\def\Xint#1{\mathchoice
    {\XXint\displaystyle\textstyle{#1}}%
    {\XXint\textstyle\scriptstyle{#1}}%
    {\XXint\scriptstyle\scriptscriptstyle{#1}}%
    {\XXint\scriptscriptstyle\scriptscriptstyle{#1}}%
      \!\int}
\def\XXint#1#2#3{{\setbox0=\hbox{$#1{#2#3}{\int}$}
    \vcenter{\hbox{$#2#3$}}\kern-.5\wd0}}
\def\dashint{\Xint-}
\def\YYint#1#2#3{{\setbox0=\hbox{$#1{#2#3}{\int}$}
    \lower1ex\hbox{$#2#3$}\kern-.46\wd0}}
\def\YYYint#1#2#3{{\setbox0=\hbox{$#1{#2#3}{\int}$}
    \lower0.35ex\hbox{$#2#3$}\kern-.48\wd0}}
\def\ZZint#1#2#3{{\setbox0=\hbox{$#1{#2#3}{\int}$}
    \raise1.15ex\hbox{$#2#3$}\kern-.57\wd0}}
\def\ZZZint#1#2#3{{\setbox0=\hbox{$#1{#2#3}{\int}$}
    \raise0.85ex\hbox{$#2#3$}\kern-.53\wd0}}
\def\XXiint#1#2#3{{\setbox0=\hbox{$#1{#2#3}{\iint}$}
    \vcenter{\hbox{$#2#3$}}\kern-.5\wd0}}
\DeclareMathOperator*{\dist}{dist}
\DeclareMathOperator*{\Tailp}{Tail_{s,p}}
\DeclareMathOperator*{\Tails}{Tail_{\frac{s}{p},p}}
\newcommand{\rom}[1]{\uppercase\expandafter{\romannumeral #1\relax}}
\newcommand{\Chi}{\mbox{\Large$\chi$}}
\newcommand{\Author}{Sun-Sig Byun \and Kyeongbae Kim}
\newcommand{\Title}{ $L^{q}$ estimates for nonlocal p-Laplacian type equations with BMO kernel coefficients \\
in divergence form}
\newcommand{\Shorttitle}{ $L^{q}$ estimates for nonlocal p-Laplacian type equations with BMO kernel coefficients in divergence form}
\theoremstyle{plain}
\newtheorem{thm}{Theorem}[section]
\newtheorem{lem}[thm]{Lemma}
\newtheorem{cor}[thm]{Corollary}
\theoremstyle{definition}
\newtheorem*{defn*}{Definition}
\theoremstyle{remark}
\newtheorem{rmk}{Remark}
\numberwithin{equation}{section}
\subjclass[2020]{35B65, 35D30, 35J70, 35R05.}
\keywords{Nonlocal; Calder\'on-Zygmund type estimate ; Nonlinear}
\newcommand{\dx}{\,dx}
\newcommand{\dy}{\,dy}
\newcommand{\dmu}{\,d\mu}
\newcommand{\dmut}{\,d\mu_{\tau}}
\newcommand{\dlambda}{\,d\lambda}
\newcommand{\ddiv}{\mathrm{div\,}}
\newcommand{\ddivs}{\mathrm{div}_{s}\,}
\newcommand{\ddivt}{\mathrm{div}_{t}\,}
\title[\Shorttitle]{\Title}
\author[\Author]{\Author}
\address{Sun-Sig Byun: Department of Mathematical Sciences and Research Institute of Mathematics, Seoul National University, Seoul 08826, Korea}
\email{byun@snu.ac.kr}
\address{Kyeongbae Kim: Department of Mathematical Sciences, Seoul National University, Seoul 08826, Korea}
\email{kkba6611@snu.ac.kr}
\begin{document}
\maketitle

\begin{abstract}
We study $s$-fractional $p$-Laplacian type equations with discontinuous kernel coefficients in divergence form to establish $W^{s+\sigma,q}$ estimates for any choice of pairs $( \sigma,q)$ with $q\in(p,\infty)$ and $\sigma\in\left(0,\min\left\{\frac{s}{p-1},1-s\right\}\right)$ under the assumption that the associated kernel coefficients have small BMO seminorms near the diagonal. As a consequence, we find in the literature an optimal fractional Sobolev regularity 
of such a non-homogeneous nonlocal equation when the right-hand side is presented by a suitable fractional operator. Our results are new even in the linear case.
\end{abstract}

\section{Introduction}
\subsection{Overview and main results.}
In this paper, we study the nonhomogeneous problem of $s$-fractional $p$-Laplacian type equation  
\begin{equation}
\label{main eq1}
    (-\Delta_{p})_{A}^{s}u=(-\Delta_{p})^{\frac{s}{p}}f\quad\text{in }\Omega,
\end{equation}
where 
\begin{equation*}
    (-\Delta_{p})_{A}^{s}u(x)=\,\mathrm{p.v.}\int_{\mathbb{R}^{n}}A(x,y)\frac{|u(x)-u(y)|^{p-2}(u(x)-u(y))}{|x-y|^{s(p-1)}}\frac{\dy}{|x-y|^{n+s}}
\end{equation*}
and
\begin{equation*}
   (-\Delta_{p})^{\frac{s}{p}}f(x) =\,\mathrm{p.v.}\int_{\mathbb{R}^{n}}|f(x)-f(y)|^{p-2}(f(x)-f(y))\frac{\dy}{|x-y|^{n+s}}=(-\Delta_{p})_{1}^{\frac{s}{p}}f(x).
\end{equation*}
Here, $0<s<1$, $2\leq p<\infty$, $\Omega\subset\mathbb{R}^{n}$ is an open and bounded set with $n\geq 2$, and $A=A(x,y):\mathbb{R}^{n}\times\mathbb{R}^{n}\to\mathbb{R}$ is a kernel coefficient with
\begin{equation*}
    A(x,y)=A(y,x)\quad\text{and}\quad \Lambda^{-1}\leq A(x,y)\leq\Lambda
\end{equation*}
for all $(x,y)\in\mathbb{R}^{n}\times\mathbb{R}^{n}$ and for some constant $\Lambda\geq1$. And $f$ is a given measurable function and $u=u(x):\mathbb{R}^{n}\to\mathbb{R}$ is the unknown.

In fact, \eqref{main eq1} occurs as the Euler-Lagrange equation of the following functional
\begin{equation*}
v\mapsto \int_{\mathbb{R}^{n}}\int_{\mathbb{R}^{n}}\left(\frac{1}{p}A(x,y)\left|\frac{v(x)-v(y)}{|x-y|^{s}}\right|^{p}-\left|f(x)-f(y)\right|^{p-2}(f(x)-f(y))\left(\frac{v(x)-v(y)}{|x-y|^{s}}\right)\right)\frac{\dx\dy}{|x-y|^{n}}
\end{equation*}
defined for $v\in W^{s,p}(\mathbb{R}^{n})$ satisfying $v=g$ on $\mathbb{R}^{n}\setminus \Omega$ for some boundary data $g\in W^{s,p}(\mathbb{R}^{n})$. 
The aim of this paper is to establish that the following implication 
\begin{equation}
\label{main result}
    \big(f(x)-f(y)\big)\in L^{q}_{\mathrm{loc}}\left(\Omega\times\Omega\,;\,\frac{\dx\dy}{|x-y|^{n+\sigma q}}\right)\Longrightarrow \left(\frac{u(x)-u(y)}{|x-y|^{s}}\right)\in L^{q}_{\mathrm{loc}}\left(\Omega\times\Omega\,;\,\frac{\dx\dy}{|x-y|^{n+\sigma q}}\right)
\end{equation}
holds with the desired Calder\'on-Zygmund type estimate \eqref{goalestimate} for each choice of two numbers $q,\sigma$ with
\begin{equation}
\label{mainassumptions}
\mbox{$q\in(p,\infty)$ and $\sigma\in\left(0,\min\left\{\frac{s}{p-1},1-s\right\}\right)$}
\end{equation}
under a possibly discontinuous kernel coefficient $A(x,y)$. For the classical case that $p=q$ and $\sigma=0$, there is a unique weak solution $u\in W^{s,p}(\Omega)\cap L^{p-1}_{sp}(\mathbb{R}^{n})$ with the standard energy estimate \eqref{estimateofp0}, as follows from Lemma \ref{existence} below.

We first discuss a motivation of the study \eqref{main eq1} from the corresponding local problem when $s=1$. According to the well-known elliptic theory, for a weak solution $w$ of 
\begin{equation*}
    -\ddiv(B|Dw|^{p-2}Dw)=-\ddiv(|F|^{p-2}F)\quad\text{in }\Omega,
\end{equation*}
which comes from the Euler-Lagrange equation of the functional
\begin{equation*}
     v\mapsto \int_{\Omega}\frac{1}{p}B|Dv|^{p}-|F|^{p-2}F\cdot Dv\dx
\end{equation*}
defined for $v\in W^{1,p}(\Omega)$ with $v=g$ on $\partial\Omega$,
there holds 
\begin{equation}
\label{czforlocal}
    F\in L^{q}_{\mathrm{loc}}\Longrightarrow Dw\in L^{q}_{\mathrm{loc}}
\end{equation}
for every $q\in[p,\infty)$, provided that the principal coefficient function $B:\Omega\to\mathbb{R}$ has a sufficiently small BMO seminorm, see \cite{CPoo,KZav,BWc,Ip} and references therein. To find a nonlocal analogue of the Calder\'on-Zygumnd theory \eqref{czforlocal}, we first need to introduce a fractional gradient operator and a fractional divergence operator by adopting the notation from the very interesting works \cite{MSf}. For $t\in[0,1)$, the fractional gradient operator $d_{t}:\mathcal{M}\left(\mathbb{R}^{n}\right)\to\mathcal{M}_{od}\left(\mathbb{R}^{n}\times \mathbb{R}^{n}\right)$ is defined by
\begin{equation*}
    (d_{t}g)(x,y)=\frac{g(x)-g(y)}{|x-y|^{t}}\quad\text{if $x\neq y$, }
\end{equation*}
where $\mathcal{M}(\mathbb{R}^{n})$ is the function space consisting of all measurable real-valued functions defined on $\mathbb{R}^{n}$ and $\mathcal{M}_{od}(\mathbb{R}^{n}\times\mathbb{R}^{n})=\{F(x,y)\in\mathcal{M}(\mathbb{R}^{n}\times\mathbb{R}^{n})\,;\, F(x,y)=-F(y,x)\}$. 
Note that $d_{t}g$ is well defined as $\left\{(x,y)\in\mathbb{R}^{n}\times\mathbb{R}^{n}\,;\, x=y\right\}$ is a measure zero set in $\mathbb{R}^{n}\times\mathbb{R}^{n}$. For $t\in(0,1)$, the fractional divergence operator $\ddivt:\mathcal{S}_{od}\left(\mathbb{R}^{n}\times\mathbb{R}^{n}\right)\to\mathcal{S}\left(\mathbb{R}^{n}\right)$ is defined by
\begin{equation*}
    \ddivt(F)(x)=\int_{\mathbb{R}^{n}}\frac{F(x,y)}{|x-y|^{t}}\frac{\dy}{|x-y|^{n}},
\end{equation*}
where $\mathcal{S}\left(\mathbb{R}^{n}\right)$ is the classical Schwartz space and $\mathcal{S}_{od}\left(\mathbb{R}^{n}\times\mathbb{R}^{n}\right)=\mathcal{S}(\mathbb{R}^{n}\times \mathbb{R}^{n})\cap\mathcal{M}_{od}(\mathbb{R}^{n}\times \mathbb{R}^{n})$ (see \cite{Wd}). 
Then our equation \eqref{main eq1} can be rewritten as
\begin{equation*}
    \ddivs(A(x,y)|d_{s}u|^{p-2}d_{s}u)=\ddivs(|d_{0}f|^{p-2}d_{0}f),
\end{equation*}
while the relation \eqref{main result} can be rewritten as 
\begin{equation*}
        d_{0}f\in L^{q}_{\mathrm{loc}}\left(\Omega\times\Omega\,;\,\frac{\dx\dy}{|x-y|^{n+\sigma q}}\right)\Longrightarrow d_{s}u\in L^{q}_{\mathrm{loc}}\left(\Omega\times\Omega\,;\,\frac{\dx\dy}{|x-y|^{n+\sigma q}}\right)
\end{equation*}
for each $q\in(p,\infty)$ and $\sigma\in\left(0,\min\left\{\frac{s}{p-1},1-s\right\}\right)$, which is the corresponding nonlocal version of our Calder\'on-Zygmund theory.

We now mention the recent results for the Calder\'on-Zygmund theory of nonlocal problems. For $p=2$, Mengesha, Schikorra and Yeepo \cite{MSY} establish the Calder\'on-Zygmund theory for a variety of forcing terms, provided that the kernel coefficient $A$ is H\"older continuous by obtaining suitable commutator estimates. On the other hand, in \cite{Nv,Ni}, Nowak obtains Calder\'on-Zygmund-type estimates with respect to non-divergence data with the associated kernel coefficient $A$ having a small $BMO$ seminorm on $\Omega\times\Omega$ via maximal function techniques. For the global estimate, Abdellaoui, Fern\'andez, Leonori and Younes \cite{AFLY} establish global regularity results for the fractional Laplacian, with the zero boundary condition and non-divergence data by employing the Green function formula of the fractional Laplacian.

For the nonlinear case when $p>2$, the Calder\'on-Zygmund theory for non-divergence data is recently investigated by Diening and Nowak in \cite{LNc}. In particular, they accomplish this by obtaining precise pointwise bounds in terms of a certain fractional sharp maximal function. On the other hand, we here wish to find an optimal Calder\'on-Zygmund theory for divergence data by dealing with a suitable form of the right-hand side along with a minimal regularity assumptions on the kernel coefficients. In addition, we point out that unlike the methods used in \cite{Ni,Nv, MSY, AFLY}, we use the maximal function free technique introduced in \cite{AMp} and later extensibly employed in the literature.

Later in this paper, we show that our result covers the main result given in \cite{Ni} (see Subsection \ref{subsectioin12} below). 
The main difficulty to handle such nonlinear problems is that even though $u$ and $v$ are weak solutions to \eqref{main eq1}, $u\pm v$ is not a weak solution to \eqref{main eq1}, as usual. Indeed for $p=2$, this linearity is used to prove comparison estimates of higher-order fractional gradients, which is an essential ingredient to improve the range of the fractional differentiability (see \cite{Ni}). However, in this paper when dealing with nonlinear fractional problems, it seems difficult to find desired comparison estimates of higher-order fractional gradients.
To overcome this, we turn to an interpolation argument along with slightly higher fractional Sobolev regularity of a solution, which turns out to be obtained regardless of the linearity, and then run a boot strap argument in order to prove comparison estimates for fractional gradients of higher order. We believe that this method is applicable for more general nonlinear nonlocal equations with nonstandard growth \cite{BKO,BOS,Olv}. We further refer to \cite{FMSY,KMS1,KMS2,ABES,NHH,BLS2,CKPf,CKPh,KKP,Nexist,KH1,Cr,Sn,Shf,Ci,BL,Hlp,KAAYg, NKSg,FMg,SM,FTASc} for a further discussion of various regularity results of nonlocal problems.

We next give the definition of local weak solutions to \eqref{main eq1}. See Section \ref{section2} for related definitions and notations.
\begin{defn*}
Let $d_{0}f\in L^{p}_{\mathrm{loc}}\left(\frac{\dx\dy}{|x-y|^{n}}\,;\,\Omega\times\Omega\right)$ and $f\in L^{p-1}_{s}(\mathbb{R}^{n})$. We say that $u\in W^{s,p}_{\mathrm{loc}}(\Omega)\cap L^{p-1}_{sp}(\mathbb{R}^{n})$ is a local weak solution to \eqref{main eq1} if it satisfies for any $\phi\in W_{c}^{s,p}(\Omega)$,
\begin{equation}
\label{defn of weak}
\begin{aligned}
    &\int_{\mathbb{R}^{n}}\int_{\mathbb{R}^{n}}A(x,y)\left(\frac{|u(x)-u(y)|}{|x-y|^{s}}\right)^{p-2}\left(\frac{u(x)-u(y)}{|x-y|^{s}}\right)\left(\frac{\phi(x)-\phi(y)}{|x-y|^{s}}\right)\frac{\dx\dy}{|x-y|^{n}}\\
    &=\int_{\mathbb{R}^{n}}\int_{\mathbb{R}^{n}}|f(x)-f(y)|^{p-2}\left(f(x)-f(y)\right)\left(\frac{\phi(x)-\phi(y)}{|x-y|^{s}}\right)\frac{\dx\dy}{|x-y|^{n}}.
\end{aligned}
\end{equation}
\end{defn*}
\begin{rmk}
    Since $d_{0}f\in L^{p}_{\mathrm{loc}}\left(\frac{\dx\dy}{|x-y|^{n}};\Omega\times\Omega\right)$ and $f\in L^{p-1}_{s}(\mathbb{R}^{n})$, Lemma \ref{existence} ensures that the right-hand side is well-defined.
\end{rmk}

We next introduce a regularity assumption on the associated kernel coefficient $A$, so called, $(\delta,R)$-vanishing condition.
\begin{defn*}Let $A:\mathbb{R}^{n}\times \mathbb{R}^{n}\to\mathbb{R}$ be a kernel coefficient. We say that for $\delta,R>0$, $A$ is $(\delta,R)$-vanishing in $\Omega\times\Omega$ if   
\begin{equation}
\label{bmocond1}
    \sup_{(x_{0},y_{0})\in\Omega\times\Omega}\sup_{0<r\leq R}\dashint_{B_{r}(x_{0})}\dashint_{B_{r}(y_{0})}|A(x,y)-(A)_{B_{r}(x_{0})\times B_{r}(y_{0})}|\dx\dy\leq \delta.
\end{equation}
In particular, we say that $A$ is $(\delta,R)$-vanishing in $\Omega\times\Omega$ only at the diagonal, if 
\begin{equation}
\label{bmocond2}
    \sup_{x_{0}\in\Omega}\sup_{0<r\leq R}\dashint_{B_{r}(x_{0})}\dashint_{B_{r}(x_{0})}|A(x,y)-(A)_{B_{r}(x_{0})\times B_{r}(x_{0})}|\dx\dy\leq \delta.
\end{equation}
\end{defn*}
We now mention that the class of functions with the $(\delta,R)$-vanishing condition contains not only all continuous functions, but also a large class of functions with discontinuity.
\begin{rmk}
We note that given $\delta>0$, any function belonging to the class of functions with vanishing mean oscillation $VMO$ satisfies $(\delta, R)$-vanishing condition, whenever $R>0$ is sufficiently small depending on $\delta$ (see \cite{Sfv}). On the other hand, there are many $(\delta,R)$-vanishing kernel coefficients which does not belong to VMO space. Let us assume that $K$ is a merely measurable kernel which is not in VMO space. Choose 
\begin{equation*}
    A(x,y)=\frac{\delta}{4\Lambda}K(x,y)+\frac{\Lambda}{2}\quad(x,y\in\mathbb{R}^{n})
\end{equation*}
to see that \eqref{bmocond1} for any $R>0$, but $A$ does not belong to VMO space.
\end{rmk}
The next remark is to explain that \eqref{bmocond2} is the more general assumption than \eqref{bmocond1}.
\begin{rmk}
Note that if $A$ is $(\delta,R)$-vanishing in $\Omega\times\Omega$, then $A$ is also $(\delta,R)$-vanishing in $\Omega\times\Omega$ only at the diagonal. However, the converse is not true. For instance if $A(x,y)=\frac{K_{1}(x,y)\Chi_{\{|x-y|> R\}}+K_{2}(x,y)}{2\Lambda}+\frac{\Lambda}{2}$ where $K_{1}$ is merely measurable and $K_{2}$ is $(\delta,R)$-vanishing in $\Omega\times\Omega$, then $A$ is $(\delta,R)$-vanishing in $\Omega\times\Omega$ only at the diagonal, but not $(\delta,R)$-vanishing in $\Omega\times\Omega$.
\end{rmk}

We clearly point out that our problem \eqref{main eq1} has a scaling invariant property which we now state.
\begin{lem} 
Let $u$ be a weak solution to \eqref{main eq1} and $r\in(0,1)$. Assume that $A$ is $(\delta,R)$-vanishing in $\Omega\times \Omega$ only at the diagonal. Define 
\begin{equation*}
    \tilde{u}(x)=\frac{u(rx)}{r^{s}},\quad \tilde{f}(x)=f(rx)\quad\text{and}\quad \tilde{A}(x,y)=A(rx,ry)\quad(x,y\in\mathbb{R}^{n}).
\end{equation*}
Then $\tilde{u}$ is a weak solution to 
\begin{equation*}
     (-\Delta_{p})_{\tilde{A}}^{s}\tilde{u}=(-\Delta_{p})^{\frac{s}{p}}\tilde{f}\quad\text{in }\frac{1}{r}\Omega,
\end{equation*}
and $\tilde{A}$ is $\left(\delta,\frac{R}{r}\right)$-vanishing in $\frac{1}{r}\Omega\times \frac{1}{r}\Omega$ only at the diagonal. 
\end{lem}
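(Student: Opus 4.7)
The plan is to verify both conclusions by direct change of variables, after first identifying the natural test function correspondence $\phi(x)=\tilde\phi(x/r)$ between the original and rescaled weak formulations. The exponent $r^{s}$ in the definition of $\tilde u$ is chosen exactly so that the fractional gradient transforms as a scalar under dilation: with the substitution $x=rx'$, $y=ry'$ one has
\begin{equation*}
\frac{u(x)-u(y)}{|x-y|^{s}}=\frac{r^{s}\bigl(\tilde u(x')-\tilde u(y')\bigr)}{r^{s}|x'-y'|^{s}}=\frac{\tilde u(x')-\tilde u(y')}{|x'-y'|^{s}},
\end{equation*}
while $A(x,y)=\tilde A(x',y')$ and $f(x)-f(y)=\tilde f(x')-\tilde f(y')$. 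This is the core observation; everything else is bookkeeping.

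Next I would plug $\phi(x)=\tilde\phi(x/r)$ into \eqref{defn of weak} for $u$, where $\tilde\phi\in W_{c}^{s,p}(\tfrac1r\Omega)$ is arbitrary, and apply the substitution $x=rx'$, $y=ry'$ on both the left-hand side and the right-hand side. The Jacobian contributes $r^{2n}$, the kernel $|x-y|^{n}$ in the denominator contributes $r^{-n}$, and each factor $(\phi(x)-\phi(y))/|x-y|^{s}$ produces the extra $r^{-s}$ coming from the test function. A direct count then shows that \emph{both} sides acquire the same overall factor $r^{n-s}$, which cancels. What remains is precisely \eqref{defn of weak} with $(u,f,A)$ replaced by $(\tilde u,\tilde f,\tilde A)$ and with $\tilde\phi$ as test function, proving that $\tilde u$ is a local weak solution to the rescaled equation on $\tfrac1r\Omega$. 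One should also remark that the integrability hypotheses $d_{0}\tilde f\in L^{p}_{\mathrm{loc}}$ in the relevant weighted sense and $\tilde f\in L^{p-1}_{s}(\mathbb R^{n})$ follow from the analogous properties for $f$ by the same change of variables, so that the right-hand side in the weak formulation for $\tilde u$ is well-defined by the remark following the definition.

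For the BMO statement, I would fix $\tilde x_{0}\in\tfrac1r\Omega$ and $0<\rho\leq R/r$, set $x_{0}:=r\tilde x_{0}\in\Omega$, and note that the substitution $x=ru$, $y=rv$ gives
\begin{equation*}
\dashint_{B_{\rho}(\tilde x_{0})}\dashint_{B_{\rho}(\tilde x_{0})}\bigl|\tilde A(x,y)-(\tilde A)_{B_{\rho}(\tilde x_{0})\times B_{\rho}(\tilde x_{0})}\bigr|\dx\dy
=\dashint_{B_{r\rho}(x_{0})}\dashint_{B_{r\rho}(x_{0})}\bigl|A(u,v)-(A)_{B_{r\rho}(x_{0})\times B_{r\rho}(x_{0})}\bigr|\,du\,dv,
\end{equation*}
using that averaging commutes with change of variables. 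Since $r\rho\leq R$ and $x_{0}\in\Omega$, the right-hand side is bounded by $\delta$ by the diagonal $(\delta,R)$-vanishing of $A$ in \eqref{bmocond2}. Taking suprema yields the claimed $(\delta,R/r)$-vanishing of $\tilde A$ in $\tfrac1r\Omega\times\tfrac1r\Omega$ only at the diagonal.

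No step is genuinely difficult; the only care needed is to keep track of all the $r$-powers in the first computation so as to confirm the perfect cancellation between the two sides, which is really the content of the lemma. If there is a subtle point, it is ensuring that $\tilde\phi\in W_{c}^{s,p}(\tfrac1r\Omega)$ indeed corresponds via $\phi(x)=\tilde\phi(x/r)$ to an element of $W_{c}^{s,p}(\Omega)$, but this is immediate from the scaling of the Gagliardo seminorm under the dilation $x\mapsto rx$.
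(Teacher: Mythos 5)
The paper states this lemma without proof, treating the scaling invariance as a routine check; your proposal supplies exactly the details one would expect, and they are correct. The key bookkeeping — that $d_{s}u$, $A$, and $d_{0}f$ are all invariant under the substitution $x=rx'$, $y=ry'$ (thanks to the choice of exponent $r^{s}$ in $\tilde u$), so that both sides of the weak formulation pick up the same overall factor $r^{n-s}$ from the test function and the Jacobian/kernel — is carried out accurately, and the verification that the diagonal BMO seminorm of $\tilde A$ at scale $\rho\leq R/r$ equals that of $A$ at scale $r\rho\leq R$ is likewise sound. Nothing is missing.
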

We now introduce our main result.
\begin{thm}
\label{main theorem}
Let $q\in(p,\infty)$, $\sigma\in\left(0,\min\left\{\frac{s}{p-1},1-s\right\}\right)$ and $R>0$ be given. Then there exists a small positive constant $\delta$ depending only on $n,s,p,\Lambda,q $ and $\sigma$ such that for each $A$ with $(\delta,R)$-vanishing in $\Omega\times\Omega$ only at the diagonal and for each $f\in L^{p-1}_{s}(\mathbb{R}^{n})$ with $d_{0}f\in L^{q}_{\mathrm{loc}}\left(\Omega\times\Omega;\frac{\dx\dy}{|x-y|^{n+\sigma q}}\right)$, any weak solution $u\in W_{\mathrm{loc}}^{s,p}(\Omega)\cap L^{p-1}_{sp}(\mathbb{R}^{n})$ of \eqref{main eq1} satisfies that $d_{s}u\in L^{q}_{\mathrm{loc}}\left(\Omega\times\Omega;\frac{\dx\dy}{|x-y|^{n+\sigma q}}\right)$. In particular, there is a positive constant $c=c(n,s,p,\Lambda,q,\sigma)$ such that
\begin{equation}
\label{goalestimate}
\begin{aligned}
    &\left(\dashint_{B_{r}(x_{0})}\int_{B_{r}(x_{0})}|d_{s}u|^{q}\frac{\dx\dy}{|x-y|^{n+\sigma q}}\right)^{\frac{1}{q}}\\
    &\leq c\left(\left(\dashint_{B_{2r}(x_{0})}\int_{B_{2r}(x_{0})}\left|\frac{d_{s}u}{(2r)^{\sigma}}\right|^{p}\frac{\dx\dy}{|x-y|^{n}}\right)^{\frac{1}{p}}+\Tailp\left(\frac{u-(u)_{B_{2r}(x_{0})}}{(2r)^{\sigma+s}};B_{2r}(x_{0})\right)\right)\\
    &+c\left(\left(\dashint_{B_{2r}(x_{0})}\int_{B_{2r}(x_{0})}|d_{0}f|^{q}\frac{\dx\dy}{|x-y|^{n+\sigma q}}\right)^{\frac{1}{q}}+\Tails\left(\frac{f-(f)_{B_{2r}(x_{0})}}{(2r)^{\sigma}};B_{2r}(x_{0})\right)\right)
\end{aligned}
\end{equation}
whenever $B_{2r}(x_{0})\Subset\Omega$ and $r\in(0,R]$.
\end{thm}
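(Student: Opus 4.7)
The plan is to run the maximal-function-free Calderón--Zygmund scheme of Acerbi--Mingione \cite{AMp} adapted to the nonlocal, diagonal-BMO setting, but to reach the full range of exponents in \eqref{mainassumptions} via an interpolation-plus-bootstrap argument that bypasses the failure of linearity for $p>2$. By the scaling lemma recorded just above, I would first normalize, so that it suffices to work on $B_{1}\Subset\Omega$, with $R=2$ and with the right-hand side of \eqref{goalestimate} (tails included) bounded by $1$; everything then reduces to producing the local bound $\bigl(\dashint_{B_{1}}\int_{B_{1}}|d_{s}u|^{q}|x-y|^{-n-\sigma q}\,dx\,dy\bigr)^{1/q}\le c$.

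The technical core is a comparison argument on an arbitrary ball $B_{\rho}(z)\subset B_{2}$. Freeze the kernel along the diagonal by setting $\bar A=(A)_{B_{\rho}(z)\times B_{\rho}(z)}$ on that product ball and keeping $A$ elsewhere, and let $v\in u+W^{s,p}_{c}(B_{\rho}(z))$ be the weak solution of the \emph{homogeneous} equation $(-\Delta_{p})^{s}_{\bar A}v=0$ in $B_{\rho}(z)$ with $v=u$ outside. The standard monotonicity inequality for the $p$-Laplacian, together with the $(\delta,R)$-vanishing condition \eqref{bmocond2}, would give a comparison estimate of the form
\begin{equation*}
\dashint_{B_{\rho}(z)}\!\!\int_{B_{\rho}(z)}|d_{s}(u-v)|^{p}\frac{\dx\dy}{|x-y|^{n}}
\le c\,\delta^{\kappa}\,\mathcal{E}(u;B_{2\rho}(z))+c\,\mathcal{D}(f;B_{2\rho}(z)),
\end{equation*}
where $\mathcal{E}$ collects the localized $W^{s,p}$-energy of $u$ together with $\Tailp$-terms, $\mathcal{D}$ collects $L^{p}$-type data terms built from $d_{0}f$ and the $f$-tail, and the loss $\delta^{\kappa}$ comes from H\"older's inequality applied to the $BMO$-deviation of $A$ on $B_{\rho}(z)^{2}$.

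The second ingredient is a self-improvement estimate for the reference solution $v$: solutions of the homogeneous, diagonal-BMO equation enjoy an excess decay of the form $d_{s}v\in W^{\tau,p}_{\mathrm{loc}}$ (in the sense of the bilinear weight $|x-y|^{-n-\tau p}$) for some small $\tau=\tau(n,s,p,\Lambda)>0$, which I would take as given from the earlier sections of the paper (this is the nonlocal analogue of the Giaquinta--Giusti / Kuusi--Mingione--Sire higher differentiability step). Combined with the comparison bound, a standard level-set decomposition $\{|d_{s}u|^{p}>\lambda\}\subset\{|d_{s}v|^{p}>\lambda/2^{p}\}\cup\{|d_{s}(u-v)|^{p}>\lambda/2^{p}\}$, a Vitali/Calder\'on--Zygmund covering at level $\lambda$, and a classical measure-theoretic lemma (summation over dyadic levels) upgrade the estimate from $L^{p}$ to $L^{p+\varepsilon_{0}}$ with the desired weight $|x-y|^{-n-\sigma_{0}q}$ for some small $\sigma_{0},\varepsilon_{0}$ and with the constant depending on $\delta$ but not on $r$.

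The last and most delicate step is the bootstrap extension to every $q\in(p,\infty)$ and every $\sigma\in(0,\min\{s/(p-1),1-s\})$. Because $u\pm v$ is not a solution for $p>2$, I cannot iterate the comparison argument on higher-order fractional differences directly, as in the linear case of \cite{Ni}. Instead, I would interpolate: using the fact that in the previous step one already controls a slightly higher fractional-Sobolev norm of $u$, reapply the freezing/comparison scheme with the improved quantity placed on the right-hand side of the comparison estimate, so that each iteration produces a small gain $\sigma\mapsto\sigma+\eta$ and $q\mapsto q+\eta'$ while keeping the constants controlled. Finite iteration, stopping before the natural ceilings $s/(p-1)$ and $1-s$ are reached, covers the whole range in \eqref{mainassumptions}; the tails are propagated along the iteration via the monotonicity of $\Tailp$ and $\Tails$. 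The main obstacle I expect is precisely this bootstrap: one must verify that the gain $\eta$ at each step does not degenerate as $\sigma$ approaches the endpoint, and that the interpolation between the current energy norm and the higher regularity for $v$ closes without losing the sharp exponent $s/(p-1)$ dictated by the $p$-Laplacian structure.
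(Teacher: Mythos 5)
Your high-level outline matches the paper's strategy (freezing and comparison, a Calder\'on--Zygmund covering at upper level sets, and a bootstrap that trades on interpolation to bypass the failure of linearity for $p>2$), and the concern you flag at the end about non-degeneration of the iteration is exactly the issue the paper handles by choosing a \emph{finite} sequence of parameters $\tau_{m},\alpha_{m}$ indexed up to $n_{\sigma}=n_{\sigma}(n,s,p,q,\sigma)$. But two steps in your plan are genuinely incomplete and, as written, would not close.

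First, the covering argument. You write that a ``Vitali/Calder\'on--Zygmund covering at level $\lambda$'' together with the comparison estimate upgrades the integrability. But the upper level set $\{|D^{\tau}d_{s}u|>\lambda\}$ lives in the \emph{product} domain $\Omega\times\Omega$, and a Vitali covering by diagonal balls $\mathcal{B}_{\rho}(x_{i})=B_{\rho}(x_{i})\times B_{\rho}(x_{i})$ cannot cover the portion of the level set that is bounded away from the diagonal. The paper's Lemma \ref{coveringoflevelset} therefore produces \emph{two} families --- diagonal balls coming from an exit-time argument, and genuinely off-diagonal cubes produced by a dyadic Calder\'on--Zygmund decomposition --- and the off-diagonal cubes must then be controlled by a separate reverse-H\"older inequality on product cubes (Lemma \ref{nd lemma}), exploiting the fact that $|x-y|\simeq\dist(Q^{1},Q^{2})$ there so that the nonlocal weight is essentially constant. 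This is not a routine adaptation of the Acerbi--Mingione scheme; without the off-diagonal branch, the measure estimates \eqref{sumofmeasq} are unavailable and the good-$\lambda$ inequality fails for the part of the level set far from the diagonal.

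Second, the regularity you want from the reference solution $v$ is too weak. You propose to use that $v$ gains a small fractional derivative, i.e.\ $d_{s}v\in W^{\tau,p}_{\mathrm{loc}}$ for a small $\tau>0$. The level-set argument needs the \emph{pointwise} bound $\|D^{\tau}d_{s}v\|_{L^{\infty}(\mathcal{B}_{1})}\leq c_{0}\lambda$ (the second inequality in \eqref{comp main res}), which is what makes the set $\{|D^{\tau}d_{s}v|>M\lambda\}$ empty for $M$ large; with only a small $W^{\tau,p}$ gain the corresponding super-level set of $v$ does not disappear and the good-$\lambda$ inequality does not close. The paper obtains this $L^{\infty}$ control from the $C^{\alpha}$ regularity of solutions to the frozen-coefficient equation for every $\alpha<\min\{1,\frac{sp}{p-1}\}$ (Lemma \ref{almost lipschitz}), which is precisely the mechanism that dictates the upper bound $\sigma<\min\{\frac{s}{p-1},1-s\}$. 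Relatedly, the comparison is in fact a \emph{two-step} argument $u\to w\to v$: one first solves the homogeneous problem $w$ with the original kernel $A$ to exploit the self-improving $L^{p(1+\epsilon_{0})}$ gain (Lemma \ref{higher inte}), and only then freezes the kernel and compares $w$ with $v$; the higher integrability of $w$ is what makes the BMO-deviation of $A$ usable through H\"older's inequality. A one-step ``freeze-and-compare'' would not let you absorb the coefficient discrepancy.

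Finally, a minor misalignment: your bootstrap description suggests simultaneously incrementing $q$ and $\sigma$. The paper separates these: it first raises the integrability exponent along the scale $p_{h}$ at a \emph{fixed} $\tau$ (Subsection \ref{subsection51}, where $\tau=\frac{q}{q-p}\sigma$), and only then, in a second stage (Subsection \ref{section52}), raises the order of differentiability along $\alpha_{m}$ using the interpolation Lemma \ref{InterpoL} to transfer the comparison estimate from $d_{s}u$ to $d_{s+\alpha_{m}}u$. Keeping these two iterations separate is what keeps the constants from compounding across steps.
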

\begin{rmk}
We observe the following equivalent relation
\begin{equation*}
    d_{s}u\in
    L^{q}_{\mathrm{loc}}\left(\Omega\times\Omega;\frac{\dx\dy}{|x-y|^{n+\sigma q}}\right)\Longleftrightarrow u\in W^{s+\sigma,q}_{\mathrm{loc}}(\Omega),
\end{equation*}
which implies that
\begin{equation*}
    f\in W^{\sigma,q}_{\mathrm{loc}}(\Omega)\Longrightarrow u\in W^{s+\sigma,q}_{\mathrm{loc}}(\Omega)
\end{equation*}
for any choice of $q\in(p,\infty)$ and $\sigma\in\left(0,\min\left\{\frac{s}{p-1},1-s\right\}\right)$.
\end{rmk}
\begin{rmk}
Let us assume $d_{0}f\in L^{q}_{\mathrm{loc}}\left(\Omega\times\Omega;\frac{\dx\dy}{|x-y|^{n}}\right)$ for some $q\in(p,\infty)$. However, it is not always true that $d_{0}f\in L^{p}_{\mathrm{loc}}\left(\Omega\times\Omega;\frac{\dx\dy}{|x-y|^{n}}\right)$, as $\int_{B}\int_{B'}\frac{\dx\dy}{|x-y|^{n}}<\infty$ for any ball $B,B'\Subset\Omega$ if and only if $\mathrm{dist}(B,B')>0$. Thus, we do not ensure that the right-hand side of \eqref{defn of weak} is well-defined. If $d_{0}f\in L^{q}_{\mathrm{loc}}\left(\Omega\times\Omega;\frac{\dx\dy}{|x-y|^{n+\sigma q q}}\right) $ for some $q\in(p,\infty)$ and $\sigma>0$, then $f\in W^{\sigma,q}_{\mathrm{loc}}(\Omega)$. By Lemma \ref{embedding sob}, we have $f\in W^{\frac{\sigma}{2},p}_{\mathrm{loc}}(\Omega)$, which gives that $d_{0}f\in L^{p}_{\mathrm{loc}}\left(\Omega\times\Omega;\frac{\dx\dy}{|x-y|^{n}}\right)$. Therefore, it is natural to take $\sigma>0$. In addition, the regularity of solutions to the homogeneous fractional p-Laplacian equation is known only for $C^{\beta}$ for any $\beta<\min\left\{\frac{sp}{p-1},1\right\}$, if $p>2$ (see \cite{BLS2}). Accordingly, the upper bound of $\sigma$ is $\min\left\{\frac{s}{p-1},1-s\right\}$.
\end{rmk}

\subsection{Derivation of regularity results for non-divergence data $g$.}
\label{subsectioin12}
In this subsection, we show that for any weak solution $u\in W^{s,2}_{\mathrm{loc}}(\Omega)\cap L^{1}_{2s}(\mathbb{R}^{n})$ to
\begin{equation}
\label{nondiv1}
    (-\Delta)_{A}^{s}u=g\quad\text{in }\Omega,
\end{equation}
if $A$ has a sufficiently small BMO seminorom only at the diagonal, then the following implication
\begin{equation}
\label{ndivre}
    g\in L_{\mathrm{loc}}^{\frac{nq}{n+(2s-t)q}}(\Omega)\Longrightarrow u\in W^{t,q}_{\mathrm{loc}}(\Omega)
\end{equation}
holds for each
\begin{equation*}
    q\in(2,\infty)\quad\text{and}\quad t\in\left(s,\min\left\{2s,1-s\right\}\right).
\end{equation*}
Let $g\in L_{\mathrm{loc}}^{\frac{nq}{n+(2s-t)q}}(\Omega)$ for some $q\in (2,\infty)$ and $t\in\left(s,\min\left\{2s,1-s\right\}\right)$, and let $\Omega'\Subset\Omega$ be an open set. We first note from \cite[Theorem 4.4]{Nv} that there is a weak solution $f\in W^{\frac{s}{2},2}(\mathbb{R}^{n})$ to 
\begin{equation}
\label{nondiv2}
    (-\Delta)^{\frac{s}{2}}f=g\quad\text{in }\Omega''
\end{equation}
satisfying $f\in H^{s,\frac{nq}{n+(2s-t)q}}(\Omega')$, where $\Omega''$ is an open set such that $\Omega'\Subset\Omega''\Subset\Omega$. In light of \eqref{nondiv1} and \eqref{nondiv2}, we have that $u\in W^{s,2}_{\mathrm{loc}}(\Omega'')\cap L^{1}_{2s}(\mathbb{R}^{n})$ is a weak solution to
\begin{equation*}
    (-\Delta)_{A}^{s}u=(-\Delta)^{\frac{s}{2}}f\quad\text{in }\Omega''.
\end{equation*}
Applying \cite[Proposition 2.5]{Nv} with $p=\frac{nq}{n+(2s-t)q}$, $s_{1}=t-s$ and $p_{1}=q$ leads to
\begin{equation*}
    f\in W^{t-s,q}_{\mathrm{loc}}(\Omega'').
\end{equation*}
In addition, we recall that the fact that $W^{\frac{s}{2},2}(\mathbb{R}^{n})\subset L^{1}_{s}(\mathbb{R}^{n})$ to observe that $f\in L^{1}_{s}(\mathbb{R}^{n})$.
Therefore, our main theorem \ref{main theorem} implies that
\begin{equation}
\label{resultg}
    u\in W^{t,q}_{\mathrm{loc}}(\Omega'')\subset W^{t,q}(\Omega'),
\end{equation}
provided that the kernel coefficient is $(\delta,R)$-vanishing in $\Omega\times\Omega$ only at the diagonal for sufficiently small $\delta$ depending only on $n,s,p,\Lambda,q$ and $t$,
as $q\in(2,\infty)$ and $t-s\in \left(0,\min\left\{s.1-s\right\}\right)$. Since $\Omega'$ is arbitrarily selected to be embedded in $\Omega$, \eqref{resultg} yields $u\in W^{t,q}_{\mathrm{loc}}(\Omega)$, which is \eqref{ndivre}. 
This is the same result given in \cite[Theorem 1.1]{Ni}. 

\subsection{Plan of the paper.}This paper is organized as follows. In Section \ref{section2}, we introduce some notations, function spaces, lemmas about embeddings and tail estimates, the existence result of the corresponding boundary value problem to \eqref{main eq1} and a technical lemma. In Section \ref{section3}, we derive comparison estimates. In Section \ref{Section4}, we obtain a covering lemma to deal with upper level sets of fractional gradients of weak solutions. Finally, in Section \ref{section5}, we prove our main result.

\section{Preliminaries and Notations}
\label{section2}
In what follows, we write $c$ to mean a general constant equal or bigger than 1 and it possibly changes from line to line. Furthermore, we use parentheses to denote the relevant dependencies on parameters such as $c\equiv c(n,s,p)$, and we denote
\[\mathsf{data}=\mathsf{data}(n,s,p,\Lambda,q,\sigma).\]

We first introduce geometric and functional notations.
\begin{enumerate}
\item Let us denote $B_{r}(x)$ by the ball in $\mathbb{R}^{n}$ with center $x\in\mathbb{R}^{n}$ and a radius $r>0$, and let $B_{r}\equiv B_{r}(0)$. 
\item We denote $\mathcal{B}_{r}(x,y)=B_{r}(x)\times B_{r}(y)$ for $x,y\in\mathbb{R}^{n}$ and $r>0$. In particular, we write $\mathcal{B}_{r}(x)=\mathcal{B}_{r}(x,x)$ and $\mathcal{B}_{r}=\mathcal{B}_{r}(0)$.
\item The cube in $\mathbb{R}^{n}$ with center $x$ and side-length $r$ is denoted by $Q_{r}(x)$, and let $Q_{r}\equiv Q_{r}(0)$.
\item We denote $\mathcal{Q}_{r}(x,y)=Q_{r}(x)\times Q_{r}(y)$ for $x,y\in\mathbb{R}^{n}$ and $r>0$. Moreover, we write $\mathcal{Q}_{r}(x)=\mathcal{Q}_{r}(x,x)$ and $\mathcal{Q}_{r}=\mathcal{Q}_{r}(0)$.
\item Given cubes $Q^{1}$ and $Q^{2}$ in $\mathbb{R}^{n}$, we denote $P^{i}\mathcal{Q}=Q^{i}\times Q^{i}$ for $i=1,2$, where $\mathcal{Q}=Q^{1}\times Q^{2}$.
\item For a locally integrable function $v:\mathbb{R}^{n}\to\mathbb{R}$ and a bounded set $B\subset\mathbb{R}^{n}$, we denote the average of $v$ over $B$ by $(v)_{B}=\dashint_{B}v(x)\dx$.
\item For a locally integrable function $F:\mathbb{R}^{n}\times\mathbb{R}^{n}\to\mathbb{R}$, $x_{0}\in\mathbb{R}$ and $r>0$, we define a function $F_{r.x_{0}}:\mathbb{R}^{n}\times\mathbb{R}^{n}\to\mathbb{R}$ by 
\begin{equation}
\label{defnofA2}
    F_{r,x_{0}}(x,y)=\begin{cases}
        (F)_{\mathcal{B}_{r}(x_{0})}&\quad\text{if }(x,y)\in \mathcal{B}_{r}(x_{0}),\\
        F(x,y)&\quad\text{otherwise}.
    \end{cases}
\end{equation}

\end{enumerate}
We next introduce the fractional Sobolev space and tail space. For a measurable function $v:\Omega\to\mathbb{R}$, we say that $v\in W^{s,p}(\Omega)$ if $v\in L^{p}(\Omega)$ and 
\begin{equation*}
    [v]_{W^{s,p}(\Omega)}=\left(\int_{\Omega}\int_{\Omega}\frac{|v(x)-v(y)|^{p}}{|x-y|^{n+sp}}\dx\dy\right)^{\frac{1}{p}}<\infty.
\end{equation*}
In particular, we say $v\in W^{s,p}_{c}(\Omega)$ if $v\in W^{s,p}(\Omega)$ and $v$ has a compact support embedded in $\Omega$.
For a given open set $\Omega'\subset\mathbb{R}^{n}$ such that $\Omega\Subset\Omega'$ and a measurable function $g:\mathbb{R}^{n}\to\mathbb{R}$, we define
\begin{equation*}
    X_{g}^{s,p}(\Omega,\Omega')=\left\{v\in W^{s,p}(\Omega')\,;\,v=g\text{ for a.e. on }\mathbb{R}^{n}\setminus\Omega\right\}.
\end{equation*}
We now introduce the associated tail space. We say that $v\in L^{p-1}_{sp}(\mathbb{R}^{n})$ if 
\begin{equation*}
   \int_{\mathbb{R}^{n}}\frac{|v(y)|^{p-1}}{(1+|y|)^{n+sp}}\dy<\infty,
\end{equation*}
and we write 
\begin{equation*}
    \Tailp\left(v;B_{r}(x_{0})\right)=\left(r^{sp}\int_{\mathbb{R}^{n}\setminus B_{r}(x_{0})}\frac{|v(y)|^{p-1}}{|y-x_{0}|^{n+sp}}\dy\right)^{\frac{1}{p-1}}.
\end{equation*}
After a simple algebraic computation, we observe that for any $r>0$,
\begin{equation}
\label{scalingpropertytail}
    \Tailp(1;B_{1})=\Tailp(1;B_{r}),
\end{equation}
which is a positive number depending only on $n,s$ and $p$.
We now introduce dual pairs of operators and measures $\left(D^{\tau},\,\mu_{\tau}\right)$ for $\tau\in\left(0,\frac{n}{p}\right)$, which were first introduced in \cite{KMS1}.
Define an operator $D^{\tau}:\mathcal{M}\left(\mathbb{R}^{n}\times \mathbb{R}^{n}\right)\to \mathcal{M}\left(\mathbb{R}^{n}\times \mathbb{R}^{n}\right)$ by 
\begin{equation*}
    \left(D^{\tau}F\right)(x,y)=\frac{F(x,y)}{|x-y|^{\tau}}\quad\text{if }x\neq y,
\end{equation*}
and a measure $\mu_{\tau}$ on $\mathbb{R}^{n}\times\mathbb{R}^{n}$ by  
\begin{equation}
\label{mutaudefnf}
    \mu_{\tau}(\mathcal{A})=\int_{\mathcal{A}}\frac{\dx\dy}{|x-y|^{n-p\tau}}\quad\text{for any measurable set }\mathcal{A}\subset\mathbb{R}^{n}\times\mathbb{R}^{n}.
\end{equation}  
In this notation, we observe that
$u\in W^{s,p}_{\mathrm{loc}}(\Omega)$ if and only if $D^{\tau}d_{s}u\in L^{p}_{\mathrm{loc}}\left(\Omega\times\Omega\,;\mu_{\tau}\right)$. We now prove some properties of the measure $\mu_{\tau}$ defined in \eqref{mutaudefnf}.
\begin{lem}
\begin{enumerate}
\item There exists a constant $v_{0}$ depending only on $n$ and $p$ such that
\begin{equation}
\label{size of b mut}
\mu_{\tau}\left(\mathcal{B}_{R}(x_{0})\right)=v_{0}\frac{R^{n+p\tau}}{\tau}\quad\text{for any }x_{0}\in\mathbb{R}^{n}\text{ and }R>0.
\end{equation}
\item Let $\rho$ and $R$ be any positive numbers, and let $x_{0},y_{0}\in\mathbb{R}^{n}$. Then 
\begin{equation}
\label{doubling}
\frac{\mu_{\tau}\left(\mathcal{B}_{R}(x_{0},y_{0}))\right)}{\mu_{\tau}\left(\mathcal{B}_{\rho}(x_{0},y_{0})\right)}=\left(\frac{R}{\rho}\right)^{n+p\tau}.
\end{equation}
\item Let $\mathcal{Q}_{r}(x_{0},y_{0})$ be any cube in $\mathcal{B}_{R}$ for $r,R>0$ and $x_{0},y_{0}\in\mathbb{R}^{n}$. Then 
\begin{equation}
\label{inclusion measure}
    \frac{\mu_{\tau}\left(\mathcal{B}_{R}\right)}{\mu_{\tau}\big(\mathcal{Q}_{r}(x_{0},y_{0})\big)}\leq 2^{n}\frac{v_{0}}{\tau}\left(\frac{R}{r}\right)^{2n}.
\end{equation}
\end{enumerate}
\end{lem}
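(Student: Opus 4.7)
The plan is to handle all three parts by direct computation, using only translation invariance of $\mu_{\tau}$ together with the scaling $|Rx-Ry|^{-(n-p\tau)} = R^{-(n-p\tau)}|x-y|^{-(n-p\tau)}$ of the kernel. Part (1) is the main identity; part (2) follows by applying the scaling to both numerator and denominator, while part (3) follows from (1) combined with a crude pointwise lower bound on the kernel over $\mathcal{Q}_{r}(x_{0},y_{0})\subseteq\mathcal{B}_{R}$.

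For part (1), I translate $x_{0}$ to the origin and dilate by $(x,y)\mapsto(R\tilde{x},R\tilde{y})$. The two Jacobians contribute $R^{2n}$ and the kernel contributes $R^{p\tau-n}$, so that
\[
\mu_{\tau}(\mathcal{B}_{R}(x_{0})) = R^{n+p\tau}\int_{B_{1}}\int_{B_{1}}\frac{dx\,dy}{|x-y|^{n-p\tau}}.
\]
To evaluate the unit-ball integral, I fix $x\in B_{1}$, substitute $z=y-x$ (so that the inner domain lies inside $B_{2}$ and contains a ball around $0$) and pass to polar coordinates $z=r\omega$; the inner integral then becomes a constant multiple of an integral of $r^{p\tau-1}$ on an interval, which exposes the characteristic $1/\tau$ factor. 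Integrating in $x\in B_{1}$ and collecting all $n,p$-dependent quantities into $v_{0}$ yields the asserted identity. Part (2) then follows by applying exactly the same dilation to numerator and denominator with the common center: each side becomes $R^{n+p\tau}$, respectively $\rho^{n+p\tau}$, times the same unit-ball integral, so the latter cancels in the ratio and only the scaling factor $(R/\rho)^{n+p\tau}$ survives.

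For part (3), the inclusion $\mathcal{Q}_{r}(x_{0},y_{0})\subseteq\mathcal{B}_{R}$ forces both $Q_{r}(x_{0})$ and $Q_{r}(y_{0})$ to lie in $B_{R}$, and hence $|x-y|\leq 2R$ for every $(x,y)\in\mathcal{Q}_{r}(x_{0},y_{0})$. Since $\tau\in(0,n/p)$ gives $n-p\tau>0$, the pointwise bound $|x-y|^{p\tau-n}\geq(2R)^{p\tau-n}$ integrates to
\[
\mu_{\tau}(\mathcal{Q}_{r}(x_{0},y_{0}))\geq\frac{|Q_{r}|^{2}}{(2R)^{n-p\tau}}=\frac{r^{2n}}{(2R)^{n-p\tau}}.
\]
Dividing the formula from (1) for $\mu_{\tau}(\mathcal{B}_{R})$ by this lower bound and using $2^{n-p\tau}\leq 2^{n}$ immediately produces the claimed ratio estimate.

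The only delicate point I anticipate is confirming that the prefactor arising from the unit-ball integral in (1) can really be absorbed into a single constant $v_{0}$ depending only on $n$ and $p$ and not on $\tau\in(0,n/p)$; everything else reduces to two explicit changes of variables together with a crude pointwise lower bound on the kernel.
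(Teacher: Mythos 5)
For part (3) your route is essentially the paper's: both bound $|x-y|$ above by $2R$ on $\mathcal{Q}_r(x_0,y_0)\subset\mathcal{B}_R$, use this to bound $\mu_\tau\bigl(\mathcal{Q}_r(x_0,y_0)\bigr)$ from below by $r^{2n}/(2R)^{n-p\tau}$, and divide into part (1). The paper merely reorganizes the algebra by writing $r^{2n}=\int_{Q_r^1}\int_{Q_r^2}dx\,dy$ and inserting the kernel bound inside that integral; the estimate is the same.

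For parts (1) and (2) the paper does not prove anything: it cites \cite[Proposition 4.1]{KMS1} and moves on. Your attempt to derive them directly runs into the concern you yourself flagged, plus a second one you did not. For (1), after translation and dilation the inner $z$-integral over $B_1-x$ does not reduce to $\int_0^a r^{p\tau-1}\,dr$ with an $x$- and $\tau$-independent $a$, because the radial upper limit $r_{\max}(\omega,x)$ depends on the direction and on $x$; in polar coordinates you get $\frac{1}{p\tau}\int_{S^{n-1}}r_{\max}(\omega,x)^{p\tau}\,d\omega$, so the $1/\tau$ factor does come out but the remaining integral still depends on $\tau$. Carrying out the computation explicitly in $n=1$ gives $\int_{B_1}\int_{B_1}|x-y|^{s-1}\,dx\,dy=2^{s+2}/(s(s+1))$ with $s=p\tau$, so $\tau\int_{B_1}\int_{B_1}|x-y|^{p\tau-n}\,dx\,dy$ is \emph{not} constant in $\tau$, only bounded above and below uniformly for $\tau\in(0,n/p)$. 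Your strategy therefore cannot produce the exact identity; at best it gives a two-sided comparison with an $(n,p)$-dependent constant. For (2), the dilation-and-cancel argument implicitly requires $x_0=y_0$: substituting $x=x_0+Ru$, $y=y_0+Rv$ yields $|x-y|=|(x_0-y_0)+R(u-v)|$, which does not scale homogeneously in $R$, so the two ``unit-ball integrals'' you obtain depend on $(x_0-y_0)/R$ and on $(x_0-y_0)/\rho$ respectively and do not cancel. Your argument covers only the diagonal case $\mathcal{B}_R(x_0,x_0)$, whereas the statement allows general $\mathcal{B}_R(x_0,y_0)$.
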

\begin{proof}
For \eqref{size of b mut} and \eqref{doubling}, we refer to \cite[Proposition 4.1]{KMS1}. A direct computation needs to  
\begin{equation*}
\begin{aligned}
\mu_{\tau}(\mathcal{B}_{R})&=v_{0}\frac{R^{n+p\tau}}{\tau}\\
&=\frac{v_{0}}{\tau}\frac{R^{n+p\tau}}{r^{2n}}\int_{Q_{r}^{1}(x_{0})}\int_{Q_{r}^{2}(y_{0})}\dx\dy\\
&\leq \frac{v_{0}}{\tau}\frac{R^{n+p\tau}}{r^{2n}}\int_{Q_{r}^{1}(x_{0})}\int_{Q_{r}^{2}(y_{0})}\left(\frac{2R}{|x-y|}\right)^{n-p\tau}\dx\dy\\
&\leq 2^{n}\frac{v_{0}}{\tau}\left(\frac{R}{r}\right)^{2n}\mu_{\tau}\big(\mathcal{Q}_{r}\left(x_{0},y_{0}\right)\big),
\end{aligned}
\end{equation*}
where we have used the fact that
\begin{equation*}
    |x-y|\leq 2R\quad\text{for any }x\in Q_{r}^{1}\text{ and }y\in Q_{r}^{2}.
\end{equation*}
This implies \eqref{inclusion measure}.
\end{proof}
We now provide embeddings of the fractional Sobolev spaces.
The first embedding lemma is the Sobolev-Poincar\'e inequality (see \cite[Lemma 2.4]{Nv} and \cite[Theorem 6.7]{DPGV}).
\begin{lem}
\label{SPIL}
Let $u\in W^{s,p}(B_{r})$. Then for any 
\begin{equation*}
    \gamma\in \begin{cases}
        \left[1,\frac{np}{n-sp}\right]&\quad\text{if } n>ps,\\
        [1,\infty) &\quad\text{if } n\leq ps,
    \end{cases}
\end{equation*}there holds that
\begin{equation}
\label{spi}
    \left(\dashint_{B_{r}}\left|u-(u)_{B_{r}}\right|^{\gamma}\dx\right)^{\frac{1}{\gamma}}\leq cr^{s}\left(\dashint_{B_{r}}\int_{B_{r}}\frac{|u(x)-u(y)|^{p}}{|x-y|^{n+sp}}\dx\dy\right)^\frac{1}{p}
\end{equation}
for some constant $c=c(n,s,p,\gamma)$. In particular, \eqref{spi} also holds if we take $Q_{r}$ instead of $B_{r}$.
\end{lem}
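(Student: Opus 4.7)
The plan is to reduce \eqref{spi} to the unit ball by scaling, establish the baseline $L^p$-Poincar\'e estimate by a direct averaging argument, and then obtain higher integrability by invoking the fractional Sobolev embedding at the endpoint exponent followed by H\"older interpolation.

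First I would set $\tilde u(x) := u(rx)$ for $x \in B_1$. A routine change of variables shows that \eqref{spi} for $u$ on $B_r$ is equivalent to the same inequality with $r = 1$ for $\tilde u$, so it suffices to prove the inequality on the unit ball without any $r$-factor; the factor $r^s$ in \eqref{spi} is exactly what absorbs the discrepancy between the scalings of $r^{-n}\int_{B_r}|u-(u)_{B_r}|^\gamma$ and $r^{sp-n}\int_{B_r}\int_{B_r}|u(x)-u(y)|^p|x-y|^{-(n+sp)}\,dx\,dy$. For the baseline case $\gamma = p$, Jensen's inequality gives
\[
|u(x) - (u)_{B_1}|^p \;\le\; \dashint_{B_1} |u(x) - u(y)|^p \, dy,
\]
and since $|x-y| \le 2$ throughout $B_1 \times B_1$, we may multiply and divide by $|x-y|^{n+sp}$ to conclude, with a constant depending only on $n$, $s$, $p$.

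To reach larger exponents when $n > sp$, I would apply the fractional Sobolev embedding $W^{s,p}(B_1) \hookrightarrow L^{p^*}(B_1)$ with $p^{*} = \tfrac{np}{n-sp}$ to the function $u - (u)_{B_1}$. The $L^p$ piece of the ambient $W^{s,p}$-norm is exactly controlled by the $\gamma = p$ Poincar\'e estimate just obtained, yielding the endpoint case $\gamma = p^{*}$. For $\gamma \in (p, p^{*})$ the conclusion then follows by H\"older interpolation between the $L^p$ and $L^{p^{*}}$ norms of $u - (u)_{B_1}$, and for $\gamma \in [1, p)$ it follows from Jensen applied to $\gamma = p$. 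In the supercritical regime $n \le sp$, given any finite $\gamma$, I would select $\tilde s \in (0, s)$ with $n > \tilde s\, p$ and $\gamma \le \tfrac{np}{n - \tilde s\, p}$; since $B_1$ is bounded one has $[u]_{W^{\tilde s, p}(B_1)} \le c(n,s,\tilde s, p)\,[u]_{W^{s,p}(B_1)}$ by comparing the kernels $|x-y|^{-(n+\tilde s p)} \le 2^{(s-\tilde s)p}|x-y|^{-(n+sp)}$, so the previous case applied with $\tilde s$ closes the argument. The extension to cubes $Q_r$ is identical up to comparability of integrals between inscribed and circumscribed balls.

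The main obstacle is the $L^{p^{*}}$ fractional Sobolev embedding itself, whose proof on a bounded domain is nontrivial and typically proceeds through an extension operator combined with the Sobolev embedding on $\mathbb{R}^n$. Since this is the single piece of external machinery required, the natural route, consistent with the references already in the paper, is to import it as a black box from \cite[Theorem 6.7]{DPGV}, while the remaining pieces (scaling, Jensen, H\"older, and the elementary kernel comparison) are straightforward.
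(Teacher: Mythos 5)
The paper does not give its own proof of this lemma; it simply cites \cite[Lemma 2.4]{Nv} and \cite[Theorem 6.7]{DPGV}. Your argument supplies what the paper omits, and it is correct. A few remarks on the route you chose. The scaling reduction is clean: the $r^{-n}$ from the Lebesgue average on the left and the $r^{sp-n}$ from the Gagliardo quantity on the right differ by exactly $r^{sp}$, which the prefactor $r^s$ in \eqref{spi} accounts for, so one may indeed work on $B_1$. The baseline $\gamma=p$ step via Jensen and the trivial bound $1\le (2/|x-y|)^{n+sp}$ on $B_1\times B_1$ is exactly the computation the paper itself uses elsewhere (it is the same maneuver as in \eqref{tkestimate}). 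For $n>sp$ the endpoint $\gamma=p^*$ follows as you say from the embedding on the extension domain $B_1$ applied to $u-(u)_{B_1}$, with the $L^p$ part of the $W^{s,p}$-norm absorbed by the $\gamma=p$ case; for $\gamma\in[1,p^*)$ you do not even need interpolation, since H\"older on the finite-measure set $B_1$ lets you downcast the $L^{p^*}$ bound directly. In the supercritical regime $n\le sp$ your device of lowering to $\tilde s<\min\{s,n/p\}$ large enough that $\gamma\le np/(n-\tilde s p)$, combined with the kernel comparison $|x-y|^{-(n+\tilde s p)}\le 2^{(s-\tilde s)p}|x-y|^{-(n+sp)}$ valid on $B_1\times B_1$, is sound; the resulting constant depends on $\tilde s$, hence on $\gamma$, which the statement permits. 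The cube variant follows by the usual inscribed/circumscribed-ball comparison together with a one-line estimate on the change of average. In short, your proof is complete and takes the same standard route the cited references take; there is no gap.
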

We next introduce another type of the Poincar\'e inequality.
\begin{lem}
\label{SPIL2}
For any $u\in X_{0}^{s,p}\left(B_{\rho},B_{R}\right)$ with $0<\rho<R$, there is a positive constant $c=c\left(n,s,p,\rho,R\right)$ such that
\begin{equation*}
    \int_{B_{R}}|u(x)|^{p}\dx\leq c\int_{B_{R}}\int_{B_{R}}\frac{|u\left(x\right)-u\left(y\right)|^{p}}{|x-y|^{n+sp}}\dx\dy.
\end{equation*}
\end{lem}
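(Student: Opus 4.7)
The plan is to exploit the vanishing of $u$ on $B_R\setminus B_\rho$ in an elementary way, turning pointwise values of $u$ into differences $u(x)-u(y)$ at no cost.

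First I would observe that for any $x\in B_\rho$ and any $y\in B_R\setminus B_\rho$, we have $u(y)=0$ a.e., so $|u(x)|^p = |u(x)-u(y)|^p$. Since both points lie in $B_R$, we have the crude bound $|x-y|\leq 2R$, which gives
\begin{equation*}
|u(x)|^p \;=\; \frac{|u(x)-u(y)|^p}{|x-y|^{n+sp}}\cdot|x-y|^{n+sp} \;\leq\; (2R)^{n+sp}\,\frac{|u(x)-u(y)|^p}{|x-y|^{n+sp}}.
\end{equation*}

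Next I would average this inequality in $y$ over $B_R\setminus B_\rho$. Since the left-hand side is independent of $y$, integrating in $y$ over $B_R\setminus B_\rho$ yields
\begin{equation*}
|B_R\setminus B_\rho|\,|u(x)|^p \;\leq\; (2R)^{n+sp}\int_{B_R\setminus B_\rho}\frac{|u(x)-u(y)|^p}{|x-y|^{n+sp}}\,dy.
\end{equation*}
Now integrate in $x$ over $B_\rho$ and use that $u\equiv 0$ a.e.\ on $B_R\setminus B_\rho$ to rewrite $\int_{B_\rho}|u|^p\,dx=\int_{B_R}|u|^p\,dx$; simultaneously, the double integral on the right is bounded above by the full double integral over $B_R\times B_R$. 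This produces
\begin{equation*}
\int_{B_R}|u(x)|^p\,dx \;\leq\; \frac{(2R)^{n+sp}}{|B_R\setminus B_\rho|}\int_{B_R}\int_{B_R}\frac{|u(x)-u(y)|^p}{|x-y|^{n+sp}}\,dx\,dy,
\end{equation*}
which is the desired inequality with $c=(2R)^{n+sp}/|B_R\setminus B_\rho|$, a constant depending only on $n,s,p,\rho,R$ (note $|B_R\setminus B_\rho|>0$ since $\rho<R$, which is exactly where the dependence on $\rho,R$ enters and why both numbers must appear in the constant).

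There is no real obstacle here; the only subtle point is to make sure one exploits both the hypothesis $\rho<R$ (to guarantee $|B_R\setminus B_\rho|>0$) and the hypothesis $u=0$ a.e.\ on $\mathbb{R}^n\setminus B_\rho$ (so that the trivial identity $|u(x)|=|u(x)-u(y)|$ holds on the annular region used for averaging). Everything else is just the crude diameter bound $|x-y|\leq 2R$, and the fact that the cross-term double integral is dominated by the Gagliardo seminorm on $B_R\times B_R$.
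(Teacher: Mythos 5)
Your proof is correct and takes essentially the same approach as the paper's: exploit $u\equiv 0$ on the annulus to write $|u(x)|=|u(x)-u(y)|$, use the crude bound $|x-y|\leq 2R$, and average over the annular region before integrating in $x$. The only cosmetic difference is that you average over the full annulus $B_R\setminus B_\rho$ rather than an intermediate one $B_R\setminus B_{(R+\rho)/2}$ as in the paper, which changes nothing.
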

\begin{proof}
Using the fact that $u(x)\equiv 0$ on $B_{R}\setminus B_{\rho}$ and $1\leq\frac{2R}{|x-y|}$ for any $x,y\in B_{R}$, we have
\begin{equation*}
\begin{aligned}
    \int_{B_{R}}|u(x)|^{p}\dx&=\int_{B_{\rho}}|u(x)|^{p}\dx=\dashint_{B_{R}\setminus B_{\frac{R+\rho}{2}}}\int_{B_{\rho}}|u(x)-u(y)|^{p}\dx\dy\\
    &\leq cR^{n+sp}\dashint_{B_{\frac{R+\rho}{2}}\setminus B_{\rho}}\int_{B_{\rho}}\frac{|u(x)-u(y)|^{p}}{|x-y|^{n+sp}}\dx\dy\leq c\int_{B_{R}}\int_{B_{R}}\frac{|u(x)-u(y)|^{p}}{|x-y|^{n+sp}}\dx\dy.
\end{aligned}
\end{equation*}
This completes the proof.
\end{proof}

We now give an embedding lemma between fractional Sobolev spaces.
\begin{lem}(See \cite[Proposition 2.5]{Ni})
\label{embedding sob}
Let $u\in W^{s_{2},q}(B_{r})$. For $s_{1}\in(0,s_{2})$ and $p\in(1,q]$, we have that
\begin{equation*}
    [u]_{W^{s_{1},p}(B_{r})}\leq cr^{\left(s_{2}-\frac{n}{q}\right)-\left(s_{1}-\frac{n}{p}\right)}[u]_{W^{s_{2},q}(B_{r})}
\end{equation*}
for some constant $c=c(n,s_{1},s_{2},p,q)$.
\end{lem}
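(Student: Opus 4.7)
The plan is a direct application of Hölder's inequality to the double integral defining the left-hand seminorm. I would split the weight $|x-y|^{-(n+s_1 p)}$ against itself so as to isolate a factor of $\bigl(|u(x)-u(y)|^{q}/|x-y|^{n+s_2 q}\bigr)^{p/q}$, and then integrate the residual pure power of $|x-y|$ against the measure $dx\,dy$ on $B_r\times B_r$ using a simple scaling.

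Concretely, I would write
\begin{equation*}
\frac{|u(x)-u(y)|^{p}}{|x-y|^{n+s_{1}p}}
= \left(\frac{|u(x)-u(y)|^{q}}{|x-y|^{n+s_{2}q}}\right)^{p/q}\,|x-y|^{\gamma},
\qquad \gamma := \frac{(n+s_{2}q)p}{q}-(n+s_{1}p).
\end{equation*}
If $p<q$, Hölder with exponents $q/p$ and $q/(q-p)$ yields
\begin{equation*}
[u]_{W^{s_{1},p}(B_{r})}^{p}
\le [u]_{W^{s_{2},q}(B_{r})}^{p}\left(\int_{B_{r}}\int_{B_{r}}|x-y|^{\gamma q/(q-p)}\,dx\,dy\right)^{(q-p)/q}.
\end{equation*}
A direct computation gives $\gamma q/(q-p)=-n+(s_{2}-s_{1})\tfrac{pq}{q-p}>-n$, so the remainder integral is finite; a change of variables $x=rx'$, $y=ry'$ evaluates it to $cr^{\,2n+\gamma q/(q-p)}=cr^{\,n+(s_{2}-s_{1})pq/(q-p)}$, with $c=c(n,s_1,s_2,p,q)$. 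If $p=q$, one avoids Hölder and uses the pointwise bound $|x-y|^{(s_{2}-s_{1})p}\le (2r)^{(s_{2}-s_{1})p}$ directly inside the integral.

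Finally I would collect the exponents. Raising the factor above to the power $(q-p)/q$ and then taking the $p$-th root produces an exponent
\begin{equation*}
\frac{n(q-p)}{pq}+(s_{2}-s_{1})=\left(\frac{n}{p}-\frac{n}{q}\right)+(s_{2}-s_{1})=\left(s_{2}-\frac{n}{q}\right)-\left(s_{1}-\frac{n}{p}\right),
\end{equation*}
which matches exactly the scaling stated in the lemma. The case $p=q$ collapses $n/p-n/q$ to zero and recovers $r^{s_{2}-s_{1}}$, as expected.

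There is no real obstacle here: the analytical content is the Hölder inequality, and everything else is bookkeeping of scaling exponents. The only minor care needed is to confirm $\gamma q/(q-p)>-n$ so that the auxiliary integral converges, which is precisely the condition $s_{1}<s_{2}$.
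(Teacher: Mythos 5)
Your proof is correct. The paper itself does not supply a proof of this lemma; it simply cites Nowak's Proposition 2.5, and the argument you give (rewrite the integrand as $\bigl(|u(x)-u(y)|^{q}/|x-y|^{n+s_{2}q}\bigr)^{p/q}|x-y|^{\gamma}$, apply H\"older with exponents $q/p$ and $q/(q-p)$, and evaluate the residual kernel $|x-y|^{-n+(s_{2}-s_{1})pq/(q-p)}$ by scaling $x=rx'$, $y=ry'$) is the standard route and exactly what that reference does; the exponent bookkeeping checks out, including the collapse to $r^{s_{2}-s_{1}}$ when $p=q$.
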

We introduce an interpolation lemma which will be used later in Lemma \ref{diagonal induction}.
\begin{lem}(Interpolation) 
\label{InterpoL}
Let $u\in W^{s_{1},p}(B_{r})\cap W^{s_{2},p}(B_{r})$ for $0<s_{1}<s_{2}<1$. Then we have that for any $t\in[0,1]$,
\begin{equation}
\label{interpesti}
    [u]_{W^{ts_{1}+(1-t)s_{2},p}(B_{r})}\leq [u]_{W^{s_{1},p}(B_{r})}^{t}[u]_{W^{s_{2},p}(B_{r})}^{1-t}.
\end{equation}
\end{lem}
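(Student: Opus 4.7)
The plan is to recognize this as a direct consequence of Hölder's inequality applied to the Gagliardo seminorm after an appropriate multiplicative decomposition of the integrand. Write $s = t s_1 + (1-t) s_2$ and note the endpoint cases $t = 0$ and $t = 1$ are trivial (the inequality becomes equality), so we may assume $t \in (0,1)$.

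First I would check the algebraic identity
\[
n + sp = t(n + s_1 p) + (1-t)(n + s_2 p),
\]
which lets us factor
\[
\frac{1}{|x-y|^{n+sp}} = \frac{1}{|x-y|^{t(n+s_1 p)}} \cdot \frac{1}{|x-y|^{(1-t)(n+s_2 p)}}.
\]
Splitting $|u(x)-u(y)|^p = |u(x)-u(y)|^{tp} \cdot |u(x)-u(y)|^{(1-t)p}$ in the same way, the integrand defining $[u]_{W^{s,p}(B_r)}^p$ can be written as
\[
\left(\frac{|u(x)-u(y)|^p}{|x-y|^{n+s_1 p}}\right)^{t} \left(\frac{|u(x)-u(y)|^p}{|x-y|^{n+s_2 p}}\right)^{1-t}.
\]

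Next I would apply Hölder's inequality with conjugate exponents $1/t$ and $1/(1-t)$ on $B_r \times B_r$ to obtain
\[
[u]_{W^{s,p}(B_r)}^p \leq [u]_{W^{s_1,p}(B_r)}^{pt} \cdot [u]_{W^{s_2,p}(B_r)}^{p(1-t)},
\]
and conclude by taking $p$-th roots. There is no substantive obstacle here: the proof is just bookkeeping of exponents followed by a single application of Hölder's inequality. The only minor subtlety is ensuring that the factorization is legitimate (it is, because the quantities involved are nonnegative and the exponent identity is exact), and implicitly restricting to $t \in (0,1)$ to have genuine conjugate exponents; the boundary values $t \in \{0, 1\}$ are handled by inspection since \eqref{interpesti} reduces to a tautology in those cases.
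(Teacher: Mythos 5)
Your proof is correct and follows essentially the same route as the paper: factor the integrand as a product of $t$-th and $(1-t)$-th powers of the two Gagliardo integrands, apply H\"older's inequality with conjugate exponents $1/t$ and $1/(1-t)$, and take $p$-th roots, with $t\in\{0,1\}$ handled by inspection. (Incidentally, your exponent bookkeeping $|x-y|^{t(n+s_1 p)}$ is actually cleaner than the paper's display, which drops the factor of $p$ in the exponent of $|x-y|$, an evident typo.)
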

\begin{proof}
If $t=0$ or $t=1$, then it follows directly. We may assume that $t\in(0,1)$.
Using H\"older's inequality, we have 
\begin{equation*}
    [u]_{W^{ts_{1}+(1-t)s_{2},p}(B_{r})}^{p}=\int_{B_{r}}\int_{B_{r}}\frac{|u(x)-u(y)|^{tp}}{|x-y|^{t(n+s_{1})}}\frac{|u(x)-u(y)|^{(1-t)p}}{|x-y|^{(1-t)(n+s_{2})}}\leq  [u]_{W^{s_{1},p}(B_{r})}^{tp}[u]_{W^{s_{2},p}(B_{r})}^{(1-t)p}.
\end{equation*}
By taking the power of $\frac{1}{p}$ for both sides of the above inequality, \eqref{interpesti} follows.
\end{proof}

We now prove two useful tail estimates which will be frequently employed later. 
\begin{lem}
\label{tail estimate for local notioin}
Let $u\in L^{p-1}_{sp}(\mathbb{R}^{n})$ with 
$D^{\tau}d_{\alpha+t}u\in L^{p}_{\mathrm{loc}}\left(\Omega\times\Omega\,;\mu_{\tau}\right)$ for some $t\in[0,s]$ and $\alpha\in[0,1)$, and  $B_{\rho}(y_{0})\Subset \Omega$.

\noindent
(1). For any nonnegative integer $i$ with $B_{2^{i}\rho}(y_{0})\Subset\Omega$, there is a constant $c=c(n,s,p)$ such that
\begin{equation}
\label{tail estimate of u local}
\begin{aligned}
\Tailp\left(u-(u)_{B_{\rho}(y_{0})};B_{\rho}(y_{0})\right)&\leq  c\rho^{\alpha+\tau+t}\sum_{j=1}^{i}2^{-j\left(\frac{sp}{p-1}-\left(\alpha+\tau+t\right)\right)}\left(\frac{1}{\tau}\dashint_{\mathcal{B}_{2^{j}\rho}(y_{0})}|D^{\tau}d_{\alpha+t}u|^{p}\dmut\right)^{\frac{1}{p}}\\
&\quad+c2^{-i\frac{sp}{p-1}}\Tailp\left(u-(u)_{B_{2^{i}\rho}(y_{0})};B_{2^{i}\rho}(y_{0})\right).
\end{aligned}
\end{equation}
(2). Let $B_{R}(x_{0})\Subset \Omega$ with $x_{0}\in B_{\rho}(y_{0})$ and $R>\rho$ such that $B_{\rho}(y_{0})\Subset B_{R}(x_{0})$.
Then there is a constant $c=c(n,s,p)$ such that
\begin{equation}
\label{tail estimate of u local2}
\begin{aligned}
\Tailp\left(u-(u)_{B_{\rho}(y_{0})};B_{\rho}(y_{0})\right)&\leq c\left(\frac{R}{\rho}\right)^{\frac{n}{p-1}}R^{\alpha+t+\tau}\left(\frac{1}{\tau}\dashint_{\mathcal{B}_{R}(x_{0})}\left|D^{\tau}d_{\alpha+t}u\right|^{p}\dmut\right)^{\frac{1}{p}}\\
&\quad+c\left(\frac{R^{n+sp}}{(R-\rho)^{n+sp}}\frac{\rho^{sp}}{R^{sp}}\right)^{\frac{1}{p-1}}\Tailp\left(u-(u)_{B_{R}(x_{0})};B_{R}(x_{0})\right).
\end{aligned}
\end{equation}
\end{lem}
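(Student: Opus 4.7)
\emph{For part (1)}, the plan is to prove a one-step recursion
\begin{equation*}
\Tailp(u-(u)_{B_\rho(y_0)};B_\rho(y_0)) \leq 2^{-\frac{sp}{p-1}}\Tailp(u-(u)_{B_{2\rho}(y_0)};B_{2\rho}(y_0)) + c\rho^{\alpha+\tau+t}\left(\frac{1}{\tau}\dashint_{\mathcal{B}_{2\rho}(y_0)}|D^\tau d_{\alpha+t}u|^p\dmut\right)^{\frac{1}{p}}
\end{equation*}
and iterate it $i$ times. To establish the recursion cleanly, apply Minkowski's inequality (with exponent $p-1\geq 1$) on the measure $d\nu=\rho^{sp}|y-y_0|^{-n-sp}\,dy$ on $\mathbb{R}^n\setminus B_\rho(y_0)$ to split $u(y)-(u)_{B_\rho(y_0)} = (u(y)-(u)_{B_{2\rho}(y_0)}) + ((u)_{B_{2\rho}(y_0)}-(u)_{B_\rho(y_0)})$ at the level of the tail norm. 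Since $(u)_{B_{2\rho}(y_0)}-(u)_{B_\rho(y_0)}$ is constant in $y$ and $\nu(\mathbb{R}^n\setminus B_\rho(y_0))$ is a dimensional constant, the second summand reduces to $c\,|(u)_{B_{2\rho}(y_0)}-(u)_{B_\rho(y_0)}|$. For the first summand $\Tailp(u-(u)_{B_{2\rho}(y_0)};B_\rho(y_0))$, split the integration domain into $B_{2\rho}(y_0)\setminus B_\rho(y_0)$ and $\mathbb{R}^n\setminus B_{2\rho}(y_0)$: the exterior produces exactly $(\rho/(2\rho))^{sp}\Tailp(u-(u)_{B_{2\rho}(y_0)};B_{2\rho}(y_0))^{p-1}=2^{-sp}\Tailp(\cdot)^{p-1}$, while the interior is bounded by $c\dashint_{B_{2\rho}(y_0)}|u-(u)_{B_{2\rho}(y_0)}|^{p-1}\,dy$ using $|y-y_0|\geq\rho$.

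Both the interior oscillation and the average difference $|(u)_{B_{2\rho}(y_0)}-(u)_{B_\rho(y_0)}|$ (via Jensen) are then controlled by the Sobolev--Poincar\'e inequality (Lemma \ref{SPIL}) on $B_{2\rho}(y_0)$ with differentiability exponent $\alpha+t$; the resulting $W^{\alpha+t,p}$ seminorm is rewritten as $\int_{\mathcal{B}_{2\rho}(y_0)}|D^\tau d_{\alpha+t}u|^p\dmut$ via $|x-y|^{n+(\alpha+t)p}=|x-y|^{p(\alpha+t+\tau)}|x-y|^{n-p\tau}$, and the measure identity $\mu_\tau(\mathcal{B}_{2\rho}(y_0))=v_0(2\rho)^{n+p\tau}/\tau$ yields the $1/\tau$ factor and the scaling $\rho^{\alpha+t+\tau}$. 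Taking $(p-1)$-th roots by subadditivity $(a+b)^{1/(p-1)}\leq a^{1/(p-1)}+b^{1/(p-1)}$ (valid for $p\geq 2$) produces the recursion with coefficient exactly $2^{-sp/(p-1)}$. The main obstacle is avoiding any extra constant in front of $\Tailp(\cdot;B_{2\rho}(y_0))$, since such a constant would compound geometrically under iteration and destroy the decay; using Minkowski's inequality rather than the crude $(a+b)^{p-1}\leq 2^{p-2}(a^{p-1}+b^{p-1})$ is precisely what avoids this. Iterating the recursion $i$ times produces \eqref{tail estimate of u local}.

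\emph{For part (2)}, the approach is direct rather than iterative, comparing to $B_R(x_0)$ in one step. Decompose $|u(y)-(u)_{B_\rho(y_0)}|^{p-1}\leq c|u(y)-(u)_{B_R(x_0)}|^{p-1}+c|(u)_{B_R(x_0)}-(u)_{B_\rho(y_0)}|^{p-1}$ and treat the pieces separately. For the first, split the integral over $\mathbb{R}^n\setminus B_\rho(y_0)$ into $B_R(x_0)\setminus B_\rho(y_0)$ and $\mathbb{R}^n\setminus B_R(x_0)$: on the exterior, $x_0\in B_\rho(y_0)$ and $\rho<R$ give $|y-y_0|\geq|y-x_0|(R-\rho)/R$ for $y\notin B_R(x_0)$, yielding the factor $(R/(R-\rho))^{n+sp}(\rho/R)^{sp}$ in front of $\Tailp(u-(u)_{B_R(x_0)};B_R(x_0))^{p-1}$; on the interior, $|y-y_0|\geq\rho$ reduces the integral to $c(R/\rho)^n\dashint_{B_R(x_0)}|u-(u)_{B_R(x_0)}|^{p-1}\,dy$, which Sobolev--Poincar\'e combined with Jensen $L^{p-1}\hookrightarrow L^p$ controls by the expected $(R/\rho)^n R^{(\alpha+t+\tau)(p-1)}(\frac{1}{\tau}\dashint_{\mathcal{B}_R(x_0)}|D^\tau d_{\alpha+t}u|^p\dmut)^{(p-1)/p}$.

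For the second piece, $\rho^{sp}\int_{\mathbb{R}^n\setminus B_\rho(y_0)}|y-y_0|^{-n-sp}\,dy$ is a dimensional constant, so it remains to bound $|(u)_{B_R(x_0)}-(u)_{B_\rho(y_0)}|^{p-1}$. The main technical subtlety is to apply Jensen's inequality to the convex function $|\cdot|^{p-1}$ directly \emph{at the $(p-1)$-th power level}:
\begin{equation*}
|(u)_{B_R(x_0)}-(u)_{B_\rho(y_0)}|^{p-1}=\Big|\dashint_{B_\rho(y_0)}(u-(u)_{B_R(x_0)})\,dy\Big|^{p-1}\leq\dashint_{B_\rho(y_0)}|u-(u)_{B_R(x_0)}|^{p-1}\,dy\leq\Big(\tfrac{R}{\rho}\Big)^n\dashint_{B_R(x_0)}|u-(u)_{B_R(x_0)}|^{p-1}\,dy,
\end{equation*}
rather than first estimating $|(u)_{B_R(x_0)}-(u)_{B_\rho(y_0)}|$ by Poincar\'e and then raising to the $(p-1)$-th power, which would yield the factor $(R/\rho)^{n(p-1)}$ and the wrong final exponent $(R/\rho)^n$. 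Keeping $(R/\rho)^n$ at the $(p-1)$-th power level means that after the final $(p-1)$-th root via subadditivity it becomes $(R/\rho)^{n/(p-1)}$, precisely matching the first term of \eqref{tail estimate of u local2}. Sobolev--Poincar\'e on $B_R(x_0)$ and the $\mu_\tau$-rewriting then conclude the proof.
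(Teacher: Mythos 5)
Your proposal is mathematically sound and does arrive at \eqref{tail estimate of u local} and \eqref{tail estimate of u local2}, and part (1) is organized in a genuinely different (and arguably cleaner) way than the paper. The paper expands the full tail directly into the annular pieces $T_k$ over $B_{2^{k+1}\rho}\setminus B_{2^k\rho}$ plus a remainder $T$, controls each $T_k$ via a telescoping chain of average differences $|(u)_{B_{2^{j+1}\rho}}-(u)_{B_{2^j\rho}}|$, and then re-sums the resulting double series with Fubini. Your one-step recursion from $B_\rho$ to $B_{2\rho}$, iterated $i$ times, unrolls to the same estimate but avoids the double sum entirely; the crucial point you identify, that the recursion coefficient must be exactly $2^{-sp/(p-1)}$ so that the errors do not compound, is correct, and your use of Minkowski's inequality (available since $p-1\geq 1$) is precisely what the paper uses at the analogous step (the splitting of $T_k^{1/(p-1)}$ and of $T^{1/(p-1)}$). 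For part (2), your route is essentially identical to the paper's: you decompose against $(u)_{B_R(x_0)}$ and split the domain into $B_R(x_0)\setminus B_\rho(y_0)$ and its complement, using $|y-y_0|\geq\frac{R-\rho}{R}|y-x_0|$ on the exterior.

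One inaccuracy worth noting in your discussion of part (2): your claim that estimating $|(u)_{B_R(x_0)}-(u)_{B_\rho(y_0)}|$ via Poincar\'e first and then raising to the $(p-1)$-th power ``would yield the factor $(R/\rho)^{n(p-1)}$'' is not correct. The Poincar\'e-first route (which is in fact what the paper does) gives $|(u)_{B_R(x_0)}-(u)_{B_\rho(y_0)}|\leq c(R/\rho)^{n/p}\bigl(\dashint_{B_R(x_0)}|u-(u)_{B_R(x_0)}|^p\dy\bigr)^{1/p}$, and raising this to the $(p-1)$-th power and then taking the $(p-1)$-th root returns the factor $(R/\rho)^{n/p}$, which is \emph{smaller} than your $(R/\rho)^{n/(p-1)}$ since $p\geq 2$. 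So both routes yield the stated estimate and there is no exponent mismatch; the ``subtlety'' you flag is not actually load-bearing. This misstatement does not affect the correctness of your proof, which delivers the (slightly weaker but sufficient) factor $(R/\rho)^{n/(p-1)}$ appearing in \eqref{tail estimate of u local2}.
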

\begin{proof}
We are first going to show \eqref{tail estimate of u local}. Since the above three terms in \eqref{tail estimate of u local} are translation invariant, we may assume that $y_{0}$ is the origin.
 Note that
\begin{equation*}
\begin{aligned}
    \Tailp\left(u-(u)_{B_{\rho}};B_{\rho}\right)^{p-1}&=\rho^{sp}\int_{\mathbb{R}^{n}\setminus B_{\rho}}\frac{|u-(u)_{B_{\rho}}|^{p-1}}{|y|^{n+sp}}\dy\\
    &=\sum_{k=0}^{i-1}\rho^{sp}\int_{B_{2^{k+1}\rho}\setminus B_{2^{k}\rho}}\frac{|u-(u)_{B_{\rho}}|^{p-1}}{|y|^{n+sp}}\dy\\
    &\quad+\rho^{sp}\int_{\mathbb{R}^{n}\setminus B_{2^{i}\rho}}\frac{|u-(u)_{B_{\rho}}|^{p-1}}{|y|^{n+sp}}\dy=\sum_{k=0}^{i-1}T_{k}+T.
\end{aligned}
\end{equation*}
Using Minkowski's inequality and H\"older's inequality, we first estimate $T_{k}^{\frac{1}{p-1}}$ as
\begin{equation*}
\begin{aligned}
    T_{k}^{\frac{1}{p-1}}&\leq c\left(2^{-ksp}\dashint_{B_{2^{k+1}\rho}}|u-(u)_{B_{\rho}}|^{p-1}\dy\right)^\frac{1}{p-1}\\
    &\leq c2^{-k\frac{sp}{p-1}}\left[\left(\dashint_{B_{2^{k+1}\rho}}|u-(u)_{B_{2^{k+1}\rho}}|^{p-1}\dy\right)^\frac{1}{p-1}+\sum_{j=0}^{k}\left|(u)_{B_{2^{j+1}\rho}}-(u)_{B_{2^{j}\rho}}\right|\right]\\
    &\leq c2^{-k\frac{sp}{p-1}}\sum_{j=1}^{k+1}\left(\dashint_{B_{2^{j}\rho}}|u-(u)_{B_{2^{j}\rho}}|^{p}\dy\right)^\frac{1}{p}.
\end{aligned}
\end{equation*}
For the last inequality, we have used the fact that
\begin{equation}
\label{meansize}
    \left|(u)_{B_{2^{j+1}\rho}}-(u)_{B_{2^{j}\rho}}\right|\leq c\left(\dashint_{B_{2^{j+1}\rho}}|u-(u)_{B_{2^{j+1}\rho}}|^{p}\dy\right)^\frac{1}{p}.
\end{equation}
Then using Jensen's inequality, the fact that
\begin{equation*}
    1\leq\frac{2^{j+1}\rho}{|x-y|}\quad\text{for any }x,y\in B_{2^{j}\rho}
\end{equation*} and \eqref{size of b mut}, we have
\begin{equation}
\label{tkestimate}
\begin{aligned}
    \left(\dashint_{B_{2^{j}\rho}}|u-(u)_{B_{2^{j}\rho}}|^{p}\dy\right)^\frac{1}{p}&\leq c\left(\dashint_{B_{2^{j}}}\dashint_{B_{2^{j}\rho}}|u(x)-u(y)|^{p}\dy\right)^\frac{1}{p}\\
    &\leq c(2^{j}\rho)^{\alpha+t}\left(\dashint_{B_{2^{j}}}\int_{B_{2^{j}\rho}}\frac{|u(x)-u(y)|^{p}}{|x-y|^{n+(\alpha+t)p}}\dy\right)^\frac{1}{p}\\
    &\leq c\frac{\left(2^{j}\rho\right)^{\alpha+\tau+t}}{\tau^{\frac{1}{p}}}\left(\dashint_{\mathcal{B}_{2^{j}\rho}}|D^{\alpha+\tau}d_{t}u|^{p}\dmut\right)^\frac{1}{p}.
\end{aligned}
\end{equation}
Since $p-1\geq 1$, we observe that
\begin{equation*}
    \left(\sum_{k=0}^{i-1}\left(T_{k}^{\frac{1}{p-1}}\right)^{p-1}\right)^{\frac{1}{p-1}}\leq \sum_{k=0}^{i-1}T_{k}^{\frac{1}{p-1}},
\end{equation*}
which implies that
\begin{equation*}
\begin{aligned}
    \left(\sum_{k=0}^{i-1}T_{k}\right)^{\frac{1}{p-1}}&\leq c\sum_{k=0}^{i-1}2^{-k\frac{sp}{p-1}}\sum_{j=1}^{k+1}\frac{\left(2^{j}\rho\right)^{\alpha+\tau+t}}{\tau^{\frac{1}{p}}}\left(\dashint_{\mathcal{B}_{2^{j}\rho}}|D^{\alpha+\tau}d_{t}u|^{p}\dmut\right)^\frac{1}{p}\\
    &\leq c\sum_{j=1}^{i}\sum_{k=j-1}^{i-1}2^{-k\frac{sp}{p-1}}\frac{\left(2^{j}\rho\right)^{\alpha+\tau+t}}{\tau^{\frac{1}{p}}}\left(\dashint_{\mathcal{B}_{2^{j}\rho}}|D^{\alpha+\tau}d_{t}u|^{p}\dmut\right)^\frac{1}{p}\\
    &\leq c\sum_{j=1}^{i}2^{-j\frac{sp}{p-1}}\frac{\left(2^{j}\rho\right)^{\alpha+\tau+t}}{\tau^{\frac{1}{p}}}\left(\dashint_{\mathcal{B}_{2^{j}\rho}}|D^{\alpha+\tau}d_{t}u|^{p}\dmut\right)^\frac{1}{p},
\end{aligned}
\end{equation*}
where we have used \eqref{tkestimate} for the first inequality, Fubini's theorem for the second inequality and then the fact that 
\begin{equation*}
    \sum_{k=j-1}^{i}2^{-k\frac{sp}{p-1}}\leq c(s,p)2^{-j\frac{sp}{p-1}}
\end{equation*}
for the last inequality. We now estimate $T^{\frac{1}{p-1}}$ as
\begin{equation*}
\begin{aligned}
    T^{\frac{1}{p-1}}&\leq \left(\rho^{sp}\int_{\mathbb{R}^{n}\setminus B_{2^{i}\rho}}\frac{|u-(u)_{B_{2^{i}\rho}}|^{p-1}}{|y|^{n+sp}}\dy\right)^{\frac{1}{p-1}}\\
    &\quad+\sum_{j=0}^{i-1}\left(\rho^{sp}\int_{\mathbb{R}^{n}\setminus B_{2^{i}\rho}}\frac{|(u)_{B_{2^{j+1}\rho}}-(u)_{B_{2^{j}\rho}}|^{p-1}}{|y|^{n+sp}}\dy\right)^{\frac{1}{p-1}},
\end{aligned}
\end{equation*}
where we have used Minkowski's inequality. We further estimate the above second term in the right-hand side as
\begin{equation*}
\begin{aligned}
    &\sum_{j=0}^{i-1}\left(\rho^{sp}\int_{\mathbb{R}^{n}\setminus B_{2^{i}\rho}}\frac{|(u)_{B_{2^{j+1}\rho}}-(u)_{B_{2^{j}\rho}}|^{p-1}}{|y|^{n+sp}}\dy\right)^{\frac{1}{p-1}}\\
    &\leq c\sum_{j=0}^{i-1}2^{\frac{-isp}{p-1}}|(u)_{B_{2^{j+1}\rho}}-(u)_{B_{2^{j}\rho}}|\\
    &\leq c\sum_{j=1}^{i}2^{\frac{-isp}{p-1}}\frac{\left(2^{j}\rho\right)^{\alpha+\tau+t}}{\tau^{\frac{1}{p}}}\left(\dashint_{\mathcal{B}_{2^{j}\rho}}|D^{\alpha+\tau}d_{t}u|^{p}\dmut\right)^\frac{1}{p},
\end{aligned}
\end{equation*}
where we have used \eqref{scalingpropertytail} for the first inequality, and \eqref{meansize}, \eqref{tkestimate} for the last inequality.
Combine all the  estimates for  $\left(\sum\limits_{k=0}^{i-1}T_{k}\right)^{\frac{1}{p-1}}$ and $T^{\frac{1}{p-1}}$ to see that
\begin{equation*}
\begin{aligned}
    \Tailp\left(u-(u)_{B_{\rho}};B_{\rho}\right)&\leq \left(\sum_{k=0}^{i-1}T_{k}+T\right)^{\frac{1}{p-1}}\\
    &\leq \left(\sum_{k=0}^{i-1}T_{k}\right)^{\frac{1}{p-1}}+T^{\frac{1}{p-1}}\\
    &\leq c\frac{\rho^{\alpha+\tau+t}}{\tau^{\frac{1}{p}}}\sum_{j=1}^{i}2^{-j\left(\frac{sp}{p-1}-\left(\alpha+\tau+t\right)\right)}\left(\dashint_{\mathcal{B}_{2^{j}\rho}}|D^{\alpha+\tau}d_{t}u|^{p}\dmut\right)^{\frac{1}{p}}\\
&\quad+c2^{-i\frac{sp}{p-1}}\Tailp\left(u-(u)_{B_{2^{i}\rho}};B_{2^{i}\rho}\right).
\end{aligned}
\end{equation*}
We are now in the position to prove \eqref{tail estimate of u local2}. Note that
\begin{equation*}
\begin{aligned}
    \Tailp\left(u-(u)_{B_{\rho}(y_{0})};B_{\rho}(y_{0})\right)&\leq \Tailp\left(u-(u)_{B_{R}(x_{0})};B_{\rho}(y_{0})\right)\\
    &\quad+\Tailp\left((u)_{B_{R}(x_{0})}-(u)_{B_{\rho}(y_{0})};B_{\rho}(y_{0})\right)\eqqcolon I_{1}+I_{2}.
\end{aligned}
\end{equation*}
We now estimate $I_{1}$ and $I_{2}$ as
\begin{align*}
    I_{1}&\leq c\left(\rho^{sp}\int_{B_{R}(x_{0})\setminus B_{\rho}(y_{0}) }\frac{|u-(u)_{B_{R}(x_{0})}|^{p-1}}{|y-y_{0}|^{n+sp}}\dy\right)^{\frac{1}{p-1}}+c\left(\rho^{sp}\int_{\mathbb{R}^{n}\setminus B_{R}(x_{0})}\frac{|u-(u)_{B_{R}(x_{0})}|^{p-1}}{|y-y_{0}|^{n+sp}}\dy\right)^{\frac{1}{p-1}}\\
    &\leq c\left(\frac{R}{\rho}\right)^{\frac{n}{p-1}}\left(\dashint_{B_{R}(x_{0})}|u-(u)_{B_{R}(x_{0})}|^{p}\dy\right)^{\frac{1}{p}}\\
    &\quad+c\left(\frac{R}{R-\rho}\right)^{\frac{n+sp}{p-1}}\left(\frac{\rho}{R}\right)^{\frac{sp}{p-1}}\left(R^{sp}\int_{\mathbb{R}^{n}\setminus B_{R}(x_{0})}\frac{|u-(u)_{B_{R}(x_{0})}|^{p-1}}{|y-x_{0}|^{n+sp}}\dy\right)^{\frac{1}{p-1}}
\end{align*}
and
\begin{align*}
    I_{2}\leq c\left|(u)_{B_{R}(x_{0})}-(u)_{B_{\rho}(y_{0})}\right|\leq c\left(\frac{R}{\rho}\right)^{\frac{n}{p}}\left(\dashint_{B_{R}(x_{0})}|u-(u)_{B_{R}(x_{0})}|^{p}\dy\right)^{\frac{1}{p}},
\end{align*}
where we have used H\"older's inequality and the fact that
\begin{equation*}
    |y-y_{0}|\geq |y-x_{0}|-|x_{0}-y_{0}|\geq \frac{R-\rho}{R}|y-x_{0}| \quad\text{any }y\in \mathbb{R}^{n}\setminus B_{R}(x_{0}).
\end{equation*}  
As in \eqref{tkestimate}, we have
\begin{equation}
\label{sobolevtau0}
\begin{aligned}
\left(\dashint_{B_{R}(x_{0})}|u-(u)_{B_{R}(x_{0})}|^{p}\dy\right)^{\frac{1}{p}}\leq cR^{\alpha+\tau+t}\left(\frac{1}{\tau}\dashint_{\mathcal{B}_{R}(x_{0})}|D^{\tau}d_{t+\alpha}u|^{p}\dmut\right)^{\frac{1}{p}}
\end{aligned}
\end{equation}
for some constant $c=c(n,s,p)$.
Combine the estimates $I_{1}$, $I_{2}$ and \eqref{sobolevtau0} to obtain \eqref{tail estimate of u local2}.
\end{proof}
We now provide the existence result of the corresponding boundary value problem to \eqref{main eq1} and the standard energy estimate.
\begin{lem} 
\label{existence}
Let $\Omega'$ be an open and bounded set in $\mathbb{R}^{n}$ such that $\Omega\Subset\Omega'$, and let $g\in W^{s,p}(\Omega')\cap L^{p-1}_{sp}(\mathbb{R}^{n})$. If $f\in L^{p-1}_{s}(\mathbb{R}^{n})$ with $d_{0}f\in L^{p}\left(\Omega'\times\Omega';\frac{\dx\dy}{|x-y|^{n}}\right)$, then there is a unique weak solution $u\in W^{s,p}(\Omega)\cap L^{p-1}_{sp}(\mathbb{R}^{n})$ to
\begin{equation}
\label{dirichlet}
\begin{cases}
    (-\Delta_{p})_{A}^{s}u=(-\Delta_{p})^{\frac{s}{p}}f&\quad\text{in }\Omega\\
    u=g&\quad\text{on }\mathbb{R}^{n}\setminus \Omega
\end{cases}
\end{equation}
with the estimate
\begin{equation}
\label{estimateofp0}
    \int_{\Omega}\int_{\Omega}|d_{s}u|^{p}\frac{\dx\dy}{|x-y|^{n}}\leq c\left(\int_{\Omega'}\int_{\Omega'}|d_{0}f|^{p}\frac{\dx\dy}{|x-y|^{n}}+\Tails(f-(f)_{\Omega'};\Omega')^{p}+\|g\|^{p}_{W^{s,p}(\Omega')}\right)
\end{equation}
for some constant $c=c(n,s,p,\Omega,\Omega')$.
\end{lem}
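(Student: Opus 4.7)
The plan is to apply the direct method of the calculus of variations to the functional
\[
\mathcal{F}(v) = \frac{1}{p}\iint_{\mathbb{R}^{n}\times\mathbb{R}^n} A(x,y)|d_s v|^p \frac{\dx\dy}{|x-y|^n} - \iint_{\mathbb{R}^{n}\times\mathbb{R}^n} |d_0 f|^{p-2}d_0 f \cdot d_s v\, \frac{\dx\dy}{|x-y|^n}
\]
on the admissible class $\mathcal{K}=X_g^{s,p}(\Omega,\Omega')\cap\{v\in L^{p-1}_{sp}(\mathbb{R}^n):v=g\text{ on }\mathbb{R}^n\setminus\Omega\}$. Writing $v=g+w$ with $w\in X_0^{s,p}(\Omega,\Omega')$, the leading integral decomposes into a local piece on $\Omega'\times\Omega'$, finite by the $W^{s,p}(\Omega')$-regularity of $v$, and a tail piece on its complement (where $v-g$ vanishes), which is finite because $g\in L^{p-1}_{sp}(\mathbb{R}^n)$ gives $\Tailp(g;\Omega')<\infty$. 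The source integral is treated analogously using $d_0 f\in L^p(\Omega'\times\Omega';\,dx\,dy/|x-y|^n)$, together with $\Tails(f-(f)_{\Omega'};\Omega')<\infty$ from $f\in L^{p-1}_s(\mathbb{R}^n)$ and the $L^{p-1}_{sp}$-tail of $v$.

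Coercivity of $\mathcal{F}$ on $g+X_0^{s,p}(\Omega,\Omega')$ follows from $A\geq\Lambda^{-1}$, Lemma \ref{SPIL2} (a Poincaré inequality for zero-trace test functions), and Young's inequality to absorb the linear source term. Weak lower semicontinuity holds by convexity of $t\mapsto|t|^p$ and linearity of the source, so $\mathcal{F}$ attains a minimizer $u\in\mathcal{K}$; strict convexity of $v\mapsto\iint A|d_s v|^p$ together with the trace constraint $v=g$ on $\mathbb{R}^n\setminus\Omega$ yields uniqueness. A standard first-variation computation $(d/d\tau)|_{\tau=0}\mathcal{F}(u+\tau\phi)=0$ for every $\phi\in W_c^{s,p}(\Omega)$ recovers \eqref{defn of weak}, so $u$ is the unique weak solution.

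For the estimate \eqref{estimateofp0}, I test the weak formulation with $\phi=u-g\in W_c^{s,p}(\Omega)$ and use the decomposition $d_s u=d_s(u-g)+d_s g$ together with $\Lambda^{-1}\leq A\leq\Lambda$ to get
\[
\Lambda^{-1}\iint|d_s u|^p\frac{\dx\dy}{|x-y|^n}\leq\Lambda\iint|d_s u|^{p-1}|d_s g|\frac{\dx\dy}{|x-y|^n}+\iint|d_0 f|^{p-1}|d_s(u-g)|\frac{\dx\dy}{|x-y|^n}.
\]
Since $u-g$ is supported in $\Omega$, the effective region of integration reduces to pairs with at least one coordinate in $\Omega$. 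I split this region into $\Omega'\times\Omega'$ (bounded by $\|g\|_{W^{s,p}(\Omega')}$ and $\|d_0 f\|_{L^p(\Omega'\times\Omega';\,dx\,dy/|x-y|^n)}$ via Hölder) and its complement intersected with $\Omega\times\mathbb{R}^n$, where $\mathrm{dist}(\Omega,\partial\Omega')>0$ gives uniform polynomial decay of $|x-y|^{-n-sp}$; this exterior contribution produces the $\Tails(f-(f)_{\Omega'};\Omega')$ term and, for the $g$-part, a bound in terms of $\|g\|_{W^{s,p}(\Omega')}$ (up to constants depending on $\Omega,\Omega'$). Applying Young's inequality with a small parameter then absorbs the top-order $|d_s u|^p$ into the left, delivering \eqref{estimateofp0}.

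The principal technical obstacle is the bookkeeping of the nonlocal cross-terms, in particular the piece over $\Omega\times(\mathbb{R}^n\setminus\Omega')$, whose finiteness rests on the $L^{p-1}_{sp}$-tail condition of $g$ but whose final bound must be absorbed into $\|g\|_{W^{s,p}(\Omega')}$, up to constants depending on $\mathrm{dist}(\Omega,\partial\Omega')$, in order to match the right-hand side of \eqref{estimateofp0}. This reduction, which exploits the positive separation between $\Omega$ and $\partial\Omega'$ together with the polynomial decay of the kernel, is what accounts for the $(\Omega,\Omega')$-dependence of the constant $c$ in the statement.
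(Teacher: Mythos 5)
Your plan for existence (direct method) is a self-contained alternative to the paper's appeal to \cite[Proposition 2.12]{BLS2}, and testing with $\phi=u-g$ is the same first move the paper makes. However, the way you handle the nonlocal tail of $g$ contains a genuine gap that appears in both halves of the argument. For $v\in\mathcal{K}$ one has $v=g$ outside $\Omega$, so on the off-diagonal region $\Omega\times(\mathbb{R}^n\setminus\Omega')$ the integrand of $\iint A|d_s v|^p\frac{\dx\dy}{|x-y|^n}$ behaves like $|g(y)|^p/(1+|y|)^{n+sp}$. The hypothesis $g\in L^{p-1}_{sp}(\mathbb{R}^n)$ controls only the $(p-1)$-st power, so your assertion that the tail piece of $\mathcal{F}$ is finite ``because $\Tailp(g;\Omega')<\infty$'' is wrong: $\mathcal{F}$ as written may be identically $+\infty$ on $\mathcal{K}$, and the direct method does not get off the ground without first normalizing the functional so that the divergent $g$-tail cancels.

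The same issue resurfaces, more seriously, in your derivation of \eqref{estimateofp0}. Your central inequality
\begin{equation*}
\Lambda^{-1}\iint|d_s u|^p\,\frac{\dx\dy}{|x-y|^{n}}\leq \Lambda\iint|d_s u|^{p-1}|d_s g|\,\frac{\dx\dy}{|x-y|^{n}}+\iint|d_0 f|^{p-1}|d_s(u-g)|\,\frac{\dx\dy}{|x-y|^{n}}
\end{equation*}
is obtained by rearranging $d_s u = d_s(u-g)+d_s g$ inside the tested identity, but over the effective region both $\iint|d_s u|^p$ and $\iint|d_s u|^{p-1}|d_s g|$ carry a $|g|^p$-tail and are potentially infinite for the same reason as above, so the manipulation is not legitimate. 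The paper circumvents this by invoking the monotonicity inequality \eqref{structrual}: subtracting $\langle(-\Delta_p)_A^s g,u-g\rangle$ and applying $\big(|a|^{p-2}a-|b|^{p-2}b\big)(a-b)\geq c_p^{-1}|a-b|^p$ yields
\begin{equation*}
\frac{1}{c_p\Lambda}\iint|d_s(u-g)|^p\,\frac{\dx\dy}{|x-y|^{n}}\leq \iint|d_0 f|^{p-1}|d_s(u-g)|\,\frac{\dx\dy}{|x-y|^{n}}+\Lambda\iint|d_s g|^{p-1}|d_s(u-g)|\,\frac{\dx\dy}{|x-y|^{n}},
\end{equation*}
in which \emph{every} term involves $d_s(u-g)$ with $u-g$ compactly supported in $\Omega$; the off-diagonal contributions then reduce to $\|u-g\|_{L^1(\Omega)}$ paired against $(p-1)$-st power tails of $f$ or $g$, which are finite under the stated hypotheses, and Young's inequality absorbs the $d_s(u-g)$ factor. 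Only afterwards does one pass to $\iint_{\Omega\times\Omega}|d_s u|^p$ via the elementary bound $|d_s u|^p\leq c(|d_s(u-g)|^p+|d_s g|^p)$ on $\Omega'\times\Omega'$. The monotonicity step is the missing idea in your proposal, and without it your decomposition does not produce finite quantities.
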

\begin{proof}
We first note that for any $\phi\in X_{0}^{s,p}(\Omega,\Omega')$,
\begin{equation*}
\begin{aligned}
    &\left|\int_{\mathbb{R}^{n}}\int_{\mathbb{R}^{n}}|f(x)-f(y)|^{p-2}(f(x)-f(y))(\phi(x)-\phi(y))\frac{\dx\dy}{|x-y|^{n+s}}\right|\\
    &\leq\int_{\Omega'}\int_{\Omega'}|f(x)-f(y)|^{p-1}|\phi(x)-\phi(y)|\frac{\dx\dy}{|x-y|^{n+s}}\\
    &\quad+2\int_{\mathbb{R}^{n}\setminus\Omega'}\int_{\Omega}|f(x)-f(y)|^{p-1}|\phi(x)|\frac{\dx\dy}{|x-y|^{n+s}}\eqqcolon I_{1}+I_{2}.
\end{aligned}
\end{equation*}
Using H\"older's inequality, we estimate $I_{1}$ as
\begin{equation*}
    I_{1}\leq \left(\int_{\Omega'}\int_{\Omega'}|d_{0}f|^{p}\frac{\dx\dy}{|x-y|^{n}}\right)^{\frac{p-1}{p}}[\phi]_{W^{s,p}(\Omega')}.
\end{equation*}
In light of Jensen's inequality and the fact that 
\begin{equation*}
    |x-y|\geq c(1+|y|)\quad\text{for any }x\in\Omega,\,y\in \mathbb{R}^{n}\setminus \Omega'\quad\text{and}\quad \int_{\mathbb{R}^{n}\setminus\Omega'}\frac{\dy}{|y|^{n+s}}\leq c
\end{equation*}
for some constant $c=c(n,s,\Omega,\Omega')$, we find
\begin{equation*}
\begin{aligned}
    I_{2}&\leq  c\int_{\Omega}|f(x)|^{p-1}|\phi(x)|\dx+c\int_{\mathbb{R}^{n}\setminus\Omega'}|f(y)|^{p-1}\frac{\dy}{(1+|y|)^{n+s}}\int_{\Omega}|\phi(x)|\dx\eqqcolon I_{2,1}+I_{2,2}.
\end{aligned}
\end{equation*}
We now estimate $I_{2,1}$ as
\begin{equation*}
\begin{aligned}
    I_{2,1}&\leq c\int_{\Omega}|f(x)-(f)_{\Omega}|^{p-1}|\phi(x)|\dx+c|(f)_{\Omega}|^{p-1}\int_{\Omega}|\phi(x)|\dx\\
    &\leq c\left[\left(\int_{\Omega}|f(x)-(f)_{\Omega}|^{p}\dx\right)^{\frac{p-1}{p}}+|(f)_{\Omega}|^{p-1}\right]\left(\int_{\Omega}|\phi(x)|^{p}\dx\right)^{\frac{1}{p}}\\
    &\leq c\left[\left(\int_{\Omega}\int_{\Omega}|f(x)-f(y)|^{p}\frac{\dx\dy}{|x-y|^{n}}\right)^{\frac{p-1}{p}}+|(f)_{\Omega}|^{p-1}\right]\|\phi\|_{L^{p}(\Omega)}, 
\end{aligned}
\end{equation*}
where we have used H\"older's inequality, the second inequality in \eqref{tkestimate} with $u(x)$, $B_{2^{j}\rho}$, $\alpha$ and $\tau$ there, replaced by $f(x)$, $\Omega$, 0 and 0, respectively,  and the fact that $\frac{2\mathrm{diam}(\Omega)}{|x-y|}\geq1$ for any $x,y\in \Omega$. In addition, H\"older's inequality yields
\begin{equation*}
    I_{2,2}\leq c\int_{\mathbb{R}^{n}\setminus\Omega'}|f(y)|^{p-1}\frac{\dy}{1+|y|^{n+s}}\|\phi\|_{L^{p}(\Omega)}.
\end{equation*}
Combine all the estimates $I_{1}$ and $I_{2}$ to see that the linear functional
\begin{equation*}
    T_{f}:\phi\to \int_{\mathbb{R}^{n}}\int_{\mathbb{R}^{n}}|f(x)-f(y)|^{p-2}(f(x)-f(y))(\phi(x)-\phi(y))\frac{\dx\dy}{|x-y|^{n+s}},\quad \phi\in X_{0}^{s,p}(\Omega,\Omega') 
\end{equation*}
is the element in the dual space of $X_{0}^{s,p}(\Omega,\Omega')$. Therefore, the existence of a unique weak solution follows from \cite[Proposition 2.12]{BLS2}. 

We are going to prove \eqref{estimateofp0}.
We first note from \cite[Remark A.4]{BLS2} that there is a constant $c_{p}$ depending only on $p$ such that for any $a,b\in\mathbb{R}$,
\begin{equation}
\label{structrual}
\left(|a|^{p-2}a-|b|^{p-2}b\right)(a-b)\geq\frac{1}{c_{p}}|a-b|^{p}.
\end{equation} 
We now test $u-g$ to \eqref{dirichlet}, and use \eqref{structrual} and the fact that $A\geq\Lambda^{-1}$, in order to see that
\begin{equation*}
\begin{aligned}
        J\coloneqq\int_{\mathbb{R}^{n}}\int_{\mathbb{R}^{n}}|d_{s}(u-g)|^{p}\frac{\dx\dy}{|x-y|^{n}}&\leq c\int_{\mathbb{R}^{n}}\int_{\mathbb{R}^{n}}|d_{0}f|^{p-1}|d_{s}(u-g)|\frac{\dx\dy}{|x-y|^{n}}\\
        &\quad+c\int_{\mathbb{R}^{n}}\int_{\mathbb{R}^{n}}|d_{s}g|^{p-1}|d_{s}(u-g)|\frac{\dx\dy}{|x-y|^{n}}\eqqcolon J_{1}+J_{2}.
\end{aligned}
\end{equation*}
We first note from $u=g$ on $\mathbb{R}^{n}\setminus\Omega$ that
\begin{equation*}
    J_{1}\leq c\int_{\Omega'}\int_{\Omega'}|d_{0}f|^{p-1}|d_{s}(u-g)|\frac{\dx\dy}{|x-y|^{n}}+c\int_{\mathbb{R}^{n}\setminus\Omega'}\int_{\Omega}|d_{0}f|^{p-1}|(u-g)(x)|\frac{\dx\dy}{|x-y|^{n}}\eqqcolon J_{1,1}+J_{1,2}.
\end{equation*}
Using H\"older's inequality and Young's inequality, we find that
\begin{equation*}
    J_{1,1}\leq \frac{1}{4}\int_{\Omega'}\int_{\Omega'}|d_{s}(u-g)|^{p}\frac{\dx\dy}{|x-y|^{n}}+c\int_{\Omega'}\int_{\Omega'}|d_{0}f|^{p}\frac{\dx\dy}{|x-y|^{n}}.
\end{equation*}
For the estimate of $J_{1,2}$, we follow the same arguments as in the estimate of $I_{2,2}$ in Lemma \ref{digagonal comparison} below with $B_{3}$, $B_{4}$ and $w(x)$ replaced by $\Omega$, $\Omega'$ and $g(x)$, respectively, to see that
\begin{equation*}
    J_{1,2}\leq \frac{1}{4}\int_{\Omega'}\int_{\Omega'}|d_{s}(u-g)|^{p}\frac{\dx\dy}{|x-y|^{n}}+c\Tails(f-(f)_{\Omega'};\Omega')^{p}+c\int_{\Omega'}\int_{\Omega'}|d_{0}f|^{p}\frac{\dx\dy}{|x-y|^{n}}.
\end{equation*}
Combining the estimates of $J$, $J_{1}$ and $J_{2}$, we get 
\begin{equation*}
    \int_{\Omega'}\int_{\Omega'}|d_{s}(u-g)|^{p}\frac{\dx\dy}{|x-y|^{n}}\leq c\Tails(f-(f)_{\Omega'};\Omega')^{p}+c\int_{\Omega'}\int_{\Omega'}|d_{0}f|^{p}\frac{\dx\dy}{|x-y|^{n}}.
\end{equation*}
Thus, \eqref{estimateofp0} follows by plugging the fact that 
\begin{equation*}
    \int_{\Omega'}\int_{\Omega'}|d_{s}u|^{p}\frac{\dx\dy}{|x-y|^{n}}-c(p)\int_{\Omega'}\int_{\Omega'}|d_{s}g|^{p}\frac{\dx\dy}{|x-y|^{n}}\leq c(p)\int_{\Omega'}\int_{\Omega'}|d_{s}(u-g)|^{p}\frac{\dx\dy}{|x-y|^{n}}
\end{equation*}
into the right-hand side of the above inequality.
\end{proof}

We end this section with the following technical lemma.
\begin{lem}(See \cite[Lemma 6.1]{G})
\label{technicallemma}
Let $\phi:[1,2]\to \mathbb{R}$ be a nonnegative bounded function. For $1\leq r_{1}<r_{2}\leq 2$, we assume that
\begin{equation*}
    \phi(r_{1})\leq \frac{1}{2}\phi(r_{2})+\frac{\mathcal{M}}{\left(r_{2}-r_{1}\right)^{2n}},
\end{equation*}
where $\mathcal{M}>0$. Then, 
\begin{equation*}
    \phi(1)\leq c\mathcal{M}
\end{equation*}for some constant $c=c(n)$.
\end{lem}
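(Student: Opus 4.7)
The plan is the classical Giaquinta-style iteration: build an increasing sequence $r_0 < r_1 < \cdots$ in $[1,2]$ starting from $r_0 = 1$ and converging to $2$, apply the hypothesized inequality between consecutive terms, and collect the accumulated errors into a convergent geometric series.

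First I would fix a parameter $\tau \in (0,1)$ with $2\tau^{2n} > 1$ (equivalently $\tau > 2^{-1/(2n)}$), and define the sequence recursively by $r_{k+1} = r_k + (1-\tau)\tau^k$. A direct telescoping calculation shows $r_k = 2 - \tau^k \in [1,2)$, so each pair $(r_k, r_{k+1})$ is admissible in the hypothesis. Applying it yields
\[
\phi(r_k) \leq \tfrac{1}{2}\phi(r_{k+1}) + \frac{\mathcal{M}}{(1-\tau)^{2n}\,\tau^{2nk}},
\]
and iterating $k$ times starting from $r_0 = 1$ produces
\[
\phi(1) \leq \frac{1}{2^k}\phi(r_k) + \frac{\mathcal{M}}{(1-\tau)^{2n}}\sum_{i=0}^{k-1}\frac{1}{(2\tau^{2n})^i}.
\]

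The boundedness of $\phi$ makes the first term tend to $0$ as $k \to \infty$, while the choice $2\tau^{2n} > 1$ ensures the geometric series converges to $(1 - (2\tau^{2n})^{-1})^{-1}$. Letting $k \to \infty$ then gives
\[
\phi(1) \leq \frac{2\tau^{2n}}{(1-\tau)^{2n}(2\tau^{2n} - 1)}\,\mathcal{M},
\]
which is the claimed estimate with $c = c(n)$ (for instance by fixing $\tau = 2^{-1/(4n)}$ or any other dimensional value).

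There is no genuine obstacle, only the trade-off in $\tau$: taking $\tau$ too close to $2^{-1/(2n)}$ blows up the geometric sum, while taking $\tau$ too close to $1$ blows up $(1-\tau)^{-2n}$; any admissible fixed $\tau$ yields a constant depending only on $n$. The two essential ingredients being used are (i) the absorption factor $1/2 < 1$, which is what allows the iteration to contract, and (ii) the uniform boundedness of $\phi$ on $[1,2]$, which is needed to discard the $2^{-k}\phi(r_k)$ term in the limit.
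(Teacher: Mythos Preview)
Your argument is correct and is exactly the standard Giaquinta iteration that the paper invokes by citing \cite[Lemma 6.1]{G}; the paper does not supply its own proof but simply refers to that reference, and your write-up reproduces that proof faithfully.
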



\section{Comparison estimates}
\label{section3}
In this section, we prove a comparison lemma based on a freezing argument which will be an important ingredient in Section \ref{section5}. Before proving this, we need to prove the following two lemmas. The first one is a self-improving property of a weak solution to the corresponding homogeneous problem of \eqref{main eq1}.
\begin{lem}
\label{higher inte}
Let $w\in W^{s,p}(B_{3})\cap L^{p-1}_{sp}(\mathbb{R}^{n})$ be a local weak solution to 
\begin{equation*}
    (-\Delta_{p})_{A}^{s}w=0\quad\text{in }B_{3}.
\end{equation*} Then there are constant $\epsilon_{0}=\epsilon_{0}(n,s,p,\Lambda)\in(0,1)$ and $c=c(n,s,p,\Lambda)$ such that
\begin{equation}
\label{selefimprovingresult}
    \left(\frac{1}{\tau}\dashint_{\mathcal{B}_{2}}|D^{\tau}d_{s}w|^{p(1+\epsilon_{0})}\dmu_{\tau}\right)^{\frac{1}{p(1+\epsilon_{0})}}\leq c\left[\left(\frac{1}{\tau}\dashint_{\mathcal{B}_{3}}|D^{\tau}d_{s}w|^{p}\dmu_{\tau}\right)^{\frac{1}{p}}+\Tailp\left(\frac{w-(w)_{B_{3}}}{3^{s+\tau}};B_{3}\right)\right].
\end{equation}  
\end{lem}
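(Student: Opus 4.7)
The plan is to combine a Caccioppoli-type energy estimate for the homogeneous equation with the fractional Sobolev-Poincar\'e inequality of Lemma \ref{SPIL} to produce a reverse H\"older inequality for $D^{\tau}d_{s}w$ over cubes $\mathcal{Q}\subset \mathcal{B}_3$, and then invoke a Gehring-type self-improving lemma adapted to the doubling product measure $\mu_\tau$. The doubling property \eqref{doubling} and the cube/ball comparison \eqref{inclusion measure} are precisely what allow the standard Calder\'on-Zygmund stopping time argument to proceed in this non-standard product-measure setting.

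First, I would derive a Caccioppoli inequality by testing the weak formulation \eqref{defn of weak} (with $f \equiv 0$) against $\phi = \eta^p (w - (w)_{B_R})$, where $\eta$ is a smooth cutoff supported in $B_R$ with $\eta \equiv 1$ on $B_r$, for concentric balls $B_r \Subset B_R \Subset B_3$. Exploiting the lower bound $A \geq \Lambda^{-1}$, the upper bound $A \leq \Lambda$, the symmetry of $A$, and the algebraic inequality \eqref{structrual}, one obtains an energy bound of the form
\begin{equation*}
\int_{B_r}\!\int_{B_r} \frac{|w(x) - w(y)|^p}{|x-y|^{n+sp}} \dx \dy \;\leq\; \frac{c}{(R-r)^{sp}} \int_{B_R} |w - (w)_{B_R}|^p \dx \;+\; (\text{tail terms}),
\end{equation*}
where the tail terms are controlled by $\Tailp(w - (w)_{B_3}; B_3)^{p-1}$ via Lemma \ref{tail estimate for local notioin}.

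Second, I would apply Lemma \ref{SPIL} on $B_R$ with a Lebesgue exponent $p_\ast < p$ chosen so that the fractional Sobolev embedding is scale-invariant, in order to bound the first term on the right-hand side by a fractional Gagliardo seminorm at exponent $p_\ast$. After iterating in $(r,R)$ via Lemma \ref{technicallemma} to absorb the cross terms, one arrives at a reverse H\"older inequality of the form
\begin{equation*}
\left(\dashint_{\mathcal{Q}_r} |D^{\tau}d_s w|^p \dmut\right)^{\frac{1}{p}} \leq c \left(\dashint_{\mathcal{Q}_{2r}} |D^{\tau}d_s w|^{p_\ast} \dmut\right)^{\frac{1}{p_\ast}} + c\,\Tailp\!\left(\frac{w - (w)_{B_3}}{3^{s+\tau}}; B_3\right),
\end{equation*}
valid on all concentric cubes $\mathcal{Q}_r \subset \mathcal{Q}_{2r} \subset \mathcal{B}_3$. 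A Gehring-type self-improving lemma on the doubling product space $(\mathbb{R}^n \times \mathbb{R}^n, \mu_\tau)$ then upgrades the exponent from $p$ to $p(1+\epsilon_0)$ for some $\epsilon_0 = \epsilon_0(n,s,p,\Lambda) > 0$. The factor $1/\tau$ appearing in \eqref{selefimprovingresult} is inherited from the normalization $\mu_\tau(\mathcal{B}_R) = v_0 R^{n+p\tau}/\tau$ in \eqref{size of b mut}, and is exactly what is needed for the doubling/covering constants to be taken uniform in $\tau$.

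The main obstacle I anticipate is the careful handling of the non-local tail throughout the iteration. In particular, when the reverse H\"older estimate is localized to a small cube deep inside $\mathcal{B}_2$, the natural tail term produced by Caccioppoli is centered on the smaller scale, whereas the statement \eqref{selefimprovingresult} only allows the fixed tail centered on $B_3$. One must therefore estimate the smaller-scale tail by the reference-scale tail plus a controlled amount of energy via Lemma \ref{tail estimate for local notioin}, and verify that the constants entering this comparison remain uniform in $\tau\in(0,n/p)$. Once this is in place, the Gehring argument closes and \eqref{selefimprovingresult} follows.
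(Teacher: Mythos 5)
Your outline is close in spirit to the classical Gehring route, but it misses the structural difficulty that makes the paper proceed differently. The measure $\mu_{\tau}$ lives on the product space $\mathbb{R}^{n}\times\mathbb{R}^{n}$, and a Gehring/Calder\'on--Zygmund iteration there must produce a reverse H\"older inequality on \emph{all} balls (or dyadic cubes) needed to cover $\mathcal{B}_{2}$, not just concentric diagonal ones. Your reverse H\"older estimate
\begin{equation*}
\left(\dashint_{\mathcal{Q}_r} |D^{\tau}d_s w|^p \dmut\right)^{\frac{1}{p}} \leq c \left(\dashint_{\mathcal{Q}_{2r}} |D^{\tau}d_s w|^{p_\ast} \dmut\right)^{\frac{1}{p_\ast}} + c\,\Tailp\!\left(\frac{w - (w)_{B_3}}{3^{s+\tau}}; B_3\right)
\end{equation*}
is obtained by Caccioppoli plus Sobolev--Poincar\'e, and that derivation only makes sense when $\mathcal{Q}_{2r}$ is a \emph{diagonal} cube $Q\times Q$: Caccioppoli controls the Gagliardo seminorm over $B_r\times B_r$ by $\|w-(w)_{B_R}\|_{L^p(B_R)}^p$, and there is simply no analogous PDE testing available for a product $Q^{1}\times Q^{2}$ with $\mathrm{dist}(Q^{1},Q^{2})\gtrsim d(\mathcal{Q})$. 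So your covering leaves the off-diagonal part of $\mathcal{B}_{2}$ untouched, and the Gehring argument does not close.

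The paper handles this by splitting the cover of $\mathcal{B}_{2}$ into diagonal balls $\mathcal{B}_{1/8}(y_{i})$ and off-diagonal cubes $\mathcal{Q}_{\frac{1}{8\sqrt{n}}}(z_{1},z_{2})$ with $\mathrm{dist}(Q^{1},Q^{2})>\frac{1}{8\sqrt{n}}$, and treats the two regimes by different mechanisms. On diagonal balls the paper does not rerun Gehring at all: it cites the pre-established self-improving result of \cite{BKK} (itself proved by a Caccioppoli--Gehring argument, but in the one-variable $W^{s,p}$ framework, not on the product space), then uses the fractional-Sobolev embedding of Lemma~\ref{embedding sob} to convert the $W^{s+\frac{n\tilde\delta}{p(1+\tilde\delta)},p(1+\tilde\delta)}$ gain into the $D^{\tau}d_{s}w\in L^{p(1+\epsilon_{0})}$ form that \eqref{selefimprovingresult} requires, and Lemma~\ref{tail estimate for local notioin} to pass the tail from the small ball back to $B_{3}$. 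On off-diagonal cubes no self-improvement is needed at all: Lemma~\ref{nd lemma} gives the higher exponent directly from Sobolev--Poincar\'e, at the cost of terms on the diagonal slices $P^{d}\mathcal{Q}$ weighted by $\big(d(\mathcal{Q})/\mathrm{dist}(Q^{1},Q^{2})\big)^{s+\tau}$ — a structure that looks nothing like your concentric-cube inequality. If you want to save your approach, you would either need to prove a genuine off-diagonal reverse H\"older of the Lemma~\ref{nd lemma} type and build it into a two-regime Gehring iteration, or, as the paper does, separate the two regimes and only do the real self-improving work where there is PDE structure to exploit, namely near the diagonal. You should also note that the paper's route avoids having to track $\tau$-uniformity of Gehring constants on the product space, which is one of the points you flag as delicate; by doing the Gehring step in the one-variable $W^{s,p}$ setting (via \cite{BKK}) and only afterwards introducing $\tau$ through the embedding, that dependence is isolated to the elementary Lemma~\ref{embedding sob} and the algebraic identity \eqref{size of b mut}.
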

\begin{proof}
In light of \cite[Theorem 1.1, Remark 1]{BKK}, we find that there exist constants $\Tilde{\delta}\in(0,1)$ and $c$ depending only on $n,s,p$ and $\Lambda$ such that
\begin{equation}
\label{resultofself}
  [w]_{W^{s+\frac{n\Tilde{\delta}}{p(1+\Tilde{\delta})},p(1+\Tilde{\delta})}\left(B_{\frac{1}{8}(y_{0})}\right)}\leq c\left[\left(\int_{\mathcal{B}_{\frac{1}{2}}\left(y_{0}\right)}|D^{\tau}d_{s}w|^{p}\dmu_{\tau}\right)^{\frac{1}{p}}+\Tailp\left(w-(w)_{B_{\frac{1}{2}}(y_{0})};B_{\frac{1}{2}}(y_{0})\right)\right] 
\end{equation}
for any $y_{0}\in B_{2}$. Apply \eqref{tail estimate of u local2} with $u=w$, $x_{0}=0$, $\rho=\frac{1}{2}$, $R=3$, $\alpha=0$ and $t=s$ to observe that
\begin{equation}
\label{tailsobo}
\begin{aligned}
    \Tailp\left(w-(w)_{B_{\frac{1}{2}}(y_{0})};y_{0},\frac{1}{2}\right)\leq c\left[\left(\frac{1}{\tau}\dashint_{\mathcal{B}_{3}}|D^{\tau}d_{s}w|^{p}\dmu_{\tau}\right)^{\frac{1}{p}}+\Tailp(w-(w)_{B_{3}};B_{3})\right].
\end{aligned}
\end{equation}
On the other hand, there is a sufficiently small $\epsilon_{0}=\epsilon_{0}(n,s,p)<\Tilde{\delta}$ such that
\begin{equation}
\label{epsilon0cond}
     p(1+\epsilon_{0})<\begin{cases}\frac{np}{n-sp}\quad&\text{if }n>sp,\\
    2p\quad&\text{if }n\leq sp.
    \end{cases}
\end{equation}
Since $\tau<\frac{n}{p}$, we apply Lemma \ref{embedding sob} with $u=w$, $r=\frac{1}{8}$,  $s_{1}=s+\tau\frac{\epsilon_{0}}{1+\epsilon_{0}}$, $p=p(1+\epsilon_{0})$, $s_{2}=s+\frac{n\Tilde{\delta}}{p(1+\Tilde{\delta})}$ and $q=p(1+\Tilde{\delta})$ to see that 
\begin{equation}
\begin{aligned}
\label{bembeddingself}
    \left(\int_{\mathcal{B}_{\frac{1}{8}}\left(y_{0}\right)}|D^{\tau}d_{s}w|^{p(1+\epsilon_{0})}\dmut\right)^{\frac{1}{p(1+\epsilon_{0})}}=[w]_{W^{s+\tau\frac{\epsilon_{0}}{1+\epsilon_{0}},p(1+\epsilon_{0})}\left(B_{\frac{1}{8}}(y_{0})\right)}\leq c[w]_{W^{s+\frac{n\Tilde{\delta}}{p(1+\Tilde{\delta})},p(1+\Tilde{\delta})}\left(B_{\frac{1}{8}(y_{0})}\right)}.
\end{aligned}
\end{equation} We combine \eqref{resultofself}, \eqref{tailsobo} and \eqref{bembeddingself} to get 
\begin{equation}
\label{embeddingself}
\left(\int_{\mathcal{B}_{\frac{1}{8}}\left(y_{0}\right)}|D^{\tau}d_{s}w|^{p(1+\epsilon_{0})}\dmut\right)^{\frac{1}{p(1+\epsilon_{0})}}\leq c\left(\int_{\mathcal{B}_{3}}|D^{\tau}d_{s}w|^{p}\dmu_{\tau}\right)^{\frac{1}{p}}+c\Tailp\left(w-(w)_{B_{3}};B_{3}\right). 
\end{equation}
We will find similar estimates like \eqref{embeddingself} for off-diagonal cubes.
Let us take $\mathcal{Q}\equiv\mathcal{Q}_{\frac{1}{8\sqrt{n}}}\left(z_{1},z_{2}\right)\Subset \mathcal{B}_{3}$ satisfying 
\begin{equation}
\label{distcube0}
    \mathrm{dist}\left(Q^{1},Q^{2}\right)>\frac{1}{8\sqrt{n}},
\end{equation}
where $Q^{1}=Q_{\frac{1}{8\sqrt{n}}}(z_{1})$ and $Q^{2}=Q_{\frac{1}{8\sqrt{n}}}(z_{2})$.
We now set 
\begin{equation}
\label{gammaindex}
    \gamma=\begin{cases}\frac{np}{n-sp}\quad&\text{if }n>sp,\\
    2p\quad&\text{if }n\leq sp.
    \end{cases}
\end{equation}
By \eqref{epsilon0cond} and \eqref{gammaindex},  we use H\"older's inequality and then apply Lemma \ref{nd lemma} with $\tilde{\gamma}=\gamma$, $\tilde{q}=p$, $\tilde{p}=p$ and $\alpha=0$, in order to see that
\begin{equation*}
\begin{aligned}
\left(\dashint_{\mathcal{Q}}|D^{\tau}d_{s}u|^{p(1+\epsilon_{0})}\dmut\right)^{\frac{1}{p(1+\epsilon_{0})}}&\leq \left(\dashint_{\mathcal{Q}}|D^{\tau}d_{s}u|^{\gamma}\dmut\right)^{\frac{1}{\gamma}}\\
&\leq c\left(\dashint_{\mathcal{Q}}|D^{\tau}d_{s}u|^{p}\dmut\right)^{\frac{1}{p}}\\
&\quad+\frac{c}{\tau^{\frac{1}{p}}}\left(\frac{d(\mathcal{Q})}{\dist(Q^{1},Q^{2})}\right)^{s+\tau}\left[\sum_{d=1}^{2}\left(\dashint_{P^{d}\mathcal{Q}}|D^{\tau}d_{s}u|^{p}\dmut\right)^{\frac{1}{p}}\right]
\end{aligned}
\end{equation*}
for some constant $c=c(n,s,p)$. After a few simple algebraic calculations, we observe
\begin{equation}
\label{reverseoff}
\begin{aligned}
\left(\int_{\mathcal{Q}}|D^{\tau}d_{s}u|^{{p(1+\epsilon_{0})}}\dmut\right)^{\frac{1}{p(1+\epsilon_{0})}}&\leq \frac{c\mu\left(\mathcal{Q}\right)^{\frac{1}{p(1+\epsilon_{0})}}}{\mu\left(\mathcal{Q}\right)^{\frac{1}{p}}}\left(\int_{\mathcal{Q}}|D^{\tau}d_{s}u|^{p}\dmut\right)^{\frac{1}{p}}\\
&\quad+\frac{c}{\tau^{\frac{1}{p}}}\left[\sum_{d=1}^{2}\frac{\mu\left(\mathcal{Q}\right)^{\frac{1}{p(1+\epsilon_{0})}}}{\mu\left(P^{d}\mathcal{Q}\right)^{\frac{1}{p}}}\left(\int_{P^{d}\mathcal{Q}}|D^{\tau}d_{s}u|^{p}\dmut\right)^{\frac{1}{p}}\right].
\end{aligned}
\end{equation}
We now take $z_{1},z_{2}\in B_{2}$, then we have that $P^{1}\mathcal{Q}$, $P^{2}\mathcal{Q}$ and $\mathcal{Q}$ are embedded in $\mathcal{B}_{3}$, and that 
\begin{equation*}
    \frac{1}{8\sqrt{n}}<\mathrm{dist}\left(Q^{1},Q^{2}\right)<4.
\end{equation*} Thus, we further estimate the right-hand side of \eqref{reverseoff} as 
\begin{equation}
\label{cubehigher}
    \left(\int_{\mathcal{Q}}|D^{\tau}d_{s}u|^{{p(1+\epsilon_{0})}}\dmut\right)^{\frac{1}{p(1+\epsilon_{0})}}\leq c\left(\int_{\mathcal{B}_{3}}|D^{\tau}d_{s}u|^{p}\dmut\right)^{\frac{1}{p}},
\end{equation}
where we have used \eqref{measure of qintau} below and \eqref{size of b mut}.
Since $\mathcal{B}_{2}$ is covered by finitely many diagonal balls $\mathcal{B}\left(y_{i},\frac{1}{8}\right)$ and off-diagonal cubes $\mathcal{Q}_{\frac{1}{8\sqrt{n}}}\left(z_{1,i},z_{2,i}\right)$ satisfying \eqref{distcube0} with $y_{i},\,z_{1,i},\,z_{2,i}\in B_{2}$, the standard covering argument along with \eqref{embeddingself} and \eqref{cubehigher} yields
\begin{equation*}
    \left(\int_{\mathcal{B}_{2}}|D^{\tau}d_{s}w|^{p(1+\epsilon_{0})}\dmu_{\tau}\right)^{\frac{1}{p(1+\epsilon_{0})}}\leq c\left[\left(\int_{\mathcal{B}_{3}}|D^{\tau}d_{s}w|^{p}\dmu_{\tau}\right)^{\frac{1}{p}}+\Tailp\left(w-(w)_{B_{3}};B_{3}\right)\right].
\end{equation*} 
After a simple algebraic computations together with \eqref{size of b mut}, \eqref{selefimprovingresult} follows.
\end{proof}
The second one is higher H\"older regularity of weak solutions to nonlocal $p$-Laplacian type equations with locally constant kernel coefficients. 
\begin{lem}
\label{almost lipschitz}
Let $v \in W^{s,p}(B_{2})\cap L^{p-1}_{sp}(\mathbb{R}^{n})$ be a local weak solution to
\begin{equation}
\label{al lip}
(-\Delta_{p})_{A_{2}}^{s}v=0\quad\text{in }B_{2},
\end{equation}
where $A_{2}\equiv A_{2,0}$ defined in \eqref{defnofA2}.
Then $v\in C^{\alpha}(B_{1})$ for every $\alpha\in\left(0,\min\left\{1,\frac{sp}{p-1}\right\}\right)$ with the estimate
\begin{equation}
\label{average lip}
    [v]_{C^{\alpha}(B_{1})}\leq c \left(\dashint_{B_{2}}|v-(v)_{B_{2}}|^{p}\dx\right)^{\frac{1}{p}}+c\Tailp(v-(v)_{B_{2}};B_{2}),
\end{equation}
where $c=c(n,s,p,\Lambda,\alpha)$.
\end{lem}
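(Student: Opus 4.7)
The plan is to reduce \eqref{al lip} to a standard fractional $p$-Laplacian equation with a bounded right-hand side, and then invoke the higher H\"older regularity of Brasco--Lindgren--Schikorra \cite{BLS2}. Since the equation is invariant under adding a constant to $v$, I may assume $(v)_{B_{2}}=0$.

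Set $c_{0}:=(A)_{\mathcal{B}_{2}}\in[\Lambda^{-1},\Lambda]$. The kernel $A_{2}$ equals $c_{0}$ on $\mathcal{B}_{2}$ and $A$ on $\mathcal{B}_{2}^{c}$; writing $A_{2}=c_{0}+(A_{2}-c_{0})$ in the weak formulation of \eqref{al lip}, taking any $\phi\in W_{c}^{s,p}(B_{2})$ and exploiting that $\phi\equiv 0$ outside $B_{2}$ together with the symmetry $A(x,y)=A(y,x)$ (which, after the swap $x\leftrightarrow y$, collapses the two off-diagonal contributions into a single one), I find that $v$ weakly solves
\begin{equation*}
c_{0}(-\Delta_{p})^{s}v=2\tilde{g}\quad\text{in }B_{2},\qquad \tilde{g}(x):=\int_{\mathbb{R}^{n}\setminus B_{2}}\bigl(c_{0}-A(x,y)\bigr)\frac{|v(x)-v(y)|^{p-2}(v(x)-v(y))}{|x-y|^{n+sp}}\dy.
\end{equation*}

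The next step is to show $\tilde{g}\in L^{\infty}(B_{3/2})$ with a quantitative bound. Standard local boundedness for the fractional $p$-Laplacian (e.g.\ \cite{BLS2}) yields
\begin{equation*}
\|v\|_{L^{\infty}(B_{3/2})}\leq c\,\Bigl[\bigl(\dashint_{B_{2}}|v|^{p}\dx\bigr)^{1/p}+\Tailp(v;B_{2})\Bigr].
\end{equation*}
For $x\in B_{3/2}$ and $y\in\mathbb{R}^{n}\setminus B_{2}$ one has $|x-y|\geq |y|/4$, and $|c_{0}-A|\leq 2\Lambda$, hence
\begin{equation*}
\|\tilde{g}\|_{L^{\infty}(B_{3/2})}\leq c\bigl(\|v\|_{L^{\infty}(B_{3/2})}^{p-1}+\Tailp(v;B_{2})^{p-1}\bigr).
\end{equation*}

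Finally I invoke the higher H\"older regularity for the plain fractional $p$-Laplacian from \cite{BLS2}: any weak solution $w$ of $(-\Delta_{p})^{s}w=h$ in $B_{3/2}$ with $h\in L^{\infty}$ belongs to $C^{\alpha}(B_{1})$ for every $\alpha\in(0,\min\{1,sp/(p-1)\})$, with
\begin{equation*}
[w]_{C^{\alpha}(B_{1})}\leq c\bigl(\|w\|_{L^{\infty}(B_{3/2})}+\Tailp(w;B_{3/2})+\|h\|_{L^{\infty}(B_{3/2})}^{1/(p-1)}\bigr).
\end{equation*}
Applied to $w=v$ with $h=2c_{0}^{-1}\tilde{g}$, and combined with the preceding $L^{\infty}$ bounds together with the elementary control $\Tailp(v;B_{3/2})\leq c\,\Tailp(v;B_{2})+c\bigl(\dashint_{B_{2}}|v|^{p}\bigr)^{1/p}$, this delivers \eqref{average lip}. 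The main technical obstacle will be matching the citable form of the H\"older estimate in \cite{BLS2} to the precise right-hand side needed here (including the $\Tailp$ correction and the source-dependent term $\|h\|_{L^{\infty}}^{1/(p-1)}$); the coefficient freezing in the first step and the tail bookkeeping for $\tilde{g}$ are otherwise routine.
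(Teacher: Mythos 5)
Your argument is correct, but it follows a genuinely different route from the paper. The paper's proof is a one-line citation of \cite[Lemma 4.1]{BKK} (a regularity lemma for nonlocal double phase equations with kernels that are constant on a diagonal block, applied with $q=p$, $b=0$), combined with the observation that $v-(v)_{B_2}$ solves the same equation. You instead reduce the problem to the constant-coefficient fractional $p$-Laplacian by the split $A_{2}=c_{0}+(A_{2}-c_{0})$: since $A_{2}-c_{0}$ vanishes on $\mathcal{B}_{2}$ and $\phi$ has compact support in $B_{2}$, the off-diagonal contribution collapses (after the $x\leftrightarrow y$ swap using $A(x,y)=A(y,x)$) into a source term $2\tilde g$, and you check $\tilde g\in L^{\infty}(B_{3/2})$ with the right quantitative bound using local boundedness of $v$ together with the elementary inequality $|x-y|\geq |y|/4$ for $x\in B_{3/2}$, $y\notin B_{2}$. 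The higher H\"older regularity of \cite{BLS2} (with $f\in L^{\infty}$, which gives the threshold $\min\{1,sp/(p-1)\}$) then delivers the seminorm estimate. This is sound: the factor-of-two bookkeeping, the $L^{\infty}$ bound on $\tilde g$, and the passage $\Tailp(v;B_{3/2})\lesssim\Tailp(v;B_{2})+(\dashint_{B_{2}}|v|^{p})^{1/p}$ are all verifiably correct. What your route buys is self-containment---it does not rely on the unpublished \cite{BKK} preprint and reduces everything to the superquadratic Brasco--Lindgren--Schikorra theory---at the cost of the freezing argument and the $\tilde g$ estimate, which the paper outsources. As you yourself note, the residual work is matching the exact statement of the nonhomogeneous H\"older estimate in \cite{BLS2}, but the scaling $\|h\|_{L^{\infty}}^{1/(p-1)}$ and the tail correction you anticipate are the correct ones.
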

\begin{proof}
Applying \cite[Lemma 4.1]{BKK} with $q=p$ and $b=0$, we find that
\begin{equation*}
    [v-(v)_{B_{2}}]_{C^{\alpha}(B_{1})}\leq c \left(\dashint_{B_{2}}|v-(v)_{B_{2}}|^{p}\dx\right)^{\frac{1}{p}}+c\Tailp(v-(v)_{B_{2}};B_{2}),
\end{equation*}
where we have used the fact that $v-(v)_{B_{2}}$ is also a local weak solution to \eqref{al lip}. Then the desired estimate \eqref{average lip} follows from the fact that $v-(v)_{B_{2}}$ and $v$ have the same H\"older seminorm. 
\end{proof}
Let us restrict the range of $\tau$ as
\begin{equation}
\label{firstcondoftau}
\tau\in\left(0,\min\left\{\frac{s}{p-1},1-s\right\}\right).
\end{equation} 
We note that $\tau<\frac{n}{p}$, since $n\geq2$ and $\frac{p}{p-1}\leq 2$. We now give the following comparison lemma. 
\begin{lem}
\label{digagonal comparison}
For any $\epsilon>0$, there is a constant $\delta=\delta(n,s,p,\Lambda,\epsilon)$ such that for any weak solution $u$ to \eqref{main eq1} with
\begin{equation*}
    B_{4}\subset\Omega
\end{equation*}and
\begin{equation}
\begin{aligned}
\label{scaling assumption}
    &\frac{1}{\tau}\dashint_{\mathcal{B}_{4}}|D^{\tau}d_{s}u|^{p}\dmu_{\tau}+\Tailp\left(\frac{u-(u)_{B_{4}}}{4^{\tau+s}};B_{4}\right)^{p}\leq 1,\\
    &\frac{1}{\tau}\dashint_{\mathcal{B}_{4}}|D^{\tau}d_{0}f|^{p}\dmu_{\tau}+\Tails\left(\frac{f-(f)_{B_{4}}}{4^{\tau}};B_{4}\right)^{p}+\left(\dashint_{B_{2}}\dashint_{B_{2}}\left|A(x,y)-(A)_{B_{2}\times B_{2}}\right|\dx\dy\right)^{p}\leq \delta^{p},
\end{aligned}
\end{equation}
there exists a weak solution $v\in W^{s,p}(B_{2})\cap L^{p-1}_{sp}(\mathbb{R}^{n})$ to 
\begin{equation*}
    (-\Delta_{p})_{A_{2}}^{s}v=0\quad\text{in }B_{2}
\end{equation*}
such that 
\begin{equation}
\label{comp main res}
    \frac{1}{\tau}\dashint_{\mathcal{B}_{1}}|D^{\tau}d_{s}(u-v)|^{p}\dmu_{\tau}\leq\epsilon^{p}\quad\text{and}\quad\|D^{\tau}d_{s}v\|_{L^{\infty}(\mathcal{B}_{1})}\leq c_{0}
\end{equation}
for some constant $c_{0}=c_{0}(n,s,p,\Lambda,\tau)$.
\end{lem}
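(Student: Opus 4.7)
The plan is to produce $v$ by solving a frozen Dirichlet problem matching $u$ outside $B_{2}$, then to run an energy comparison. First I would apply Lemma~\ref{existence} with $\Omega=B_{2}$, an auxiliary set $\Omega'$ satisfying $B_{2}\Subset\Omega'\Subset B_{3}$, zero source $f\equiv 0$, and boundary data $g:=u|_{\mathbb{R}^{n}\setminus B_{2}}$ to obtain the unique $v\in W^{s,p}(B_{2})\cap L^{p-1}_{sp}(\mathbb{R}^{n})$ solving $(-\Delta_{p})_{A_{2}}^{s}v=0$ in $B_{2}$ with $v=u$ on $\mathbb{R}^{n}\setminus B_{2}$. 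The estimate \eqref{estimateofp0} together with the normalization \eqref{scaling assumption} and the tail control of $u$ then bound the local Gagliardo energy of $v$ on $B_{2}$ and its tail outside $B_{2}$ by constants depending only on $\mathsf{data}$.

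Since $\phi:=u-v\in W^{s,p}_{c}(B_{2})$, I would test both weak formulations \eqref{defn of weak} against $\phi$ and subtract. Splitting the resulting integrals over $\mathcal{B}_{2}$ and over the cross region $B_{2}\times(\mathbb{R}^{n}\setminus B_{2})$, using the monotonicity inequality \eqref{structrual} together with $A_{2}\geq\Lambda^{-1}$, and absorbing one factor of $|d_{s}(u-v)|$ via Young's inequality, one arrives at
\begin{equation*}
\int_{B_{2}}\!\int_{B_{2}}|d_{s}(u-v)|^{p}\,\frac{dx\,dy}{|x-y|^{n}}\leq c\,E_{\mathrm{coef}}+c\,E_{f},
\end{equation*}
where $E_{\mathrm{coef}}=\int\!\int_{\mathcal{B}_{2}}|A-A_{2}|\,|d_{s}u|^{p}\,dx\,dy/|x-y|^{n}$ (note $A-A_{2}\equiv 0$ outside $\mathcal{B}_{2}$ by \eqref{defnofA2}), and $E_{f}$ collects the local and tail contributions of the right-hand side — the tail part being handled exactly as the term $I_{2,2}$ in the proof of Lemma~\ref{existence}.

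To close $E_{\mathrm{coef}}$, I would adopt a two-step comparison: introduce the unfrozen homogeneous auxiliary $w\in W^{s,p}(B_{2})\cap L^{p-1}_{sp}(\mathbb{R}^{n})$ solving $(-\Delta_{p})_{A}^{s}w=0$ in $B_{2}$ with $w=u$ on $\mathbb{R}^{n}\setminus B_{2}$. The step $u\to w$ absorbs only the $f$-driven error, which is bounded by $\delta$ using the normalization and the estimates of Lemma~\ref{existence}; the step $w\to v$ only encounters the coefficient error. Since $w$ is homogeneous on $B_{2}$, Lemma~\ref{higher inte} furnishes $\int\!\int_{\mathcal{B}_{2}}|d_{s}w|^{p(1+\epsilon_{0})}\,dx\,dy/|x-y|^{n}\leq c(\mathsf{data})$. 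A Hölder's inequality with exponent $1+\epsilon_{0}$, together with the trivial bound $|A-A_{2}|\leq 2\Lambda$, reduces the coefficient contribution to a positive power of $\dashint_{B_{2}}\dashint_{B_{2}}|A-A_{2}|\,dx\,dy\leq\delta$. For $E_{f}$, the local part is directly bounded by $\delta^{p}$ through the normalization on $d_{0}f$, and the tail cross-integrals are controlled by $\Tails(f-(f)_{B_{4}};B_{4})^{p}\leq\delta^{p}$. Choosing $\delta$ sufficiently small in terms of $\epsilon$ yields the first inequality in \eqref{comp main res}.

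Finally, for the $L^{\infty}$ bound on $D^{\tau}d_{s}v$, note that \eqref{firstcondoftau} ensures $s+\tau\in\left(s,\min\{1,sp/(p-1)\}\right)$; fix $\alpha\in\left(s+\tau,\min\{1,sp/(p-1)\}\right)$ and apply Lemma~\ref{almost lipschitz} to $v$ on $B_{1}\subset B_{2}$. Since the relevant local $L^{p}$-oscillation and tail of $v$ are bounded by $\mathsf{data}$-constants via the construction above, one obtains $[v]_{C^{\alpha}(B_{1})}\leq c_{0}$. Consequently, for every $(x,y)\in\mathcal{B}_{1}$,
\begin{equation*}
|D^{\tau}d_{s}v(x,y)|=\frac{|v(x)-v(y)|}{|x-y|^{s+\tau}}\leq [v]_{C^{\alpha}(B_{1})}\,|x-y|^{\alpha-s-\tau}\leq c_{0},
\end{equation*}
which yields the second inequality in \eqref{comp main res}. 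The main obstacle I expect is the bookkeeping of the coefficient error: the BMO smallness is available with respect to Lebesgue measure at the single scale $\mathcal{B}_{2}$, whereas $E_{\mathrm{coef}}$ naturally lives against the singular kernel $|x-y|^{-n}$, and extracting a clean power of $\delta$ demands Hölder's inequality carefully paired with the $\epsilon_{0}$-gain of Lemma~\ref{higher inte} and with the measure identification $|d_{s}w|^{p}\,dx\,dy/|x-y|^{n}=|D^{\tau}d_{s}w|^{p}\,d\mu_{\tau}$, together with the tail estimate \eqref{tail estimate of u local2} for controlling the nonlocal cross integrals.
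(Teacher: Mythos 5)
Your overall strategy is the right one and mirrors the paper's: a two-step comparison $u\to w\to v$, using Lemma~\ref{higher inte} to gain integrability of $d_{s}w$, then H\"older to extract a power of $\delta$ from the coefficient discrepancy, and finally Lemma~\ref{almost lipschitz} for the $L^{\infty}$ bound. There is, however, a genuine gap in the scale at which you pose the intermediate problem for $w$.

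You take $w$ to solve $(-\Delta_{p})_{A}^{s}w=0$ in $B_{2}$ with $w=u$ on $\mathbb{R}^{n}\setminus B_{2}$, and then claim that ``since $w$ is homogeneous on $B_{2}$, Lemma~\ref{higher inte} furnishes $\iint_{\mathcal{B}_{2}}|d_{s}w|^{p(1+\epsilon_{0})}\,dx\,dy/|x-y|^{n}\leq c$.'' That is not what Lemma~\ref{higher inte} provides: it requires $w$ to be a weak solution on $B_{3}$ and only then yields higher integrability on the smaller set $\mathcal{B}_{2}$. This is an interior self-improving estimate; if $w$ solves only on $B_{2}$, rescaling it gives the $(1+\epsilon_{0})$-gain only on a compactly contained set such as $\mathcal{B}_{4/3}$. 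But the coefficient error you need to close,
\begin{equation*}
\iint_{\mathcal{B}_{2}}|A-A_{2}|\,|d_{s}w|^{p-1}\,|d_{s}(w-v)|\,\frac{\dx\dy}{|x-y|^{n+sp}},
\end{equation*}
runs over all of $\mathcal{B}_{2}$ because $A-A_{2}$ is supported exactly there, and the measure $|x-y|^{-n+p\tau}\dx\dy$ of the region $\mathcal{B}_{2}\setminus\mathcal{B}_{4/3}$ is not negligible. So after H\"older you would need the $p(1+\epsilon_{0})$-norm of $D^{\tau}d_{s}w$ over the full $\mathcal{B}_{2}$, which your construction does not give. The fix is exactly what the paper does: solve the $A$-homogeneous problem for $w$ on the intermediate ball $B_{3}$ (with $w=u$ on $\mathbb{R}^{n}\setminus B_{3}$, so the $u$-vs-$w$ comparison is run on $\mathcal{B}_{4}$), which makes Lemma~\ref{higher inte} applicable on $\mathcal{B}_{2}$; then pose the frozen problem for $v$ on $B_{2}$ with $v=w$ on $\mathbb{R}^{n}\setminus B_{2}$, and run the $w$-vs-$v$ comparison on $\mathcal{B}_{2}$. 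Aside from this point, the rest of your outline (the $f$-error estimate matching the $I_{2,2}$-type computations, the H\"older/Young absorption, and the $C^{\alpha}$-argument giving the $L^{\infty}$ bound via Lemma~\ref{almost lipschitz}, where taking $\alpha=s+\tau$ directly is simplest) is consistent with the paper's argument.
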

\begin{proof}
From \cite[Proposition 2.12]{BLS2}, there is a unique weak solution $w\in X^{s,p}_{u}(B_{3},B_{4})\cap L^{p-1}_{sp}(\mathbb{R}^{n})$ to 
\begin{equation*}
\begin{cases}
    (-\Delta_{p})_{A}^{s}w=0&\quad\text{in }B_{3},\\
    w=u&\quad\text{on }\mathbb{R}^{n}\setminus B_{3}.
\end{cases}
\end{equation*}
Then we observe that
\begin{equation*}
\begin{aligned}
    \left\langle (-\Delta_{p})_{A}^{s}u -(-\Delta_{p})_{A}^{s}w,u-w\right\rangle=\left\langle (-\Delta_{p})^{\frac{s}{p}}f,u-w\right\rangle,
\end{aligned}
\end{equation*}
 where 
\begin{equation*}
\begin{aligned}
    &\left\langle (-\Delta_{p})_{A}^{s}u ,u-w\right\rangle\\
    &=\int_{\mathbb{R}^{n}}\int_{\mathbb{R}^{n}}A(x,y)\left(\frac{|u(x)-u(y)|}{|x-y|^{s}}\right)^{p-2}\left(\frac{u(x)-u(y)}{|x-y|^{s}}\right)\left(\frac{(u-w)(x)-(u-w)(y)}{|x-y|^{s}}\right)\frac{\dx\dy}{|x-y|^{n}}.
\end{aligned}
\end{equation*}
We now estimate $I_{1}$ and $I_{2}$.

\noindent
\textbf{Estimate of $I_{1}$.} 
In light of \eqref{size of b mut} and \eqref{structrual}, we have
\begin{equation*}
\begin{aligned}
I_{1}\geq \frac{1}{c_{p}\Lambda}\int_{\mathbb{R}^{n}}\int_{\mathbb{R}^{n}}\frac{|(u-w)(x)-(u-w)(y)|^{p}}{|x-y|^{n+sp}}\dx\dy\geq\frac{\nu_{0}}{c_{p}\Lambda\tau}\dashint_{\mathcal{B}_{4}}|D^{\tau}d_{s}(u-w)|^{p}\dmu_{\tau}.
\end{aligned}
\end{equation*}
\textbf{Estimate of $I_{2}$.} By the fact that $u=w$ on $\mathbb{R}^{n}\setminus B_{3}$, we first estimate $I_{2}$ as
\begin{equation*}
\begin{aligned}
I_{2}&\leq \int_{B_{4}}\int_{B_{4}}\frac{|d_{0}f(x,y)|^{p-1}|(u-w)(x)-(u-w)(y)|}{|x-y|^{n+s}}\dx\dy\\
&\quad+2\int_{\mathbb{R}^{n}\setminus B_{4}}\int_{B_{3}}\frac{|d_{0}f(x,y)|^{p-1}|(u-w)(x)|}{|x-y|^{n+s}}\dx\dy\eqqcolon I_{2,1}+2I_{2,2}.
\end{aligned}
\end{equation*}
Applying Young's inequality to $I_{2,1}$, we find that there is a constant $c=c(n,s,p,\Lambda)$ such that
\begin{equation*}
    I_{2,1}\leq \frac{c}{\tau}\dashint_{\mathcal{B}_{4}}|D^{\tau}d_{0}f|^{p}\dmut+\frac{1}{4}\frac{\nu_{0}}{c_{p}\Lambda\tau}\dashint_{\mathcal{B}_{4}}|D^{\tau}d_{s}(u-w)|^{p}\dmut.
\end{equation*}
To estimate of $I_{2,2}$, we first observe
\begin{equation*}
\begin{aligned}
I_{2,2}&\leq \int_{\mathbb{R}^{n}\setminus B_{4}}\int_{B_{3}}\frac{|f(x)-(f)_{B_{4}}|^{p-1}|(u-w)(x)|}{|x-y|^{n+s}}\dx\dy\\
&\quad+\int_{\mathbb{R}^{n}\setminus B_{4}}\int_{B_{3}}\frac{|f(y)-(f)_{B_{4}}|^{p-1}|(u-w)(x)|}{|x-y|^{n+s}}\dx\dy\eqqcolon I_{2,2,1}+I_{2,2,2}.
\end{aligned}
\end{equation*}
By the fact that
\begin{equation*}
    \int_{\mathbb{R}^{n}\setminus B_{4}}\frac{1}{|x-y|^{n+s}}\dy\leq c(n,s)\quad\text{for any }x\in B_{3}
\end{equation*}
and Young's inequality, we show that for any $\varsigma\in(0,1)$,
\begin{equation}
\label{i221esti}
\begin{aligned}
    I_{2,2,1}&\leq \frac{c}{\varsigma^{\frac{1}{p-1}}}\int_{B_{4}}|f(x)-(f)_{B_{4}}|^{p}\dx+\varsigma\int_{B_{4}}|u(x)-w(x)|^{p}\dx.
\end{aligned}
\end{equation}
We next apply \eqref{tkestimate} and Lemma \ref{SPIL2} to the first term and the second term in the right-hand side of \eqref{i221esti}, respectively, in order to get that
\begin{equation*}
    I_{2,2,1}\leq \frac{c}{\tau}\dashint_{\mathcal{B}_{4}}|D^{\tau}d_{0}f|^{p}\dmut+ \frac{1}{4}\frac{\nu_{0}}{c_{p}\Lambda\tau}\dashint_{\mathcal{B}_{4}}|D^{\tau}d_{s}(u-w)|^{p}\dmu_{\tau},
\end{equation*}
where we take $\varsigma$ so small depending on $n,s,p$ and $\Lambda$. By the fact that $|x-y|\geq c|y|$ for any $x\in B_{3}$ and $y\in \mathbb{R}^{n}\setminus B_{4}$, we estimate $I_{2,2,2}$ as
\begin{equation*}
\begin{aligned}
    I_{2,2,2}\leq \int_{\mathbb{R}^{n}\setminus B_{4}}\int_{B_{3}}\frac{|f(y)-(f)_{B_{4}}|^{p-1}|(u-w)(x)|}{|x-y|^{n+s}}\dx\dy.
\end{aligned}
\end{equation*}
As in \eqref{i221esti}, Young's inequality and Lemma \ref{SPIL} yield that
\begin{equation*}
    I_{2,2,2}\leq c\Tails\left(f-(f)_{B_{4}};B_{4}\right)^{p}+\frac{1}{4}\frac{\nu_{0}}{c_{p}\Lambda\tau}\dashint_{\mathcal{B}_{4}}|D^{\tau}d_{s}(u-w)|^{p}\dmut.
\end{equation*}
Combine all the estimates $I_{1}$ and $I_{2}$ to see that
\begin{equation}
\label{uminusw}
    \frac{1}{\tau}\dashint_{\mathcal{B}_{4}}|D^{\tau}d_{s}(u-w)|^{p}\dmut\leq c\left(\frac{1}{\tau}\dashint_{\mathcal{B}_{4}}|D^{\tau}d_{0}f|^{p}\dmut+ \Tails\left(f-(f)_{B_{4}};B_{4}\right)^{p}\right)\leq c\delta^{p}.
\end{equation}
where we have used \eqref{scaling assumption} for the last inequality. We next apply Lemma \ref{higher inte} to see that there exists a small number $\epsilon_{0}\in(0,1)$ depending only on  $n,s,p$ and $\Lambda$ such that 
\begin{equation*}
    \left(\frac{1}{\tau}\dashint_{\mathcal{B}_{2}}|D^{\tau}d_{s}w|^{p(1+\epsilon_{0})}\dmu_{\tau}\right)^{\frac{1}{p(1+\epsilon_{0})}}\leq c\left[\left(\frac{1}{\tau}\dashint_{\mathcal{B}_{3}}|D^{\tau}d_{s}w|^{p}\dmu_{\tau}\right)^{\frac{1}{p}}+\Tailp(w-(w)_{B_{3}};{B_{3}})\right]
\end{equation*}
for some constant $c=c(n,s,p,\Lambda)$. Note that using Minkowski's inequality, \eqref{uminusw} and \eqref{scaling assumption}, we have that
\begin{equation*}
    \left(\frac{1}{\tau}\dashint_{\mathcal{B}_{3}}|D^{\tau}d_{s}w|^{p}\dmu_{\tau}\right)^{\frac{1}{p}}\leq \left(\frac{1}{\tau}\dashint_{\mathcal{B}_{3}}\left|D^{\tau}d_{s}(w-u)\right|^{p}\dmu_{\tau}\right)^{\frac{1}{p}}+\left(\frac{1}{\tau}\dashint_{\mathcal{B}_{3}}|D^{\tau}d_{s}u|^{p}\dmu_{\tau}\right)^{\frac{1}{p}}\leq c
\end{equation*}
and
\begin{equation*}
\begin{aligned}
    \Tailp(w-(w)_{B_{3}};B_{3})\leq \Tailp(w-u-(w-u)_{B_{3}};B_{3})+\Tailp(u-(u)_{B_{3}};B_{3})\eqqcolon T_{1}+T_{2}.
\end{aligned}
\end{equation*}
\textbf{Estimate of $T_{1}$.}
Since $u=w$ on $\mathbb{R}^{n}\setminus B_{3}$, it follows from \eqref{scalingpropertytail}, Lemma \ref{SPIL2}, \eqref{size of b mut} and \eqref{uminusw} that
\begin{equation*}
\begin{aligned}
    T_{1}&=\Tailp((w-u)_{B_{3}};B_{3})= \nu_{1}|(w-u)_{B_{3}}|\leq \frac{c}{\tau}\dashint_{\mathcal{B}_{3}}|D^{\tau}d_{s}(w-u)|^{p}\dmut\leq c.
\end{aligned}
\end{equation*}

\noindent
\textbf{Estimate of $T_{2}$.} Applying \eqref{tail estimate of u local2} with $y_{0}=0$, $x_{0}=0$, $\rho=3$, $R=4$, $\alpha=0$ and  $t=s$ leads to
\begin{equation*}
\begin{aligned}
T_{2}&\leq\Tailp(u-(u)_{B_{4}};B_{4})+\left(\frac{c}{\tau}\dashint_{\mathcal{B}_{4}}|D^{\tau}d_{s}u|^{p}\dmut\right)^{\frac{1}{p}}\leq c.
\end{aligned}
\end{equation*}
For the last inequality, we have used \eqref{scaling assumption}.
Therefore, we have 
\begin{equation}
\label{higher const}
    \left(\frac{1}{\tau}\dashint_{\mathcal{B}_{2}}|D^{\tau}d_{s}w|^{p(1+\epsilon_{0})}\dmu_{\tau}\right)^{\frac{1}{p(1+\epsilon_{0})}}\leq c
\end{equation}
for some constant $c=c(n,s,p,\Lambda)$. We now note from \cite[Proposition 2.12]{BLS2} that there is a unique weak solution $v\in X_{w}^{s,p}(B_{2},B_{3})\cap L^{p-1}_{sp}(\mathbb{R}^{n})$ to 
\begin{equation*}
\begin{cases}
    (-\Delta_{p})_{A_{2}}^{s}v=0&\quad\text{in }B_{2},\\
    v=w&\quad\text{on }\mathbb{R}^{n}\setminus B_{2}.
\end{cases}
\end{equation*}
Therefore, we observe that
\begin{equation*}
    \left\langle (-\Delta_{p})_{A}^{s}w-(-\Delta_{p})_{A_{2}}^{s}v,v-w\right\rangle=0
\end{equation*}
and rearrange it as
\begin{equation*}
\begin{aligned}
    J_{1}\coloneqq\left\langle(-\Delta_{p})_{A_{2}}^{s}w-(-\Delta_{p})_{A_{2}}^{s}v,v-w\right\rangle=-\left\langle(-\Delta_{p})_{A}^{s}w-(-\Delta_{p})_{A_{2}}^{s}w,v-w\right\rangle\eqqcolon J_{2}.
\end{aligned}
\end{equation*}
We first note from \eqref{structrual} that
\begin{equation*}
    J_{1}\geq \frac{\nu_{0}}{c_{p}\Lambda\tau}\dashint_{\mathcal{B}_{2}}|D^{\tau}d_{s}(w-v)|^{p}\dmut.
\end{equation*}
\textbf{Estimate of $J_{2}.$}
Using the definition of $A_{2}$ given in \eqref{defnofA2}, we next estimate $J_{2}$ as 
\begin{equation*}
\begin{aligned}
    J_{2}&\leq \int_{B_{2}}\int_{B_{2}}\frac{|(w(x)-w(y))|^{p-1}|A-A_{2}|||(w-v)(x)-(w-v)(y)|}{|x-y|^{n+sp}}\dx\dy.
\end{aligned}
\end{equation*}
We then use H\"older's inequality, \eqref{size of b mut}, Young's inequality, \eqref{scaling assumption} and \eqref{higher const} to the above estimate so as to obtain that
\begin{equation*}
\begin{aligned}
J_{2}&\leq c\left(\frac{1}{\tau}\dashint_{\mathcal{B}_{2}}|D^{\tau}d_{s}w|^{p(1+\epsilon_{0})}\dmu_{\tau}\right)^{\frac{p-1}{p(1+\epsilon_{0})}}\left(\dashint_{B_{2}}\dashint_{B_{2}}|A-A_{2}|^{\frac{p}{p-1}\frac{1+\epsilon_{0}}{\epsilon_{0}}}\dx\dy\right)^{\frac{p-1}{p}\frac{\epsilon_{0}}{1+\epsilon_{0}}}\\
&\quad\times\left(\frac{1}{\tau}\dashint_{\mathcal{B}_{2}}|D^{\tau}d_{s}(w-v)|^{p}\dmut\right)^{\frac{1}{p}}\\
&\leq \frac{1}{4}\frac{\nu_{0}}{c_{p}\Lambda\tau}\dashint_{\mathcal{B}_{2}}|D^{\tau}d_{s}(w-v)|^{p}\dmut+c\delta^{\nu}
\end{aligned}
\end{equation*}
for some constant $\nu=\nu(n,s,p,\Lambda)>0$. Thus, we get that
\begin{equation}
\label{wminusv}
    \frac{1}{\tau}\dashint_{\mathcal{B}_{2}}|D^{\tau}d_{s}(w-v)|^{p}\dmut\leq c\delta^{\nu}.
\end{equation}
Taking $\delta$ sufficiently small depending on $n,s,p,\Lambda$ and $\epsilon$, we have that
\begin{equation}
\begin{aligned}
\label{UminusVd}
    \frac{1}{\tau}\dashint_{\mathcal{B}_{2}}|D^{\tau}d_{s}(u-v)|^{p}\dmut&\leq \frac{c}{\tau}\dashint_{\mathcal{B}_{2}}|D^{\tau}d_{s}(u-v)|^{p}\dmut \\
    &\leq \frac{c}{\tau}\left(\dashint_{\mathcal{B}_{2}}|D^{\tau}d_{s}(u-w)|^{p}\dmut+\dashint_{\mathcal{B}_{2}}|D^{\tau}d_{s}(w-v)|^{p}\dmut\right)\\
    &\leq c\delta^p+c\delta^\nu\leq \epsilon^{p},
\end{aligned}
\end{equation}
where we have used Jensen's inequality, \eqref{uminusw} and \eqref{wminusv}. 
We now use Lemma \ref{almost lipschitz} to see that 
\begin{equation}
\label{tailvandvp}
\begin{aligned}
    \|D^{\tau}d_{s}v\|_{L^{\infty}(\mathcal{B}_{1})}&\leq c[v]_{C^{s+\tau}(\mathcal{B}_{1})}\leq c \left(\dashint_{B_{2}}|v-(v)_{B_{2}}|^{p}\dx\right)^{\frac{1}{p}}+c\Tailp(v-(v)_{B_{2}};B_{2})
\end{aligned}
\end{equation}
for some constant $c=c(n,s,p,\Lambda,\tau)$.
As in \eqref{tkestimate} along with \eqref{higher const} and \eqref{wminusv}, we get that
\begin{equation}
\label{vspi}
\left(\dashint_{B_{2}}|v-(v)_{B_{2}}|^{p}\dx\right)^{\frac{1}{p}}\leq c\left(\frac{1}{\tau}\dashint_{\mathcal{B}_{2}}|D^{\tau}d_{s}v|^{p}\dmut\right)^{\frac{1}{p}}\leq c.
\end{equation}
Like the proof for estimates of $T_{1}$ and $T_{2}$, we obtain
\begin{equation*}
\begin{aligned}
\Tailp(v-(v)_{B_{2}};B_{2})&\leq \Tailp((v-w)-(v-w)_{B_{2}};B_{2})\\
    &\quad+\Tailp((w-u)-(w-u)_{B_{2}};B_{2})+\Tailp(u-(u)_{B_{2}};B_{2})\\
    &\leq c\left(|(v-w)_{B_{2}}|+|(u-w)_{B_{2}}|\right)\\
    &\quad+\Tailp((w-u);B_{2})+\Tailp(u-(u)_{B_{2}};B_{2}).
\end{aligned}
\end{equation*}
We note that
\begin{equation}
\label{uminuswspl2}
    |(u-w)_{B_{2}}|\leq c\int_{B_{3}}|(u-w)(x)|^{p}\dx\leq  \frac{c}{\tau}\dashint_{\mathcal{B}_{3}}|D^{\tau}d_{s}(u-w)|^{p}\dmut\leq c
\end{equation}
and
\begin{equation*}
    |(v-w)_{B_{2}}|\leq \frac{1}{\tau}\dashint_{\mathcal{B}_{2}}|D^{\tau}d_{s}(w-v)|^{p}\dmut\leq c,
\end{equation*}
where we have used Lemma \ref{SPIL2}, \eqref{uminusw} and \eqref{wminusv}. In light of \eqref{uminuswspl2}, \eqref{tail estimate of u local} and \eqref{scaling assumption}, we find 
\begin{equation*}
\begin{aligned}
    \Tailp((w-u);B_{2})+\Tailp(u-(u)_{B_{2}};B_{2})&\leq  c\int_{B_{3}}|(u-w)(x)|^{p}\dx\\
    &\quad+\frac{c}{\tau}\dashint_{\mathcal{B}_{4}}|D^{\tau}d_{s}u|^{p}\dmut+c\Tailp(u-(u)_{B_{4}};B_{4})\\
    &\leq c.
\end{aligned}
\end{equation*}
Therefore, combining the above three inequalities leads to
\begin{equation}
\label{vtail}
\Tailp(v-(v)_{B_{2}};B_{2})\leq c.
\end{equation}
Taking into account \eqref{tailvandvp}, \eqref{vspi} and \eqref{vtail}, we conclude the second inequality in \eqref{comp main res}. This completes the proof.
\end{proof}

Using scaling and translation arguments, we have the non-scaled version of Lemma \ref{digagonal comparison}.
\begin{lem}
\label{digagonal comparison2}
For any $\epsilon>0$, there is a constant $\delta=\delta(n,s,p,\Lambda,\epsilon)$ such that for any weak solution $u$ to \eqref{main eq1} with
\begin{equation*}
    B_{20\rho_{x_{i}}}(x_{i})\subset\Omega
\end{equation*}and
\begin{equation}
\label{nscaling assumption}
\begin{aligned}
    &\frac{1}{\tau}\dashint_{\mathcal{B}_{20\rho_{x_{i}}}(x_{i})}|D^{\tau}d_{s}u|^{p}\dmu_{\tau}+\Tailp\left(\frac{u-(u)_{B_{20\rho_{x_{i}}}(x_{i})}}{(20\rho_{x_{i}})^{\tau+s}};B_{20\rho_{x_{i}}}(x_{I})\right)^{p}\leq \lambda^{p},\\
    &\frac{1}{\tau}\dashint_{\mathcal{B}_{20\rho_{x_{i}}}(x_{i})}|D^{\tau}d_{0}f|^{p}\dmu_{\tau}+\Tails\left(\frac{f-(f)_{B_{20\rho_{x_{i}}}(x_{i})}}{(20\rho_{x_{i}})^{\tau}};B_{20\rho_{x_{i}}}(x_{i})\right)^{p}\\
    &\quad+\left(\dashint_{B_{10\rho_{x_{i}}}(x_{i})}\dashint_{B_{10\rho_{x_{i}}}(x_{i})}\lambda\left|A(x,y)-(A)_{B_{10\rho_{x_{i}}}(x_{i})\times B_{10\rho_{x_{i}}}(x_{i})}\right|\dx\dy\right)^{p}\leq (\delta\lambda)^{p},
\end{aligned}
\end{equation}
there exists a weak solution $v\in W^{s,p}(B_{10\rho_{x_{i}}}(x_{i}))\cap L^{p-1}_{sp}(\mathbb{R}^{n})$ to 
\begin{equation}
\label{ncompsol}
    (-\Delta_{p})_{A_{10\rho_{x_{i}},x_{i}}}^{s}v=0\quad\text{in }B_{10\rho_{x_{i}}}(x_{i})
\end{equation}
such that 
\begin{equation}
\label{ncomp main res}
    \frac{1}{\tau}\dashint_{\mathcal{B}_{5\rho_{x_{i}}}(x_{i})}|D^{\tau}d_{s}(u-v)|^{p}\dmu_{\tau}\leq(\epsilon\lambda)^{p}\quad\text{and}\quad\|D^{\tau}d_{s}v\|_{L^{\infty}(\mathcal{B}_{5\rho_{x_{i}}}(x_{i}))}\leq c_{0}\lambda
\end{equation}
for some constant $c_{0}=c_{0}(n,s,p,\Lambda,\tau)$.
\end{lem}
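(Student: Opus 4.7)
The plan is to reduce Lemma \ref{digagonal comparison2} to Lemma \ref{digagonal comparison} by a translation-dilation argument. Set $r = 5\rho_{x_i}$ and introduce the rescaled triple
\[
\tilde{u}(y) = \frac{u(x_i + ry)}{\lambda\,r^{s+\tau}},\qquad \tilde{f}(y) = \frac{f(x_i + ry)}{\lambda\,r^{s+\tau}},\qquad \tilde{A}(y,z) = A(x_i + ry,\, x_i + rz).
\]
By combining the scaling lemma stated just before Theorem~\ref{main theorem} with the $(p-1)$-homogeneity of \eqref{main eq1} (both sides are homogeneous of degree $p-1$, so dividing through by $\lambda^{p-1}$ preserves the identity), $\tilde{u}$ is a local weak solution to $(-\Delta_p)^s_{\tilde{A}} \tilde{u} = (-\Delta_p)^{s/p} \tilde{f}$ in $B_4$, and $\tilde{A}$ remains $(\delta, R/r)$-vanishing in $B_4 \times B_4$ only at the diagonal.

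The core of the proof is to verify that assumption \eqref{nscaling assumption} transcribes exactly to assumption \eqref{scaling assumption} for the tilde quantities. This is a direct change-of-variables computation based on the pointwise identities
\[
|D^\tau d_s \tilde{u}(y,z)| = \tfrac{1}{\lambda}\,|D^\tau d_s u|(x_i + ry,\,x_i+rz),\qquad |D^\tau d_0 \tilde{f}(y,z)| = \tfrac{1}{\lambda r^s}\,|D^\tau d_0 f|(x_i+ry,\,x_i+rz),
\]
the dilation identity $\mu_\tau(\mathcal{B}_{4r}(x_i)) = r^{n+p\tau}\mu_\tau(\mathcal{B}_4)$ from \eqref{size of b mut}, and the analogous tail identities
\[
\Tailp\bigl(\tilde{u} - (\tilde{u})_{B_\rho}; B_\rho\bigr) = \tfrac{1}{\lambda r^{s+\tau}}\,\Tailp\bigl(u - (u)_{B_{r\rho}(x_i)}; B_{r\rho}(x_i)\bigr),
\]
together with the corresponding formula for $\Tails(\tilde{f}-(\tilde{f})_{B_\rho};B_\rho)$. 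The normalizations $(20\rho_{x_i})^{s+\tau}$ and $(20\rho_{x_i})^\tau$ in the tail denominators of \eqref{nscaling assumption} are precisely those needed for the spatial scaling factors to cancel, and the BMO term is invariant under dilation of the domain ($L^1$-averages of $|A-(A)_\bullet|$ are preserved), with the multiplicative factor $\lambda$ inside the BMO assumption of \eqref{nscaling assumption} cancelling against the $\lambda^p$ on its right-hand side to yield exactly the $\delta^p$ bound required by Lemma~\ref{digagonal comparison}.

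With \eqref{scaling assumption} in force for $(\tilde{u}, \tilde{f}, \tilde{A})$, apply Lemma~\ref{digagonal comparison} to produce $\tilde{v} \in W^{s,p}(B_2)\cap L^{p-1}_{sp}(\mathbb{R}^n)$ solving $(-\Delta_p)^s_{\tilde{A}_2}\tilde{v} = 0$ in $B_2$ with $\frac{1}{\tau}\dashint_{\mathcal{B}_1}|D^\tau d_s(\tilde{u}-\tilde{v})|^p d\mu_\tau \leq \epsilon^p$ and $\|D^\tau d_s \tilde{v}\|_{L^\infty(\mathcal{B}_1)} \leq c_0$. Setting $v(x) = \lambda r^{s+\tau}\,\tilde{v}((x-x_i)/r)$ and observing that $(\tilde{A})_{B_2\times B_2} = (A)_{\mathcal{B}_{10\rho_{x_i}}(x_i)}$ identifies $\tilde{A}_2$ with $A_{10\rho_{x_i}, x_i}$ after unscaling, $v$ is a weak solution to \eqref{ncompsol}. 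The two conclusions in \eqref{ncomp main res} then follow from the two tilde estimates by the inverse change of variables.

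The only real obstacle is the bookkeeping: one must track three distinct powers of $r$ simultaneously (the $r^{s+\tau}$ in the functional normalization, the $r^s$ produced by the spatial scaling residue of the equation, and the $r^\tau$ embedded in the tail denominators of \eqref{nscaling assumption}) and verify that they combine consistently in each of the four terms appearing in \eqref{nscaling assumption}. No analytical step beyond Lemma~\ref{digagonal comparison} itself is required.
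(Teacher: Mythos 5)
Your overall strategy is exactly the paper's: rescale by $r=5\rho_{x_i}$, normalize by $\lambda$, verify the hypotheses of Lemma~\ref{digagonal comparison} for the tilde quantities, apply it, and rescale back. The framework and the $v(x)=\lambda r^{s+\tau}\tilde{v}((x-x_i)/r)$ unscaling are right.

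However, there is a concrete error in your normalization of $\tilde{f}$. You set $\tilde{f}(y)=f(x_i+ry)/(\lambda r^{s+\tau})$, using the same power $r^{s+\tau}$ as for $\tilde{u}$, whereas the correct (and the paper's) choice is $\tilde{f}(y)=f(x_i+ry)/(\lambda r^{\tau})$. The equation $(-\Delta_p)^s_A u=(-\Delta_p)^{s/p}f$ is invariant under $u\mapsto u(r\cdot+x_0)/r^s$, $f\mapsto f(r\cdot+x_0)$ (this is the scaling lemma stated before Theorem~\ref{main theorem}); combining this with $(p-1)$-homogeneity by dividing both sides by $(\lambda r^{\tau})^{p-1}$ gives $\tilde{u}=u/(\lambda r^{s+\tau})$ but only $\tilde{f}=f/(\lambda r^{\tau})$. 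More directly, in the weak formulation one needs $C_2=C_1/r^s$ if $\tilde{u}=u(r\cdot)/C_1$ and $\tilde{f}=f(r\cdot)/C_2$; your choice $C_2=C_1$ leaves an extra $r^{s(p-1)}$ on the right-hand side, so $\tilde{u}$ does not solve the equation with your $\tilde{f}$.

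This propagates to the verification of \eqref{scaling assumption}. Your pointwise identity $|D^\tau d_0\tilde{f}|=\frac{1}{\lambda r^s}|D^\tau d_0 f|(\cdot)$ is internally consistent with your $\tilde{f}$, but the correct identity should be $|D^\tau d_0\tilde{f}|=\frac{1}{\lambda}|D^\tau d_0 f|(\cdot)$, exactly parallel to the identity for $u$. With the extra $r^{-s}$ you obtain
\begin{equation*}
\frac{1}{\tau}\dashint_{\mathcal{B}_{4}}|D^{\tau}d_{0}\tilde{f}|^{p}\,d\mu_{\tau}
=\frac{1}{r^{sp}}\cdot\frac{1}{\lambda^{p}}\cdot\frac{1}{\tau}\dashint_{\mathcal{B}_{20\rho_{x_{i}}}(x_{i})}|D^{\tau}d_{0}f|^{p}\,d\mu_{\tau}
\leq \frac{\delta^{p}}{r^{sp}},
\end{equation*}
which blows up as $\rho_{x_i}\to 0$ rather than yielding the required $\delta^p$; the same $r^{-s}$ discrepancy appears in the tail term $\Tails$. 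Notice that you yourself observe that \eqref{nscaling assumption} has $(20\rho_{x_i})^{\tau}$ (not $(20\rho_{x_i})^{s+\tau}$) in the $f$-tail denominator, which is exactly the signal that $\tilde{f}$ must be normalized by $r^{\tau}$ only; if you track that through consistently, the argument closes as in the paper.
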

\begin{proof}
Let us define the scaled functions for $x,y\in\mathbb{R}^{n}$,
\begin{equation*}
    \tilde{u}(x)=\frac{u(5\rho_{x_{i}}x+x_{i})}{\left(5\rho_{x_{i}}\right)^{\tau+s}\lambda},\quad\tilde{f}(x)=\frac{f(5\rho_{x_{i}}x+x_{i})}{\left(5\rho_{x_{i}}\right)^{\tau}\lambda},\quad\text{and}\quad\tilde{A}(x,y)=A(5\rho_{x_{i}}x+x_{i},5\rho_{x_{i}}y+x_{i})
\end{equation*}
to see that 
\begin{equation*}
    (-\Delta_{p})_{\tilde{A}}^{s}\tilde{u}=(-\Delta_{p})^{\frac{s}{p}}\tilde{f}\quad\text{in }B_{4}
\end{equation*}
and \eqref{scaling assumption} with $u=\tilde{u},\, f=\tilde{f}$ and $A=\tilde{A}$ by \eqref{nscaling assumption}. Therefore, using Lemma \ref{digagonal comparison}, there is a weak solution $\tilde{v}\in W^{s,p}(B_{2})\cap L^{p-1}_{sp}(\mathbb{R}^{n})$ to 
\begin{equation*}
    (-\Delta_{p})_{\tilde{A}_{2}}^{s}\tilde{v}=0\quad\text{in }B_{2}
\end{equation*}
such that \eqref{comp main res} with $u=\tilde{u}$ and $v=\tilde{v}$ holds. Taking $v(x)=\lambda \left(5\rho_{x_{i}}\right)^{\tau+s}\tilde{v}\left(\frac{x-x_{i}}{5\rho_{x_{i}}}\right)$ for $x\in\mathbb{R}^{n}$ and changing variables, we obtain that $v\in W^{s,p}(B_{10\rho_{x_{i}}}(x_{i}))\cap L^{p-1}_{sp}(\mathbb{R}^{n})$ is a weak solution to 
\eqref{ncompsol} satisfying \eqref{ncomp main res}.

\end{proof}



\section{Coverings of upper level sets}
\label{Section4}


\noindent
In this section, we construct coverings for upper level sets of $D^{\tau}d_{\alpha+s}u$ under the following assumption:
\begin{equation}
\label{ind assum}
\begin{aligned}    \int_{\mathcal{B}_{2}}|D^{\tau}d_{\alpha+s}u|^{\tilde{p}}+|D^{\tau}d_{\alpha}f|^{\tilde{q}}\dmut<\infty
\end{aligned}
\end{equation}
for some
\begin{equation}
\label{tildeptildeqalphacond}
\tilde{p},\,\tilde{q}\in [p,\infty)\text{ and }\alpha\in\left[0,1\right).
\end{equation}
We note that the the given parameter $\alpha$ will be used to improve the range of $\sigma$ as in \eqref{mainassumptions} (see Section \ref{section52} below). Let us fix $\delta\in(0,1]$.
To handle the upper level set of $D^{\tau}d_{\alpha+s}u$, we define for any $x\in B_{2}$ and $\rho\in(0,2-|x|]$,
\begin{equation}
\begin{aligned}
\label{defn of theta}
\Theta\left(x,\rho\right)&=\left(\dashint_{\mathcal{B}_{\rho}(x)}|D^{\tau}d_{\alpha+s}u|^{\tilde{p}}\dmu_{\tau}\right)^{\frac{1}{\tilde{p}}}+\frac{1}{\delta}\left(\dashint_{\mathcal{B}_{\rho}(x)}|D^{\tau}d_{\alpha}f|^{\tilde{q}}\dmu_{\tau}\right)^{\frac{1}{\tilde{q}}}.
\end{aligned}
\end{equation}
In this setting, based on the techniques given in \cite{KMS1}, we now prove the following lemma which is a modified version of \cite[Proposition 5.1]{KMS1}. 
\begin{lem}
\label{coveringoflevelset}
Let $u\in L^{p-1}_{sp}(\mathbb{R}^{n})$ and $f\in L^{p-1}_{s}(\mathbb{R}^{n})$ satisfy \eqref{ind assum} along with \eqref{tildeptildeqalphacond}, and let $1\leq r_{1}<r_{2}\leq2$. Then there are families of countable disjoint diagonal balls and off-diagonal cubes,  $\left\{\mathcal{B}_{\rho_{x_{i}}}\left(x_{i}\right)\right\}_{i\in \mathbb{N}}$ and $\left\{\mathcal{Q}\right\}_{\mathcal{Q}\in \mathcal{A}}$, respectively, such that
\begin{equation}
\label{coveringforelambda}
    E_{\lambda}\coloneqq\left\{(x,y)\in \mathcal{B}_{r_{1}}\,;\,|D^{\tau}d_{\alpha+s}u|(x,y)>\lambda\right\}\subset\left(\bigcup\limits_{i\in\mathbb{N}}\mathcal{B}_{5\rho_{x_{i}}}\left(x_{i}\right)\right)\bigcup \left(\bigcup\limits_{\mathcal{Q}\in\mathcal{A}}\mathcal{Q}\right)
\end{equation}
whenever $\lambda\geq \lambda_{0}$, where
\begin{equation}
\label{lambda0}
\begin{aligned}
    \lambda_{0}&\coloneqq \frac{c_{d}}{\tau^{\frac{1}{p}}}\left(\frac{20}{r_{2}-r_{1}}\right)^{2n}\left(\left(\dashint_{\mathcal{B}_{2}}|D^{\tau}d_{\alpha+s}u|^{\tilde{p}}\dmut\right)^{\frac{1}{\tilde{p}}}+\Tailp\left(\frac{u-(u)_{B_{2}}}{2^{\alpha+\tau+s}};B_{2}\right)\right)\\
&\quad+\frac{c_{d}}{\tau^{\frac{1}{p}}}\left(\frac{20}{r_{2}-r_{1}}\right)^{2n}\frac{1}{\delta}\left(\left(\dashint_{\mathcal{B}_{2}}|D^{\tau}d_{\alpha}f|^{\tilde{q}}\dmu_{\tau}\right)^{\frac{1}{\tilde{q}}}+\Tails\left(\frac{f-(f)_{B_{2}}}{2^{\alpha+\tau}};B_{2}\right)\right)
\end{aligned}
\end{equation} for some constant $c_{d}=c_{d}(n,s,p)$ independent of $\tau$ and $\alpha$.  In particular, we have that
\begin{equation}
\label{exitradius}
    \Theta(x_{i},\rho_{x_{i}})\geq\frac{\tau^{\frac{1}{p}}}{c_{d}}\lambda\quad\text{and}\quad\Theta(x_{i},\rho)\leq\frac{\tau^{\frac{1}{p}}}{c_{d}}\lambda\quad\text{ if }\rho_{x_{i}}\leq\rho\leq\frac{r_{2}-r_{1}}{10}\,\,(i=1,2,\ldots),
\end{equation}
\begin{equation}
\label{gamma norm}
\left(\dashint_{\mathcal{Q}}|D^{\tau}d_{\alpha+s}u|^{\tilde{\gamma}}\dmut\right)^{\frac{1}{\tilde{\gamma}}}\leq \frac{c_{oh}}{\tau^{\frac{1}{p}}}\lambda\quad\text{for any }\mathcal{Q}\in\mathcal{A}
\end{equation}
and
\begin{equation}
\label{sumofmeasq}
    \sum_{\mathcal{Q}\in\mathcal{A}}\mu_{\tau}(\mathcal{Q})\leq\frac{c_{od}}{\lambda^{\tilde{p}}}\int_{\mathcal{B}_{r_{2}}\cap\left\{\,|D^{\tau}d_{\alpha+s}u|>c_{u}\lambda\right\}}|D^{\tau}d_{\alpha+s}u|^{\tilde{p}}\dmut
\end{equation}
for some constants $c_{oh}=c_{oh}(n,s,p,\tilde{p},\tilde{\gamma})$ and $c_{od}=c_{od}(n,s,p,\tilde{p})$ independent of $\tau$ and $\alpha$, where $c_{u}=\frac{\tau^{\frac{1}{p}}}{c_{d}}$ and
\begin{equation}
\label{condoftildegamma}
\tilde{\gamma}=\begin{cases}
    \frac{n\tilde{p}}{n-s\tilde{p}}&\quad\text{if }s\tilde{p}<n,\\
    2q&\quad\text{if }s\tilde{p}\geq n.
\end{cases}
\end{equation}
\end{lem}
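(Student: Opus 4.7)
The plan is to adapt the exit-time plus Whitney covering scheme of \cite[Proposition 5.1]{KMS1} to the present setting. I would split the level set $E_\lambda$ into a \emph{near-diagonal} part, handled by a Vitali exit-time argument producing the balls $\mathcal{B}_{\rho_{x_i}}(x_i)$, and an \emph{off-diagonal} part, handled by a Whitney-type decomposition into product cubes $\mathcal{Q}=Q^1\times Q^2$ with $\mathrm{dist}(Q^1,Q^2)$ comparable to $\mathrm{diam}(\mathcal{Q})$. The threshold $\lambda_0$ and the use of the fivefold enlargement $\mathcal{B}_{5\rho_{x_i}}(x_i)$ are designed so that both families sit inside $\mathcal{B}_{r_2}$.

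For the near-diagonal part I would first verify that, when $\lambda\geq \lambda_0$, no exit radius ever reaches $(r_2-r_1)/10$. Fix $x_0 \in B_{r_1}$ and $\rho\in(0,(r_2-r_1)/10]$. By the doubling formula \eqref{doubling}, the $\mu_\tau$-average of $|D^\tau d_{\alpha+s}u|^{\tilde p}$ on $\mathcal{B}_\rho(x_0)$ is controlled, up to the factor $((r_2-r_1)/20)^{-2n}$, by the corresponding average on $\mathcal{B}_2$; the tail term in \eqref{lambda0} is produced by \eqref{tail estimate of u local2}, and an analogous computation for $f$ yields the $\delta^{-1}$-terms. This gives $\Theta(x_0,\rho)\leq \tau^{1/p}\lambda_0/c_d$ for a suitable $c_d=c_d(n,s,p)$. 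Continuity of $\rho\mapsto \Theta(x_0,\rho)$, together with Lebesgue differentiation at any point $x_0$ belonging to (a suitable projection of) $E_\lambda$, then produces the exit radius $\rho_{x_0}<(r_2-r_1)/10$ satisfying \eqref{exitradius}. Vitali's $5r$-covering lemma extracts the disjoint family $\{\mathcal{B}_{\rho_{x_i}}(x_i)\}_{i\in\mathbb{N}}$ whose fivefold enlargements cover every near-diagonal point of $E_\lambda$.

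For the off-diagonal part I would apply a Whitney-type decomposition of $\mathcal{B}_{r_2}\setminus\bigcup_i \mathcal{B}_{5\rho_{x_i}}(x_i)$, restricted away from the diagonal, into product cubes with the usual comparability $\mathrm{diam}(\mathcal{Q})\sim \mathrm{dist}(Q^1,Q^2)$, and let $\mathcal{A}$ be those cubes meeting $E_\lambda$. This immediately gives \eqref{coveringforelambda}. Each $\mathcal{Q}\in\mathcal{A}$ is, by construction, contained in a diagonal ball of radius at most $(r_2-r_1)/10$ where the second inequality in \eqref{exitradius} caps the $L^{\tilde p}_{\mu_\tau}$-average of $|D^\tau d_{\alpha+s}u|$ by a constant multiple of $\lambda/\tau^{1/p}$. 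The bound \eqref{gamma norm} then follows from the off-diagonal Sobolev--Poincar\'e-type inequality already invoked in the proof of Lemma~\ref{higher inte}, which upgrades $L^{\tilde p}_{\mu_\tau}$ to $L^{\tilde \gamma}_{\mu_\tau}$ on $\mathcal{Q}$ with $\tilde\gamma$ as in \eqref{condoftildegamma} (the ratio $\mathrm{diam}(\mathcal{Q})/\mathrm{dist}(Q^1,Q^2)$ being bounded absorbs into $c_{oh}$).

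Finally, the measure estimate \eqref{sumofmeasq} will follow from the Whitney disjointness of $\{\mathcal{Q}\}_{\mathcal{Q}\in\mathcal{A}}$ together with a truncated Markov inequality: since every selected $\mathcal{Q}$ meets $E_\lambda$ while its $\tilde p$-average of $|D^\tau d_{\alpha+s}u|$ is bounded below by a multiple of $\lambda$, the contribution of $\{|D^\tau d_{\alpha+s}u|\leq c_u\lambda\}$ can be absorbed for $c_u=\tau^{1/p}/c_d$ small enough, leaving $\lambda^{\tilde p}\mu_\tau(\mathcal{Q})\lesssim \int_{\mathcal{Q}\cap\{|D^\tau d_{\alpha+s}u|>c_u\lambda\}}|D^\tau d_{\alpha+s}u|^{\tilde p}\dmut$, and summation yields \eqref{sumofmeasq}. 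The main technical obstacle I anticipate is the careful tracking of the $((r_2-r_1)/20)^{-2n}$ blow-up factor in $\lambda_0$: one has to apply \eqref{doubling} and \eqref{tail estimate of u local2} in a way that preserves the precise $\tau^{1/p}$ dependence, so that the constants $c_d$, $c_{oh}$, $c_{od}$ in \eqref{lambda0}--\eqref{sumofmeasq} end up genuinely independent of $\tau$ and $\alpha$ (the latter being crucial later for the boot-strap in Section~\ref{section5}).
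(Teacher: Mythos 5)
Your near-diagonal step matches the paper's Step 1 (exit time and Vitali covering, with $\lambda_0$ chosen so that $\Theta$ at radius $(r_2-r_1)/10$ stays below $\kappa\lambda$). But the off-diagonal construction has a genuine gap. You propose a Whitney decomposition of $\mathcal{B}_{r_2}\setminus\bigcup_i\mathcal{B}_{5\rho_{x_i}}(x_i)$ away from the diagonal and keep those cubes that merely \emph{meet} $E_\lambda$. A cube meeting the pointwise level set $E_\lambda$ carries no lower bound on $\big(\dashint_{\mathcal{Q}}|D^\tau d_{\alpha+s}u|^{\tilde{p}}\,d\mu_\tau\big)^{1/\tilde{p}}$; this is exactly what you assert without justification when you write that its "$\tilde{p}$-average \ldots is bounded below by a multiple of $\lambda$". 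You also claim that each $\mathcal{Q}\in\mathcal{A}$ is contained in a diagonal ball of radius at most $(r_2-r_1)/10$ where the second inequality in \eqref{exitradius} applies; this is false, since the cubes surviving the first elimination satisfy $\dist(Q^1,Q^2)\ge d(\mathcal{Q})$ and can lie well off the diagonal — it is the diagonal projections $P^d\mathcal{Q}$ that sit inside small diagonal balls, not $\mathcal{Q}$ itself. The paper avoids both problems by running a dyadic Calder\'on--Zygmund stopping-time decomposition with the functional in \eqref{czdere}, which combines $\dashint_{\mathcal{Q}}|D^\tau d_{\alpha+s}u|^{\tilde{p}}\,d\mu_\tau$ with weighted averages over $P^1\mathcal{Q}$ and $P^2\mathcal{Q}$: the stopping criterion on $\mathcal{Q}$ supplies the lower bound needed for \eqref{sumofmeasq}, the criterion on the predecessor supplies the upper bounds needed to run Lemma~\ref{nd lemma} and obtain \eqref{gamma norm}, and Lebesgue differentiation outside the stopping cubes yields \eqref{coveringforelambda}.

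Even granting a lower bound, your truncated Markov argument accounts only for the paper's Step 6, that is, for the class $AD_\lambda$ of cubes where the lower bound comes from $\dashint_{\mathcal{Q}}|D^\tau d_{\alpha+s}u|^{\tilde{p}}\,d\mu_\tau$ itself. When the stopping criterion is instead triggered by the diagonal projection terms (the class $ND_\lambda$), the lower bound lives on $P^d\mathcal{Q}$, and many distinct off-diagonal cubes share the same projection, so a naive Markov bound overcounts the contribution to $\sum\mu_\tau(\mathcal{Q})$. The paper's Step 7 resolves this by introducing the auxiliary family $NH$, proving the separation estimate Lemma~\ref{dist of ndl}, and summing the geometric counting bound \eqref{numberofcube} over the indices $i,j,k$. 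That overlap-counting argument is the bulk of the work for \eqref{sumofmeasq} and is entirely absent from your outline.
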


\begin{proof}
    
\noindent
\textbf{Step 1. Diagonal balls and Vitali's covering.} We first find diagonal balls satisfying \eqref{exitradius} using the exit time argument and Vitali's covering lemma.
Let us consider a constant $\lambda$ satisfying
\begin{equation}
\label{cond of lambda0}
\begin{aligned}
\lambda&\geq\frac{1}{\kappa}\left(\frac{20}{r_{2}-r_{1}}\right)^{2n}\left(\left(\dashint_{\mathcal{B}_{2}}|D^{\tau}d_{\alpha+s}u|^{\tilde{p}}\dmut\right)^{\frac{1}{\tilde{p}}}+\Tailp\left(\frac{u-(u)_{B_{2}}}{2^{\alpha+\tau+s}};B_{2}\right)\right)\\
&\quad+\frac{1}{\kappa}\left(\frac{20}{r_{2}-r_{1}}\right)^{2n}\frac{1}{\delta}\left(\left(\dashint_{\mathcal{B}_{2}}|D^{\tau}d_{\alpha}f|^{\tilde{q}}\dmu_{\tau}\right)^{\frac{1}{\tilde{q}}}+\Tails\left(\frac{f-(f)_{B_{2}}}{2^{\alpha+\tau}};B_{2}\right)\right),
\end{aligned}
\end{equation}
where a free parameter $\kappa\in(0,1]$ will be selected later. We next define
\begin{equation*}
D_{\kappa\lambda}=\left\{(x,x)\in \mathcal{B}_{r_{1}}\,;\, \sup_{0<\rho\leq\frac{r_{2}-r_{1}}{10}}\Theta(x,\rho)>\kappa\lambda\right\}.
\end{equation*}
We now note from \eqref{size of b mut} that for any $x\in B_{r_{1}}$,
\begin{equation}
\label{thetau}
\begin{aligned}
    \left(\dashint_{\mathcal{B}_{\frac{r_{2}-r_{1}}{10}}\left(x\right)}|D^{\tau}d_{\alpha+s}u|^{\tilde{p}}\dmu_{\tau}\right)^{\frac{1}{\tilde{p}}}&\leq \left(\frac{\mu_{\tau}\left(\mathcal{B}_{2}\right)}{\mu_{\tau}\left(\mathcal{B}_{\frac{r_{2}-r_{1}}{10}}\left(x\right)\right)}\dashint_{\mathcal{B}_{2}}|D^{\tau}d_{\alpha+s}u|^{\tilde{p}}\dmu_{\tau}\right)^{\frac{1}{\tilde{p}}}\\
    &\leq \left(\left(\frac{20}{r_{2}-r_{1}}\right)^{2n}\dashint_{\mathcal{B}_{2}}|D^{\tau}d_{\alpha+s}u|^{\tilde{p}}\dmu_{\tau}\right)^{\frac{1}{\tilde{p}}}
\end{aligned}
\end{equation}
and
\begin{equation}
\label{thetaf}
\begin{aligned}
    \frac{1}{\delta}\left(\dashint_{\mathcal{B}_{\frac{r_{2}-r_{1}}{10}}\left(x\right)}|D^{\tau}d_{\alpha}f|^{\tilde{q}}\dmu_{\tau}\right)^{\frac{1}{\tilde{q}}}\leq\frac{1}{\delta} \left(\left(\frac{20}{r_{2}-r_{1}}\right)^{2n}\dashint_{\mathcal{B}_{2}}|D^{\tau}d_{\alpha}f|^{\tilde{q}}\dmu_{\tau}\right)^{\frac{1}{\tilde{q}}}.
\end{aligned}
\end{equation}
Combine \eqref{defn of theta}, \eqref{cond of lambda0}, \eqref{thetau} and \eqref{thetaf} to see that
\begin{equation*}
\begin{aligned}
\Theta\left(x,\frac{r_{2}-r_{1}}{10}\right)\leq \kappa\lambda\quad\text{for any $x\in B_{r_{1}}$}.
\end{aligned}
\end{equation*}
Thus, there is an exit radius $\rho_{x}\in\left(0,\frac{r_{2}-r_{1}}{10}\right]$ such that 
\begin{equation}
\label{bexitradius}
    \Theta(x,\rho_{x})\geq\kappa\lambda\quad\text{and}\quad\Theta(x,\rho)\leq\kappa\lambda\quad\text{ if }\rho_{x}\leq\rho\leq\frac{r_{2}-r_{1}}{10},
\end{equation}
for each $(x,x)\in D_{\kappa\lambda}$.
Using the Vitali covering lemma, we have a family of countable disjoint diagonal balls $\left\{\mathcal{B}_{\rho_{x_{i}}}\left(x_{i}\right)\right\}_{i\in\mathbb{N}}$ satisfying 
\begin{equation}
\label{bcoveringdia}
    \bigcup_{(x,x)\in D_{\kappa\lambda}}\mathcal{B}_{\rho_{x}}\left(x\right)\subset \bigcup_{i\in\mathbb{N}}\mathcal{B}_{5\rho_{x_{i}}}\left(x_{i}\right).
\end{equation}


\noindent
\textbf{Step2. Dyadic cubes and Calder\'on-Zygmund decomposition.} In this step, we find a collection of dyadic cubes contained in $\mathcal{B}_{r_{2}}$ which covers $\mathcal{B}_{r_{1}}$ and apply Calder\'on-Zygmund decomposition to the each dyadic cube in order to find a covering of $E_{\lambda}$. We first find finitely many disjoint dyadic cubes $\{\mathcal{K}
_{j}\}_{j}$ such that
\begin{equation*}
    \mathcal{B}_{r_{1}}\subset \bigcup_{j}\{\mathcal{K}
_{j}\}\subset \mathcal{B}_{r_{2}}\quad\text{and}\quad \mathcal{K}
_{j}\cap \mathcal{B}_{r_{1}}\neq\emptyset.
\end{equation*}
This follows from \cite[Proposition 5-5B]{KMS1}. In particular, the cube is of the form $\mathcal{K}_{j}=\mathcal{Q}_{2^{-k_{0}}}\left(2^{-k_{0}}X_{j}\right)$ for some $X_{j}\in \mathbb{Z}^{n}\times\mathbb{Z}^{n}$ and the positive integer $k_{0}$ such that
\begin{equation}
\label{k0 cond}
\frac{r_{2}-r_{1}}{20}\leq 2^{-k_{0}}20\sqrt{n}<\frac{r_{2}-r_{1}}{10}.
\end{equation} 
We recall $P^{d}\mathcal{K}=K^{d}\times K^{d}$ for a given cube $\mathcal{K}=K^{1}\times K^{2}$.
In this setting, we note from  \eqref{size of b mut}, \eqref{inclusion measure} and \eqref{cond of lambda0} that
\begin{equation}
\label{first red of cube}
\begin{aligned}
   &\left(\dashint_{\mathcal{K}}|D^{\tau}d_{\alpha+s}u|^{\tilde{p}}\dmut\right)^{\frac{1}{\tilde{p}}}+\left[\sum_{d=1}^{2}\left(\frac{1}{\tau}\dashint_{P^{d}\mathcal{K}}|D^{\tau}d_{\alpha+s}u|^{\tilde{p}}\dmut\right)^{\frac{1}{\tilde{p}}}\right]\\
   &\leq \left(\frac{\mu_{\tau}(\mathcal{B}_{2})}{\mu_{\tau}(\mathcal{K})}\dashint_{\mathcal{B}_{2}}|D^{\tau}d_{\alpha+s}u|^{\tilde{p}}\dmut\right)^{\frac{1}{\tilde{p}}}+\left[\sum_{d=1}^{2}\left(\frac{1}{\tau}\frac{\mu_{\tau}(\mathcal{B}_{2})}{\mu_{\tau}(P^{d}\mathcal{K})}\dashint_{\mathcal{B}_{2}}|D^{\tau}d_{\alpha+s}u|^{\tilde{p}}\dmut\right)^{\frac{1}{\tilde{p}}}\right]\\
   &\leq 3\left(2^{n}\frac{\nu_{0}+1}{\tau}\left(\frac{800\sqrt{n}}{r_{2}-r_{1}}\right)^{2n} \dashint_{\mathcal{B}_{2}}|D^{\tau}d_{\alpha+s}u|^{\tilde{p}}\dmut\right)^{\frac{1}{\tilde{p}}}\\
   &\leq\lambda
\end{aligned}
\end{equation}
for any $\mathcal{K}\in \left\{\mathcal{K}_{j}\right\}$ by taking $\kappa\in(0,\kappa_{0}]$, where 
\begin{equation}
\label{kappa0}
\kappa_{0}=\left(\frac{\tau}{(\nu_{0}+1)(80n)^{4n}4^{\tilde{p}}}\right)^{\frac{1}{\tilde{p}}}.
\end{equation}
We next observe a simple fact for a dyadic cube $\mathcal{Q}=Q^{1}\times Q^{2}$ and its predecessor $\tilde{\mathcal{Q}}=\tilde{Q}^{1}\times\tilde{Q}^{2}$. If $\mathrm{dist}\left(\tilde{Q}^{1},\tilde{Q}^{2}\right)\geq d\left(\tilde{\mathcal{Q}}\right)$, where we define $d\big({\mathcal{Q}}\big)=\text{side length of }{\mathcal{Q}}$, then
\begin{equation}
\label{sizetilde}
\mathrm{dist}\left(Q^{1},Q^{2}\right)\geq \mathrm{dist}\left(\tilde{Q}^{1},\tilde{Q}^{2}\right)\geq d\left(\tilde{\mathcal{Q}}\right)> d(\mathcal{Q}).
\end{equation}

In light of \eqref{first red of cube} and \eqref{sizetilde}, we apply the same arguments as in the proof of the classical Calder\'on-Zygmund decomposition lemma to each cube $\mathcal{K}\in \left\{\mathcal{K}_{j}\right\}$, in order to find a family of countable disjoint dyadic cubes $\widetilde{\mathcal{A}}=\left\{\mathcal{Q}^{i}\right\}$ such that  $\mathcal{Q}^{i}\subset \mathcal{K}_{j}$ for some $j$ satisfying
\begin{equation}
\begin{aligned}
\label{czdere}
    &\left(\dashint_{\mathcal{Q}^{i}}|D^{\tau}d_{\alpha+s}u|^{\tilde{p}}\dmut\right)^{\frac{1}{\tilde{p}}}\\
    &\quad+\mathfrak{A}(\mathcal{Q}^{i})\left(\frac{d(\mathcal{Q}^{i})}{\dist(Q^{i,1},Q^{i,2})}\right)^{s+\alpha+\tau}\left[\sum_{d=1}^{2}\left(\frac{1}{\tau}\dashint_{P^{d}\mathcal{Q}^{i}}|D^{\tau}d_{\alpha+s}u|^{\tilde{p}}\dmut\right)^{\frac{1}{\tilde{p}}}\right]>\lambda\quad\text{and}\\
    &\left(\dashint_{\Tilde{\mathcal{Q}}^{i}}|D^{\tau}d_{\alpha+s}u|^{\tilde{p}}\dmut\right)^{\frac{1}{\tilde{p}}}\\
    &\quad+\mathfrak{A}(\tilde{\mathcal{Q}}^{i})\left(\frac{d(\Tilde{\mathcal{Q}^{i}})}{\dist(\Tilde{Q}^{i,1},\Tilde{Q}^{i,2})}\right)^{s+\alpha+\tau}\left[\sum_{d=1}^{2}\left(\frac{1}{\tau}\dashint_{P^{d}\Tilde{\mathcal{Q}}^{i}}|D^{\tau}d_{\alpha+s}u|^{\tilde{p}}\dmut\right)^{\frac{1}{\tilde{p}}}\right]\leq\lambda,
\end{aligned}
\end{equation}
where $\Tilde{\mathcal{Q}}^{i}=\tilde{Q}^{i,1}\times\tilde{Q}^{i,2}$ is the predecessor of $\mathcal{Q}^{i}=Q^{i,1}\times Q^{i,2}$ and
\begin{equation}
\label{cubecoeff}
\begin{aligned}
    \mathfrak{A}\left(\mathcal{Q}^{i}\right)=\begin{cases}
    1&\quad\text{if }\mathrm{dist}\left(Q^{i,1},Q^{i,2}\right)\geq d(\mathcal{Q}^{i}),\\
        0&\quad\text{if }\mathrm{dist}\left(Q^{i,1},Q^{i,2}\right)< d(\mathcal{Q}^{i}).
    \end{cases}
\end{aligned}
\end{equation}
In addition, if $(x,y)\in\mathcal{B}_{r_{1}}\setminus \bigcup\limits_{i}\mathcal{Q}^{i}$, then by using H\"older's inequality and \eqref{czdere}, we find a sequence of dyadic cubes $\mathcal{K}^{j}\notin \widetilde{\mathcal{A}}$ containing $(x,y)$ such that
\begin{equation*}
    \left(\dashint_{\mathcal{K}^{j}}|D^{\tau}d_{\alpha+s}u|\dmut\right)\leq \left(\dashint_{\mathcal{K}^{j}}|D^{\tau}d_{\alpha+s}u|^{\tilde{p}}\dmut\right)^{\frac{1}{\tilde{p}}}\leq \lambda\quad\text{and}\quad \mathrm{diam}\left(\mathcal{K}^{j}\right)\to0\text{ as }j\to\infty.
\end{equation*}
Therefore, the fact that $\mu_{\tau}$ has the doubling property \eqref{doubling} and is absolutely continuous with respect to the Lebesgue measure $\dx\dy$ yields that  
\begin{equation}
\label{coveringforoffdiagonal}
|D^{\tau}d_{\alpha+s}u|\leq\lambda \quad\mbox{a.e. on $\mathcal{B}_{r_{1}}\setminus \bigcup\limits_{i}\mathcal{Q}^{i}$ with respect to the measure $\mu_{\tau}$.}
\end{equation}

\noindent
\textbf{Step 3. First elimination of nearly diagonal cubes.}
We first want to reduce the family of off-diagonal cubes as follows:
For any $\mathcal{Q}=Q^{1}\times Q^{2}\in\widetilde{\mathcal{A}}$,
\begin{equation}
\label{1rm of nondi}
\mathcal{Q}\subset\bigcup\limits_{i}\mathcal{B}_{5\rho_{x_{i}}}\left(x_{i}\right)
\end{equation}
provided that
\begin{equation}
\label{dist less q1q2}
    \dist\left(\tilde{Q}^{1},\tilde{Q}^{2}\right)<d\left(\tilde{\mathcal{Q}}\right).
\end{equation}
To this end, we note from \eqref{czdere} and \eqref{cubecoeff} that one of the following three inequalities must hold:
\begin{equation}
\label{three ineq}
\begin{aligned}
    &\left(\dashint_{\mathcal{Q}}|D^{\tau}d_{\alpha+s}u|^{\tilde{p}}\dmut\right)^{\frac{1}{\tilde{p}}}>\frac{\lambda}{3},\quad\left(\frac{1}{\tau}\dashint_{P^{1}\mathcal{Q}}|D^{\tau}d_{\alpha+s}u|^{\tilde{p}}\dmut\right)^{\frac{1}{\tilde{p}}}>\frac{\lambda}{3},\\
    &\left(\frac{1}{\tau}\dashint_{P^{2}\mathcal{Q}}|D^{\tau}d_{\alpha+s}u|^{\tilde{p}}\dmut\right)^{\frac{1}{\tilde{p}}}>\frac{\lambda}{3}.
\end{aligned}
\end{equation}
Suppose that the first inequality in \eqref{three ineq} holds.
In light of \cite[Proposition 5.3]{KMS1}, we observe that there are points $(x_{0},x_{0})\in \mathbb{R}^{2n}$ and $(x_{1},x_{2})\in\tilde{\mathcal{Q}}=\tilde{Q}^{1}\times \tilde{Q}^{2}$ such that
\begin{equation*}
    \dist\left(\tilde{Q}^{1},\tilde{Q}^{2}\right)=\sqrt{2}\dist(\tilde{\mathcal{Q}},(x_{0},x_{0}))=\sqrt{2}|(x_{1},x_{2})-(x_{0},x_{0})|.
\end{equation*}
Then using \eqref{dist less q1q2},  we find that for any $(a,b)\in\tilde{\mathcal{Q}}$,
\begin{equation*}
\begin{aligned}
    |(a,b)-(x_{0},x_{0})|&\leq |(a,b)-(x_{1},x_{2})|+|(x_{1},x_{2})-(x_{0},x_{0})|\\
    &\leq 2\sqrt{n}d(\tilde{\mathcal{Q}})+\frac{1}{\sqrt{2}}\dist(\tilde{Q}^{1},\tilde{Q}^{2})\\
    &\leq 4\sqrt{n}d(\mathcal{Q})+\frac{2}{\sqrt{2}}d\left(\mathcal{Q}\right)\\
    &\leq 10\sqrt{n}d(\mathcal{Q}),
\end{aligned}
\end{equation*}
which implies that $\mathcal{Q}\subset\tilde{\mathcal{Q}}\subset \mathcal{B}_{10\sqrt{n}d(\mathcal{Q})}(x_{0})$. Since $\mathcal{Q}\cap\mathcal{B}_{r_{1}}\neq\emptyset$, we take $z_{0}\in Q^{1}\cap B_{r_{1}} $. Then we observe that
\begin{equation}
\label{z0r1}
    \mathcal{B}_{10\sqrt{n}d(\mathcal{Q})}(x_{0})\subset \mathcal{B}_{20\sqrt{n}d(\mathcal{Q})}(z_{0}),
\end{equation}
where we have used fact that  $z_{0}\in Q^{1}\subset B_{10\sqrt{n}d(\mathcal{Q})}(x_{0})\subset B_{20\sqrt{n}d(\mathcal{Q})}(z_{0})$.
Note from \eqref{inclusion measure}, the first inequality in \eqref{three ineq} and \eqref{kappa0} that
\begin{equation}
\label{first ineq}
\begin{aligned}
    \dashint_{\mathcal{B}(z_{0},20\sqrt{n}d(\mathcal{Q}))}|D^{\tau}d_{\alpha+s}u|^{\tilde{p}}\dmut&\geq \frac{\mu_{\tau}(\mathcal{Q})}{\mu_{\tau}\Big(\mathcal{B}\big(z_{0},20\sqrt{n}d(\mathcal{Q})\big)\Big)}\dashint_{\mathcal{Q}}|D^{\tau}d_{\alpha+s}u|^{\tilde{p}}\dmut\\
    &\geq \frac{\tau}{2^{n}v_{0}(20\sqrt{n})^{2n}}
    \left(\frac{\lambda}{3}\right)^{\tilde{p}}\\
    &\geq \left(\kappa\lambda\right)^{\tilde{p}}.
\end{aligned}
\end{equation}
Using \eqref{bexitradius}, \eqref{first ineq} and the fact that $z_{0}\in B_{r_{1}}$ and $20\sqrt{n}d(\mathcal{Q})<\frac{r_{2}-r_{1}}{10}$ by \eqref{k0 cond}, we have 
\begin{equation}
\label{lastforfirstremoval}
\mathcal{B}_{20\sqrt{n}d(\mathcal{Q})}(z_{0})\subset \mathcal{B}_{\rho_{z_{0}}}\left(z_{0}\right).
\end{equation}
From \eqref{bcoveringdia}, \eqref{z0r1} and \eqref{lastforfirstremoval}, we have \eqref{1rm of nondi}. We next assume that the second inequality in \eqref{three ineq} holds. We write $\tilde{\mathcal{Q}}=\tilde{\mathcal{Q}}_{d(\tilde{\mathcal{Q}})}\left(x,y\right)$ to see that $P^{1}\tilde{\mathcal{Q}}=\tilde{\mathcal{Q}}_{d(\tilde{\mathcal{Q}})}\left(x\right)$.
For any $(a,b)\in \tilde{\mathcal{Q}}$, we find that 
\begin{equation*}
\begin{aligned}
    |(a,b)-(x,x)&|\leq |(a,b)-(x,y_{2})|+|(x,y_{2})-(x,y_{1})|+|(x,y_{1}),(x,x)|\\
    &\leq \frac{3}{2}\sqrt{n}d(\tilde{\mathcal{Q}})+\mathrm{dist}\left(\tilde{Q}^{1},\tilde{Q}^{2}\right)+\frac{1}{2}\sqrt{n}d\left(\tilde{\mathcal{Q}}\right)\\
    &<10\sqrt{n}d(\mathcal{Q}),
\end{aligned}
\end{equation*}
where $y_{1}\in \tilde{Q^{1}}$ and $y_{2}\in \tilde{Q}^{2}$ such that $|y_{1}-y_{2}|=\mathrm{dist}\left(\tilde{Q}^{1},\tilde{Q}^{2}\right)$. Thus, this yields that $\mathcal{Q}\subset\tilde{\mathcal{Q}}\subset \mathcal{B}_{10\sqrt{n}d(\mathcal{Q})}(x)\subset \mathcal{B}_{20\sqrt{n}d(\mathcal{Q})}(z_{0})$ for some $z_{0}\in B_{r_{1}}$. Like in the estimate of \eqref{first ineq}, we deduce that
\begin{equation*}
\begin{aligned}
\dashint_{\mathcal{B}_{10\sqrt{n}d(\mathcal{Q})}(x)}|D^{\tau}d_{\alpha+s}u|^{\tilde{p}}\dmut&\geq \frac{\mu_{\tau}\left(P^{1}\mathcal{Q}\right)}{\mu_{\tau}\left(\mathcal{B}_{10\sqrt{n}d(\mathcal{Q})}(x)\right)}\dashint_{P^{1}\mathcal{Q}}|D^{\tau}d_{\alpha+s}u|^{\tilde{p}}\dmut \geq\left(\kappa\lambda\right)^{\tilde{p}},
\end{aligned}
\end{equation*}
where we have used \eqref{size of b mut} and the second inequality in \eqref{three ineq} for the third inequality. This implies \eqref{1rm of nondi}.
Similarly, if the third inequality in \eqref{three ineq} is true, then we have \eqref{1rm of nondi}. Therefore, we now restrict our concentration on a subfamily of $\widetilde{\mathcal{A}}$, $\mathcal{A}$ which is defined by
\begin{equation}
\label{tildeadefn}
    \mathcal{A}=\left\{\mathcal{Q}\in \widetilde{\mathcal{A}}\,\;\,\mathcal{Q}\in \widetilde{\mathcal{A}}\setminus \bigcup\limits_{i}\mathcal{B}_{5\rho_{x_{i}}}\left(x\right)\right\}.
\end{equation}
Then we note from \eqref{1rm of nondi}, \eqref{dist less q1q2} and \eqref{sizetilde} that 
\begin{equation}
\label{tildeA}
    \dist\left(Q^{1},Q^{2}\right)\geq\dist\left(\tilde{Q}^{1},\tilde{Q}^{2}\right)\geq d\left(\tilde{\mathcal{Q}}\right)>d(\mathcal{Q})\quad\mbox{for any $\mathcal{Q}=Q^{1}\times Q^{2}\in \mathcal{A}$.}
\end{equation}

\noindent
\textbf{Step 4. Off-diagonal reverse H\"older's inequality.}
We now introduce an important lemma to handle the off-diagonal cubes in $\mathcal{A}$.

\begin{lem}[Reverse H\"older inequality on off-diagonal cubes]
\label{nd lemma}
Let $\mathcal{Q}=Q^{1}\times Q^{2}$ be a cube such that
\begin{equation}
\label{dist cube}
\dist\left(Q^{1},Q^{2}\right)\geq d(\mathcal{Q}).
\end{equation}
Then there is a constant $c=c(n,s,p,\tilde{p},\tilde{\gamma})$ independent of $\tau$ and $\alpha$ such that
\begin{equation*}
\begin{aligned}
\left(\dashint_{\mathcal{Q}}|D^{\tau}d_{\alpha+s}u|^{\tilde{\gamma}}\dmut\right)^{\frac{1}{\tilde{\gamma}}}&\leq c\left(\dashint_{\mathcal{Q}}|D^{\tau}d_{\alpha+s}u|^{p}\dmut\right)^{\frac{1}{p}}\\
&\quad+c\left(\frac{d(\mathcal{Q})}{\dist(Q^{1},Q^{2})}\right)^{s+\alpha+\tau}\left[\sum_{d=1}^{2}\left(\frac{1}{\tau}\dashint_{P^{d}\mathcal{Q}}|D^{\tau}d_{\alpha+s}u|^{\tilde{p}}\dmut\right)^{\frac{1}{\tilde{p}}}\right].
\end{aligned}
\end{equation*}
\end{lem}
\begin{proof}
We note from \eqref{dist cube} that for any $x\in Q^{1}$ and $y\in Q^{2}$,
\begin{equation*}
    |x-y|\leq |x-x_{0}|+|x_{0}-y_{0}|+|y_{0}-y|\leq \sqrt{n}d(\mathcal{Q})+\dist(Q^{1},Q^{2})+\sqrt{n}d(\mathcal{Q})\leq 3\sqrt{n}\dist(Q^{1},Q^{2}),
\end{equation*}
where $x_{0}\in Q^{1}$ and $y_{0}\in Q^{2}$ such that $\dist(Q^{1},Q^{2})=|x_{0}-y_{0}|$.
Therefore, we have that
\begin{equation}
\label{xydist}
\dist\left(Q^{1},Q^{2}\right)\leq |x-y|\leq 3\sqrt{n}\dist\left(Q^{1},Q^{2}\right)
\end{equation}
whenever $x\in Q^{1}$ and $y\in Q^{2}$. The above inequality \eqref{xydist} and Jensen's inequality yield that 
\begin{equation}
\label{measure of qintau}
    \frac{d(\mathcal{Q})^{2n}}{c(n)\dist\left(Q^{1},Q^{2}\right)^{n-p\tau}}\leq\mu_{\tau}(\mathcal{Q})\leq \frac{d(\mathcal{Q})^{2n}}{\dist\left(Q^{1},Q^{2}\right)^{n-p\tau}}
\end{equation}
and 
\begin{equation*}
\begin{aligned}
&\dashint_{\mathcal{Q}}|D^{\tau}d_{\alpha+s}u|^{\tilde{\gamma}}\dmut\\
&\leq \frac{c\dist\left(Q^{1},Q^{2}\right)^{n-p\tau}}{d(\mathcal{Q})^{2n}}\frac{1}{\dist\left(Q^{1},Q^{2}\right)^{n-p\tau+\tilde{\gamma}\left(s+\alpha+\tau\right)}}\int_{Q^{1}}\int_{Q^{2}}|u(x)-u(y)|^{\tilde{\gamma}}\dx\dy\\
&\leq \sum_{i=1}^{2}\frac{c\dist\left(Q^{1},Q^{2}\right)^{-\tilde{\gamma}\left(s+\alpha+\tau\right)}}{d(\mathcal{Q})^{2n}}\int_{Q^{1}}\int_{Q^{2}}|u(x)-(u)_{Q^{i}}|^{\tilde{\gamma}}\dx\dy\\
&\quad+\frac{c\dist\left(Q^{1},Q^{2}\right)^{-\tilde{\gamma}\left(s+\alpha+\tau\right)}}{d(\mathcal{Q})^{2n}}\int_{Q^{1}}\int_{Q^{2}}|(u)_{Q^{1}}-(u)_{Q^{2}}|^{\tilde{\gamma}}\dx\dy\eqqcolon\sum_{i=1}^{2}I_{i}+J
\end{aligned}
\end{equation*}
for some constant $c=c(n,\tilde{\gamma})$. We now further estimate $I_{1}$, $I_{2}$ and $J$.

\noindent
\textbf{Estimates of $I_{1}$ and $I_{2}$.}
From Lemma \ref{SPIL} together with \eqref{condoftildegamma}, there is a constant $c=c(n,s,\tilde{p},\tilde{\gamma})$ such that
\begin{equation}
\label{SPILreverse}
    \left(\dashint_{Q^{1}}|u-(u)_{Q^{1}}|^{\tilde{\gamma}}\dx\right)^\frac{1}{\tilde{\gamma}}\leq cd\left(Q^{1}\right)^{s}\left(\int_{Q^{1}}\dashint_{Q^{1}}\frac{|u(x)-u(y)|^{\tilde{p}}}{|x-y|^{n+s\tilde{p}}}\dx\dy\right)^{\frac{1}{\tilde{p}}}.
\end{equation}
In light of \eqref{SPILreverse}, the fact that 
\begin{equation*}
    1\leq\left(\frac{2d(\mathcal{Q})}{|x-y|}\right)^{\tilde{p}(\alpha+\tau)-p\tau}\quad\text{any }x,y\in Q^{1}
\end{equation*} and \eqref{size of b mut}, we estimate $I_{1}$ as
\begin{equation*}
\begin{aligned}
I_{1}&\leq \frac{c\dist(Q^{1},Q^{2})^{-\tilde{\gamma}\left(s+\alpha+\tau\right)}}{d(\mathcal{Q})^{2n}}d(\mathcal{Q})^{2n+\tilde{\gamma} s}\left(\int_{Q^{1}}\dashint_{Q^{1}}\frac{|u(x)-u(y)|^{\tilde{p}}}{|x-y|^{n+s\tilde{p}}}\dx\dy\right)^{\frac{\tilde{\gamma}}{\tilde{p}}}\\
&\leq \frac{cd(\mathcal{Q})^{\tilde{\gamma}s}}{\dist(Q^{1},Q^{2})^{\tilde{\gamma}\left(s+\alpha+\tau\right)}}\left(\int_{Q^{1}}\dashint_{Q^{1}}\left(\frac{2d(\mathcal{Q})}{|x-y|}\right)^{\tilde{p}(\alpha+\tau)-p\tau}\frac{|u(x)-u(y)|^{\tilde{p}}}{|x-y|^{n+s\tilde{p}}}\dx\dy\right)^{\frac{\tilde{\gamma}}{\tilde{p}}}\\
&\leq c\left(\frac{d(\mathcal{Q})}{\dist(Q^{1},Q^{2})}\right)^{\tilde{\gamma}\left(s+\alpha+\tau\right)}\left(\frac{1}{\tau}\dashint_{Q^{1}\times Q^{1}}|D^{\tau}d_{\alpha+s}u|^{\tilde{p}}\dmut\right)^{\frac{\tilde{\gamma}}{\tilde{p}}}
\end{aligned}
\end{equation*}
for some constant $c=c(n,s,p,\tilde{p},\tilde{\gamma})$. Likewise, we have 
\begin{equation*}
    I_{2}\leq c\left(\frac{d(\mathcal{Q})}{\dist(Q^{1},Q^{2})}\right)^{\tilde{\gamma}\left(s+\alpha+\tau\right)}\left(\frac{1}{\tau}\dashint_{Q^{2}\times Q^{2}}|D^{\tau}d_{\alpha+s}u|^{\tilde{p}}\dmut\right)^{\frac{\tilde{\gamma}}{\tilde{p}}}.
\end{equation*}
\textbf{Estimate of $J$.}
In light of Jensen's inequality, \eqref{xydist} and \eqref{measure of qintau}, we estimate $J$ as
\begin{equation*}
\begin{aligned}
J&\leq c\frac{1}{\dist(Q^{1},Q^{2})^{\tilde{\gamma}\left(s+\alpha+\tau\right)}}\left(\dashint_{Q^{1}}\dashint_{Q^{2}}|u(x)-u(y)|^{p}\dx\dy\right)^{\frac{\tilde{\gamma}}{p}}\\
&\leq c\left(\dashint_{Q^{1}}\dashint_{Q^{2}}\left(\frac{|u(x)-u(y)|}{|x-y|^{s+\alpha+\tau}}\right)^{p}\dx\dy\right)^{\frac{\tilde{\gamma}}{p}}\\
&\leq c\left(\dashint_{\mathcal{Q}}|D^{\tau}d_{\alpha+s}u|^{p}\dmut\right)^{\frac{\tilde{\gamma}}{p}}.
\end{aligned}
\end{equation*}
Combine all the estimates $I_{1}$, $I_{2}$ and $J$ to get the desired result.
\end{proof}

\noindent
\begin{rmk}
We now prove that if $\mathcal{Q}\in\mathcal{A}$, then there is a constant $c_{oh}=c_{oh}(n,s,p,\tilde{p},\tilde{\gamma})$ such that
\begin{equation}
\label{bgamma norm}
\left(\dashint_{\mathcal{Q}}|D^{\tau}d_{\alpha+s}u|^{\tilde{\gamma}}\dmut\right)^{\frac{1}{\tilde{\gamma}}}\leq c_{oh}\lambda\quad\text{for any }\mathcal{Q}\in\mathcal{A}.
\end{equation}
By $p\leq\tilde{p}$, $\tilde{\mathcal{Q}}\subset 4\mathcal{Q}$ and $P^{d}\tilde{\mathcal{Q}}\subset 4P^{d}\mathcal{Q}$, we observe  that
\begin{equation*}
\begin{aligned}
    \left(\dashint_{\mathcal{Q}}|D^{\tau}d_{\alpha+s}u|^{p}\dmut\right)^{\frac{1}{p}}&\leq \left(\dashint_{\mathcal{Q}}|D^{\tau}d_{\alpha+s}u|^{\tilde{p}}\dmut\right)^{\frac{1}{\tilde{p}}}\leq \left(\frac{\mu_{\tau}(4\mathcal{Q})}{\mu_{\tau}(\mathcal{Q})}\dashint_{\Tilde{\mathcal{Q}}}|D^{\tau}d_{\alpha+s}u|^{\tilde{p}}\dmut\right)^{\frac{1}{\tilde{p}}}
\end{aligned}
\end{equation*}
and
\begin{equation*}
\begin{aligned}
&\left(\frac{d(\mathcal{Q})}{\dist(Q^{1},Q^{2})}\right)^{s+\alpha+\tau}\left[\sum_{d=1}^{2}\left(\frac{1}{\tau}\dashint_{P^{d}\mathcal{Q}}|D^{\tau}d_{\alpha+s}u|^{\tilde{p}}\dmut\right)^{\frac{1}{\tilde{p}}}\right]\\
&\leq \left(\frac{d(\tilde{\mathcal{Q}})}{\dist(\tilde{Q}^{1},\tilde{Q}^{2})}\right)^{s+\alpha+\tau}\left[\sum_{d=1}^{2}\left(\frac{\mu_{\tau}(4P^{d}\mathcal{Q})}{\mu_{\tau}(P^{d}\mathcal{Q})}\frac{1}{\tau}\dashint_{P^{d}\tilde{\mathcal{Q}}}|D^{\tau}d_{\alpha+s}u|^{\tilde{p}}\dmut\right)^{\frac{1}{\tilde{p}}}\right].
\end{aligned}
\end{equation*}
Using \eqref{czdere}, \eqref{tildeA}, Lemma \ref{nd lemma}, \eqref{doubling} and the above observations, we have \eqref{bgamma norm}.
\end{rmk}

\noindent
\textbf{Step 5. Decomposition of $\mathcal{A}$.}
We now decompose the family $\mathcal{A}$ into $AD_{\lambda}=\bigcap\limits_{d=1}^{2}AD_{\lambda}^{d}$ and $ND_{\lambda}=\bigcup\limits_{d=1}^{2}ND_{\lambda}^{d}$, where we define
\begin{equation}
\label{almost diagonal cubes2}
    AD_{\lambda}^{d}=\left\{\mathcal{Q}\in \mathcal{A}\,;\,\frac{1}{\tau}\dashint_{P^{d}\mathcal{Q}}|D^{\tau}d_{\alpha+s}u|^{\tilde{p}}\dmut\leq\left(\frac{\lambda}{4}\right)^{\tilde{p}}\right\}
\end{equation}
and
\begin{equation}
\label{non diagonal cubes2}
    ND_{\lambda}^{d}=\left\{\mathcal{Q}\in \mathcal{A}\,;\,\frac{1}{\tau}\dashint_{P^{d}\mathcal{Q}}|D^{\tau}d_{\alpha+s}u|^{\tilde{p}}\dmut>\left(\frac{\lambda}{4}\right)^{\tilde{p}}\right\}.
\end{equation}
We now estimate $\mu_{\tau}(\mathcal{Q})$ for each cubes $\mathcal{Q}\in \mathcal{A}$.

\noindent
\textbf{Step 6. Measure estimate of $\mathcal{Q}\in AD_{\lambda}$.}
We first estimate $\mu_{\tau}(\mathcal{Q})$ for $\mathcal{Q}\in AD_{\lambda}$.
\begin{lem}
There is a constant $c_{ad}=c_{ad}(\tilde{p})$ such that for every $\mathcal{Q}=Q^{1}\times Q^{2}\in AD_{\lambda}$,
\begin{equation}
\label{measqad}
\mu_{\tau}(\mathcal{Q})\leq \frac{c_{ad}}{\lambda^{\tilde{p}}}\int_{\mathcal{Q}\cap\left\{|D^{\tau}d_{\alpha+s}u|>\kappa\lambda\right\}}|D^{\tau}d_{\alpha+s}u|^{\tilde{p}}\dmut.
\end{equation}
\end{lem}
\begin{proof}
In light of \eqref{czdere} and Jensen's inequality, we have that
\begin{equation}
\label{nd app}
\begin{aligned}
    \lambda^{\tilde{p}}
    &\leq 3^{\tilde{p}-1}\left(\dashint_{\mathcal{Q}}|D^{\tau}d_{\alpha+s}u|^{\tilde{p}}\dmut\right)\\
    &\quad+3^{\tilde{p}-1}\left(\frac{d(\mathcal{Q})}{\dist(Q^{1},Q^{2})}\right)^{\tilde{p}\left(s+\alpha+\tau\right)}\left[\sum_{d=1}^{2}\left(\frac{1}{\tau}\dashint_{P^{d}\mathcal{Q}}|D^{\tau}d_{\alpha+s}u|^{\tilde{p}}\dmut\right)\right].
\end{aligned}
\end{equation}
Note that
\begin{equation}
\label{ptermesti}
    \left(\dashint_{\mathcal{Q}}|D^{\tau}d_{\alpha+s}u|^{\tilde{p}}\dmut\right)\leq (\kappa\lambda)^{\tilde{p}}+\frac{1}{\mu_{\tau}(\mathcal{Q})}\int_{\mathcal{Q}\cap\left\{|D^{\tau}d_{\alpha+s}u|>\kappa\lambda\right\}}|D^{\tau}d_{\alpha+s}u|^{\tilde{p}}\dmut.
\end{equation}
Taking into account \eqref{kappa0}, \eqref{tildeA}, \eqref{almost diagonal cubes2},  \eqref{nd app} and \eqref{ptermesti}, we find that
\begin{equation*}
    \frac{\lambda^{\tilde{p}}}{2}\leq \frac{3^{\tilde{p}-1}}{\mu_{\tau}(\mathcal{Q})}\int_{\mathcal{Q}\cap\left\{|D^{\tau}d_{\alpha+s}u|>\kappa\lambda\right\}}|D^{\tau}d_{\alpha+s}u|^{\tilde{p}}\dmut,
\end{equation*}
which implies the desired result \eqref{measqad}.
\end{proof}

\noindent
\textbf{Step 7. Measure estimate of $\mathcal{Q}\in ND_{\lambda}$.}
We next want to estimate $\mu_{\tau}(\mathcal{Q})$ for $\mathcal{Q}\in ND_{\lambda}$. To do this, we introduce families of countable diagonal cubes 
\begin{equation*}
    P^{d}ND_{\lambda}=\left\{P^{d}\mathcal{Q}\,;\, \mathcal{Q}=Q^{1}\times Q^{2}\in ND_{\lambda}\right\} \quad(d=1,2).
\end{equation*}
Then we observe the following two facts (see \cite[Section 5]{KMS1}):
\begin{enumerate}
\item $P^{1}ND_{\lambda}=P^{2}ND_{\lambda}$.
\item There is a subfamily of disjoint cubes $NH\subset P^{1}ND_{\lambda}$ such that
\begin{equation}
\label{hnd}
    \bigcup\limits_{\mathcal{H}\in NH}\mathcal{H}=\bigcup_{\mathcal{Q}\in P^{1}ND_{\lambda}}\mathcal{Q}=\bigcup_{\mathcal{Q}\in P^{2}ND_{\lambda}}\mathcal{Q}.
\end{equation}
\end{enumerate}

We now introduce the following lemma which gives an information of qualitative distance between $Q^{1}$ and $Q^{2}$ for $\mathcal{Q}=Q^{1}\times Q^{2}\in ND_{\lambda}$. 
\begin{lem}
\label{dist of ndl}
Let $\mathcal{Q}=Q^{1}\times Q^{2}\in ND_{\lambda}$. Assume that $Q^{d}\times Q^{d}\subset \mathcal{H}$ for some $d\in\{1,2\}$ and $\mathcal{H}\in NH$. Then we have
\begin{equation*}
    \dist\left(Q^{1},Q^{2}\right)\geq d(\mathcal{H}).
\end{equation*}
\end{lem}
\begin{proof}
We first assume $d=1$. We prove this Lemma by contradiction. Suppose that 
\begin{equation}
\label{cotra}
\dist\left(Q^{1},Q^{2}\right)< d(\mathcal{H}).
\end{equation}
Since $\mathcal{Q}$ and $\mathcal{H}$ are dyadic cubes with $d(\mathcal{Q})\leq \dist\left(Q^{1},Q^{2}\right)<d(\mathcal{H})$, we observe 
\begin{equation}
\label{dqdhdist}
2d(\mathcal{Q})\leq d(\mathcal{H}).
\end{equation}
Let $(x_{c},x_{c})$ be the center of $\mathcal{H}$. Then we find that for $\mathcal{B}\equiv \mathcal{B}_{\frac{d(\mathcal{H})}{2}}\left(x_{c}\right)$,
\begin{equation}
\label{sizeofbh}
    \mathcal{B}\subset\mathcal{H}\subset\sqrt{n}\mathcal{B}.
\end{equation}
For any $(x,y)\in \mathcal{Q}$, we first note from \eqref{cotra} and \eqref{dqdhdist} that
\begin{equation}
\label{xyxcxcdist}
\begin{aligned}
    |(x,y)-(x_{c},x_{c})|&\leq |(x,y)-(x,y')|+|(x,y')-(x_{c},x')|+|(x_{c},x')-(x_{c},x_{c})|\\
    &\leq \sqrt{n}d(\mathcal{Q})+\sqrt{n}\frac{d(\mathcal{H})}{2}+\dist\left(Q^{1},Q^{2}\right)+\sqrt{n}\frac{d(\mathcal{H})}{2}\\
    &\leq \sqrt{n}\left[\frac{d(\mathcal{H})}{2}+\frac{d(\mathcal{H})}{2}+\frac{d(\mathcal{H})}{2}\right]+\dist\left(Q^{1},Q^{2}\right)< 5\sqrt{n}d(\mathcal{H}),
\end{aligned}
\end{equation}
where $(x',y')\in\mathcal{Q}$ such that $\dist\left(Q^{1},Q^{2}\right)=|x'-y'|$.
Thus, \eqref{sizeofbh} and \eqref{xyxcxcdist} imply that $\mathcal{Q},\mathcal{H}\subset \mathcal{B}_{5\sqrt{n}d(\mathcal{H})}\left(x_{c}\right)$. In addition, by following the same arguments as in \eqref{z0r1} and using the fact that $\mathcal{B}_{5\sqrt{n}d(\mathcal{H})}\left(x_{c}\right)\subset \mathcal{B}_{10\sqrt{n}d(\mathcal{H})}\left(z_{0}\right)$, there is a point $z_{0}\in B_{r_{1}}\cap Q^{1}$ such that
\begin{equation}
\label{hqrelationb}
    \mathcal{Q}\subset \mathcal{H}\subset \mathcal{B}_{10\sqrt{n}d(\mathcal{H})}\left(z_{0}\right).
\end{equation}
Then using \eqref{non diagonal cubes2}, \eqref{hnd}, \eqref{hqrelationb} and \eqref{size of b mut}, we observe that
\begin{equation*}
\begin{aligned}
    \left(\frac{\lambda}{4}\right)^{\tilde{p}}&< \frac{1}{\tau}\dashint_{\mathcal{H}}|D^{\tau}d_{\alpha+s}u|^{\tilde{p}}\dmut\\
    &\leq \frac{1}{\tau}\frac{\mu_{\tau}\left(\mathcal{B}_{10\sqrt{n}d(\mathcal{H})}\left(z_{0}\right)\right)}{\mu_{\tau}\left(\mathcal{B}_{\frac{d(\mathcal{H})}{2}}\left(x_{c}\right)\right)}\dashint_{\mathcal{B}_{10\sqrt{n}d(\mathcal{H})}\left(z_{0}\right)}|D^{\tau}d_{\alpha+s}u|^{\tilde{p}}\dmut\\
    &=(20\sqrt{n})^{n+p\tau}\frac{1}{\tau}\dashint_{\mathcal{B}_{10\sqrt{n}d(\mathcal{H})}\left(z_{0}\right)}|D^{\tau}d_{\alpha+s}u|^{\tilde{p}}\dmut,
\end{aligned}
\end{equation*}
which implies that
\begin{equation}
\label{contr h}
    (\kappa\lambda)^{\tilde{p}}<\frac{\tau}{(20n)^{2n}4^{\tilde{p}}}\lambda^{\tilde{p}}<\dashint_{\mathcal{B}_{10\sqrt{n}d(\mathcal{H})}\left(z_{0}\right)}|D^{\tau}d_{\alpha+s}u|^{\tilde{p}}\dmut.
\end{equation}
Since \eqref{k0 cond} gives that $10\sqrt{n}d(\mathcal{H})<\frac{r_{2}-r_{1}}{10}$,  we find that $\mathcal{B}_{10\sqrt{n}d(\mathcal{H})}\left(z_{0}\right)\subset \bigcup\limits_{i}\mathcal{B}_{5\rho_{x_{i}}}(x_{i})$, where we have used \eqref{bexitradius}, \eqref{contr h} and the fact that $z_{0}\in B_{r_{1}}$. Thus, this contradicts the definition of the family $\mathcal{A}$ given in \eqref{tildeadefn}. 
We similarly prove the case for $d=2$.
\end{proof}
With the above lemma, we now prove the following measure estimates for the cubes in $ND_{\lambda}$.
\begin{lem}
There is a constant $c_{nd}=c_{nd}(n,s,p,\tilde{p})$ such that
\begin{equation}
\label{sumofmeasqnd}
    \sum_{\mathcal{Q}\in ND_{\lambda}}\mu_{\tau}(\mathcal{Q})\leq\frac{c_{nd}}{\lambda^{\tilde{p}}}\int_{\mathcal{B}_{r_{2}}\cap\left\{\,|D^{\tau}d_{\alpha+s}u|>\kappa\lambda\right\}}|D^{\tau}d_{\alpha+s}u|^{\tilde{p}}\dmut.
\end{equation} 
\end{lem}
\begin{proof}
Let $\mathcal{Q}\in ND_{\lambda}$.
We then note from \eqref{tildeA} that
\begin{equation}
\label{simple cal}
\begin{aligned}
    &\left(\frac{d(\mathcal{Q})}{\dist(Q^{1},Q^{2})}\right)^{\tilde{p}\left(s+\alpha+\tau\right)}\left[\left(\frac{1}{\tau}\dashint_{P^{d}\mathcal{Q}}|D^{\tau}d_{\alpha+s}u|^{\tilde{p}}\dmut\right)\right]\\
    &\leq \frac{(\kappa\lambda)^{\tilde{p}}}{\tau}+\left(\frac{d(\mathcal{Q})}{\dist(Q^{1},Q^{2})}\right)^{\tilde{p}\left(s+\alpha+\tau\right)}\left[\frac{1}{\tau}\left(\frac{1}{\mu_{\tau}(P^{d}\mathcal{Q})}\int_{P^{d}\mathcal{Q}\cap\left\{|D^{\tau}d_{\alpha+s}u|>\kappa\lambda\right\}}|D^{\tau}d_{\alpha+s}u|^{\tilde{p}}\dmut\right)\right]
\end{aligned}
\end{equation}
for $d=1$ and 2.
In light of \eqref{czdere}, \eqref{kappa0}, Jensen's inequality, \eqref{ptermesti} and  \eqref{simple cal}, we have that
\begin{equation}
\label{adlndlcalforme}
\begin{aligned}
    &\frac{\lambda^{\tilde{p}}}{2}\mu_{\tau}(\mathcal{Q})\\
    &\leq 3^{\tilde{p}-1}\int_{\mathcal{Q}\cap\left\{|D^{\tau}d_{\alpha+s}u|>\kappa\lambda\right\}}|D^{\tau}d_{\alpha+s}u|^{\tilde{p}}\dmut\\
    &\quad+3^{\tilde{p}-1}\left(\frac{d(\mathcal{Q})}{\dist(Q^{1},Q^{2})}\right)^{\tilde{p}\left(s+\alpha+\tau\right)}\left[\sum_{d=1}^{2}\left(\frac{1}{\tau}\frac{\mu_{\tau}(\mathcal{Q})}{\mu_{\tau}(P^{d}\mathcal{Q})}\int_{P^{d}\mathcal{Q}\cap\left\{|D^{\tau}d_{\alpha+s}u|>\kappa\lambda\right\}}|D^{\tau}d_{\alpha+s}u|^{\tilde{p}}\dmut\right)\right].
\end{aligned}
\end{equation}
We now denote  
\begin{equation}
\label{Kd12}
\begin{aligned}
    K^{d}=\sum_{\mathcal{Q}\in ND_{\lambda}^{d}}\left(\frac{d(\mathcal{Q})}{\dist(Q^{1},Q^{2})}\right)^{\tilde{p}\left(s+\alpha+\tau\right)}\left[\left(\frac{1}{\tau}\frac{\mu_{\tau}(\mathcal{Q})}{\mu_{\tau}(P^{d}\mathcal{Q})}\int_{P^{d}\mathcal{Q}\cap\left\{|D^{\tau}d_{\alpha+s}u|>\kappa\lambda\right\}}|D^{\tau}d_{\alpha+s}u|^{\tilde{p}}\dmut\right)\right]    
\end{aligned}
\end{equation}
for $d=1$ and $2$.
To estimate $K^{d}$ for each $d=1$ and 2, we investigate families of countable disjoint cubes  
\begin{equation}
\label{lengthqh}
    ND^{d}_{\lambda}(\mathcal{H})=\left\{\mathcal{Q}\in ND_{\lambda}^{d}\,;\, P^{d}\mathcal{Q}\subset\mathcal{H}\right\}\quad(d=1,2)
\end{equation}
for $\mathcal{H}\in NH$. We note from \eqref{hnd} that 
\begin{equation*}
   ND^{d}_{\lambda}=\bigcup\limits_{\mathcal{H}\in NH}ND_{\lambda}^{d}(\mathcal{H}),
\end{equation*}
where $\left\{ND_{\lambda}^{d}(\mathcal{H})\right\}_{\mathcal{H}\in NH}$ are mutually disjoint classes.
Using \eqref{lengthqh}, we first find countable mutually disjoint families of cubes $\left[ND^{d}_{\lambda}(\mathcal{H})\right]_{i}$ such that
\begin{equation*}
    \left[ND^{d}_{\lambda}(\mathcal{H})\right]_{i}=\left\{\mathcal{Q}\in ND^{d}_{\lambda}(\mathcal{H})\,;\, d(\mathcal{Q})=2^{-i}d(\mathcal{H})\right\} \quad (i\geq0)
\end{equation*}
and
\begin{equation*}
    ND^{d}_{\lambda}(\mathcal{H})=\bigcup\limits_{i=0}^{\infty}\left[ND^{d}_{\lambda}(\mathcal{H})\right]_{i}.
\end{equation*}
In light of Lemma \ref{dist of ndl}, we next decompose 
$\left[ND^{d}_{\lambda}(\mathcal{H})\right]_{i}$ into subfamilies $\left[ND^{d}_{\lambda}(\mathcal{H})\right]_{i,j}$ which are defined by 
\begin{equation*}
    \left[ND^{d}_{\lambda}(\mathcal{H})\right]_{i,j}=\left\{\mathcal{Q}\in \left[ND^{d}_{\lambda}(\mathcal{H})\right]_{i}\,;\, 2^{j}d(\mathcal{H})\leq\dist(Q^{1},Q^{2})< 2^{j+1}d(\mathcal{H})\text{ for }\mathcal{Q}=Q^{1}\times Q^{2}\right\}
\end{equation*}
for $j\geq0$. To further decompose  $\left[ND^{d}_{\lambda}(\mathcal{H})\right]_{i,j}$ into the smaller families, we denote all diagonal dyadic cubes in $\mathcal{H}$ with the side-length $2^{-i}d(\mathcal{H})$ by $\mathcal{H}_{i}(k)=H_{i}(k)\times H_{i}(k)$ for $i\geq0$ and $k=1,2,\ldots,2^{in}$.
From the above notation, we set
\begin{equation*}
    \left[ND^{d}_{\lambda}(\mathcal{H})\right]_{i,j,k}=\left\{\mathcal{Q}\in \left[ND^{d}_{\lambda}(\mathcal{H})\right]_{i,j}\,;\, P^{d}\mathcal{Q}=\mathcal{H}_{i}(k) \right\}.
\end{equation*}
Then $\left\{\left[ND^{d}_{\lambda}(\mathcal{H})\right]_{i,j,k}\right\}_{i,j,k\geq0}$ is a class of the mutually disjoint families such that
\begin{equation*}
    ND^{d}_{\lambda}(\mathcal{H})=\bigcup\limits_{i,j,k\geq0}\left[ND^{d}_{\lambda}(\mathcal{H})\right]_{i,j,k}.
\end{equation*}
We first note that for any $\mathcal{Q}\in \left[ND^{1}_{\lambda}(\mathcal{H})\right]_{i,j,k}$, $\mathcal{Q}$ is of the form $H_{i}(k)\times Q^{2}$ for some dyadic cube $Q^{2}$ satisfying 
\begin{equation}
\label{ndlambdaijk}
d(Q^{2})=2^{-i}d(\mathcal{H})\quad\text{and}\quad 2^{j}d(\mathcal{H})\leq\dist(H_{i}(k),Q^{2})< 2^{j+1}d(\mathcal{H}).
\end{equation}
Thus, we find that there exists a constant $c$ depending only on $n$ such that
\begin{equation}
\label{numberofcube}
\left|\left[ND^{d}_{\lambda}(\mathcal{H})\right]_{i,j,k}\right|\leq c2^{n(i+j)},
\end{equation}
where $\left|\left[ND^{d}_{\lambda}(\mathcal{H})\right]_{i,j,k}\right|$ denotes the number of the cubes in $\left[ND^{d}_{\lambda}(\mathcal{H})\right]_{i,j,k}$.
From the decomposition of $ND_{\lambda}^{d}(\mathcal{H})$, we now estimate $K^{d}$ for each $d=1$ and $2$.
We first note from \eqref{size of b mut} and \eqref{measure of qintau} that there is a constant $c=c(n)$ such that
\begin{equation}
\label{qqh}
    \frac{\mu_{\tau}(\mathcal{Q})}{\mu_{\tau}(P^{d}\mathcal{Q})}\leq c\frac{\tau}{\nu_{0}} \left(\frac{d(\mathcal{Q})}{\dist(Q^{1},Q^{2})}\right)^{n-p\tau}.
\end{equation}
According to \eqref{qqh}, the fact that $p\tau\leq \tilde{p}\tau$ and \eqref{ndlambdaijk}, we find
\begin{equation*}
\begin{aligned}
K^{d}&\leq \frac{c}{\nu_{0}}\sum_{\mathcal{H}\in H}\sum_{i,j,k}\sum_{\mathcal{Q}\in\left[ND^{d}_{\lambda}(\mathcal{H})\right]_{i,j,k}} \left(\frac{d(\mathcal{Q})}{\dist(Q^{1},Q^{2})}\right)^{n-p\tau+\tilde{p}(s+\alpha)+\tilde{p}\tau}\\
    &\qquad\qquad\qquad\qquad\qquad\qquad\times\left(\int_{P^{d}\mathcal{Q}\cap\left\{|D^{\tau}d_{\alpha+s}u|>\kappa\lambda\right\}}|D^{\tau}d_{\alpha+s}u|^{\tilde{p}}\dmut\right)\\
    &\leq \frac{c}{\nu_{0}}\sum_{\mathcal{H}\in H}\sum_{i,j\geq0}\sum_{k=1}^{2^{in}}\sum_{\mathcal{Q}\in\left[ND^{d}_{\lambda}(\mathcal{H})\right]_{i,j,k}}\left(2^{-(i+j)}\right)^{n+\tilde{p}(s+\alpha)}\\
    &\qquad\qquad\qquad\qquad\qquad\qquad\qquad\times\left(\int_{\mathcal{H}_{i}(k)\cap\left\{|D^{\tau}d_{\alpha+s}u|>\kappa\lambda\right\}}|D^{\tau}d_{\alpha+s}u|^{\tilde{p}}\dmut\right).
\end{aligned}
\end{equation*}
Using \eqref{numberofcube} and the fact that $\mathcal{H}_{i}(k)$ is disjoint for each $i$ and $k$, we further estimate $K^{d}$ as
\begin{equation*}
\begin{aligned}
    K^{d}&\leq \frac{1}{\nu_{0}}\sum_{\mathcal{H}\in H}\sum_{i,j\geq0}\sum_{k=1}^{2^{in}} c\left(2^{-(i+j)}\right)^{\tilde{p}(s+\alpha)}\left(\int_{\mathcal{H}_{i}(k)\cap\left\{|D^{\tau}d_{\alpha+s}u|>\kappa\lambda\right\}}|D^{\tau}d_{\alpha+s}u|^{\tilde{p}}\dmut\right)\\
    &\leq \frac{1}{\nu_{0}}\sum_{\mathcal{H}\in H}\sum_{i,j\geq0} c\left(2^{-(i+j)}\right)^{sp}\left(\int_{\mathcal{H}\cap\left\{|D^{\tau}d_{\alpha+s}u|>\kappa\lambda\right\}}|D^{\tau}d_{\alpha+s}u|^{\tilde{p}}\dmut\right)\\
    &\leq c\sum_{\mathcal{H}\in H}\left(\int_{\mathcal{H}\cap\left\{|D^{\tau}d_{\alpha+s}u|>\kappa\lambda\right\}}|D^{\tau}d_{\alpha+s}u|^{\tilde{p}}\dmut\right)
\end{aligned}
\end{equation*}
for some constant $c=c(n,s,p)$,
For the last inequality, we have used the fact that
\begin{equation}
\label{sumof2minus}
    \sum_{i,j\geq0} \left(2^{-(i+j)}\right)^{a}\leq \frac{2^{a}}{a\mathrm{ln}2}\quad(a>0).
\end{equation}
 Since $NH$ is the family of disjoint dyadic cubes and $\mathcal{H}\subset \mathcal{B}_{r_{2}}$, the above estimate yields that
\begin{equation}
\label{kh}
    K^{d}\leq c\int_{\mathcal{B}_{r_{2}}\cap\left\{|D^{\tau}d_{\alpha+s}u|>\kappa\lambda\right\}}|D^{\tau}d_{\alpha+s}u|^{\tilde{p}}\dmut.
\end{equation}
Then we note from \eqref{almost diagonal cubes2}, \eqref{nd app}, \eqref{adlndlcalforme}, \eqref{Kd12} and \eqref{kh} that
\begin{equation*}
\begin{aligned}
    \sum_{\mathcal{Q}\in ND_{\lambda}^{1}\cap AD_{\lambda}^{2}}\frac{\lambda^{\tilde{p}}}{4}\mu_{\tau}(\mathcal{Q})&\leq 3^{\tilde{p}-1}\sum_{\mathcal{Q}\in ND_{\lambda}^{1}\cap AD_{\lambda}^{2}}\int_{\mathcal{Q}\cap\left\{|D^{\tau}d_{\alpha+s}u|>\kappa\lambda\right\}}|D^{\tau}d_{\alpha+s}u|^{\tilde{p}}\dmut+3^{\hat{p}-1}K^{1}\\
    &\leq c\int_{\mathcal{B}_{r_{2}}\cap\left\{|D^{\tau}d_{\alpha+s}u|>\kappa\lambda\right\}}|D^{\tau}d_{\alpha+s}u|^{\tilde{p}}\dmut,
\end{aligned}
\end{equation*}
for some constant $c=c(n,s,p,\tilde{p})$. Likewise, we deduce that there is a constant $c=c(n,s,p,\tilde{p})$ such that
\begin{equation*}
    \sum_{\mathcal{Q}\in ND_{\lambda}^{2}\cap AD_{\lambda}^{1}}\frac{\lambda^{\tilde{p}}}{4}\mu_{\tau}(\mathcal{Q})\leq c\int_{\mathcal{B}_{r_{2}}\cap\left\{|D^{\tau}d_{\alpha+s}u|>\kappa\lambda\right\}}|D^{\tau}d_{\alpha+s}u|^{\tilde{p}}\dmut
\end{equation*}
and
\begin{equation*}
    \sum_{\mathcal{Q}\in ND_{\lambda}^{1}\cap ND_{\lambda}^{2}}\frac{\lambda^{\tilde{p}}}{4}\mu_{\tau}(\mathcal{Q})\leq c\int_{\mathcal{B}_{r_{2}}\cap\left\{|D^{\tau}d_{\alpha+s}u|>\kappa\lambda\right\}}|D^{\tau}d_{\alpha+s}u|^{\tilde{p}}\dmut.
\end{equation*}
By combining the above three estimates, we obtain the desired estimate \eqref{sumofmeasqnd}. 
\end{proof}
\noindent
\textbf{Step 8. The choice of constants.} We now take 
\begin{equation*}
\kappa=\frac{\tau^{\frac{1}{p}}}{(\nu_{0}+1)(80n)^{8n}}.
\end{equation*}
Using the fact that $\tau^{\frac{1}{p}}\leq n\tau^{\frac{1}{\tilde{p}}}$ by \eqref{firstcondoftau}, we have $\kappa\leq \kappa_{0}$ which is given in \eqref{kappa0}.
We next choose 
\begin{equation*}
    c_{d}=\frac{\tau^{\frac{1}{p}}}{\kappa},\quad c_{u}=\frac{\tau^{\frac{1}{p}}}{c_{d}}\quad\mbox{and}\quad c_{od}=c_{ad}+c_{nd},
\end{equation*}
where $c_{d}=c_{d}(n,s,p)$ and $c_{od}=c_{od}(n,s,p,\tilde{p})$.

\noindent
\textbf{Step 9. Conclusion of the proof.}
Let $\lambda\geq \lambda_{0}$ given in \eqref{lambda0}. Then $\lambda$ satisfies \eqref{cond of lambda0}. Therefore, in light of \eqref{bcoveringdia}, \eqref{coveringforoffdiagonal} and \eqref{tildeadefn}, we find \eqref{coveringforelambda}. In addition, taking into account
\eqref{bgamma norm}, \eqref{measqad} and  \eqref{sumofmeasqnd}, we prove \eqref{exitradius}, \eqref{gamma norm} and \eqref{sumofmeasq}. This completes the proof of Lemma \ref{coveringoflevelset}.
\end{proof}
\noindent
We finish this section by providing  estimates which will be needed for the above comparison lemma.
\begin{rmk}
For each diagonal ball $\mathcal{B}_{\rho_{x_{i}}}(x_{i})$ obtained by Lemma \ref{coveringoflevelset}, we want to show that if 
\begin{equation}
\label{alphatausum}
\alpha+\tau<\frac{s}{p-1},
\end{equation}
then 
\begin{equation}
\label{diagonal lambda estimate}
\begin{aligned}
    &\left(\frac{1}{\tau}\dashint_{\mathcal{B}_{20\rho_{x_{i}}}(x_{i})}|D^{\tau}d_{\alpha+s}u|^{p}\dmu_{\tau}\right)^{\frac{1}{p}}+\Tailp\left(\frac{u-(u)_{B_{20\rho_{x_{i}}}}}{(20\rho_{x_{i}})^{\alpha+\tau+s}};B_{20\rho_{x_{i}}}(x_{i})\right)\leq \frac{c}{\frac{s}{p-1}-(\alpha+\tau)}\lambda,\\
    &\left(\frac{1}{\tau}\dashint_{\mathcal{B}_{20\rho_{x_{i}}}(x_{i})}|D^{\tau}d_{\alpha}f|^{\tilde{q}}\dmu_{\tau}\right)^{\frac{1}{\tilde{q}}}+\Tails\left(\frac{f-(f)_{B_{20\rho_{x_{i}}}}}{(20\rho_{x_{i}})^{\alpha+\tau}};B_{20\rho_{x_{i}}}(x_{i})\right)\leq \frac{c}{\frac{s}{p-1}-(\alpha+\tau)}\delta\lambda,
\end{aligned}
\end{equation}
where $c=c(n,s,p)$. To this end, we divide the range of $20\rho_{x_{i}}$ into 
\begin{equation}
\label{rhoxifirst}
\frac{r_{2}-r_{1}}{20}< 20\rho_{x_{i}}\leq 2(r_{2}-r_{1})
\end{equation}
and
\begin{equation}
\label{rhoxisecond}
0<20\rho_{x_{i}}\leq\frac{r_{2}-r_{1}}{20}.
\end{equation}

\noindent
(1). Assume \eqref{rhoxifirst} is true. Using \eqref{size of b mut}, H\"older's inequality and \eqref{lambda0}, we get that
\begin{equation*}
\begin{aligned}
    \left(\frac{1}{\tau}\dashint_{\mathcal{B}_{20\rho_{x_{i}}}(x_{i})}|D^{\tau}d_{\alpha+s}u|^{p}\dmu_{\tau}\right)^{\frac{1}{p}}&\leq \left(\frac{1}{\tau}\frac{\mu_{\tau}\left(\mathcal{B}_{2}\right)}{\mu_{\tau}\left(\mathcal{B}_{20\rho_{x_{i}}}(x_{i})\right)}\dashint_{\mathcal{B}_{2}}|D^{\tau}d_{\alpha+s}u|^{p}\dmu_{\tau}\right)^{\frac{1}{p}}\\
    &\leq \frac{1}{\tau^{\frac{1}{p}}}\left(\frac{40}{r_{2}-r_{1}}\right)^{2n}\left(\dashint_{\mathcal{B}_{2}}|D^{\tau}d_{\alpha+s}u|^{\tilde{p}}\dmu_{\tau}\right)^{\frac{1}{\tilde{p}}}\\
    &\leq c\lambda
\end{aligned}
\end{equation*}
for some constant $c=c(n)$. Similarly, we find
\begin{equation*}
\left(\frac{1}{\tau}\dashint_{\mathcal{B}_{20\rho_{x_{i}}}(x_{i})}|D^{\tau}d_{\alpha}f|^{\tilde{q}}\dmu_{\tau}\right)^{\frac{1}{\tilde{q}}}\leq \frac{1}{\tau^{\frac{1}{p}}}\left(\frac{\mu_{\tau}\left(\mathcal{B}_{2}\right)}{\mu_{\tau}\left(\mathcal{B}_{20\rho_{x_{i}}}(x_{i})\right)}\dashint_{\mathcal{B}_{2}}|D^{\tau}d_{\alpha}f|^{\tilde{q}}\dmu_{\tau}\right)^{\frac{1}{\tilde{q}}}\leq c\delta\lambda,
\end{equation*}
where we have used the fact that $\tau^{\frac{1}{p}}\leq n\tau^{\frac{1}{\tilde{q}}}$ by \eqref{firstcondoftau}. On the other hand, in light of \eqref{tildeptildeqalphacond} and the fact that
\begin{equation*}
    \frac{1}{(20\rho_{x_{i}})^{\alpha+\tau+s}}\leq \left(\frac{20}{r_{2}-r_{1}}\right)^{\alpha+\tau+s}\leq \left(\frac{20}{r_{2}-r_{1}}\right)^{\frac{sp}{p-1}},
\end{equation*} we estimate the tail term of $u$ as 
\begin{equation*}
\begin{aligned}
    \Tailp\left(\frac{u-(u)_{B_{20\rho_{x_{i}}}(x_{i})}}{(20\rho_{x_{i}})^{\alpha+\tau+s}};B_{20\rho_{x_{i}}}(x_{i})\right)&\leq\left(\frac{20}{r_{2}-r_{1}}\right)^{\frac{sp}{p-1}}\Tailp\left(u-(u)_{B_{2}};B_{20\rho_{x_{i}}}(x_{i})\right)\\
    &\quad+\left(\frac{20}{r_{2}-r_{1}}\right)^{\frac{sp}{p-1}}\Tailp\left((u)_{B_{2}}-(u)_{B_{20\rho_{x_{i}}}(x_{i})};B_{20\rho_{x_{i}}}(x_{i})\right)\\
    &\eqqcolon \left(\frac{20}{r_{2}-r_{1}}\right)^{\frac{sp}{p-1}}T_{1}+\left(\frac{20}{r_{2}-r_{1}}\right)^{\frac{sp}{p-1}}T_{2}.
\end{aligned}
\end{equation*}
By the fact that $(|a|+|b|)^{\frac{1}{p-1}}\leq 2|a|^{\frac{1}{p-1}}+2|b|^{\frac{1}{p-1}}$ and $|y-x_{i}|\geq\frac{r_{2}-r_{1}}{2}|y|$ for any $y\in\mathbb{R}^{n}\setminus B_{2}$, we further estimate $T_{1}$ as
\begin{equation*}
\begin{aligned}
    T_{1}&\leq 2\left(\left(20\rho_{x_{i}}\right)^{sp}\int_{\mathbb{R}^{n}\setminus B_{2}}\frac{|u(y)-(u)_{B_{2}}|^{p-1}}{|y-x_{i}|^{n+sp}}\dy\right)^{\frac{1}{p-1}}\\
    &\quad+2\left(\left(20\rho_{x_{i}}\right)^{sp}\int_{B_{2}\setminus B_{20\rho_{x_{i}}}(x_{i})}\frac{|u(y)-(u)_{B_{2}}|^{p-1}}{|y-x_{i}|^{n+sp}}\dy\right)^{\frac{1}{p-1}}\\
    &\leq 2\left(2^{n+2sp}\left(\frac{1}{r_{2}-r_{1}}\right)^{n}\int_{\mathbb{R}^{n}\setminus B_{2}}\frac{|u(y)-(u)_{B_{2}}|^{p-1}}{|y|^{n+sp}}\dy\right)^{\frac{1}{p-1}}\\
    &\quad+2\left(\left(\frac{40}{r_{2}-r_{1}}\right)^{n}|B_{1}|\dashint_{B_{2}}|u(y)-(u)_{B_{2}}|^{p-1}\dy\right)^{\frac{1}{p-1}}.
\end{aligned}
\end{equation*}
Applying H\"older's inequality and \eqref{tkestimate} to the second term in the above last inequality, we observe that there is a constant $c=c(n,s,p)$ such that
\begin{equation*}
\begin{aligned}
    T_{1}&\leq \frac{c}{(r_{2}-r_{1})^{\frac{n}{p-1}}}\Tailp\left(\frac{u-(u)_{B_{2}}}{2^{\alpha+\tau+s}};B_{2}\right)+\frac{c}{(r_{2}-r_{1})^{\frac{n}{p-1}}}\left(\frac{1}{\tau}\dashint_{\mathcal{B}_{2}}|D^{\tau}d_{\alpha+s}u|^{p}\dmut\right)^{\frac{1}{p}}\\
    &\leq \frac{c}{(r_{2}-r_{1})^{\frac{n}{p-1}}}\Tailp\left(\frac{u-(u)_{B_{2}}}{2^{\alpha+\tau+s}};B_{2}\right)+\frac{c}{(r_{2}-r_{1})^{\frac{n}{p-1}}}\left(\frac{1}{\tau}\dashint_{\mathcal{B}_{2}}|D^{\tau}d_{\alpha+s}u|^{\tilde{p}}\dmut\right)^{\frac{1}{\tilde{p}}}.
\end{aligned}
\end{equation*}
For the last inequality, we have used H\"older's inequality. We next estimate $T_{2}$ as
\begin{equation*}
\begin{aligned}
    T_{2}&\leq \frac{c}{(r_{2}-r_{1})^{\frac{n}{p-1}}}\left(\dashint_{B_{2}}|u(y)-(u)_{B_{2}}|^{p-1}\dy\right)^{\frac{1}{p-1}}\leq \frac{c}{(r_{2}-r_{1})^{\frac{n}{p-1}}}\left(\frac{1}{\tau}\dashint_{\mathcal{B}_{2}}|D^{\tau}d_{\alpha+s}u|^{\tilde{p}}\dmut\right)^{\frac{1}{\tilde{p}}},
\end{aligned}
\end{equation*}
where we have used \eqref{scalingpropertytail},  \eqref{meansize}, H\"older's inequality and \eqref{tkestimate}. Combine all the estimates $T_{1}$ and $T_{2}$ together with \eqref{lambda0} and the fact that $\frac{sp}{p-1}+\frac{n}{p-1}\leq 2n$ to see that
\begin{equation}
\label{firsttailu}
\begin{aligned}
   \left(\frac{1}{\tau}\dashint_{\mathcal{B}_{20\rho_{x_{i}}}(x_{i})}|D^{\tau}d_{\alpha+s}u|^{p}\dmu_{\tau}\right)^{\frac{1}{p}}+ \Tailp\left(\frac{u-(u)_{B_{20\rho_{x_{i}}}(x_{i})}}{(20\rho_{x_{i}})^{\alpha+\tau+s}};B_{20\rho_{x_{i}}}(x_{i})\right)\leq c\lambda
\end{aligned}
\end{equation}
for some constant $c=c(n,s,p)$. Likewise, by following the same lines for the proof of \eqref{firsttailu} with $u(x)$, $s$, $\tilde{p}$, $sp$ and $\lambda$ there, replaced by $f(x)$, $0$, $\tilde{q}$, $s$ and $\delta\lambda$, respectively,  we have
\begin{equation}
\label{firsttailf}
\begin{aligned}
    \left(\frac{1}{\tau}\dashint_{\mathcal{B}_{20\rho_{x_{i}}}(x_{i})}|D^{\tau}d_{\alpha}f|^{\tilde{q}}\dmu_{\tau}\right)^{\frac{1}{\tilde{q}}}+\Tails\left(\frac{f-(f)_{B_{20\rho_{x_{i}}}(x_{i})}}{(20\rho_{x_{i}})^{\alpha+\tau}};B_{20\rho_{x_{i}}}(x_{i})\right)\leq c\delta\lambda.
\end{aligned}
\end{equation}
Since $1\leq \frac{1}{\frac{s}{p-1}-(\alpha+\tau)}$ which follows by \eqref{alphatausum}, \eqref{firsttailu} and \eqref{firsttailf} imply \eqref{diagonal lambda estimate}.

\noindent
(2). We now assume \eqref{rhoxisecond}. Then there is a positive integer $k\in\mathbb{N}$ such that 
\begin{equation}
\label{icond}
    \frac{r_{2}-r_{1}}{20}< 2^{k}20\rho_{x_{i}}\leq \frac{r_{2}-r_{1}}{10}.
\end{equation}
Applying Lemma \ref{tail estimate for local notioin} with $\Omega=B_{2}$, $t=s$, $\rho=20\rho_{x_{i}}$, $x_{0}=x_{i}$ and $i=k$, we have that
\begin{equation*}
\begin{aligned}
    \Tailp\left(\frac{u-(u)_{B_{20\rho_{x_{i}}}(x_{i})}}{(20\rho_{x_{i}})^{\alpha+\tau+s}};B_{20\rho_{x_{i}}}(x_{i})\right)&\leq c2^{-k\frac{sp}{p-1}}\Tailp\left(\frac{u-(u)_{B_{2^{k}20\rho_{x_{i}}}(x_{i})}}{(20\rho_{x_{i}})^{\alpha+\tau+s}};B_{2^{k}20\rho_{x_{i}}}(x_{i})\right)\\
    &\quad+c\sum_{j=1}^{k}2^{-j\left(\frac{sp}{p-1}-(\alpha+\tau+s)\right)}\left(\frac{1}{\tau}\dashint_{\mathcal{B}(x_{0},2^{j}20\rho_{x_{i}})}|D^{\tau}d_{\alpha+s}u|^{p}\dmut\right)^{\frac{1}{p}},
\end{aligned}
\end{equation*}
where we denote the two terms on the right-hand side by $T_{3}$ and $T_{4}$.
we first estimate $T_{3}$ as
\begin{equation*}
\begin{aligned}
T_{3}\leq \Tailp\left(\frac{u-(u)_{B_{2^{k}20\rho_{x_{i}}}(x_{i})}}{(2^{k}20\rho_{x_{i}})^{\alpha+\tau+s}};B_{2^{k}20\rho_{x_{i}}}(x_{i})\right)\leq c\lambda,
\end{aligned}
\end{equation*}
where we have used \eqref{firsttailu} along with \eqref{icond} and the fact that 
\begin{equation*}
    2^{-k\frac{sp}{p-1}}\leq 2^{-k(\alpha+\tau+s)}.
\end{equation*}
By H\"older's inequality and \eqref{exitradius}, we next estimate $T_{4}$ as  
\begin{equation*}
\begin{aligned}
T_{4}\leq\sum_{j=1}^{k}\frac{2^{-j\left(\frac{sp}{p-1}-(\alpha+\tau+s)\right)}}{\tau^{\frac{1}{p}}}\left(\dashint_{\mathcal{B}(x_{0},2^{j}20\rho_{x_{i}})}|D^{\tau}d_{\alpha+s}u|^{\tilde{p}}\dmut\right)^{\frac{1}{\tilde{p}}}\leq c\sum_{j=1}^{k}2^{-j\left(\frac{sp}{p-1}-(\alpha+\tau+s)\right)}\lambda
\end{aligned}
\end{equation*}
for some constant $c=c(n,s,p)$. 
Combining the estimates of $T_{3}$, $T_{4}$ and the fact that
\begin{equation*}
    \left(\frac{1}{\tau}\dashint_{\mathcal{B}_{20\rho_{x_{i}}}(x_{i})}|D^{\tau}d_{\alpha+s}u|^{p}\dmu_{\tau}\right)^{\frac{1}{p}}\leq \left(\frac{1}{\tau}\dashint_{\mathcal{B}_{20\rho_{x_{i}}}(x_{i})}|D^{\tau}d_{\alpha+s}u|^{\tilde{p}}\dmu_{\tau}\right)^{\frac{1}{\tilde{p}}}\leq\lambda,
\end{equation*}
which follows from H\"older's inequality and \eqref{exitradius}, we have 
\begin{equation}
\label{secondtailu}
    \left(\frac{1}{\tau}\dashint_{\mathcal{B}_{20\rho_{x_{i}}}(x_{i})}|D^{\tau}d_{\alpha+s}u|^{p}\dmu_{\tau}\right)^{\frac{1}{p}}+\Tails\left(\frac{u-(u)_{B_{20\rho_{x_{i}}}(x_{i})}}{(20\rho_{x_{i}})^{\alpha+\tau+s}};B_{20\rho_{x_{i}}}(x_{i})\right)\leq c\sum_{j=0}^{k}2^{-j\left(\frac{sp}{p-1}-(\alpha+\tau+s)\right)}\lambda
\end{equation}
for some constant $c=c(n,s,p)$. We follow the same lines for proving \eqref{secondtailu} with $u(x)$, $s$, $\tilde{p}$, $sp$ and $\lambda$ there, replaced by $f(x)$, $0$, $\tilde{q}$, $s$ and $\delta\lambda$, respectively, in order to obtain that  
\begin{equation}
\label{secondtailf}
    \left(\frac{1}{\tau}\dashint_{\mathcal{B}_{20\rho_{x_{i}}}(x_{i})}|D^{\tau}d_{\alpha}f|^{\tilde{q}}\dmu_{\tau}\right)^{\frac{1}{\tilde{q}}}+\Tails\left(\frac{f-(f)_{B_{20\rho_{x_{i}}}(x_{i})}}{(20\rho_{x_{i}})^{\alpha+\tau}};B_{20\rho_{x_{i}}}(x_{i})\right)\leq c\sum_{j=0}^{k}2^{-j\left(\frac{s}{p-1}-(\alpha+\tau)\right)}\delta\lambda.
\end{equation}
Applying \eqref{sumof2minus} to the last terms in \eqref{secondtailu} and \eqref{secondtailf}, we obtain \eqref{diagonal lambda estimate}.
\end{rmk}

\section{\texorpdfstring{$L^{q}$}{Lq}-estimate of \texorpdfstring{$d_{s}u$}{dtaudsu}}
\label{section5}
In the previous section, we have proved Lemma \ref{coveringoflevelset} which have constructed coverings of upper level sets for $D^{\tau}d_{\alpha+s}u$. We are now going to prove our main theorem \ref{main theorem} using a bootstrap argument.
To treat an iterative process, we need to introduce new parameters.
We first note that there is the smallest positive integer $l=l(n,s,p)$ such that
\begin{equation}
\label{rangeofl}
    n\leq (l+1)p\frac{s}{2} .
\end{equation}
Let us start with $p_{0}=p$.
If $l=1$, then define
\begin{equation}
\label{gamma0defn}
\gamma_{0}=2q.
\end{equation}
Otherwise, for any nonnegative integer $h<l-1$, we inductively define
\begin{equation}
\label{pgamma}
    p_{h+1}=\frac{np_{h}}{n-p_{h}\frac{s}{2}}\quad\text{and}\quad \gamma_{h}=\frac{np_{h}}{n-p_{h}s}
\end{equation}
with $\gamma_{l-1}=2q$. We note from \eqref{rangeofl} and \eqref{pgamma} that
\begin{equation*}
    p_{h+1}=\frac{np}{n-(h+1)\frac{sp}{2}}\quad\text{and}\quad\gamma_{h}=\frac{np}{n-(h+2)\frac{sp}{2}} \quad\text{for }h<l-1.
\end{equation*}
Then, we find the smallest nonnegative integer $l_{q}=l_{q}(n,s,p,q)$ such that
\begin{equation}
\label{lqrange}
     q<\gamma_{l_{q}}.
\end{equation}
Let $u\in W^{s,p}_{\mathrm{loc}}(B_{4})\cap L^{p-1}_{sp}(\mathbb{R}^{n})$ be a weak solution to the localized problem
\begin{equation}
\label{localizedproblem}
(-\Delta_{p})_{A}^{s}u=(-\Delta_{p})^{\frac{s}{p}}f\quad\text{in }B_{4}.
\end{equation}
Suppose that $f\in L^{p-1}_{s}(\mathbb{R}^{n})$ satisfies $d_{0}f\in L^{q}_{\mathrm{loc}}\left(\mathcal{B}_{4};\frac{\dx\dy}{|x-y|^{n+\sigma q}}\right)$ for any 
\begin{equation*}
    q\in(p,\infty)\quad\text{and}\quad\sigma\in\left(0, \min\left\{\frac{s}{p-1},1-s\right\}\right).
\end{equation*} 
We now divide this section into two subsections depending on the range of $\sigma$ below, \eqref{firstsigmares} and \eqref{assumsigmaforhig}.
\subsection{Restricted range of fractional differentiability $\sigma$.}
\label{subsection51}
We first show the following lemma which will be used to apply a bootstrap argument.  
\begin{lem}
\label{inductiveicz}
Let $u$ be a weak solution to \eqref{localizedproblem} with 
 \begin{equation}
\label{firstsigmares}
\sigma\in\left(0, \left(1-\frac{p}{q}\right)\min\left\{\frac{s}{p-1},1-s\right\}\right)
\end{equation}
and
$D^{\tau}d_{s}u\in L^{p_{h}}_{\mathrm{loc}}\left(\mathcal{B}_{4}\,;\mu_{\tau}\right)$, where $h\in\{0,1,\ldots, l_{q}\}$ and $\tau=\frac{q}{q-p}\sigma$.
Then there are a small positive constant $\delta=\delta(\mathsf{data})$ and a positive constant $c=c(\mathsf{data})$ independent of $h$ such that if $A$ is $(\delta,2)$-vanishing in $B_{4}\times B_{4}$ only at the diagonal, then
\begin{equation}
\label{ph1norm}
\begin{aligned}
\left(\dashint_{\mathcal{B}_{1}}|D^{\tau}d_{s}u|^{\hat{q}}\dmut\right)^{\frac{1}{\hat{q}}}&\leq c\left(\left(\dashint_{\mathcal{B}_{2}}|D^{\tau}d_{s}u|^{p_{h}}\dmut\right)^{\frac{1}{p_{h}}}+\Tailp\left(\frac{u-(u)_{B_{2}}}{2^{\tau+s}};B_{2}\right)\right)\\
&\quad+c\left(\left(\dashint_{\mathcal{B}_{2}}|D^{\tau}d_{0}f|^{\hat{q}}\dmu_{\tau}\right)^{\frac{1}{\hat{q}}}+\Tails\left(\frac{f-(f)_{B_{2}}}{2^{\tau}};B_{2}\right)\right),
\end{aligned}
\end{equation}
where the constant $\hat{q}$ is defined by
\begin{equation}
\label{tildeq}
\hat{q}=\begin{cases}
    p_{h+1}&\quad\text{if }\gamma_{h}\leq q,\\
    q&\quad\text{if }\gamma_{h}>q.
\end{cases}
\end{equation}
\end{lem}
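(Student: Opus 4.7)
The plan is to combine the covering Lemma \ref{coveringoflevelset} with the comparison Lemma \ref{digagonal comparison2} in a good-$\lambda$ argument, and then to upgrade the resulting level-set estimate to the $L^{\hat{q}}$ bound \eqref{ph1norm} by layer-cake integration in $\lambda$ together with the iteration Lemma \ref{technicallemma}. First, I will fix $1 \leq r_1 < r_2 \leq 2$ and apply Lemma \ref{coveringoflevelset} with $\tilde{p} = p_h$, $\tilde{q} = \hat{q}$, $\alpha = 0$, and a small parameter $\delta$ to be fixed below. Hypothesis \eqref{firstsigmares} ensures that $\tau = \tfrac{q\sigma}{q-p} < \min\{s/(p-1),\,1-s\}$, so \eqref{firstcondoftau}, \eqref{alphatausum} with $\alpha=0$, and consequently \eqref{diagonal lambda estimate} are all at our disposal. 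Moreover, inspection of \eqref{condoftildegamma} together with the case analysis \eqref{tildeq} yields $\tilde{\gamma} \geq \hat{q}$ in every situation, so H\"older's inequality applied to \eqref{gamma norm} upgrades the off-diagonal reverse-H\"older control to the $L^{\hat{q}}$-bound $\bigl(\dashint_{\mathcal{Q}} |D^{\tau}d_{s}u|^{\hat{q}}\,d\mu_\tau\bigr)^{1/\hat{q}} \leq c\lambda$ on every $\mathcal{Q}\in\mathcal{A}$.

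Next, on each diagonal ball $\mathcal{B}_{5\rho_{x_i}}(x_i)$ I will invoke Lemma \ref{digagonal comparison2}: the estimate \eqref{diagonal lambda estimate} verifies \eqref{nscaling assumption} at level $c\lambda$ for the $u$-data and at level $c\delta\lambda$ for the $f$- and BMO-data (the latter from $A$ being $(\delta,2)$-vanishing near the diagonal), producing a comparison solution $v_i$ on $B_{10\rho_{x_i}}(x_i)$ with $\|D^{\tau}d_s v_i\|_{L^\infty(\mathcal{B}_{5\rho_{x_i}}(x_i))} \leq c_0\lambda$ and $\tau^{-1}\dashint_{\mathcal{B}_{5\rho_{x_i}}(x_i)} |D^\tau d_s (u-v_i)|^p\,d\mu_\tau \leq (\epsilon\lambda)^p$. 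Setting $M_1 = 2c_0$, Chebyshev gives $\mu_\tau(\mathcal{B}_{5\rho_{x_i}}(x_i)\cap\{|D^\tau d_s u|>M_1\lambda\}) \leq c\epsilon^p\mu_\tau(\mathcal{B}_{5\rho_{x_i}}(x_i))$. Splitting according to which summand in \eqref{exitradius} dominates, I will further bound each $\mu_\tau(\mathcal{B}_{\rho_{x_i}}(x_i))$ by either $c\lambda^{-p_h}\int_{\mathcal{B}_{\rho_{x_i}}(x_i)\cap\{|D^\tau d_s u|>c_u\lambda\}} |D^\tau d_s u|^{p_h}\,d\mu_\tau$ or $c(\delta\lambda)^{-\hat{q}}\int_{\mathcal{B}_{\rho_{x_i}}(x_i)} |D^\tau d_0 f|^{\hat{q}}\,d\mu_\tau$. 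Summing over the disjoint Vitali family and combining with \eqref{sumofmeasq} together with the $L^{\hat{q}}$ reverse-H\"older estimate from the first paragraph yields the good-$\lambda$ inequality
\begin{equation*}
\mu_\tau\bigl(\mathcal{B}_{r_1}\cap\{|D^\tau d_s u|>M_1\lambda\}\bigr) \leq \frac{c\epsilon^p}{\lambda^{p_h}}\int_{\mathcal{B}_{r_2}\cap\{|D^\tau d_s u|>c_u\lambda\}} |D^\tau d_s u|^{p_h}\,d\mu_\tau + \frac{c}{\delta^{\hat{q}}\lambda^{\hat{q}}}\int_{\mathcal{B}_{r_2}} |D^\tau d_0 f|^{\hat{q}}\,d\mu_\tau,
\end{equation*}
valid for every $\lambda \geq \lambda_0$ with $\lambda_0$ as in \eqref{lambda0}.

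Now I multiply by $\hat{q}\lambda^{\hat{q}-1}$ and integrate over $[\lambda_0,\infty)$; Fubini (using $\hat{q}>p_h$) converts the truncated $L^{p_h}$-integral into $c\epsilon^p\int_{\mathcal{B}_{r_2}} |D^\tau d_s u|^{\hat{q}}\,d\mu_\tau$. To circumvent the fact that $D^\tau d_s u$ is not yet known to lie in $L^{\hat{q}}_{\mathrm{loc}}$, I will first run the argument with $|D^\tau d_s u|$ replaced by the truncation $G_k := \min(|D^\tau d_s u|,k)$, obtaining
\begin{equation*}
\phi(r_1) \leq c\epsilon^p\,\phi(r_2) + c\int_{\mathcal{B}_{r_2}} |D^\tau d_0 f|^{\hat{q}}\,d\mu_\tau + \frac{c\,\mathcal{M}^{\hat{q}}}{(r_2-r_1)^{2n\hat{q}}},
\end{equation*}
where $\phi(r) = \int_{\mathcal{B}_r} G_k^{\hat{q}}\,d\mu_\tau$ and $\mathcal{M}$ denotes the right-hand side of \eqref{ph1norm}. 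Choosing $\epsilon$ so small that $c\epsilon^p < 1/2$ (which in turn fixes $\delta$ via Lemma \ref{digagonal comparison2}), applying Lemma \ref{technicallemma} (whose elementary proof extends verbatim with exponent $2n\hat{q}$ in place of $2n$) to absorb $\phi(r_2)$, and letting $k\to\infty$ by monotone convergence will deliver \eqref{ph1norm}.

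The principal technical obstacle is ensuring that the off-diagonal contribution from $\mathcal{A}$ is controlled at exponent $\hat{q}$ rather than some smaller exponent, for otherwise the Fubini step would not close onto a self-referential inequality for $\int |D^\tau d_s u|^{\hat{q}}$. This is exactly what dictates the bootstrap sequence \eqref{pgamma} and \eqref{tildeq}, since the gap $\tilde{\gamma}\geq\hat{q}$ in \eqref{condoftildegamma} must be maintained at every stage. A secondary point is the $h$-independence of constants, which is guaranteed because $\tau$ is fixed and \eqref{firstsigmares} keeps $\tfrac{s}{p-1}-\tau$ bounded away from zero, so all constants in \eqref{diagonal lambda estimate} and Lemma \ref{digagonal comparison2} depend only on $\mathsf{data}$.
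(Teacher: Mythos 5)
Your overall plan follows the same route as the paper (good-$\lambda$ decomposition via Lemma~\ref{coveringoflevelset} and Lemma~\ref{digagonal comparison2}, layer--cake, Fubini, and iteration via Lemma~\ref{technicallemma}), and several variants you propose are legitimate: applying the covering lemma with $\tilde q=\hat q$ rather than $\tilde q=p$ works and in fact slightly simplifies the final Fubini step for $f$, and working with $\phi(r)=\int_{\mathcal B_r}G_k^{\hat q}\,d\mu_\tau$ instead of $\phi_L(r)=\bigl(\dashint G_k^{\hat q}\,d\mu_\tau\bigr)^{1/\hat q}$ is a harmless cosmetic change as long as you track the modified exponents in Lemma~\ref{technicallemma} as you note.

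However, there is a genuine gap in the treatment of the off-diagonal cubes. Your good-$\lambda$ inequality
\begin{equation*}
\mu_\tau\bigl(\mathcal{B}_{r_1}\cap\{|D^\tau d_s u|>M_1\lambda\}\bigr) \leq \frac{c\epsilon^p}{\lambda^{p_h}}\int_{\mathcal{B}_{r_2}\cap\{|D^\tau d_s u|>c_u\lambda\}} |D^\tau d_s u|^{p_h}\,d\mu_\tau + \frac{c}{\delta^{\hat{q}}\lambda^{\hat{q}}}\int_{\mathcal{B}_{r_2}} |D^\tau d_0 f|^{\hat{q}}\,d\mu_\tau
\end{equation*}
asserts that the $u$-contribution carries the small factor $\epsilon^p$. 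That is correct only for the diagonal balls. The contribution from the cubes $\mathcal Q\in\mathcal A$ arises from pairing a Chebyshev estimate on each $\mathcal Q$ with the measure estimate \eqref{sumofmeasq}, and the resulting factor in front of $\lambda^{-p_h}\int_{\{|D^\tau d_s u|>c_u\lambda\}}|D^\tau d_s u|^{p_h}\,d\mu_\tau$ is \emph{not} $\epsilon^p$ but rather a constant that depends on the cutoff level $M_1$. After the Fubini step this contributes $c\,M_1^{\hat q}\cdot(\text{Chebyshev factor})\cdot\int_{\mathcal B_{r_2}}G_k^{\hat q}\,d\mu_\tau$, and for absorption one needs this coefficient to fall below $1/2$.

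This is exactly where the strict gap $\gamma_h>\hat q$ enforced by \eqref{pgamma}--\eqref{tildeq} must be used: performing the Chebyshev step at exponent $\gamma_h$ (i.e.\ using \eqref{gamma norm} directly, \emph{not} its weaker $L^{\hat q}$ corollary) gives $\mu_\tau(\mathcal Q\cap\{|D^\tau d_s u|>M\lambda\})\leq (c/M)^{\gamma_h}\mu_\tau(\mathcal Q)$, and the post-Fubini coefficient becomes $c\,M^{\hat q-\gamma_h}$, which tends to $0$ as $M\to\infty$. Your two simplifications sabotage this mechanism: (i) applying H\"older to downgrade \eqref{gamma norm} from $\tilde\gamma=\gamma_h$ to the exponent $\hat q$ forfeits the decay $M^{\hat q-\gamma_h}$ and leaves only a fixed constant $c^{\hat q}$ after Fubini; (ii) pinning $M_1=2c_0$ removes the freedom to send $M\to\infty$ at all. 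The correct order of operations, as in the paper, is: introduce $M$ as a free large parameter subject to \emph{two} constraints, $M>c_0c_1$ (so that $\{|D^\tau d_s v|>M\lambda\}$ is empty) \emph{and} $c\,M^{\hat q-\gamma_h}\leq 1/2^{2n+2q}$ (off-diagonal absorption); fix $M$; and only then choose $\epsilon$ (hence $\delta$) small enough that $c\,M^{\hat q-p}\epsilon^p\leq 1/2^{2n+2q}$ for the diagonal absorption. Without this decoupling and without retaining the exponent $\gamma_h$ in the off-diagonal Chebyshev, the absorption step fails.
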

\begin{proof}
By H\"older's inequality and the fact that $\tau=\frac{q}{q-p}\sigma$, we first note 
\begin{equation*}
\left(\dashint_{\mathcal{B}_{2}}|D^{\tau}d_{0}f|^{p}\dmu_{\tau}\right)^{\frac{1}{p}}\leq \left(\dashint_{\mathcal{B}_{2}}|D^{\tau}d_{0}f|^{q}\dmu_{\tau}\right)^{\frac{1}{q}}=\left(\frac{1}{\mu_{\tau}\left(\mathcal{B}_{2}\right)}\iint_{\mathcal{B}_{2}}|d_{0}f|^{q}\frac{\dx\dy}{|x-y|^{n+\sigma q}}\right)^{\frac{1}{q}}<\infty.
\end{equation*}
Let $\epsilon\in(0,1)$ which will be determined later. We then take $\delta=\delta(n,s,p,\Lambda,\epsilon)$ given in Lemma \ref{digagonal comparison2}. Let $1\leq r_{1}<r_{2}\leq2$.
We now apply Lemma \ref{coveringoflevelset} with $\alpha=0,\, \tilde{p}=p_{h}$ , $\tilde{q}=p$ and $\tilde{\gamma}=\gamma_{h}$ to find  families of countable disjoint diagonal balls and off-diagonal cubes, $\{\mathcal{B}_{\rho_{x_{i}}}\left(x_{i}\right)\}_{i\in\mathbb{N}}$ and $\{\mathcal{Q}\}_{\mathcal{Q}\in\mathcal{A}}$, such that
\begin{equation*}
   \left\{(x,y)\in \mathcal{B}_{r_{1}}\,;\,|D^{\tau}d_{s}u|(x,y)>\lambda\right\}\subset\left(\bigcup\limits_{i\in\mathbb{N}}\mathcal{B}_{5\rho_{x_{i}}}\left(x_{i}\right)\right)\bigcup \left(\bigcup\limits_{\mathcal{Q}\in\mathcal{A}}\mathcal{Q}\right)
\end{equation*}
for any fixed $\lambda\geq\lambda_{0}$, where $\lambda_{0}$ is given in \eqref{lambda0} with $\alpha=0$, $\tilde{p}=p_{h}$ and $\tilde{q}=p$. Furthermore, Lemma \ref{coveringoflevelset} yields \eqref{exitradius}, \eqref{gamma norm} and \eqref{sumofmeasq} with $\alpha=0,\, \tilde{p}=p_{h}$ , $\tilde{q}=p$ and $\tilde{\gamma}=\gamma_{h}$.
For $L\geq\lambda_{0}$, we define a function $\phi_{L}(r):[1,2]\to\mathbb{R}$ by
\begin{equation*}
    \phi_{L}(r)=\left(\dashint_{\mathcal{B}_{r}}\left|D^{\tau}d_{s}u\right|_{L}^{\hat{q}}\dmut\right)^{\frac{1}{\hat{q}}},
\end{equation*}
where $\left|D^{\tau}d_{s}u\right|_{L}=\max\{\left|D^{\tau}d_{s}u\right|,L\}$. We now want to show that if $L\geq \lambda_{0}$, then
\begin{equation}
\label{desiredgoal1}
\begin{aligned}
    \phi_{L}(r_{1})&\leq \frac{1}{2}\phi_{L}(r_{2})+\frac{c}{(r_{2}-r_{1})^{2n}}\left(\left(\dashint_{\mathcal{B}_{2}}|D^{\tau}d_{s}u|^{p_{h}}\dmut\right)^{\frac{1}{p_{h}}}+\Tailp\left(\frac{u-(u)_{B_{2}}}{2^{\tau+s}};B_{2}\right)\right)\\
&\quad+\frac{c}{(r_{2}-r_{1})^{2n}}\left(\left(\dashint_{\mathcal{B}_{2}}|D^{\tau}d_{0}f|^{\hat{q}}\dmu_{\tau}\right)^{\frac{1}{\hat{q}}}+\Tails\left(\frac{f-(f)_{B_{2}}}{2^{\tau}};B_{2}\right)\right)
\end{aligned}
\end{equation}
for some constant $c=c(\mathsf{data})$ independent of $L$. By \eqref{diagonal lambda estimate} with $\tilde{p}=p_{h}, \alpha=0$ and $\tilde{q}=p$, and the fact that $\tau<\frac{s}{p-1}$, we deduce that
\begin{equation*}
\begin{aligned}
    &\frac{1}{\tau}\dashint_{\mathcal{B}_{20\rho_{x_{i}}}(x_{i})}|D^{\tau}d_{s}u|^{{p}}\dmu_{\tau}+\Tailp\left(\frac{u-(u)_{B_{20\rho_{x_{i}}}}}{(20\rho_{x_{i}})^{\alpha+\tau+s}};B_{20\rho_{x_{i}}}(x_{i})\right)^{p}\leq (c_{1}\lambda)^{{p}},\\
    &\frac{1}{\tau}\dashint_{\mathcal{B}_{20\rho_{x_{i}}}(x_{i})}|D^{\tau}d_{\alpha}f|^{p}\dmu_{\tau}+\Tails\left(\frac{f-(f)_{B_{20\rho_{x_{i}}}}}{(20\rho_{x_{i}})^{\alpha+\tau}};B_{20\rho_{x_{i}}}(x_{i})\right)^{p}\leq (\delta c_{1}\lambda)^{p}
\end{aligned}
\end{equation*}
for some constant $c_{1}=c_{1}(n,s,p,q,\sigma)$. By the above choice of $\delta=\delta(n,s,p,\Lambda,\epsilon)$, we apply Lemma \ref{digagonal comparison} with $\Omega$ and $\lambda$ there, replaced by $B_{4}$ and $c_{1}\lambda$, respectively, to find a weak solution $v$ of \eqref{ncompsol} satisfying
\begin{equation}
\label{ncomp main res1}
    \frac{1}{\tau}\dashint_{\mathcal{B}_{5\rho_{x_{i}}}(x_{i})}|D^{\tau}d_{s}(v-u)|^{p}\dmu_{\tau}\leq (\epsilon c_{1}\lambda)^{p}\quad\text{and}\quad\|D^{\tau}d_{s}v\|_{L^{\infty}(\mathcal{B}_{5\rho_{x_{i}}}(x_{i}))}\leq c_{0}c_{1}\lambda
\end{equation} for some constant $c_{0}=c_{0}(\mathsf{data})$, as $\tau$ depends only on $p,q$ and $\sigma$. From Fubini's theorem, we have that
\begin{equation*}
\begin{aligned}
    \int_{\mathcal{B}_{r_{1}}}|D^{\tau}d_{s}u|_{L}^{\hat{q}}\dmut&=\int_{0}^{\infty}\hat{q}\lambda^{\hat{q}-1}\mu_{\tau}\left\{(x,y)\in\mathcal{B}_{r_{1}}\,;\,|D^{\tau}d_{s}u|_{L}(x,y)>\lambda\right\}\dlambda\\
    &=\int_{0}^{M\lambda_{0}}\hat{q}\lambda^{\hat{q}-1}\mu_{\tau}\left\{(x,y)\in\mathcal{B}_{r_{1}}\,;\,|D^{\tau}d_{s}u|(x,y)>\lambda\right\}\dlambda\\
    &\quad+\int_{M\lambda_{0}}^{L}\hat{q}\lambda^{\hat{q}-1}\mu_{\tau}\left\{(x,y)\in\mathcal{B}_{r_{1}}\,;\,|D^{\tau}d_{s}u|(x,y)>\lambda\right\}\dlambda\coloneqq I+J,
\end{aligned}
\end{equation*}
where $M>1$ will be selected later to control the off-diagonal upper level set of $D^{\tau}d_{s}u$ and $L>M\lambda_{0}$. We first observe that
\begin{equation*}
    I\leq (M\lambda_{0})^{\hat{q}}\mu_{\tau}(\mathcal{B}_{r_{1}}).
\end{equation*}
We next estimate $J$ as
\begin{equation}
\label{estimateofJ}
\begin{aligned}
    J&=\int_{\lambda_{0}}^{LM^{-1}} \hat{q}M^{\hat{q}}\lambda^{\hat{q}-1}\mu_{\tau}\left(\left\{(x,y)\in\mathcal{B}_{r_{1}}\,;\,|D^{\tau}d_{s}u(x,y)|>M\lambda\right\}\right)\dlambda\\
    &\leq \int_{\lambda_{0}}^{LM^{-1}}\hat{q} M^{\hat{q}}\lambda^{\hat{q}-1}\mu_{\tau}\left(\left\{(x,y)\in\left(\bigcup\limits_{i}\mathcal{B}_{5\rho_{x_{i}}}(x_{i})\right)\bigcup \left(\bigcup\limits_{\mathcal{Q}\in\tilde{\mathcal{A}}}\mathcal{Q}\right)\,;\, |D^{\tau}d_{s}u(x,y)|>M\lambda \right\}\right)\dlambda\\
    &\leq \int_{\lambda_{0}}^{LM^{-1}} \hat{q}M^{\hat{q}}\lambda^{\hat{q}-1}\mu_{\tau}\left(\left\{(x,y)\in\left(\bigcup\limits_{i}\mathcal{B}_{5\rho_{x_{i}}}(x_{i})\right) \,;\, |D^{\tau}d_{s}u(x,y)|>M\lambda  \right\}\right)\dlambda\\
    &\quad+\int_{\lambda_{0}}^{LM^{-1}} \hat{q}M^{\hat{q}}\lambda^{\hat{q}-1}\mu_{\tau}\left(\left\{(x,y)\in\left(\bigcup\limits_{\mathcal{Q}\in\tilde{\mathcal{A}}}\mathcal{Q}\right) \,;\, |D^{\tau}d_{s}u(x,y)|>M\lambda \right\}\right)\dlambda\eqqcolon J_{1}+J_{2},
\end{aligned}
\end{equation}
where we have used the change of variables for the first equality and the fact that 
\begin{equation*}
\begin{aligned}
    \left\{(x,y)\in\mathcal{B}_{r_{1}}\,;\,|D^{\tau}d_{s}u(x,y)|>M\lambda\right\}&\subset \left\{(x,y)\in\mathcal{B}_{r_{1}}\,;\,|D^{\tau}d_{s}u(x,y)|>\lambda\right\}\\
    &\subset\left(\bigcup\limits_{i}\mathcal{B}_{5\rho_{x_{i}}}(x_{i})\right)\bigcup \left(\bigcup\limits_{\mathcal{Q}\in\tilde{\mathcal{A}}}\mathcal{Q}\right) 
\end{aligned}
\end{equation*}
for the second inequality. By taking $M>c_{0}c_{1}$, we find that there is a constant $c=c(n,s,p,q,\sigma)$ such that
\begin{equation}
\label{diagonal level set estimate1}
\begin{aligned}
    &\int_{\lambda_{0}}^{LM^{-1}}M^{\hat{q}}\lambda^{\hat{q}-1}\mu_{\tau}\left(\left\{(x,y)\in\mathcal{B}_{5\rho_{x_{i}}}\left(x_{i}\right) \,;\, |D^{\tau}d_{s}u(x,y)|>M\lambda  \right\}\right)\dlambda\\
    &\leq \int_{\lambda_{0}}^{LM^{-1}}M^{\hat{q}}\lambda^{\hat{q}-1}\mu_{\tau}\left(\left\{(x,y)\in\mathcal{B}_{5\rho_{x_{i}}}\left(x_{i}\right) \,;\, |D^{\tau}d_{s}(u-v)(x,y)|>M\lambda  \right\}\right)\dlambda\\
    &\quad+\int_{\lambda_{0}}^{LM^{-1}}M^{\hat{q}}\lambda^{\hat{q}-1}\mu_{\tau}\left(\left\{(x,y)\in\mathcal{B}_{5\rho_{x_{i}}}\left(x_{i}\right) \,;\, |D^{\tau}d_{s}v(x,y)|>M\lambda  \right\}\right)\dlambda\\
    &= \int_{\lambda_{0}}^{LM^{-1}}M^{\hat{q}}\lambda^{\hat{q}-1}\mu_{\tau}\left(\left\{(x,y)\in\mathcal{B}_{5\rho_{x_{i}}}\left(x_{i}\right) \,;\, |D^{\tau}d_{s}(u-v)(x,y)|>M\lambda  \right\}\right)\dlambda\\
    &\leq \int_{\lambda_{0}}^{LM^{-1}}M^{\hat{q}}\lambda^{\hat{q}-1}\int_{\mathcal{B}_{5\rho_{x_{i}}}\left(x_{i}\right)}\frac{|D^{\tau}d_{s}(u-v)(x,y)|^{p}}{(M\lambda)^{p}}\dmut\dlambda\\
    &\leq cM^{\hat{q}-p}\epsilon^{p}\int_{\lambda_{0}}^{LM^{-1}}\lambda^{\hat{q}-1}\mu_{\tau}\left(\mathcal{B}_{5\rho_{x_{i}}}\left(x_{i}\right)\right)\dlambda\\
    &\leq cM^{\hat{q}-p}\epsilon^{p}\int_{\lambda_{0}}^{LM^{-1}}\lambda^{\hat{q}-1}\mu_{\tau}\left(\mathcal{B}_{\rho_{x_{i}}}\left(x_{i}\right)\right)\dlambda,
\end{aligned}
\end{equation}
where we have used the second inequality in \eqref{ncomp main res1} for the fourth equality, weak 1-1 estimate for the fifth inequality and the first inequality in \eqref{ncomp main res1} for the sixth inequality. We next note from \eqref{exitradius} with $\alpha=0$, $\tilde{p}=p_{h}$ and $\tilde{q}=p$ that either
\begin{equation*}
\begin{aligned}
    \frac{\lambda}{2c}\leq \left(\dashint_{\mathcal{B}_{\rho_{x_{i}}}(x_{i})}|D^{\tau}d_{s}u|^{p_{h}}\dmut\right)^{\frac{1}{p_{h}}}
\end{aligned}
\end{equation*}
or
\begin{equation*}
\begin{aligned}
    \frac{\lambda}{2c}\leq \frac{1}{\delta}\left(\dashint_{\mathcal{B}_{\rho_{x_{i}}}(x_{i})}|D^{\tau}d_{0}f|^{p}\dmut\right)^{\frac{1}{p}}
\end{aligned}
\end{equation*}
holds for some constant $c=c(n,s,p,q,\sigma)$. Considering the above two alternatives, we observe that
\begin{equation}
\label{mut dia2}
\begin{aligned}
    \mu_{\tau}\left(\mathcal{B}_{\rho_{x_{i}}}(x_{i})\right)&\leq \left(\frac{4c}{\lambda}\right)^{p_{h}}\int_{\mathcal{B}_{\rho_{x_{i}}}(x_{i})\cap\left\{ |D^{\tau}d_{s}u|>a\lambda\right\}}|D^{\tau}d_{s}u|^{p_{h}}\dmut\\
    &\quad+\left(\frac{4c}{\lambda}\right)^{p}\int_{\mathcal{B}_{\rho_{x_{i}}}(x_{i})\cap\left\{ |D^{\tau}d_{0}f|>b\lambda\right\}}\frac{1}{\delta^{p}}|D^{\tau}d_{0}f|^{p}\dmut,
\end{aligned}
\end{equation}
where we have chosen $a=\frac{1}{4c}$ and $b=\frac{\delta}{4c}$.
Using \eqref{diagonal level set estimate1}, \eqref{mut dia2} and the fact that $\left\{\mathcal{B}_{\rho_{x_{i}}}(x_{i})\right\}_{i}$ is the family of disjoint sets contained in $\mathcal{B}_{r_{2}}$, we have
\begin{equation*}
\begin{aligned}
    J_{1}&\leq \sum_{i}cM^{\hat{q}-p}\epsilon^{p}\int_{\lambda_{0}}^{LM^{-1}}\lambda^{\hat{q}-1-p_{h}}\int_{\mathcal{B}_{\rho_{x_{i}}}(x_{i})\cap\left\{ |D^{\tau}d_{s}u|>a\lambda\right\}}|D^{\tau}d_{s}u|^{p_{h}}\dmut\dlambda\\
    &\quad+\sum_{i}cM^{\hat{q}-p}\epsilon^{p}\int_{\lambda_{0}}^{LM^{-1}}\lambda^{\hat{q}-1-p}\int_{\mathcal{B}_{\rho_{x_{i}}}(x_{i})\cap\left\{ |D^{\tau}d_{0}f|>b\lambda\right\}}\frac{1}{\delta^{p}}|D^{\tau}d_{0}f|^{p}\dmut\dlambda\\
    &\leq cM^{\hat{q}-p}\epsilon^{p}\int_{\lambda_{0}}^{\infty}\lambda^{\hat{q}-p_{h}-1}\int_{\mathcal{B}_{r_{2}}\cap\left\{ |D^{\tau}d_{s}u|_{LM^{-1}}>a\lambda\right\}}|D^{\tau}d_{s}u|^{p_{h}}\dmut\dlambda\\
    &\quad+cM^{\hat{q}-p}\epsilon^{p}\int_{\lambda_{0}}^{\infty}\lambda^{\hat{q}-p-1}\int_{\mathcal{B}_{r_{2}}\cap\left\{ |D^{\tau}d_{0}f|>b\lambda\right\}}\frac{1}{\delta^{p}}|D^{\tau}d_{0}f|^{p}\dmut\dlambda\\
\end{aligned}
\end{equation*}
for some constant $c=c(n,s,p,q,\sigma)$. Apply Fubini's theorem to the  last inequality in order to discover that
\begin{equation}
\label{j1estimate}
J_{1}\leq cM^{\hat{q}-p}\epsilon^{p}\int_{\mathcal{B}_{r_{2}}}|D^{\tau}d_{s}u|_{LM^{-1}}^{\hat{q}-p_{h}}|D^{\tau}d_{s}u|^{p_{h}}\dmut+cM^{\hat{q}-p}\epsilon^{p}\int_{\mathcal{B}_{r_{2}}}\frac{1}{\delta^{\hat{q}}}|D^{\tau}d_{0}f|^{\hat{q}}\dmut.
\end{equation}
We now estimate the remaining term $J_{2}$. We first observe that there are constants $c=c(n,s,p,q,\sigma)$ and $c_{u}=c_{u}(n,s,p,q,\sigma)$ such that
\begin{equation}
\label{estimateofj2r}
\begin{aligned}
    J_{2}&\leq \sum_{\mathcal{Q}\in\tilde{\mathcal{A}}}\int_{\lambda_{0}}^{LM^{-1}} \hat{q}M^{\hat{q}}\lambda^{\hat{q}-1}\mu_{\tau}\left(\left\{(x,y)\in\mathcal{Q} \,;\, |D^{\tau}d_{s}u(x,y)|>M\lambda \right\}\right)\dlambda\\
    &\leq \sum_{\mathcal{Q}\in\tilde{\mathcal{A}}}\int_{\lambda_{0}}^{LM^{-1}} \hat{q}M^{\hat{q}}\lambda^{\hat{q}-1}\left(\int_{\mathcal{Q}\cap\left\{|D^{\tau}d_{s}u|>M\lambda\right\}}\left(\frac{|D^{\tau}d_{s}u|}{M\lambda}\right)^{\gamma_{h}}\dmut\right)\dlambda\\
    &\leq \sum_{\mathcal{Q}\in\tilde{\mathcal{A}}}\int_{\lambda_{0}}^{LM^{-1}} cM^{\hat{q}-\gamma_{h}}\lambda^{\hat{q}-1}\mu_{\tau}\left(\mathcal{Q}\right)\dlambda\\
    &\leq \int_{\lambda_{0}}^{LM^{-1}} cM^{\hat{q}-\gamma_{h}}\lambda^{\hat{q}-1-p_{h}}\int_{\mathcal{B}_{r_{2}\cap\left\{|D^{\tau}d_{s}u|>c_{u}\lambda\right\}}}|D^{\tau}d_{s}u|^{p_{h}}\dmut\dlambda,
\end{aligned}
\end{equation}
where we have used weak 1-1 estimate, \eqref{gamma norm} and \eqref{sumofmeasq} with $\alpha=0$, $\tilde{p}=p_{h}$ and $\tilde{\gamma}=\gamma_{h}$. Since the value of $\min\{\gamma_{h}-\hat{q}\,;\, h=0,1,\ldots l_{q}\}$ is positive and depends only on $n,s,p,q$ and $\sigma$, Fubini's theorem yields that
\begin{equation}
\label{chooseofM}
\begin{aligned}
    J_{2}\leq cM^{\hat{q}-\gamma_{h}}\int_{\mathcal{B}_{r_{2}}}\left|D^{\tau}d_{s}u\right|_{LM^{-1}}^{\hat{q}-p_{h}}|D^{\tau}d_{s}u|^{p_{h}}\dmut\leq \frac{1}{2^{2n+2q}} \int_{\mathcal{B}_{r_{2}}}\left|D^{\tau}d_{s}u\right|_{LM^{-1}}^{\hat{q}}\dmut
\end{aligned}
\end{equation}
by taking $cM^{\hat{q}-\gamma_{h}}\leq\frac{1}{2^{2n+2q}}$ and $M> c_{0}c_{1}$.
Since $M$ depends only on $\mathsf{data}$, we now choose $\epsilon=\epsilon(\mathsf{data})$ sufficiently small so that \eqref{j1estimate} becomes
\begin{equation*}
   J_{1}\leq \frac{1}{2^{2n+2q}}\int_{\mathcal{B}_{r_{2}}}\left|D^{\tau}d_{s}u\right|_{LM^{-1}}^{\hat{q}}\dmut+c\int_{\mathcal{B}_{r_{2}}}|D^{\tau}d_{0}f|^{\hat{q}}\dmut
\end{equation*}
for some constant $c=c(\mathsf{data})$. Combine all the estimates $I,J_{1}$ and $J_{2}$ to derive that for any $L>M\lambda_{0}$,
\begin{equation}
\label{techlemmabef}
\begin{aligned}
     \phi_{LM^{-1}}(r_{1})&\leq \frac{1}{2}\phi_{LM^{-1}}(r_{2})+\frac{c}{(r_{2}-r_{1})^{2n}}\left(\left(\dashint_{\mathcal{B}_{2}}|D^{\tau}d_{s}u|^{p_{h}}\dmut\right)^{\frac{1}{p_{h}}}+\Tailp\left(\frac{u-(u)_{B_{2}}}{2^{\tau+s}};B_{2}\right)\right)\\
&\quad+\frac{c}{(r_{2}-r_{1})^{2n}}\left(\left(\dashint_{\mathcal{B}_{2}}|D^{\tau}d_{0}f|^{\hat{q}}\dmu_{\tau}\right)^{\frac{1}{\hat{q}}}+\Tails\left(\frac{f-(f)_{B_{2}}}{2^{\tau}};B_{2}\right)\right),
\end{aligned}
\end{equation}
where we have used the fact that
\begin{equation*}
    \phi_{LM^{-1}}(r_{1})\leq \phi_{L}(r_{1})
\end{equation*}
and 
\begin{equation*}
    \frac{\mu_{\tau}(\mathcal{B}_{r_
    {2}})}{\mu_{\tau}(\mathcal{B}_{r_{1}})}\leq 2^{2n}.
\end{equation*}
Thus \eqref{desiredgoal1} follows from \eqref{techlemmabef}. Then, using Lemma \ref{technicallemma}, we get that if $L>\lambda_{0}$, then
\begin{equation*}
\begin{aligned}
    \phi_{L}(1)&\leq c\left(\left(\dashint_{\mathcal{B}_{2}}|D^{\tau}d_{s}u|^{p_{h}}\dmut\right)^{\frac{1}{p_{h}}}+\Tailp\left(\frac{u-(u)_{B_{2}}}{2^{\tau+s}};B_{2}\right)\right)\\
&\quad+c\left(\left(\dashint_{\mathcal{B}_{2}}|D^{\tau}d_{0}f|^{\hat{q}}\dmu_{\tau}\right)^{\frac{1}{\hat{q}}}+\Tails\left(\frac{f-(f)_{B_{2}}}{2^{\tau}};B_{2}\right)\right)
\end{aligned}
\end{equation*}
for some constant $c=c(\mathsf{data})$. By passing to the limit $L\to \infty$, we have the desired estimate \eqref{ph1norm}.
\end{proof}


With Lemma \ref{inductiveicz}, we now prove Theorem \ref{main theorem} for the restricted range of $\sigma$ given in \eqref{firstsigmares}.

\noindent
\textbf{Proof of Theorem \ref{main theorem} when \eqref{firstsigmares}.} We first take $\delta>0$ determined in Lemma \ref{inductiveicz}. 
We are now going to show that for any $B_{2r}(x_{0})\Subset\Omega$ with $r\in(0,R]$, there is a constant $c=c(\mathsf{data})$ such that
\begin{equation}
\label{indgoalr}
\begin{aligned}
\left(\dashint_{\mathcal{B}_{r}(x_{0})}|D^{\tau}d_{s}u|^{\hat{q}}\dmut\right)^{\frac{1}{\hat{q}}}&\leq c\left(\left(\dashint_{\mathcal{B}_{2r}(x_{0})}|D^{\tau}d_{s}u|^{p}\dmut\right)^{\frac{1}{p}}+\Tailp\left(\frac{u-\left(u\right)_{B_{2r}(x_{0})}}{(2r)^{\tau+s}};B_{2r}(x_{0})\right)\right)\\
&\quad+c\left(\left(\dashint_{\mathcal{B}_{2r}(x_{0})}|D^{\tau}d_{0}f|^{\hat{q}}\dmu_{\tau}\right)^{\frac{1}{\hat{q}}}+\Tails\left(\frac{f-\left(f\right)_{B_{2r}(x_{0})}}{(2r)^{\tau}};B_{2r}(x_{0})\right)\right),
\end{aligned}
\end{equation}
where 
\begin{equation}
\label{condoftau1}
    \tau=\frac{q}{q-p}\sigma
\end{equation}
and
\begin{equation}
\label{hatq}
\hat{q}=\begin{cases}
    p_{1}&\quad\text{if }\gamma_{0}\leq q,\\
    q&\quad\text{if }\gamma_{0}>q.
\end{cases}
\end{equation}
Let $y_{0}\in B_{2r}(x_{0})$. We define
\begin{equation*}
\begin{aligned}
    &\tilde{u}(x)=\left(\frac{r}{8}\right)^{-(\tau+s)}u\left(\frac{r}{8}x+y_{0}\right),\quad \tilde{f}(x)=\left(\frac{r}{8}\right)^{-\tau}f\left(\frac{r}{8}x+y_{0}\right),\quad\tilde{A}(x,y)=A\left(\frac{r}{8}x+y_{0},\frac{r}{8}y+y_{0}\right)
\end{aligned}
\end{equation*}
for $x,y\in \mathbb{R}^{n}$.  Then $\tilde{u}\in W^{s,p}(B_{4})\cap L^{p-1}_{sp}(\mathbb{R}^{n})$ is a weak solution to
\begin{equation*}
    (-\Delta)_{p,\tilde{A}}^{s}\tilde{u}=(-\Delta_{p})^{\frac{s}{p}}\tilde{f}\quad\text{in }B_{4},
\end{equation*}
where $\Tilde{A}$ is diagonal $(\delta,2)$-vanishing in $B_{4}\times B_{4}$. In addition, we observe that
\begin{equation*}
    d_{0}\tilde{f}\in L^{q}_{\mathrm{loc}}\left(\mathcal{B}_{4};\frac{\dx\dy}{|x-y|^{n+\sigma q}}\right)\quad\text{and}\quad D^{\tau}d_{s}\tilde{u}\in L^{p}_{\mathrm{loc}}\left(\mathcal{B}_{4}\,;\mu_{\tau}\right).
\end{equation*}
Therefore, we apply Lemma \ref{inductiveicz} with $u=\tilde{u}$, $f=\tilde{f}$, $A=\tilde{A}$ and $h=0$ to see that
\begin{equation*}
\begin{aligned}
\left(\dashint_{\mathcal{B}_{1}}|D^{\tau}d_{s}\tilde{u}|^{\hat{q}}\dmut\right)^{\frac{1}{\hat{q}}}&\leq c\left(\left(\dashint_{\mathcal{B}_{2}}|D^{\tau}d_{s}\tilde{u}|^{p}\dmut\right)^{\frac{1}{p}}+\Tailp\left(\frac{\tilde{u}-\left(\tilde{u}\right)_{B_{2}}}{2^{\tau+s}};B_{2}\right)\right)\\
&\quad+c\left(\left(\dashint_{\mathcal{B}_{2}}|D^{\tau}d_{0}\tilde{f}|^{\hat{q}}\dmu_{\tau}\right)^{\frac{1}{\hat{q}}}+\Tails\left(\frac{\tilde{f}-\left(\tilde{f}\right)_{B_{2}}}{2^{\tau}};B_{2}\right)\right)
\end{aligned}
\end{equation*}
for some constant $c=c(\mathsf{data})$. Using the change of variables, we then observe that 
\begin{equation}
\label{p0esti}
\begin{aligned}
\left(\dashint_{\mathcal{B}_{\frac{r}{8}}(y_{0})}|D^{\tau}d_{s}u|^{\hat{q}}\dmut\right)^{\frac{1}{\hat{q}}}&\leq c\left(\left(\dashint_{\mathcal{B}_{\frac{r}{4}}(y_{0})}|D^{\tau}d_{s}u|^{p}\dmut\right)^{\frac{1}{p}}+\Tailp\left(\frac{u-(u)_{B_{\frac{r}{4}}(y_{0})}}{\left(\frac{r}{4}\right)^{\tau+s}};B_{\frac{r}{4}}(y_{0})\right)\right)\\
&+c\left(\left(\dashint_{\mathcal{B}_{\frac{r}{4}}(y_{0})}|D^{\tau}d_{0}f|^{\hat{q}}\dmu_{\tau}\right)^{\frac{1}{\hat{q}}}+\Tails\left(\frac{f-(f)_{B_{\frac{r}{4}}(y_{0})}}{\left(\frac{r}{4}\right)^{\tau}};B_{\frac{r}{4}}(y_{0})\right)\right).
\end{aligned}
\end{equation}
Applying \eqref{tail estimate of u local2} with $\rho=\frac{r}{4}$, $R=2r$, $\alpha=0$ and $t=s$ and then using \eqref{condoftau1}, we find that
\begin{equation}
\label{tailestimate1}
\begin{aligned}
    \Tailp\left(\frac{u-(u)_{B_{\frac{r}{4}}(y_{0})}}{\left(\frac{r}{4}\right)^{\tau+s}};B_{\frac{r}{4}}(y_{0})\right)
    &\leq c\Tailp\left(\frac{u-(u)_{B_{2r}(x_{0})}}{\left(2r\right)^{\tau+s}};B_{2r}(x_{0})\right)+ c\left(\dashint_{\mathcal{B}_{2r}(x_{0})}|D^{\tau}d_{s}u|^{p}\dmut\right)^{\frac{1}{p}}
\end{aligned}
\end{equation}
for some constant $c=c(\mathsf{data})$. Likewise, \eqref{tail estimate of u local2} with $u(x)=f(x)$ $\rho=\frac{r}{4}$, $R=2r$, $s=\frac{s}{p}$, $\alpha=0$ and $t=0$, and \eqref{condoftau1} yield that
\begin{equation}
\label{tailestimate2}
\begin{aligned}
       \Tails\left(\frac{f-(f)_{B_{\frac{r}{4}}(y_{0})}}{\left(\frac{r}{4}\right)^{\tau}};B_{\frac{r}{4}}(y_{0})\right)&\leq c\Tails\left(\frac{f-(f)_{B_{2r}(x_{0})}}{\left(2r\right)^{\tau}};B_{2r}(x_{0})\right)+ c\left(\dashint_{\mathcal{B}_{2r}(x_{0})}|D^{\tau}d_{0}f|^{p}\dmut\right)^{\frac{1}{p}}\\
       &\leq c\Tails\left(\frac{f-(f)_{B_{2r}(x_{0})}}{\left(2r\right)^{\tau}};B_{2r}(x_{0})\right)+ c\left(\dashint_{\mathcal{B}_{2r}(x_{0})}|D^{\tau}d_{0}f|^{\hat{q}}\dmut\right)^{\frac{1}{\hat{q}}},
\end{aligned}
\end{equation}
where we have used H\"older's inequality for the last inequality. By inserting \eqref{tailestimate1} and \eqref{tailestimate2} into \eqref{p0esti} and using the fact that $\mathcal{B}_{\frac{r}{8}}(y_{0})\subset \mathcal{B}_{2r}(x_{0})$, we have 
\begin{equation}
\label{indesti}
\begin{aligned}
\left(\dashint_{\mathcal{B}_{\frac{r}{8}}(y_{0})}|D^{\tau}d_{s}u|^{\hat{q}}\dmut\right)^{\frac{1}{\hat{q}}}&\leq c\left(\left(\dashint_{\mathcal{B}_{2r}(x_{0})}|D^{\tau}d_{s}\tilde{u}|^{p}\dmut\right)^{\frac{1}{p}}+\Tailp\left(\frac{u-(u)_{B_{\frac{r}{2}}(y_{0})}}{\left(2r\right)^{\tau+s}};B_{2r}(x_{0})\right)\right)\\
&+c\left(\left(\dashint_{\mathcal{B}_{2r}(x_{0})}|D^{\tau}d_{0}\tilde{f}|^{\hat{q}}\dmu_{\tau}\right)^{\frac{1}{\hat{q}}}+\Tails\left(\frac{f-(f)_{B_{2r}(x_{0})}}{\left(2r\right)^{\tau}};B_{2r}(x_{0})\right)\right).
\end{aligned}
\end{equation}
On the other hand, using  H\"older's inequality along with the fact that $\hat{q}\leq \gamma_{0}$ by \eqref{gamma0defn}, \eqref{pgamma} and \eqref{hatq}, and then applying Lemma \ref{nd lemma} with $\tilde{\gamma}=\gamma_{0}$, $\alpha=0$ and $\tilde{p}=p$, we find that  there is a constant $c=c(n,s,p,q,\sigma)$ such that for any $\mathcal{Q}\equiv Q_{\frac{r}{8\sqrt{n}}}(z_{1})\times Q_{\frac{r}{8\sqrt{n}}}(z_{2})\Subset \Omega\times\Omega$ satisfying \eqref{dist cube}, 
\begin{equation}
\label{ndiagonallastest}
\begin{aligned}
\left(\dashint_{\mathcal{Q}}|D^{\tau}d_{s}u|^{\hat{q}}\dmut\right)^{\frac{1}{\hat{q}}}&\leq \left(\dashint_{\mathcal{Q}}|D^{\tau}d_{s}u|^{\gamma_{0}}\dmut\right)^{\frac{1}{\gamma_{0}}}\\
&\leq c\left(\dashint_{\mathcal{Q}}|D^{\tau}d_{s}u|^{p}\dmut\right)^{\frac{1}{p}}+c\left[\sum_{d=1}^{2}\left(\frac{1}{\tau}\dashint_{P^{d}\mathcal{Q}}|D^{\tau}d_{s}u|^{p}\dmut\right)^{\frac{1}{p}}\right].
\end{aligned}
\end{equation}
Let us assume that $z_{1}, z_{2}\in B_{r}(x_{0})$. From \eqref{measure of qintau} together with the observation that
\begin{equation*}
    \frac{r}{8\sqrt{n}}=d(\mathcal{Q})<|x-y|<r\quad\text{for any }(x,y)\in \mathcal{Q},
\end{equation*} we have
\begin{equation}
\label{mutaucube}
    \frac{r^{n+p\tau}}{c}\leq\mu_{\tau}(\mathcal{Q})\leq cr^{n+p\tau}
\end{equation}
for some constant $c=c(n,s,p,q,\sigma)$.
Using \eqref{condoftau1}, \eqref{mutaucube} and the fact that $\mathcal{Q},P^{1}\mathcal{Q},P^{2}\mathcal{Q}\subset \mathcal{B}_{2r}(x_{0})$, we further estimate the both sides of \eqref{ndiagonallastest} to see that
\begin{equation}
\label{offdiagonal0}
    \left(\frac{1}{r^{n+p\tau}}\int_{\mathcal{Q}}|D^{\tau}d_{s}u|^{\hat{q}}\dmut\right)^{\frac{1}{\hat{q}}}\leq c\left(\dashint_{\mathcal{B}_{2r}(x_{0})}|D^{\tau}d_{s}u|^{p}\dmut\right)^{\frac{1}{p}}
\end{equation}
for some constant $c=c(\mathsf{data})$. Since we can cover $\mathcal{B}_{r}(x_{0})$ with finitely many diagonal balls $\mathcal{B}_{\frac{r}{8}}\left(y_{i}\right)$ and off-diagonal cubes $\mathcal{Q}_{\frac{r}{8\sqrt{n}}}\left(z_{1,i},z_{2,i}\right)$ satisfying \eqref{dist cube} for some  $y_{i},\,z_{1,i},\,z_{2,i}\in B_{r}(x_{0})$ ,
the standard covering argument along with \eqref{condoftau1}, \eqref{indesti} and \eqref{offdiagonal0} leads to \eqref{indgoalr}. Since we have arbitrarily chosen $x_{0},z_{1},z_{2}\in \Omega$ and $r\in(0,R]$ satisfying $B_{2r}(x_{0})\Subset\Omega$ and $\mathcal{Q}_{\frac{r}{8\sqrt{n}}}\left(z_{1},z_{2}\right)\Subset\Omega\times\Omega$ with \eqref{dist cube}, it follows from \eqref{indgoalr} and \eqref{ndiagonallastest} that $D^{\tau}d_{s}u\in L^{\hat{q}}_{\mathrm{loc}}\left(\Omega\times\Omega\,;\mu_{\tau}\right)$. 
If $l_{q}=0$, where $l_{q}$ is given in \eqref{lqrange}, then $\hat{q}=q$.  Let us assume that $l_{q}>0$. Then the fact that $\hat{q}=p_{1}$ yields  $D^{\tau}d_{s}\tilde{u}\in L^{p_{1}}_{\mathrm{loc}}\left(\mathcal{B}_{4}\,;\mu_{\tau}\right)$. Thus, we apply Lemma \ref{inductiveicz} with $u=\tilde{u}$, $f=\tilde{f}$, $A=\tilde{A}$ and $h=1$, and follow the same arguments as in \eqref{p0esti} to find that
\begin{equation}
\label{estimate11forr}
\begin{aligned}
\left(\dashint_{\mathcal{B}_{\frac{r}{8}}(y_{0})}|D^{\tau}d_{s}u|^{\hat{q}_{1}}\dmut\right)^{\frac{1}{\hat{q}_{1}}}&\leq c\left(\left(\dashint_{\mathcal{B}_{\frac{r}{4}}(y_{0})}|D^{\tau}d_{s}u|^{p_{1}}\dmut\right)^{\frac{1}{p_{1}}}+\Tailp\left(\frac{u-(u)_{B_{\frac{r}{4}}(y_{0})}}{\left(\frac{r}{4}\right)^{\tau+s}};B_{\frac{r}{4}}(y_{0})\right)\right)\\
&+c\left(\left(\dashint_{\mathcal{B}_{\frac{r}{4}}(y_{0})}|D^{\tau}d_{0}f|^{\hat{q}_{1}}\dmu_{\tau}\right)^{\frac{1}{\hat{q}_{1}}}+\Tails\left(\frac{f-(f)_{B_{\frac{r}{4}}(y_{0})}}{\left(\frac{r}{4}\right)^{\tau}};B_{\frac{r}{4}}(y_{0})\right)\right),
\end{aligned}
\end{equation}
where 
\begin{equation}
\label{hatq1}
\hat{q}_{1}=\begin{cases}
    p_{2}&\quad\text{if }\gamma_{1}\leq q,\\
    q&\quad\text{if }\gamma_{1}>q.
\end{cases}
\end{equation}
Inserting \eqref{tail estimate of u local2} with $\rho=\frac{r}{4}$, $R=2r$, $\alpha=0$ and $t=s$ into \eqref{indgoalr} with $x_{0}$ and $r$ there, replaced by $y_{0}$ and $\frac{r}{4}$, respectively, and then using the fact that $\mathcal{B}_{\frac{r}{2}}(y_{0})\subset\mathcal{B}_{2r}(x_{0})$, we have  
\begin{equation}
\label{estimate12forr}
\begin{aligned}
\left(\dashint_{\mathcal{B}_{\frac{r}{4}}(y_{0})}|D^{\tau}d_{s}u|^{p_{1}}\dmut\right)^{\frac{1}{p_{1}}} &\leq c\left(\left(\dashint_{\mathcal{B}_{2r}(x_{0})}|D^{\tau}d_{s}u|^{p}\dmut\right)^{\frac{1}{p}}+\Tailp\left(\frac{u-\left(u\right)_{B_{2r}(x_{0})}}{\left(2r\right)^{\tau+s}};B_{2r}(x_{0})\right)\right)\\
&\quad+c\left(\left(\dashint_{\mathcal{B}_{2r}(x_{0})}|D^{\tau}d_{0}f|^{\hat{q}_{1}}\dmu_{\tau}\right)^{\frac{1}{\hat{q}_{1}}}+\Tails\left(\frac{f-\left(f\right)_{B_{\frac{r}{2}}(y_{0})}}{\left(\frac{r}{2}\right)^{\tau}};B_{\frac{r}{2}}(y_{0})\right)\right),
\end{aligned}
\end{equation}
where we have used H\"older's inequality along with the fact that $p_{1}\leq\hat{q}_{1}$ for the third term in the right-hand side of \eqref{estimate12forr}. We now combine \eqref{estimate11forr} and \eqref{estimate12forr} to obtain that
\begin{equation}
\label{indesti2}
\begin{aligned}
\left(\dashint_{\mathcal{B}_{\frac{r}{8}}(y_{0})}|D^{\tau}d_{s}u|^{\hat{q}_{1}}\dmut\right)^{\frac{1}{\hat{q}_{1}}}&\leq c\left(\left(\dashint_{\mathcal{B}_{2r}(x_{0})}|D^{\tau}d_{s}u|^{p}\dmut\right)^{\frac{1}{p}}+\Tailp\left(\frac{u-(u)_{B_{\frac{r}{2}}(y_{0})}}{\left(2r\right)^{\tau+s}};B_{2r}(x_{0})\right)\right)\\
&+c\left(\left(\dashint_{\mathcal{B}_{2r}(x_{0})}|D^{\tau}d_{0}f|^{\hat{q}_{1}}\dmu_{\tau}\right)^{\frac{1}{\hat{q}_{1}}}+\Tails\left(\frac{f-(f)_{B_{2r}(x_{0})}}{\left(2r\right)^{\tau}};B_{2r}(x_{0})\right)\right).
\end{aligned}
\end{equation}
For the fourth terms in the right-hand side of \eqref{estimate11forr} and \eqref{estimate12forr}, we have used \eqref{tail estimate of u local2} with $u(x)=f(x)$ $\rho=\frac{r}{2}$ or $\frac{r}{4}$, $R=2r$, $s=\frac{s}{p}$, $\alpha=0$ and $t=0$, and H\"older's inequality along with the fact that $p\leq \hat{q}_{1}$.
Furthermore, as in \eqref{ndiagonallastest}, Lemma \ref{nd lemma} with $\tilde{\gamma}=\gamma_{1}$, $\alpha=0$ and $\tilde{p}=p_{1}$ yields  
\begin{equation}
\label{ndiagonallastest2}
\begin{aligned}
\left(\dashint_{\mathcal{Q}}|D^{\tau}d_{s}u|^{\hat{q}_{1}}\dmut\right)^{\frac{1}{\hat{q}_{1}}}\leq c\left(\dashint_{\mathcal{Q}}|D^{\tau}d_{s}u|^{p}\dmut\right)^{\frac{1}{p}}+c\left[\sum_{d=1}^{2}\left(\dashint_{P^{d}\mathcal{Q}}|D^{\tau}d_{s}u|^{p_{1}}\dmut\right)^{\frac{1}{p_{1}}}\right],
\end{aligned}
\end{equation}
provided that $\mathcal{Q}=\mathcal{Q}_{\frac{r}{8\sqrt{n}}}\left(z_{1},z_{2}\right)\Subset \Omega\times\Omega$ satisfying \eqref{dist cube}. Let $z_{1},z_{2}\in B_{r}(x_{0})$. Since $P^{d}\mathcal{Q}\subset \mathcal{B}_{\frac{r}{8}}(z_{d})$, H\"older's inequality and the same computations as in \eqref{indesti2} with $y_{0}=z_{d}$ give that
\begin{equation}
\label{offprod}
\begin{aligned}
\left(\dashint_{P^{d}\mathcal{Q}}|D^{\tau}d_{s}u|^{p_{1}}\dmut\right)^{\frac{1}{p_{1}}}&\leq \left(\dashint_{\mathcal{B}_{\frac{r}{8}}(z_{d})}|D^{\tau}d_{s}u|^{\hat{q}_{1}}\dmut\right)^{\frac{1}{\hat{q}_{1}}}\\
&\leq c\left(\left(\dashint_{\mathcal{B}_{2r}(x_{0})}|D^{\tau}d_{s}u|^{p}\dmut\right)^{\frac{1}{p}}+\Tailp\left(\frac{u-(u)_{B_{\frac{r}{2}}(y_{0})}}{\left(2r\right)^{\tau+s}};B_{2r}(x_{0})\right)\right)\\
&+c\left(\left(\dashint_{\mathcal{B}_{2r}(x_{0})}|D^{\tau}d_{0}f|^{\hat{q}_{1}}\dmu_{\tau}\right)^{\frac{1}{\hat{q}_{1}}}+\Tails\left(\frac{f-(f)_{B_{2r}(x_{0})}}{\left(2r\right)^{\tau}};B_{2r}(x_{0})\right)\right).
\end{aligned}
\end{equation}
In light of \eqref{mutaucube}, \eqref{offprod} and the fact that $\mathcal{Q}\subset \mathcal{B}_{2r}(x_{0})$, we further estimate the both sides of \eqref{ndiagonallastest2} to see
\begin{equation}
\label{offdiagonal2}
\begin{aligned}
        \left(\frac{1}{r^{n+p\tau}}\int_{\mathcal{Q}}|D^{\tau}d_{s}u|^{\hat{q}_{1}}\dmut\right)^{\frac{1}{\hat{q}_{1}}}&\leq c\left(\left(\dashint_{\mathcal{B}_{2r}(x_{0})}|D^{\tau}d_{s}u|^{p}\dmut\right)^{\frac{1}{p}}+\Tailp\left(\frac{u-(u)_{B_{2r}(x_{0})}}{\left(2r\right)^{\tau+s}};B_{2r}(x_{0})\right)\right)\\
&+c\left(\left(\dashint_{\mathcal{B}_{2r}(x_{0})}|D^{\tau}d_{0}f|^{\hat{q}_{1}}\dmu_{\tau}\right)^{\frac{1}{\hat{q}_{1}}}+\Tails\left(\frac{f-(f)_{B_{2r}(x_{0})}}{\left(2r\right)^{\tau}};B_{2r}(x_{0})\right)\right).
\end{aligned}
\end{equation}
As in the case when $l_{q}=0$, the standard covering argument together with \eqref{condoftau1}, \eqref{indesti2},  \eqref{ndiagonallastest2} and \eqref{offdiagonal2} yields \eqref{indgoalr} with $\hat{q}=\hat{q}_{1}$ and $D^{\tau}d_{s}u\in L^{\tilde{q}_{1}}_{\mathrm{loc}}\left(\Omega\times\Omega\,;\,\mu_{\tau}\right)$. By iterating the above procedure $l_{q}-1$ times, we obtain \eqref{indgoalr} with $\hat{q}=q$ and $D^{\tau}d_{s}u\in L^{q}_{\mathrm{loc}}\left(\Omega\times\Omega\,;\,\mu_{\tau}\right)$. Thus, the desired estimate \eqref{goalestimate} follows by recalling the definition \eqref{condoftau1} and using a few algebraic computations. It completes the proof. \qed


\subsection{Improved range of fractional differentiability $\sigma$}
\label{section52}
We first prove a comparison estimate of the fractional gradient  $d_{s+\alpha}u$ for some $\alpha\in(0,1)$.
To this end, we introduce new parameters. 
We first assume that 
\begin{equation}
\label{assumsigmaforhig}
    \left(1-\frac{p}{q}\right)\min\left\{\frac{s}{p-1},1-s\right\}\leq\sigma< \min\left\{\frac{s}{p-1},1-s\right\}.
\end{equation}
We then choose $\beta\in(0,1)$ such that 
\begin{equation}
\label{betacond}
\sigma=\beta\min\left\{\frac{s}{p-1},1-s\right\}.
\end{equation}
For each $m=0,1,\ldots,$ we now take 
\begin{equation}
\label{tildetau}
\tilde{\tau}_{0}=\frac{\beta+1}{2}\min\left\{\frac{s}{p-1},1-s\right\},\quad q_{0}=\begin{cases} \min\{p_{1},q\}&\quad\text{if }sp\leq n\\
q&\quad\text{if }sp> n
\end{cases}\quad\text{and}\quad \tilde{\tau}_{m+1}=\frac{q_{0}+p}{2q_{0}}\tilde{\tau}_{m}
\end{equation}
and
\begin{equation}
\label{tildealpha}
\tilde{\alpha}_{m}=\tilde{\tau}_{0}-\tilde{\tau}_{m}
\end{equation}
to see that
\begin{equation}
\label{talphatau}
\tilde{\alpha}_{m}+\left(1-\frac{p}{q}\right)\tilde{\tau}_{m}=\tilde{\tau_{0}}-\frac{p}{q}\tilde{\tau}_{m}=\tilde{\tau}_{0}\left(1-\frac{p}{q}\left(\frac{q_{0}+p}{2q_{0}}\right)^{m}\right),
\end{equation}
where $p_{1}$ is given in \eqref{pgamma}.
From \eqref{assumsigmaforhig}, \eqref{betacond}, \eqref{tildetau} and the fact that 
\begin{equation*}
    \tilde{\tau}_{0}\left(1-\left(\frac{p}{q}\right)\right)<\sigma\quad\text{and}\quad \tilde{\tau}_{0}>\sigma,
\end{equation*} there is a positive integer $n_{\sigma}=n_{\sigma}(n,s,p,q,\sigma)$ such that
\begin{equation*}
\tilde{\tau}_{0}\left(1-\frac{p}{q}\left(\frac{q_{0}+p}{2q_{0}}\right)^{n_{\sigma}}\right)\geq\sigma\quad\text{and}\quad \tilde{\tau}_{0}\left(1-\frac{p}{q}\left(\frac{q_{0}+p}{2q_{0}}\right)^{n_{\sigma}-1}\right)<\sigma.
\end{equation*}
We now select $\tau_{0}\in(0,\tilde{\tau}_{0}]$ such that
\begin{equation}
\label{taucond}
\tau_{0}\left(1-\frac{p}{q}\left(\frac{q_{0}+p}{2q_{0}}\right)^{n_{\sigma}}\right)=\sigma\quad\text{and}\quad \tau_{0}\left(1-\frac{p}{q}\left(\frac{q_{0}+p}{2q_{0}}\right)^{n_{\sigma}-1}\right)<\sigma.
\end{equation}
Similarly, we take 
\begin{equation}
\label{taualphacond}
\tau_{m+1}=\frac{q_{0}+p}{2q_{0}}\tau_{m}\quad\text{and}\quad \alpha_{m}=\tau_{0}-\tau_{m}
\end{equation}
to observe that
\begin{equation}
\label{alphatauplus}
\alpha_{n_{\sigma}}+\left(1-\frac{p}{q}\right)\tau_{n_{\sigma}}=\sigma\quad\text{and}\quad \alpha_{i}+\left(1-\frac{p}{q}\right)\tau_{i}<\sigma\quad(i=0,1,\ldots, n_{\sigma}-1),
\end{equation}
where we have used \eqref{tildealpha}, \eqref{talphatau}, \eqref{taucond} and \eqref{taualphacond}.
In this setting, we define 
\begin{equation*}
\tilde{p}_{m}=p+\sum_{i=0}^{m}\frac{1}{2^{i+2}}\frac{q_{0}-p}{4}\quad\text{and}\quad \tilde{q}_{m}=q_{0}\left(\frac{3p+q_{0}}{2(q_{0}+p)}+\sum_{i=0}^{m}\frac{1}{2^{i+2}}\frac{q_{0}-p}{2(q_{0}+p)}\right)
\end{equation*}
to find that
\begin{equation}
\label{tildepqrange}
    p<\tilde{p}_{m}<\frac{3p+q_{0}}{4}< q_{0}\frac{3p+q_{0}}{2(q_{0}+p)}<\tilde{q}_{m}<q_{0}.
\end{equation}
In light of \eqref{tildepqrange} and \eqref{taualphacond}, we find 
\begin{equation}
\label{indnumforemb}
    \alpha_{m}+\left(1-\frac{p}{\tilde{p}_{m}}\right)\tau_{m}<\alpha_{m-1}+\left(1-\frac{p}{\tilde{q}_{m}}\right)\tau_{m-1}.
\end{equation}
Fix a nonnegative integer $m\in[0,n_{\sigma}]$. Throughout the proof of the next two lemmas, we assume that any weak solution 
$u\in W^{s,p}_{\mathrm{loc}}(B_{4})\cap L^{p-1}_{sp}(\mathbb{R}^{n})$ to the localized problem
\begin{equation}
\label{localizedproblemi}
(-\Delta_{p})_{A}^{s}u=(-\Delta_{p})^{\frac{s}{p}}f\quad\text{in }B_{4},
\end{equation}
where $f\in L^{p-1}_{s}(\mathbb{R}^{n})$ with $d_{0}f\in L^{q}_{\mathrm{loc}}\left(\mathcal{B}_{4};\frac{\dx\dy}{|x-y|^{n+\sigma q}}\right)$ satisfies  $D^{\tau_{m}}d_{\alpha_{m}+s}u\in L^{p}_{\mathrm{loc}}\left(\mathcal{B}_{4}\,;\,\mu_{\tau_{m}}\right)$ with the estimates
\begin{equation}
\label{indforhigh}
\begin{aligned}
    &\left(\frac{1}{\tau_{m}}\dashint_{\mathcal{B}_{r}(x_{0})}|D^{\tau_{m}}d_{\alpha_{m}+s}u|^{\tilde{p}_{m}}\dmu_{\tau_{m}}\right)^{\frac{1}{\tilde{p}_{m}}}\\
    &\leq d_{m}\left[\left(\frac{1}{\tau_{m}}\dashint_{\mathcal{B}_{2r}(x_{0})}|D^{\tau_{m}}d_{\alpha_{m}+s}u|^{p}\dmu_{\tau_{m}}\right)^{\frac{1}{p}}+\Tailp\left(\frac{u-(u)_{B_{2r}(x_{0})}}{(2r)^{\alpha_{m}+\tau_{m}+s}}; B_{2r}(x_{0})\right)\right]\\
    &\quad+d_{m}\left[\left(\frac{1}{\tau_{m}}\dashint_{\mathcal{B}_{2r}(x_{0})}|D^{\tau_{m}}d_{\alpha_{m}}f|^{\tilde{q}_{m}}\dmu_{\tau_{m}}\right)^{\frac{1}{\tilde{q}_{m}}}+\Tails\left(\frac{f-(f)_{B_{2r}(x_{0})}}{(2r)^{\alpha_{m}+\tau_{m}}}; B_{2r}(x_{0})\right)\right]
\end{aligned}
\end{equation}
for some constant $d_{m}>0$, provided that $B_{4r}(x_{0})\Subset B_{4}$. We remark that applying Lemma \ref{embedding sob} with $p=\tilde{q}_{m}$, $s_{1}=\alpha_{m}+\left(1-\frac{p}{\tilde{q}_{m}}\right)\tau_{m}$ and $s_{2}=\sigma$, we discover that
\begin{equation*}
    \left(\frac{1}{\tau_{m}}\dashint_{\mathcal{B}_{2r}(x_{0})}|D^{\tau_{m}}d_{\alpha_{m}}f|^{\tilde{q}_{m}}\dmu_{\tau_{m}}\right)^{\frac{1}{\tilde{q}_{m}}}\leq c[f]_{W^{\alpha_{m}+\left(1-\frac{p}{\tilde{q}_{m}}\right)\tau_{m},\tilde{q}_{m}}(B_{2r}(x_{0}))}\leq c[f]_{W^{\sigma,q}(B_{2r}(x_{0}))}<\infty,
\end{equation*}
which implies that the right-hand side of \eqref{indforhigh} is well-defined. We now improve the comparison lemma \ref{digagonal comparison2} using an interpolation argument. 

\begin{lem}
\label{diagonal induction}
For any $\epsilon>0$, there is a constant $\delta=\delta(\mathsf{data},\epsilon,d_{m})$ such that for any weak solution $u$ to \eqref{localizedproblemi} with
\begin{equation*}
    B_{20\rho_{x_{i}}}(x_{i})\Subset B_{4}
\end{equation*}and
\begin{equation}
\begin{aligned}
\label{scaling assumption2}
    &\frac{1}{\tau_{m}}\dashint_{\mathcal{B}_{20\rho_{x_{i}}}(x_{0})}|D^{\tau_{m}}d_{\alpha_{m}+s}u|^{p}\dmu_{\tau_{m}}+\Tailp\left(\frac{u-(u)_{B_{20\rho_{x_{i}}}(x_{i})}}{(4r)^{\alpha_{m}+\tau_{m}+s}};B_{20\rho_{x_{i}}}(x_{i})\right)^{p}\leq \lambda^{p},\\
    &\frac{1}{\tau_{m}}\dashint_{\mathcal{B}_{20\rho_{x_{i}}}(x_{i})}|D^{\tau_{m}}d_{\alpha_{m}}f|^{\tilde{q}_{m}}\dmu_{\tau_{m}}+\Tails\left(\frac{f-(f)_{B_{20\rho_{x_{i}}}(x_{i})}}{(4r)^{\alpha_{m}+\tau_{m}}};B_{20\rho_{x_{i}}}(x_{i})\right)^{\tilde{q}_{m}}\\
    &\quad+\left(\dashint_{B_{10\rho_{x_{i}}}(x_{i})}\dashint_{B_{10\rho_{x_{i}}}(x_{i})}\lambda|A(x,y)-(A)_{B_{10\rho_{x_{i}}}(x_{i})\times B_{10\rho_{x_{i}}}(x_{i})}|\dx\dy\right)^{\tilde{q}_{m}}\leq (\delta\lambda)^{\tilde{q}_{m} },
\end{aligned}
\end{equation}
there exists a weak solution $v\in W^{s,p}(B_{10\rho_{x_{i}}}(x_{i}))\cap L^{p-1}_{sp}(\mathbb{R}^{n})$ to 
\begin{equation}
\label{sol v for impr}
    (-\Delta)_{p,A_{10\rho_{x_{i}},x_{i}}}^{s}v=0\quad\text{in }B_{10\rho_{x_{i}}}(x_{i})
\end{equation}
such that 
\begin{equation}
\label{compar result2}
    \frac{1}{\tau_{m}}\dashint_{\mathcal{B}_{5\rho_{x_{i}}}(x_{i})}|D^{\tau_{m}}d_{\alpha_{m}+s}(u-v)|^{p}\dmu_{\tau_{m}}\leq(\epsilon\lambda)^{p}\quad\text{and}\quad\|D^{\tau_{m}}d_{\alpha_{m}+s}v\|_{L^{\infty}\left(\mathcal{B}_{5\rho_{x_{i}}}\left(x_{i}\right)\right)}\leq c_{2}\lambda
\end{equation}
for some constant $c_{2}=c_{2}(\mathsf{data})$.
\end{lem}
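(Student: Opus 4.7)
The plan is to apply the basic comparison Lemma~\ref{digagonal comparison2} with $\tau=\tau_m$ to produce $v$, then upgrade the resulting $W^{s,p}$-comparison to the desired $W^{s+\alpha_m,p}$-comparison via the interpolation Lemma~\ref{InterpoL}. To verify the hypotheses of Lemma~\ref{digagonal comparison2}, I exploit the pointwise identity $|D^{\tau_m}d_s u|^p\,d\mu_{\tau_m}=|x-y|^{p\alpha_m}|D^{\tau_m}d_{\alpha_m+s}u|^p\,d\mu_{\tau_m}$ together with $|x-y|\le 40\rho_{x_i}$ on $\mathcal{B}_{20\rho_{x_i}}(x_i)$; the tail terms scale by the same factor. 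For the $f$-hypothesis, Jensen's inequality converts the $L^{\tilde q_m}$-control in~\eqref{scaling assumption2} into an $L^p$-control, and the BMO hypothesis on $A$ is identical (both being an $L^1$-mean oscillation bound by $\delta$). This verifies the hypotheses with the rescaled quantity $\lambda':=(40\rho_{x_i})^{\alpha_m}\lambda$, producing $v$ solving~\eqref{sol v for impr} together with the bound $\frac{1}{\tau_m}\dashint_{\mathcal{B}_{5\rho_{x_i}}(x_i)}|D^{\tau_m}d_s(u-v)|^p\,d\mu_{\tau_m}\le(\tilde\epsilon\lambda')^p$ for any prescribed $\tilde\epsilon>0$ (obtained by shrinking $\delta$). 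The $L^\infty$ bound in~\eqref{compar result2} is then immediate from Lemma~\ref{almost lipschitz}: the condition $s+\alpha_m+\tau_m=s+\tau_0<\min\{1,sp/(p-1)\}$ from~\eqref{firstcondoftau} yields $[v]_{C^{s+\alpha_m+\tau_m}(B_{5\rho_{x_i}}(x_i))}\lesssim\lambda$, and as a byproduct every fractional Sobolev seminorm of $v$ of order strictly below $s+\alpha_m+\tau_m$ is uniformly bounded.

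The decisive step is to upgrade the $W^{s,p}$-comparison to a $W^{s+\alpha_m,p}$-comparison by interpolation. The critical observation is that the inductive hypothesis~\eqref{indforhigh} is genuinely a bound at a higher order of differentiability: a direct computation gives
\[|D^{\tau_m}d_{\alpha_m+s}u|^{\tilde p_m}\,d\mu_{\tau_m}=\frac{|u(x)-u(y)|^{\tilde p_m}}{|x-y|^{n+\tilde p_m s'}}\,dx\,dy,\qquad s':=s+\alpha_m+\Big(1-\frac{p}{\tilde p_m}\Big)\tau_m,\]
so~\eqref{indforhigh} controls $[u]_{W^{s',\tilde p_m}}$, and crucially $s'>s+\alpha_m$ because $\tilde p_m>p$. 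Lemma~\ref{embedding sob} then embeds $W^{s',\tilde p_m}$ into $W^{s'',p}$ for any $s''\in(s+\alpha_m,s')$, yielding $[u]_{W^{s'',p}(\mathcal{B}_{5\rho_{x_i}}(x_i))}\lesssim\lambda$; since $s''<s'<s+\alpha_m+\tau_m$, the Hölder control on $v$ from the previous paragraph gives the same bound for $v$, hence for $u-v$. Lemma~\ref{InterpoL} applied with $t=(s''-s-\alpha_m)/(s''-s)\in(0,1)$ then produces
\[[u-v]_{W^{s+\alpha_m,p}(\mathcal{B}_{5\rho_{x_i}}(x_i))}\le[u-v]_{W^{s,p}}^{t}\,[u-v]_{W^{s'',p}}^{1-t}\lesssim(\tilde\epsilon\lambda')^{t}\lambda^{1-t}.\]
A short computation shows that the powers of $\rho_{x_i}$ reassemble into precisely the scaling required by~\eqref{compar result2}, and choosing $\tilde\epsilon$ small enough (equivalently, $\delta$ small depending on $\mathsf{data}$, $\epsilon$, and $d_m$) absorbs the remaining constants.

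The principal obstacle lies in this reinterpretation of the inductive hypothesis as providing fractional differentiability of order $s'>s+\alpha_m$; without recognizing that~\eqref{indforhigh} encodes a $W^{s',\tilde p_m}$-seminorm with $s'$ strictly exceeding $s+\alpha_m$, one remains on the single Sobolev scale $s+\alpha_m$ and no interpolation gain is possible. The remaining work is the careful bookkeeping of the scaling powers of $\rho_{x_i}$ and $\lambda$ to ensure that the constant $c_2$ in~\eqref{compar result2} depends only on $\mathsf{data}$.
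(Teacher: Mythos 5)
Your proposal is correct and takes essentially the same route as the paper: a $d_s$-level comparison via the basic comparison lemma, followed by interpolation (Lemma~\ref{InterpoL}) between that comparison and the $W^{s'',p}$ control with $s''>s+\alpha_m$ that the inductive hypothesis~\eqref{indforhigh} supplies through Lemma~\ref{embedding sob} together with the H\"older control on $v$ from Lemma~\ref{almost lipschitz}. The only difference is presentational: the paper first rescales to the unit ball $B_4$ before running the argument, whereas you work at the original scale and defer the $\rho_{x_i}$-bookkeeping, which does reassemble correctly because the interpolation inequality of Lemma~\ref{InterpoL} is scale-invariant.
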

\begin{proof}
Let us define for $x,y\in\mathbb{R}^{n}$,
\begin{equation*}
    \tilde{u}(x)=\frac{u(5\rho_{x_{i}}x+x_{i})}{\left(5\rho_{x_{i}}\right)^{\alpha_{m}+\tau_{m}+s}\lambda},\quad\tilde{f}(x)=\frac{f(5\rho_{x_{i}}x+x_{i})}{\left(5\rho_{x_{i}}\right)^{\alpha_{m}+\tau_{m}}\lambda}\quad\text{and}\quad\tilde{A}(x,y)=A(5\rho_{x_{i}}x+x_{i},5\rho_{x_{i}}y+x_{i})
\end{equation*}
to see that 
\begin{equation*}
    (-\Delta)_{p,\tilde{A}}^{s}\tilde{u}=(-\Delta_{p})^{\frac{s}{p}}\tilde{f}\quad\text{in }B_{4}
\end{equation*}
and 
\begin{equation}
\begin{aligned}
\label{scaledscaling assumption2}
    &\frac{1}{\tau_{m}}\dashint_{\mathcal{B}_{4}}|D^{\tau_{m}}d_{\alpha_{m}+s}\tilde{u}|^{p}\dmu_{\tau_{m}}+\Tailp\left(\frac{\tilde{u}-(\tilde{u})_{B_{4}}}{4^{\alpha_{m}+\tau_{m}+s}};B_{4}\right)^{p}\leq 1,\\
    &\frac{1}{\tau_{m}}\dashint_{\mathcal{B}_{4}}|D^{\tau_{m}}d_{\alpha_{m}}\tilde{f}|^{\tilde{q}_{m}}\dmu_{\tau_{m}}+\Tails\left(\frac{\tilde{f}-(\tilde{f})_{B_{4}}}{4^{\alpha_{m}+\tau_{m}}};B_{4}\right)^{\tilde{q}_{m}}\\
    &\quad+\left(\dashint_{B_{2}}\dashint_{B_{2}}|\tilde{A}(x,y)-(\tilde{A})_{B_{2}\times B_{2}}|\dx\dy\right)^{\tilde{q}_{m}}\leq \delta^{\tilde{q}_{m} },
\end{aligned}
\end{equation}
which follows from \eqref{scaling assumption2}. We first observe that there is a constant $c=c(n,s,p,q,\sigma)$ such that
\begin{equation}
\label{holderf}
    \frac{1}{\tau_{m}}\dashint_{\mathcal{B}_{4}}|D^{\tau_{m}}d_{0}\tilde{f}|^{p}\dmu_{\tau_{m}}\leq \frac{1}{\tau_{m}^{1-\frac{p}{\tilde{q}_{m}}}}\left(\frac{1}{\tau_{m}}\dashint_{\mathcal{B}_{4}}|D^{\tau_{m}}d_{0}\tilde{f}|^{\tilde{q}_{m}}\dmu_{\tau_{m}}\right)^{\frac{p}{\tilde{q}_{m}}}\leq c,
\end{equation}
where we have used H\"older's inequality. Combining the estimates \eqref{scaledscaling assumption2} and \eqref{holderf}, and then using the fact
\begin{equation*}
1\leq \frac{8^{\alpha_{m}}}{|x-y|^{\alpha_{m}}}\leq \frac{c}{|x-y|^{\alpha_{m}}} \quad\text{for }x,y\in B_{4} 
\end{equation*}
and
\begin{equation}
\label{indm}
    \frac{1}{\tau_{m}}\leq \frac{c}{\tau_{n_{\sigma}}}\leq c,
\end{equation}
we find that there exists a constant $c$ depending only on $n,s,p,q$ and $\sigma$ such that
\begin{equation}
\label{assumscal}
\begin{aligned}
    &\frac{1}{\tau_{m}}\dashint_{\mathcal{B}_{4}}|D^{\tau_{m}}d_{s}\tilde{u}|^{p}\dmu_{\tau_{m}}+\Tailp\left(\frac{\tilde{u}-(\tilde{u})_{B_{4}}}{4^{\tau_{m}+s}};B_{4}\right)^{p}\leq c^{p},\\
    &\frac{1}{\tau_{m}}\dashint_{\mathcal{B}_{4}}|D^{\tau_{m}}d_{0}\tilde{f}|^{p}\dmu_{\tau_{m}}+\Tails\left(\frac{\tilde{f}-(\tilde{f})_{B_{4}}}{4^{\tau_{m}}};B_{4}\right)^{p}\\
    &\quad+\left(\dashint_{B_{2}}\dashint_{B_{2}}c\left|\tilde{A}(x,y)-(\tilde{A})_{B_{2}\times B_{2}}\right|\dx\dy\right)^{p}\leq (\delta c)^{p}.
\end{aligned}
\end{equation}
By following the same lines as in the proof of \eqref{UminusVd} and \eqref{vtail}, we observe that there is a weak solution $\tilde{v}\in W^{s,p}(B_{2})\cap L^{p-1}_{sp}(\mathbb{R}^{n})$ to 
\begin{equation}
\label{scaled sol v for impr}
(-\Delta_{p})_{\tilde{A}_{2}}^{s}\tilde{v}=0\quad\text{in }B_{2}
\end{equation}
such that
\begin{equation}
\label{Vesti}
    \frac{1}{\tau_{m}}\dashint_{\mathcal{B}_{2}}|D^{\tau_{m}}d_{s}(\tilde{u}-\tilde{v})|^{p}\dmu_{\tau_{m}}\leq c\left(\delta^{\nu}+\delta^{p}\right)\quad\text{and}\quad\Tailp(\tilde{v}-(\tilde{v})_{B_{2}};B_{2})\leq c
\end{equation}
for some constants $c=c(\mathsf{data})$ and $\nu=\nu(\mathsf{data})>0$. In light of Lemma \ref{almost lipschitz} along with the fact that $s+\alpha_{m}+\tau_{m}=s+\tau_{0}<\min\left\{1,\frac{sp}{p-1}\right\}$ by \eqref{tildetau} and \eqref{taucond}, Lemma \ref{SPIL}, \eqref{Vesti} and \eqref{assumscal}, we then find that there is a constant $c_{2}=c_{2}(\mathsf{data})$ such that
\begin{equation}
\label{compar resul2 of2}
\begin{aligned}
    \|D^{\tau_{m}}d_{\alpha_{m}+s}\tilde{v}\|_{L^{\infty}(\mathcal{B}_{1})}&
    =\|\tilde{v}\|_{C^{s+\tau_{0}}(B_{1})}\\
    &\leq c\left(\dashint_{B_{2}}|\tilde{v}-(\tilde{v})_{B_{2}}|^{p}\dx\right)^{\frac{1}{p}}+c\Tailp(\tilde{v}-(\tilde{v})_{B_{2}};B_{2})\\
    &\leq c\left(\frac{1}{\tau_{m}}\dashint_{\mathcal{B}_{2}}|D^{\tau_{m}}d_{s}\tilde{v}|^{p}\,d\mu_{\tau_{m}}\right)^{\frac{1}{p}}+c\\
    &\leq c\left(\frac{1}{\tau_{m}}\dashint_{\mathcal{B}_{2}}|D^{\tau_{m}}d_{s}(\tilde{u}-\tilde{v})|^{p}+|D^{\tau_{m}}d_{s}\tilde{u}|^{p}\,d\mu_{\tau_{m}}\right)^{\frac{1}{p}}+c\\
    &\leq c_{2}.
\end{aligned}
\end{equation}
We next note from \eqref{tail estimate of u local} that 
\begin{equation*}
\begin{aligned}
   \Tailp\left(\frac{\tilde{u}-(\tilde{u})_{B_{2}}}{2^{\alpha_{m}+\tau_{m}+s}}; B_{2}\right)\leq  c\left(\frac{1}{\tau_{m}}\dashint_{\mathcal{B}_{4}}\left|D^{\tau_{m}}d_{\alpha_{m}+s}\tilde{u}\right|^{p}\,d\mu_{\tau_{m}}\right)^{\frac{1}{p}}+c\Tailp\left(\frac{\tilde{u}-(\tilde{u})_{B_{4}}}{4^{\alpha_{m}+\tau_{m}+s}}; B_{4}\right)
\end{aligned}
\end{equation*}
and
\begin{equation*}
\Tails\left(\frac{\tilde{f}-(\tilde{f})_{B_{2}}}{2^{\alpha_{m}+\tau_{m}}}; B_{2}\right)\leq  c\left(\frac{1}{\tau_{m}}\dashint_{\mathcal{B}_{4}}\left|D^{\tau_{m}}d_{\alpha_{m}}\tilde{f}\right|^{\tilde{q}_{m}}\,d\mu_{\tau_{m}}\right)^{\frac{1}{\tilde{q}_{m}}}+c\Tails\left(\frac{\tilde{f}-(\tilde{f})_{B_{4}}}{4^{\alpha_{m}+\tau_{m}}}; B_{4}\right),
\end{equation*}
where we have used H\"older's inequality along with the fact that $p\leq \tilde{q}_{m}$ for the first term in the right-hand side of the above inequality. Combining the above two inequalities, the scaled version of \eqref{indforhigh} with $u=\tilde{u}$, $r=1$ and $x_{0}=0$, and \eqref{scaledscaling assumption2}, we have
\begin{equation}
\label{tildepmu}
\begin{aligned}
    &\left(\frac{1}{\tau_{m}}\dashint_{\mathcal{B}_{1}}|D^{\tau_{m}}d_{\alpha_{m}+s}\tilde{u}|^{\tilde{p}_{m}}\dmu_{\tau_{m}}\right)^{\frac{1}{\tilde{p_{m}}}}\\
    &\leq c\left[\left(\frac{1}{\tau_{m}}\dashint_{\mathcal{B}_{4}}|D^{\tau_{m}}d_{\alpha_{m}+s}\tilde{u}|^{p}\dmu_{\tau_{m}}\right)^{\frac{1}{p}}+\Tailp\left(\frac{\tilde{u}-(\tilde{u})_{B_{4}}}{4^{\alpha_{m}+\tau_{m}+s}}; B_{4}\right)\right]\\
    &\quad+c\left[\left(\frac{1}{\tau_{m}}\dashint_{\mathcal{B}_{4}}|D^{\tau_{m}}d_{\alpha_{m}}\tilde{f}|^{\tilde{q}_{m}}\dmu_{\tau_{m}}\right)^{\frac{1}{\tilde{q}_{m}}}+\Tails\left(\frac{\tilde{f}-(\tilde{f})_{B_{4}}}{4^{\alpha_{m}+\tau_{m}}}; B_{4}\right)\right]\\
    &\leq c
\end{aligned}
\end{equation}
for some constant $c=c(\mathsf{data}, d_{m})$. Applying Lemma \ref{embedding sob} with $q=\tilde{p}_{m}$, $s_{1}=s+\alpha_{m}+\frac{1}{2}\left(1-\frac{p}{\tilde{p}_{m}}\right)$ and $s_{2}=s+\alpha_{m}+\left(1-\frac{p}{\tilde{p}_{m}}\right)$, and then using \eqref{tildepmu}, \eqref{compar resul2 of2} and \eqref{indm}, we get that
\begin{equation*}
\begin{aligned}
    [\tilde{u}-\tilde{v}]_{W^{s+\alpha_{m}+\frac{1}{2}\left(1-\frac{p}{\tilde{p}_{m}}\right)\tau_{m},p}(B_{1})}&\leq c [\tilde{u}-\tilde{v}]_{W^{s+\alpha_{m}+\left(1-\frac{p}{\tilde{p}_{m}}\right)\tau_{m},\tilde{p}_{m}}(B_{1})}\\
    &\leq  c\left(\frac{1}{\tau_{m}}\dashint_{\mathcal{B}_{1}}|D^{\tau_{m}}d_{\alpha_{m}+s}(\tilde{u}-\tilde{v})|^{\tilde{p}_{m}}\dmu_{\tau_{m}}\right)^{\frac{1}{\tilde{p}_{m}}}\\
    &\leq  c\left(\frac{1}{\tau_{m}}\dashint_{\mathcal{B}_{1}}|D^{\tau_{m}}d_{\alpha_{m}+s}\tilde{u}|^{\tilde{p}_{m}}\dmu_{\tau_{m}}\right)^{\frac{1}{\tilde{p}_{m}}}\\
    &\quad+c\left(\frac{1}{\tau_{m}}\dashint_{\mathcal{B}_{1}}|D^{\tau_{m}}d_{\alpha_{m}+s}\tilde{v}|^{\tilde{p}_{m}}\dmu_{\tau_{m}}\right)^{\frac{1}{\tilde{p}_{m}}}\\
    &\leq c.
\end{aligned}
\end{equation*}
We next apply Lemma \ref{InterpoL} with  $s_{1}=s$, $s_{2}=s+\alpha_{m}+\frac{1}{2}\left(1-\frac{p}{\tilde{p}_{m}}\right)\tau_{m}$ and $t=t_{m}\in(0,1)$ such that
\begin{equation*}
     t_{m}s+(1-t_{m})\left(s+\alpha_{m}+\frac{1}{2}\left(1-\frac{p}{\tilde{p}_{m}}\right)\tau_{m}\right)=s+\alpha_{m},
\end{equation*}
to see that
\begin{equation}
\label{interpolation}
\begin{aligned}
[\tilde{u}-\tilde{v}]_{W^{s+\alpha_{m},p}(B_{1})}&
&\leq [\tilde{u}-\tilde{v}]^{t_{m}}_{W^{s,p}(B_{1})}[\tilde{u}-\tilde{v}]^{1-t_{m}}_{W^{s+\alpha_{m}+\frac{1}{2}\left(1-\frac{p}{\tilde{p}_{m}}\right)\tau_{m},p}(B_{1})}.
\end{aligned}
\end{equation}
We now take $t=\min\limits_{m=0,1,\ldots n_{\sigma}}t_{m}>0$ which depends only on $n,s,p,q$ and $\sigma$, since $t_{m}$ and $n_{\sigma}$ depend only on $n,s,p,q$ and $\sigma$.
In light of \eqref{Vesti}, \eqref{indm} and \eqref{interpolation}, we get that 
\begin{equation*}
\begin{aligned}
    \frac{1}{\tau_{m}}\dashint_{\mathcal{B}_{1}}|D^{\tau_{m}}d_{\alpha_{m}+s}(\tilde{u}-\tilde{v})|^{p}\dmu_{\tau_{m}}&= \frac{1}{\tau_{m}}\frac{1}{\mu_{\tau_{m}}(\mathcal{B}_{1})} [\tilde{u}-\tilde{v}]_{W^{s+\alpha_{m},p}(B_{1})}\\
    &\leq c\left[\left(\delta^{p}+\delta^{\nu}\right)\right]^{t}.
\end{aligned}
\end{equation*}
By taking $\delta$ sufficiently small depending only on $\mathsf{data}$,  $\epsilon$ and $d_{m}$, we have 
\begin{equation}
\label{scaledgoal2}
    \frac{1}{\tau_{m}}\dashint_{\mathcal{B}_{1}}|D^{\tau_{m}}d_{\alpha_{m}+s}(\tilde{u}-\tilde{v})|^{p}\dmu_{\tau_{m}}\leq \epsilon^{p}.
\end{equation} We define $v(x)=\left(5\rho_{x_{i}}\right)^{\alpha_{m}+\tau_{m}+s}\tilde{v}\left(\frac{x-x_{i}}{5\rho_{x_{i}}}\right)$ for $x\in \mathbb{R}^{n}$ to conclude that $v$ is a local weak solution to \eqref{sol v for impr} satisfying \eqref{compar result2} which follows from \eqref{scaled sol v for impr}, \eqref{compar resul2 of2} and \eqref{scaledgoal2}.
\end{proof}
\noindent
By following the same strategy as for the proof of Lemma \ref{inductiveicz}, we now obtain \eqref{indforhigh} with $m$, replaced by $m+1$, where the constant $d_{m+1}$ depends only on $\mathsf{data}$ and $d_{m}$.
\begin{lem}
\label{improved comparison}
Let $u$ be a weak solution to \eqref{localizedproblemi}, and let $m\leq n_{\sigma}-1$.
Then there is a small $\delta=\delta(\mathsf{data},d_{m})$ such that if $A$ is $(\delta,2)$-vanishing in $B_{4}\times B_{4}$ only at the diagonal, then $D^{\tau_{m+1}}d_{\alpha_{m+1}+s}u\in L^{p}_{\mathrm{loc}}\left(\mathcal{B}_{4}\,;\,\mu_{\tau_{m}}\right)$ and that \eqref{indforhigh} with $m$ replaced by $m+1$ holds, where $d_{m+1}=d_{m+1}(\mathsf{data},d_{m})$.
\end{lem}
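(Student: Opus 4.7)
The plan is to follow the same overall scheme as in the proof of Lemma \ref{inductiveicz}, with two key substitutions: (i) the diagonal comparison will be performed via Lemma \ref{diagonal induction} rather than Lemma \ref{digagonal comparison2}, which delivers the crucial $L^\infty$-bound on $D^{\tau_m}d_{\alpha_m+s}v$ (with the shifted differentiability index $s+\alpha_m+\tau_m$, guaranteed to be strictly below $\min\{1,sp/(p-1)\}$ by \eqref{taucond}); and (ii) the resulting reverse Hölder inequality at level $m$ will be converted to a level-$(m+1)$ estimate via the fractional Sobolev embedding Lemma \ref{embedding sob}, whose admissibility is encoded in \eqref{indnumforemb}.

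After a standard scaling reducing to $B_4$, I would apply Lemma \ref{coveringoflevelset} with parameters $\tilde p=\tilde p_m$, $\tilde q=\tilde q_m$, $\tau=\tau_m$, $\alpha=\alpha_m$ and $\tilde\gamma$ as in \eqref{condoftildegamma}. On each diagonal ball $\mathcal{B}_{5\rho_{x_i}}(x_i)$, the exit condition \eqref{exitradius} together with \eqref{diagonal lambda estimate} verifies the hypothesis \eqref{scaling assumption2} of Lemma \ref{diagonal induction}, producing a reference solution $v$ with $\|D^{\tau_m}d_{\alpha_m+s}v\|_{L^\infty}\le c_2\lambda$ and $L^p$-closeness $\dashint|D^{\tau_m}d_{\alpha_m+s}(u-v)|^p\,d\mu_{\tau_m}\le(\epsilon\lambda)^p$. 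On the off-diagonal cubes, the reverse Hölder estimate \eqref{gamma norm} gives $L^{\tilde\gamma}$-control in terms of $L^{\tilde p_m}$-data.

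Next, I would run the layer-cake/weak-$(1,1)$ level-set argument from the proof of Lemma \ref{inductiveicz} (with $p_h, \hat q$ there replaced by $\tilde p_m, \tilde\gamma$): choose $M$ large first to absorb the off-diagonal factor $M^{\tilde\gamma-\gamma_m}$, then shrink $\epsilon$ (hence $\delta$) to absorb the diagonal contribution. This yields a reverse Hölder inequality
\[
\phi_L(r_1)\le \tfrac{1}{2}\phi_L(r_2)+\frac{c(\mathsf{data},d_m)}{(r_2-r_1)^{2n}}\bigl[\text{RHS of \eqref{indforhigh}}\bigr],
\]
where $\phi_L(r)=\bigl(\dashint_{\mathcal{B}_r}|D^{\tau_m}d_{\alpha_m+s}u|_L^{\tilde\gamma}\,d\mu_{\tau_m}\bigr)^{1/\tilde\gamma}$. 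Lemma \ref{technicallemma} and $L\to\infty$ then promote this into an $L^{\tilde\gamma}$-gain at level $m$.

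To produce the desired estimate at level $m+1$, I would apply Lemma \ref{embedding sob} to embed the resulting $W^{s+\alpha_m+(1-p/\tilde\gamma)\tau_m,\tilde\gamma}$-control of $u$ into $W^{s+\alpha_{m+1}+(1-p/\tilde p_{m+1})\tau_{m+1},\tilde p_{m+1}}$, which is precisely $L^{\tilde p_{m+1}}(\mu_{\tau_{m+1}})$-control of $D^{\tau_{m+1}}d_{\alpha_{m+1}+s}u$. The off-diagonal cubes in the final covering of $\mathcal{B}_r(x_0)$ are handled via Lemma \ref{nd lemma}, and the tail terms via \eqref{tail estimate of u local2}; the $L^{\tilde q_{m+1}}$-norm of $D^{\tau_{m+1}}d_{\alpha_{m+1}}f$ on the right-hand side is controlled by Hölder's inequality using $\tilde q_{m+1}<q$ together with the hypothesis $d_0 f\in L^q_{\mathrm{loc}}(|x-y|^{-n-\sigma q})$. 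The main obstacle is the bookkeeping of the indices: \eqref{indnumforemb} is exactly what makes the embedding go through, and the parameters $(\tau_m,\alpha_m,\tilde p_m,\tilde q_m)$ have been designed so that both the comparison (needing $s+\tau_0<\min\{1,sp/(p-1)\}$) and the embedding remain valid at every level $m\le n_\sigma-1$; since the iteration is finite, the resulting constant $d_{m+1}$ depends only on $\mathsf{data}$ and $d_m$, and after $n_\sigma$ applications it is absorbed into a single $\mathsf{data}$-dependent constant.
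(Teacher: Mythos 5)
Your overall architecture is the same as the paper's: rescale to $B_4$, invoke Lemma \ref{coveringoflevelset}, use Lemma \ref{diagonal induction} (whose $L^\infty$-bound on the shifted fractional gradient $D^{\tau_m}d_{\alpha_m+s}v$ is indeed the key new ingredient) for the diagonal comparison, run the layer-cake argument, close with the iteration Lemma \ref{technicallemma}, then pass to level $m+1$ via Lemma \ref{embedding sob} and the index condition \eqref{indnumforemb}, handling off-diagonal cubes by Lemma \ref{nd lemma} and tails by Lemma \ref{tail estimate for local notioin}. All of that matches. However, there is a genuine flaw in the choice of level-set exponent that prevents the absorption step from going through.

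Concretely, you apply Lemma \ref{coveringoflevelset} with $\tilde p=\tilde p_m$ and the off-diagonal reverse-H\"older exponent $\tilde\gamma$ from \eqref{condoftildegamma}, \emph{and you also set the layer-cake target exponent equal to this same $\tilde\gamma$} (your $\phi_L(r)=\bigl(\dashint_{\mathcal{B}_r}|D^{\tau_m}d_{\alpha_m+s}u|_L^{\tilde\gamma}\,d\mu_{\tau_m}\bigr)^{1/\tilde\gamma}$). The absorption of the off-diagonal term $J_2$ relies on the factor $M^{\hat q-\tilde\gamma}$ being made small by taking $M$ large, which requires the target exponent $\hat q$ to be \emph{strictly smaller} than the off-diagonal cube exponent $\tilde\gamma$ appearing in \eqref{gamma norm}. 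With your choice $\hat q=\tilde\gamma$, the factor degenerates to $M^0=1$ and $J_2$ cannot be absorbed into $\tfrac12\phi_L(r_2)$. (Your notation $M^{\tilde\gamma-\gamma_m}$ does not fix this, since the covering lemma with $\tilde p=\tilde p_m$ gives no control at any exponent other than $\tilde\gamma=\tilde\gamma(\tilde p_m)$ on the off-diagonal cubes.) The paper avoids this by running the covering with $\tilde p=p$ (so the off-diagonal exponent is $\gamma_0$) and taking the layer-cake exponent to be $\tilde q_{m+1}<q_0\le p_1<\gamma_0$, which makes $M^{\tilde q_{m+1}-\gamma_0}<1$ for $M$ large. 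A repair of your proposal would be to keep the covering but lower the target exponent to $\tilde q_{m+1}$ (note $\tilde p_m<\tilde q_{m+1}<\tilde\gamma(\tilde p_m)$, so both the convergence of the $\lambda$-integrals and the $M$-absorption are restored), and then run the embedding step from $L^{\tilde q_{m+1}}$ exactly as the paper does.

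A secondary, lesser issue: using $\tilde p=\tilde p_m$ (rather than $\tilde p=p$ as in the paper) means that $\lambda_0$ and the resulting reverse-H\"older estimate contain the $L^{\tilde p_m}$-norm of $D^{\tau_m}d_{\alpha_m+s}u$ on the right, whereas the target estimate \eqref{indforhigh} at level $m+1$ has an $L^p$-norm at level $m+1$. You would therefore need to invoke the induction hypothesis \eqref{indforhigh} at level $m$ once more to descend from $L^{\tilde p_m}$ to $L^p$, and then convert the level-$m$ quantities to level-$m+1$ via the weight monotonicity \eqref{alphamplus1} and the embedding \eqref{holderpmqm}. This is doable but adds bookkeeping that the paper sidesteps by working directly at $L^p$.
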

\begin{proof}
Let $B_{4r}(x_{0})\Subset B_{4}$ for some $r>0$. 
We then define for $x,y\in\mathbb{R}^{n}$,
\begin{equation*}
\begin{aligned}
&\tilde{u}(x)=\frac{u\left(rx+x_{0}\right)}{r^{\alpha_{m}+\tau_{m}+s}},\quad \tilde{f}(x)=\frac{f\left(rx+x_{0}\right)}{r^{\alpha_{m}+\tau_{m}}}\quad\text{and}\quad\tilde{A}(x,y)=A\left(rx+x_{0},ry+x_{0}\right)
\end{aligned}
\end{equation*}
to see that 
\begin{equation*}
    (-\Delta)_{p,\tilde{A}}^{s}\tilde{u}=(-\Delta_{p})^{\frac{s}{p}}\tilde{f}\quad\text{in } B_{4},
\end{equation*}
where $\tilde{A}$ is $(\delta,2)$-vanishing in $B_{4}\times B_{4}$ only at the diagonal. We first want to show that
\begin{equation}
\label{goalforindhighdiff}
\begin{aligned}
\left(\dashint_{\mathcal{B}_{1}}|D^{\tau_{m}}d_{\alpha_{m}+s}\tilde{u}|^{\tilde{q}_{m+1}}\dmut\right)^{\frac{1}{\tilde{q}_{m+1}}}
&\leq c\left[\left(\dashint_{\mathcal{B}_{2}}|D^{\tau_{m}}d_{\alpha_{m}+s}\tilde{u}|^{p}\dmut\right)^{\frac{1}{p}}+\Tailp\left(\frac{\tilde{u}-(\tilde{u})_{B_{2}}}{2^{\alpha_{m}+\tau_{m}+s}};B_{2}\right)\right]\\
&\quad+c\left[\left(\dashint_{\mathcal{B}_{2}}|D^{\tau_{m}}d_{\alpha_{m}}f|^{\tilde{q}_{m+1}}\dmu_{\tau}\right)^{\frac{1}{\tilde{q}_{m+1}}}\Tails\left(\frac{\tilde{f}-(\tilde{f})_{B_{2}}}{2^{\alpha_{m}+\tau_{m}}};B_{2}\right)\right]
\end{aligned}
\end{equation}
for some constant $c=c(\mathsf{data},d_{m})$.
We fix $\epsilon\in(0,1)$, and then take $\delta=\delta(\mathsf{data},\epsilon,d_{m})$ given in Lemma \ref{diagonal induction}. Let $1\leq r_{1}<r_{2}\leq2$.
We now apply Lemma \ref{coveringoflevelset} with $\alpha=\alpha_{m}$ $ \tilde{p}=p$ , $\tilde{q}=\tilde{q}_{m}$, $\tilde{\gamma}=\gamma_{0}$ and $\tau=\tau_{m}$ to gain families of countable disjoint diagonal balls and off-diagonal cubes, $\{\mathcal{B}_{\rho_{x_{i}}}\left(x_{i}\right)\}_{i\in\mathbb{N}}$ and $\{\mathcal{Q}\}_{\mathcal{Q}\in\tilde{A}}$, respectively, 
such that
\begin{equation*}
   \left\{(x,y)\in \mathcal{B}_{r_{1}}\,;\,|D^{\tau_{m}}d_{\alpha_{m}+s}u|(x,y)>\lambda\right\}\subset\left(\bigcup\limits_{i\in\mathbb{N}}\mathcal{B}_{5\rho_{x_{i}}}\left(x_{i}\right)\right)\bigcup \left(\bigcup\limits_{\mathcal{Q}\in\tilde{\mathcal{A}}}\mathcal{Q}\right)
\end{equation*}
provided that $\lambda\geq\lambda_{0}$, where $\lambda_{0}$ is given in \eqref{lambda0} with $\alpha=\alpha_{m}$, $\tilde{p}=p$, $\tilde{q}=\tilde{q}_{m}$ and $\tau=\tau_{m}$. Moreover, by Lemma \ref{coveringoflevelset}, we have  \eqref{exitradius}, \eqref{gamma norm} and \eqref{sumofmeasq} with $\alpha=\alpha_{m},\, \tilde{p}=p$ , $\tilde{q}=\tilde{q}_{m}$, $\tilde{\gamma}=\gamma_{0}$ and $\tau=\tau_{m}$.
We now define for  $L\geq\lambda_{0}$, 
\begin{equation*}
    \tilde{\phi}_{L}(r)=\left(\dashint_{\mathcal{B}_{r}}\left|D^{\tau_{m}}d_{\alpha_{m}+s}\tilde{u}\right|_{L}^{\tilde{q}_{m+1}}\dmut\right)^{\frac{1}{\tilde{q}_{m+1}}},
\end{equation*}
where $\left|D^{\tau_{m}}d_{\alpha_{m}+s}\tilde{u}\right|_{L}=\min\{\left|D^{\tau_{m}}d_{\alpha_{m}+s}\tilde{u}\right|,L\}$. We are going to prove that if $L\geq \lambda_{0}$, then
\begin{equation}
\label{btechlemineq2}
\begin{aligned}
    \tilde{\phi}_{L}(r_{1})&\leq \frac{1}{2}\tilde{\phi}_{L}(r_{2})+\frac{c}{(r_{2}-r_{1})^{2n}}\left(\left(\dashint_{\mathcal{B}_{2}}|D^{\tau_{m}}d_{\alpha_{m}+s}\tilde{u}|^{p}\,d\mu_{\tau_{m}}\right)^{\frac{1}{p}}+\Tailp\left(\frac{\tilde{u}-(\tilde{u})_{B_{2}}}{2^{\alpha_{m}+\tau_{m}+s}};B_{2}\right)\right)\\
&\quad+\frac{c}{(r_{2}-r_{1})^{2n}}\left(\left(\dashint_{\mathcal{B}_{2}}|D^{\tau_{m}}d_{\alpha_{m}}\tilde{f}|^{\tilde{q}_{m}}\,d\mu_{\tau_{m}}\right)^{\frac{1}{\tilde{q}_{m}}}+\Tails\left(\frac{\tilde{f}-(\tilde{f})_{B_{2}}}{2^{\alpha_{m}+\tau_{m}}};B_{2}\right)\right)
\end{aligned}
\end{equation}
for some constant $c=c(\mathsf{data},d_{m})$ independent of $L$. Using \eqref{diagonal lambda estimate} with $\alpha=\alpha_{m}$, $\tilde{p}=p$, $\tilde{q}=q_{m}$, and $\tau=\tau_{m}$ and the fact that $\frac{s}{p-1}-(\alpha_{m}+\tau_{m})=\frac{s}{p-1}-\tau_{0}>0$, we have
\begin{equation*}
\begin{aligned}
    &\frac{1}{\tau_{m}}\dashint_{\mathcal{B}_{20\rho_{x_{i}}}(x_{i})}|D^{\tau_{m}}d_{\alpha_{m}+s}\tilde{u}|^{{p}}\dmu_{\tau_{m}}+\Tailp\left(\frac{\tilde{u}-(\tilde{u})_{B_{20\rho_{x_{i}}}}}{(20\rho_{x_{i}})^{\alpha_{m}+\tau_{m}+s}};B_{20\rho_{x_{i}}}(x_{i})\right)^{p}\leq (c_{1}\lambda)^{{p}},\\
    &\frac{1}{\tau_{m}}\dashint_{\mathcal{B}_{20\rho_{x_{i}}}(x_{i})}|D^{\tau_{m}}d_{\alpha_{m}}\tilde{f}|^{\tilde{q}_{m}}\dmu_{\tau_{m}}+\Tails\left(\frac{\tilde{f}-(\tilde{f})_{B_{20\rho_{x_{i}}}}}{(20\rho_{x_{i}})^{\alpha_{m}+\tau_{m}}};B_{20\rho_{x_{i}}}(x_{i})\right)^{\tilde{q}_{m}}\leq (c_{1}\lambda\delta)^{\tilde{q}_{m}}
\end{aligned}
\end{equation*}
for some constant $c_{1}=c_{1}(n,s,p,q,\sigma)$. We then apply Lemma \ref{diagonal induction} with $\lambda$ there, replaced by $c_{1}\lambda$, in order to obtain a weak solution $\tilde{v}$ of \eqref{sol v for impr} satisfying
\begin{equation}
\label{comparisonimprores}
    \frac{1}{\tau_{m}}\dashint_{\mathcal{B}_{5\rho_{x_{i}}}(x_{i})}|D^{\tau_{m}}d_{\alpha_{m}+s}(\tilde{u}-\tilde{v})|^{p}\dmu_{\tau_{m}}\leq (\epsilon c_{1}\lambda)^{p}\quad\text{and}\quad\|D^{\tau_{m}}d_{\alpha_{m}+s}\tilde{v}\|_{L^{\infty}(\mathcal{B}_{5\rho_{x_{i}}}(x_{i}))}\leq c_{2}c_{1}\lambda,
\end{equation} where the constant $c_{2}$ is given in Lemma \ref{diagonal induction}.
We first notice from Fubini's theorem that 
\begin{equation*}
\begin{aligned}
    \int_{\mathcal{B}_{r_{1}}}\left|D^{\tau_{m}}d_{\alpha_{m}+s}\tilde{u}\right|_{L}^{\tilde{q}_{m+1}}\,d\mu_{\tau_{m}}&=\int_{0}^{\infty}\tilde{q}_{m+1}\lambda^{\tilde{q}_{m+1}-1}\mu_{\tau_{m}}\left\{(x,y)\in\mathcal{B}_{r_{1}}\,;\,\left|D^{\tau_{m}}d_{\alpha_{m}+s}\tilde{u}\right|_{L}(x,y)>\lambda\right\}\dlambda\\
    &=\int_{0}^{M\lambda_{0}}\tilde{q}_{m+1}\lambda^{\tilde{q}_{m+1}-1}\mu_{\tau_{m}}\left\{(x,y)\in\mathcal{B}_{r_{1}}\,;\,|D^{\tau_{m}}d_{\alpha_{m}+s}\tilde{u}|(x,y)>\lambda\right\}\dlambda\\
    &\quad+\int_{M\lambda_{0}}^{L}\tilde{q}_{m+1}\lambda^{\tilde{q}_{m+1}-1}\mu_{\tau_{m}}\left\{(x,y)\in\mathcal{B}_{r_{1}}\,;\,|D^{\tau}d_{s}\tilde{u}|(x,y)>\lambda\right\}\dlambda\coloneqq I+J,
\end{aligned}
\end{equation*}
where $M>1$ will be selected later and $L>M\lambda_{0}$. We now estimate $I$ as
\begin{equation*}
    I\leq (M\lambda_{0})^{\tilde{q}_{m+1}}\mu_{\tau_{m}}(\mathcal{B}_{r_{1}}).
\end{equation*}
Proceeding as for \eqref{estimateofJ} leads to
\begin{equation*}
\begin{aligned}
    J&\leq \int_{\lambda_{0}}^{LM^{-1}} \tilde{q}_{m+1}M^{\tilde{q}_{m+1}}\lambda^{\tilde{q}_{m+1}-1}\mu_{\tau_{m}}\left(\left\{(x,y)\in\left(\bigcup\limits_{i}\mathcal{B}_{5\rho_{x_{i}}}(x_{i})\right) \,;\, |D^{\tau_{m}}d_{\alpha_{m}+s}\tilde{u}(x,y)|>M\lambda  \right\}\right)\dlambda\\
    &\quad+\int_{\lambda_{0}}^{LM^{-1}}\tilde{q}_{m+1} M^{\tilde{q}_{m+1}}\lambda^{\tilde{q}_{m+1}-1}\mu_{\tau_{m}}\left(\left\{(x,y)\in\left(\bigcup\limits_{\mathcal{Q}\in\tilde{\mathcal{A}}}\mathcal{Q}\right) \,;\, |D^{\tau_{m}}d_{\alpha_{m}+s}\tilde{u}(x,y)|>M\lambda \right\}\right)\dlambda\\
    &\eqqcolon J_{1}+J_{2}.
\end{aligned}
\end{equation*}
By following the same arguments as in \eqref{diagonal level set estimate1} along with \eqref{comparisonimprores}, we have that there is a constant $c=c(\mathsf{data})$ such that
\begin{equation}
\label{diagonal level set estimate2}
\begin{aligned}
    &\int_{\lambda_{0}}^{LM^{-1}}M^{\tilde{q}_{m+1}}\lambda^{\tilde{q}_{m+1}-1}\mu_{\tau_{m+1}}\left(\left\{(x,y)\in\left(\mathcal{B}_{5\rho_{x_{i}}}\left(x_{i}\right)\right) \,;\, |D^{\tau_{m}}d_{\alpha_{m}+s}\tilde{u}(x,y)|>M\lambda  \right\}\right)\dlambda\\
    &\leq cM^{\tilde{q}_{m+1}-p}\epsilon^{p}\int_{\lambda_{0}}^{LM^{-1}}\lambda^{\tilde{q}_{m+1}-1}\mu_{\tau_{m+1}}\left(\mathcal{B}_{\rho_{x_{i}}}\left(x_{i}\right)\right)\dlambda,
\end{aligned}
\end{equation}
where we have taken $M>c_{1}c_{2}$. We next note from \eqref{exitradius} with $\tilde{p}=p$, $\tilde{q}=\tilde{q}_{m}$, $\alpha=\alpha_{m}$ and $\tau=\tau_{m}$ that either 
\begin{equation}
\label{mut dia u1}
\begin{aligned}
    \frac{\tau_{m}^{\frac{1}{p}}}{c_{d}}\frac{\lambda}{2}\leq \left(\dashint_{\mathcal{B}_{\rho_{x_{i}}}(x_{i})}|D^{\tau_{m}}d_{\alpha_{m}+s}\tilde{u}|^{p}\dmu_{\tau_{m}}\right)^{\frac{1}{p}}
\end{aligned}
\end{equation}
or 
\begin{equation}
\label{mut dia f1}
\begin{aligned}
\frac{\tau_{m}^{\frac{1}{p}}}{c_{d}}\frac{\lambda}{2}\leq \frac{1}{\delta}\left(\dashint_{\mathcal{B}_{\rho_{x_{i}}}(x_{i})}|D^{\tau_{m}}d_{\alpha_{m}}\tilde{f}|^{\tilde{q}_{m}}\dmu_{\tau_{m}}\right)^{\frac{1}{\tilde{q}_{m}}}
\end{aligned}
\end{equation}
holds. After a few algebraic manipulations along with \eqref{indm}, \eqref{mut dia u1} and \eqref{mut dia f1}, we find that
\begin{equation*}
   \mu_{\tau}\left(\mathcal{B}_{\rho_{x_{i}}}(x_{i})\right)\leq \left(\frac{c}{\lambda}\right)^{p}\int_{\mathcal{B}_{\rho_{x_{i}}}(x_{i})}|D^{\tau_{m}}d_{\alpha_{m}+s}\tilde{u}|^{p}\dmu_{\tau_{m}}+\left(\frac{c}{\delta\lambda}\right)^{\tilde{q}_{m}}\int_{\mathcal{B}_{\rho_{x_{i}}}(x_{i})}|D^{\tau_{m}}d_{\alpha_{m}}\tilde{f}|^{\tilde{q}_{m}}\dmu_{\tau_{m}}
\end{equation*}
for some constant $c=c(\mathsf{data})$. We then observe that
\begin{equation}
\label{mut dia3}
\begin{aligned}
    \mu_{\tau_{m}}\left(\mathcal{B}_{\rho_{x_{i}}}(x_{i})\right)&\leq \left(\frac{2c}{\lambda}\right)^{p}\int_{\mathcal{B}_{\rho_{x_{i}}}(x_{i})\cap\left\{ |D^{\tau_{m}}d_{\alpha_{m}+s}\tilde{u}|>a\lambda\right\}}|D^{\tau_{m}}d_{\alpha_{m}+s}\tilde{u}|^{p}\dmu_{\tau_{m}}\\
    &\quad+\left(\frac{2c}{\delta\lambda}\right)^{\tilde{q}_{m}}\int_{\mathcal{B}_{\rho_{x_{i}}}(x_{i})\cap\left\{ |D^{\tau_{m}}d_{\alpha_{m}}\tilde{f}|>b\lambda\right\}}|D^{\tau_{m}}d_{\alpha_{m}}\tilde{f}|^{\tilde{q}_{m}}\dmu_{\tau_{m}},
\end{aligned}
\end{equation}
where $a=\frac{1}{4c}$ and $b=\frac{\delta}{4c}$.
Combine \eqref{diagonal level set estimate2} and \eqref{mut dia3} together with the fact that $\left\{\mathcal{B}_{\rho_{x_{i}}}(x_{i})\right\}$ is a collection of disjoint sets contained in $\mathcal{B}_{r_{2}}$ to see that
\begin{equation*}
\begin{aligned}
    J_{1}&\leq cM^{\tilde{q}_{m+1}-p}\epsilon^{p}\int_{\lambda_{0}}^{\infty}\lambda^{\tilde{q}_{m+1}-p-1}\int_{\mathcal{B}_{r_{2}}\cap\left\{ |D^{\tau_{m}}d_{\alpha_{m}+s}\tilde{u}|_{LM^{-1}}>a\lambda\right\}}|D^{\tau_{m}}d_{\alpha_{m}+s}\tilde{u}|^{p}\dmu_{\tau_{m}}\dlambda\\
    &\quad+cM^{\tilde{q}_{m+1}-p}\epsilon^{p}\int_{\lambda_{0}}^{\infty}\lambda^{\tilde{q}_{m+1}-\tilde{q}_{m}-1}\int_{\mathcal{B}_{r_{2}}\cap\left\{ |D^{\tau_{m}}d_{\alpha_{m}}\tilde{f}|>b\lambda\right\}}\frac{1}{\delta^{\tilde{q}_{m}}}|D^{\tau_{m}}d_{\alpha_{m}}\tilde{f}|^{\tilde{q}_{m}}\dmu_{\tau_{m}}\dlambda\\
\end{aligned}
\end{equation*}
for some constant $c=c(\mathsf{data})$. We then have
\begin{equation*}
\begin{aligned}
J_{1}&\leq cM^{\tilde{q}_{m+1}-p}\epsilon^{p}\int_{\mathcal{B}_{r_{2}}}\left|D^{\tau_{m}}d_{\alpha_{m}+s}\tilde{u}\right|_{LM^{-1}}^{\tilde{q}_{m+1}-p}|D^{\tau_{m}}d_{\alpha_{m}+s}\tilde{u}|^{p}\,d\mu_{\tau_{m}}\\
&\quad+cM^{\tilde{q}_{m+1}-\tilde{q}_{m}}\epsilon^{p}\int_{\mathcal{B}_{r_{2}}}\frac{1}{\delta^{\tilde{q}_{m}+1}}|D^{\tau_{m}}d_{\alpha_{m}}\tilde{f}|^{\tilde{q}_{m+1}}\,d\mu_{\tau_{m}},
\end{aligned}
\end{equation*} 
where we have used Fubini's theorem. Since $\gamma_{0}>\tilde{q}_{m+1}$, similar arguments performed on the estimates of \eqref{estimateofj2r} and \eqref{chooseofM} along with \eqref{gamma norm} and \eqref{sumofmeasq} with $\alpha=\alpha_{m},\, \tilde{p}=p$ ,  $\tilde{\gamma}=\gamma_{0}$ and $\tau=\tau_{m}$ yield that
\begin{equation*}
\begin{aligned}
    J_{2}
    &\leq cM^{\tilde{q}_{m+1}-\gamma_{0}}\int_{\mathcal{B}_{r_{2}}}\left|D^{\tau_{m}}d_{\alpha_{m}+s}\tilde{u}\right|_{LM^{-1}}^{\tilde{q}_{m+1}-p}|D^{\tau_{m}}d_{\alpha_{m}+s}\tilde{u}|^{p}\,d\mu_{\tau_{m}}\\
    &\leq \frac{1}{2^{2n+2q}} \int_{\mathcal{B}_{r_{2}}}\left|D^{\tau_{m}}d_{\alpha_{m}+s}\tilde{u}\right|_{LM^{-1}}^{\tilde{q}_{m+1}}\dmu_{\tau_{m}}
\end{aligned}
\end{equation*}
by taking $M$ sufficiently large such that $cM^{\tilde{q}_{m+1}-\gamma_{0}}\leq\frac{1}{2^{2n+2q}}$ and $M>c_{1}c_{2}$ depending only on $\mathsf{data}$. We now choose a sufficiently small $\epsilon=\epsilon(\mathsf{data})$ such that  $cM^{\tilde{q}_{m+1}-p}\epsilon^{p}\leq\frac{1}{2^{2n+2q}}$ to see that
\begin{equation*}
    J_{1}\leq \frac{1}{2^{2n+2q}} \int_{\mathcal{B}_{r_{2}}}\left|D^{\tau_{m}}d_{\alpha_{m}+s}\tilde{u}\right|_{LM^{-1}}^{\tilde{q}_{m+1}}\dmu_{\tau_{m}}+c\int_{\mathcal{B}_{r_{2}}}|D^{\tau_{m}}d_{\alpha_{m}}\tilde{f}|^{\tilde{q}_{m+1}}\,d\mu_{\tau_{m}}
\end{equation*}
for some constant $c=c(\mathsf{data},d_{m})$, as $\delta$ depends only on $\mathsf{data}$ and $d_{m}$.
In light of the estimates $I$, $J_{1}$ and $J_{2}$, the same arguments as for \eqref{techlemmabef} yield \eqref{btechlemineq2}. Then \eqref{goalforindhighdiff} follows by using the technical lemma \ref{technicallemma} and the limit procedure as in the last part for the proof of Lemma \ref{inductiveicz}. Using the change of variables and \eqref{goalforindhighdiff}, we find that
\begin{equation}
\label{indforhighm}
\begin{aligned}
    &\left(\dashint_{\mathcal{B}_{r}(x_{0})}|D^{\tau_{m}}d_{\alpha_{m}+s}u|^{\tilde{q}_{m+1}}\dmu_{\tau_{m}}\right)^{\frac{1}{\tilde{q}_{m+1}}}\\
    &\leq c\left[\left(\dashint_{\mathcal{B}_{2r}(x_{0})}|D^{\tau_{m}}d_{\alpha_{m}+s}u|^{p}\dmu_{\tau_{m}}\right)^{\frac{1}{p}}+\Tailp\left(\frac{u-(u)_{B_{2r}(x_{0})}}{(2r)^{\alpha_{m}+\tau_{m}+s}}; B_{2r}(x_{0})\right)\right]\\
    &+c\left[\left(\dashint_{\mathcal{B}_{2r}(x_{0})}|D^{\tau_{m}}d_{\alpha_{m}}f|^{\tilde{q}_{m+1}}\dmu_{\tau_{m}}\right)^{\frac{1}{\tilde{q}_{m+1}}}+\Tails\left(\frac{f-(f)_{B_{2r}(x_{0})}}{(2r)^{\alpha_{m}+\tau_{m}}}; B_{2r}(x_{0})\right)\right]
\end{aligned}
\end{equation}
for some constant $c=c(\mathsf{data},d_{m})$. By \eqref{indnumforemb}, we now apply Lemma \ref{embedding sob} with $p=\tilde{p}_{m+1}$, $q=\tilde{q}_{m+1}$, $s_{1}=\alpha_{m+1}+\left(1-\frac{p}{\tilde{p}_{m+1}}\right)\tau_{m+1}$ and $s_{2}=\alpha_{m}+\left(1-\frac{p}{\tilde{q}_{m}}\right)\tau_{m}$, and use \eqref{taualphacond} and \eqref{indm} to see that 
\begin{equation}
\label{holderpmqm}
    \left(\dashint_{\mathcal{B}_{r}(x_{0})}|D^{\tau_{m+1}}d_{\alpha_{m+1}+s}u|^{\tilde{p}_{m+1}}\dmu_{\tau_{m+1}}\right)^{\frac{1}{\tilde{p}_{m+1}}}\leq c\left(\dashint_{\mathcal{B}_{r}(x_{0})}|D^{\tau_{m}}d_{\alpha_{m}+s}u|^{\tilde{q}_{m+1}}\dmu_{\tau_{m}}\right)^{\frac{1}{\tilde{q}_{m+1}}}
\end{equation}
for some constant $c=c(\mathsf{data})$. Using \eqref{holderpmqm}, the fact that
\begin{equation}
\label{alphamplus1}
    1\leq\left(\frac{4r}{|x-y|}\right)^{\alpha_{m+1}-\alpha_{m}}\quad\text{for }x,y\in \mathcal{B}_{2r}(x_{0})
\end{equation}
and \eqref{taualphacond}, we further estimate the left-hand side and the third term in the right-hand side in \eqref{indforhighm} to find that
\begin{equation}
\label{indforhighm1}
\begin{aligned}
    &\left(\dashint_{\mathcal{B}_{r}(x_{0})}|D^{\tau_{m+1}}d_{\alpha_{m+1}+s}u|^{\tilde{p}_{m+1}}\dmu_{\tau_{m+1}}\right)^{\frac{1}{\tilde{p}_{m+1}}}\\
    &\leq c\left[\left(\dashint_{\mathcal{B}_{2r}(x_{0})}|D^{\tau_{m}}d_{\alpha_{m}+s}u|^{p}\dmu_{\tau_{m}}\right)^{\frac{1}{p}}+\Tailp\left(\frac{u-(u)_{B_{2r}(x_{0})}}{(2r)^{\alpha_{m+1}+\tau_{m+1}+s}}; B_{2r}(x_{0})\right)\right]\\
    &+c\left[\left(\dashint_{\mathcal{B}_{2r}(x_{0})}|D^{\tau_{m+1}}d_{\alpha_{m+1}}f|^{\tilde{q}_{m+1}}\dmu_{\tau_{m+1}}\right)^{\frac{1}{\tilde{q}_{m+1}}}+\Tails\left(\frac{f-(f)_{B_{2r}(x_{0})}}{(2r)^{\alpha_{m+1}+\tau_{m+1}}}; B_{2r}(x_{0})\right)\right]
\end{aligned}
\end{equation}
for some constant $c=c(\mathsf{data}, d_{m})$ provided that $\mathcal{B}_{4r}(x_{0})\Subset\mathcal{B}_{4}$. Using H\"older's inequality and \eqref{indforhighm1}, we have 
\begin{equation}
\label{diagonalp}
    \left(\dashint_{\mathcal{B}_{r}(x_{0})}|D^{\tau_{m+1}}d_{\alpha_{m+1}+s}u|^{p}\dmu_{\tau_{m+1}}\right)^{\frac{1}{p}}\leq\left(\dashint_{\mathcal{B}_{r}(x_{0})}|D^{\tau_{m+1}}d_{\alpha_{m+1}+s}u|^{\tilde{p}_{m+1}}\dmu_{\tau_{m+1}}\right)^{\frac{1}{\tilde{p}_{m+1}}}<\infty.
\end{equation}
On the other hand, for any cube $\mathcal{Q}\equiv\mathcal{Q}_{\frac{r}{\sqrt{n}}}\left(z_{1},z_{2}\right)$ where $z_{1},z_{2}\in B_{4}$ satisfying \eqref{dist cube}, we find that
\begin{equation}
\label{offdiagonalp}
\begin{aligned}
    \left(\int_{\mathcal{Q}}|D^{\tau_{m+1}}d_{\alpha_{m+1}+s}u|^{p}\dmu_{\tau_{m+1}}\right)^{\frac{1}{p}}\leq c(\mathsf{data},r)\left(\int_{\mathcal{Q}}|D^{\tau_{0}}d_{s}u|^{p}\dmu_{\tau_{m}}\right)^{\frac{1}{p}}<\infty,
\end{aligned}
\end{equation}
where we have used \eqref{taualphacond} and the fact that $\frac{1}{8}\leq \frac{1}{|x-y|}\leq \frac{\sqrt{n}}{r}$ for any $(x,y)\in \mathcal{Q}$. Thus, the standard covering argument along with \eqref{diagonalp} and \eqref{offdiagonalp} yields $D^{\tau_{m+1}}d_{\alpha_{m+1}+s}u\in L^{p}_{\mathrm{loc}}\left(\mathcal{B}_{4}\,;\,\mu_{\tau_{m+1}}\right)$. Lastly, using \eqref{alphamplus1}, we estimate the first term in the right-hand side of \eqref{indforhighm1} to get \eqref{indforhigh} with $m$ there, replaced by $m+1$, where $d_{m+1}=d_{m+1}(\mathsf{data},d_{m})$.
\end{proof}
\begin{cor}
\label{pnormnsigma}
Let $u$ be a weak solution to \eqref{localizedproblem} with \eqref{assumsigmaforhig}. Then there is a small constant $\delta=\delta(\mathsf{data})$ such that if $A$ is $(\delta,2)$-vanishing in $B_{4}\times B_{4}$ only at the diagonal, then we have that 
\eqref{indforhigh} with $m=n_{\sigma}$, where $d_{n_{\sigma}}=d_{n_{\sigma}}(\mathsf{data})$, and 
$D^{\tau_{n_{\sigma}}}d_{\alpha_{n_{\sigma}}+s}u\in L^{p}_{\mathrm{loc}}\left(\mathcal{B}_{4}\,;\,\mu_{\tau_{n\sigma}}\right)$ with the estimate 
\begin{equation}
\label{nsigmalast}
\begin{aligned}
&\left(\dashint_{\mathcal{B}_{\frac{r}{2^{n_{\sigma}-1}}}(x_{0})}|D^{\tau_{n_{\sigma}}}d_{\alpha_{n_{\sigma}}+s}u|^{p}\,d\mu_{\tau_{n_{\sigma}}}\right)^{\frac{1}{p}}\\
&\leq c\left[\left(\dashint_{\mathcal{B}_{2r}(x_{0})}|D^{\tau_{0}}d_{s}u|^{p}\,d\mu_{\tau_{0}}\right)^{\frac{1}{p}}+\Tailp\left(\frac{u-(u)_{B_{2}(x_{0})}}{(2r)^{\tau_{0}+s}}; B_{2r}(x_{0})\right)\right]\\
&\quad+c\left[\left(\dashint_{\mathcal{B}_{2r}(x_{0})}|D^{\tau_{n_{\sigma}}}d_{\alpha_{n_{\sigma}}}f|^{q}\,d\mu_{\tau_{n_{\sigma}}}\right)^{\frac{1}{q}}+\Tailp\left(\frac{f-(f)_{B_{2}(x_{0})}}{(2r)^{\tau_{0}}}; B_{2r}(x_{0})\right)\right]
\end{aligned}
\end{equation}
for some constant $c=c(\mathsf{data})$ whenever $B_{2r}(x_{0})\Subset B_{4}$.
\end{cor}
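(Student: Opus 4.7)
The plan is to obtain \eqref{indforhigh} at level $m = n_\sigma$ by iterating Lemma \ref{improved comparison}, and then to extract \eqref{nsigmalast} by chaining the resulting estimates at all levels together with the fractional Sobolev embedding.

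\textbf{Base case ($m = 0$).} The pair $(\sigma, q)$ lies outside the regime \eqref{firstsigmares} where Subsection \ref{subsection51} applies directly, but I can invoke that subsection with the auxiliary pair $(\sigma', q') := ((1 - p/\tilde p_0)\tau_0,\, \tilde p_0)$. From \eqref{tildepqrange} and \eqref{taucond} one has $\tau_0 < \min\{s/(p-1),\, 1-s\}$, so $\sigma' < (1 - p/q')\min\{s/(p-1),\, 1-s\}$ and \eqref{firstsigmares} holds for $(\sigma', q')$; moreover $\tfrac{q'}{q' - p}\sigma' = \tau_0$, so the resulting $L^{\tilde p_0}$ estimate controls precisely $D^{\tau_0} d_s u$. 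Since $\tilde q_0 > \tilde p_0$ by \eqref{tildepqrange}, Jensen's inequality upgrades the $L^{\tilde p_0}$-norm of $D^{\tau_0} d_0 f$ on the right-hand side to the $L^{\tilde q_0}$-norm required by \eqref{indforhigh}. This establishes \eqref{indforhigh} at $m = 0$ with some constant $d_0 = d_0(\mathsf{data})$.

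\textbf{Inductive step.} Lemma \ref{improved comparison} advances \eqref{indforhigh} from level $m$ to level $m + 1$, requiring BMO smallness $\delta_m = \delta_m(\mathsf{data}, d_m)$ and producing a new constant $d_{m+1}(\mathsf{data}, d_m)$. After exactly $n_\sigma$ applications I reach \eqref{indforhigh} at $m = n_\sigma$. Since $n_\sigma$ is determined by $\mathsf{data}$ through \eqref{taucond}, both $d_{n_\sigma}$ and the definitive threshold $\delta := \min_{0 \le m < n_\sigma} \delta_m$ depend only on $\mathsf{data}$.

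\textbf{Deriving \eqref{nsigmalast}.} The $L^{\tilde p_{n_\sigma}}$-norm on the left of \eqref{indforhigh} at $m = n_\sigma$ dominates the $L^p$-norm on the left of \eqref{nsigmalast} by Jensen, since $\tilde p_{n_\sigma} > p$. To reduce the $L^p$-norm of $D^{\tau_{n_\sigma}} d_{\alpha_{n_\sigma}+s} u$ appearing on the right down to the target $L^p$-norm of $D^{\tau_0} d_s u$ on $\mathcal{B}_{2r}$, I apply \eqref{indforhigh} iteratively for $m = n_\sigma - 1, \ldots, 0$ on the geometrically doubling balls $\mathcal{B}_{r/2^{n_\sigma-2}}, \ldots, \mathcal{B}_r, \mathcal{B}_{2r}$, bridging the exponents and measures between consecutive levels via Lemma \ref{embedding sob}, exactly as in the passage from \eqref{indforhighm} to \eqref{indforhighm1} via \eqref{holderpmqm} in the proof of Lemma \ref{improved comparison}. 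For the forcing term, the identity $\alpha_{n_\sigma} + (1 - p/q)\tau_{n_\sigma} = \sigma$ from \eqref{alphatauplus}, combined with Lemma \ref{embedding sob}, converts the $L^{\tilde q_m}$-norms of $D^{\tau_m} d_{\alpha_m} f$ arising at intermediate levels into the $L^q$-norm of $D^{\tau_{n_\sigma}} d_{\alpha_{n_\sigma}} f$ (corresponding to the $W^{\sigma, q}$-seminorm) that appears on the right of \eqref{nsigmalast}; the associated tails are handled by Lemma \ref{tail estimate for local notioin}.

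The main obstacle is that each application of Lemma \ref{improved comparison} worsens the constant $d_m$, so the finiteness of the iteration—i.e., $n_\sigma$ depending only on $\mathsf{data}$—is essential. This is precisely what the careful parameter choices \eqref{tildetau}--\eqref{taualphacond} guarantee, and once this is granted the remaining work consisting of chaining the estimates, bookkeeping the radii, and invoking the embedding is routine.
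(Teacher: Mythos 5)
Your overall strategy matches the paper: obtain \eqref{indforhigh} at $m=0$ by invoking the restricted-range theorem from Subsection \ref{subsection51}, iterate Lemma \ref{improved comparison} to reach $m=n_\sigma$, and then chain the level estimates with Sobolev embeddings and the tail lemma to get \eqref{nsigmalast}. Your base-case variant (applying the theorem with $q'=\tilde p_0$, $\sigma'=(1-p/\tilde p_0)\tau_0$ and upgrading the $f$-term by Jensen afterwards) is a harmless modification of the paper's choice $q'=\tilde q_0$ (there the $L^{\tilde q_0}$ output is downgraded on the left by H\"older instead); both give \eqref{indforhigh} at $m=0$ with $d_0=d_0(\mathsf{data})$.

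However, there is a genuine gap in your derivation of \eqref{nsigmalast}. You write that you ``apply \eqref{indforhigh} iteratively,'' bridging consecutive levels via \eqref{holderpmqm}. But \eqref{indforhigh} at level $m$ only delivers the $L^{\tilde p_m}(\mu_{\tau_m})$-norm of $D^{\tau_m}d_{\alpha_m+s}u$ on the left, whereas the embedding step \eqref{holderpmqm} — the passage from $D^{\tau_m}d_{\alpha_m+s}u$ to $D^{\tau_{m+1}}d_{\alpha_{m+1}+s}u$ — needs the $L^{\tilde q_{m+1}}(\mu_{\tau_m})$-norm on its right. By \eqref{tildepqrange} one has $\tilde p_m < \frac{3p+q_0}{4} < q_0\frac{3p+q_0}{2(q_0+p)} < \tilde q_{m+1}$, so the exponent you are handed by \eqref{indforhigh} is strictly smaller than the one \eqref{holderpmqm} requires, and H\"older runs in the wrong direction. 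So iterating the statement \eqref{indforhigh} alone does not close the chain. What is actually needed is the intermediate output \eqref{indforhighm} (resp.\ its embedded form \eqref{indforhighm1}) produced inside the proof of Lemma \ref{improved comparison}: the covering argument there gives $D^{\tau_m}d_{\alpha_m+s}u$ in $L^{\tilde q_{m+1}}$, not merely in $L^{\tilde p_m}$, and this higher-exponent bound is exactly what feeds \eqref{holderpmqm}. The paper extracts \eqref{indforhighm1} and \eqref{diagonalp} to form the step estimate \eqref{nsigma1}, which relates $L^p(D^{\tau_{m+1}})$ on $\mathcal{B}_{r/2^m}$ to $L^p(D^{\tau_m})$ on $\mathcal{B}_{r/2^{m-1}}$; it is this telescoping inequality, not \eqref{indforhigh}, that one must iterate. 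Your reference to ``the passage from \eqref{indforhighm} to \eqref{indforhighm1}'' shows you have the right ingredients in mind, but as written the plan asserts the wrong input (the stated conclusion of the iteration, rather than its proof byproducts), and that input is quantitatively insufficient.
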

\begin{proof}
From Lemma \ref{embedding sob}, $f\in W^{\sigma, q}_{\mathrm{loc}}(B_{4})$ implies $f\in W^{\left(1-\frac{p}{\tilde{q}_{0}}\right)\tau_{0},\tilde{q}_{0}}_{\mathrm{loc}}(B_{4})$, which is equivalent to
\begin{equation*}
    d_{0}f\in L^{\tilde{q}_{0}}_{\mathrm{loc}}\left(\mathcal{B}_{4};\frac{\dx\dy}{|x-y|^{n+\left(\tilde{q}_{0}-p\right)\tau_{0}}}\right).
\end{equation*}
We now apply Theorem \ref{main theorem} with $\sigma=\left(1-\frac{p}{\tilde{q}_{0}}\right)\tau_{0}$ to find a small constant $\delta_{0}=\delta_{0}(\mathsf{data})\in(0,1)$. Thus if $\delta<\delta_{0}$, then we have
\begin{equation*}
\begin{aligned}
    &\left(\dashint_{\mathcal{B}_{r}(x_{0})}|D^{\tau_{0}}d_{s}u|^{\tilde{q}_{0}}\dmu_{\tau_{0}}\right)^{\frac{1}{\tilde{q}_{0}}}\\
    &\leq d_{0}\left[\left(\dashint_{\mathcal{B}_{2r}(x_{0})}|D^{\tau_{0}}d_{s}u|^{p}\dmu_{\tau_{0}}\right)^{\frac{1}{p}}+\Tailp\left(\frac{u-(u)_{B_{2r}(x_{0})}}{(2r)^{\tau_{0}+s}}; B_{2r}(x_{0})\right)\right]\\
    &+d_{0}\left[\left(\dashint_{\mathcal{B}_{2r}(x_{0})}|D^{\tau_{0}}d_{0}f|^{\tilde{q}_{0}}\dmu_{\tau_{0}}\right)^{\frac{1}{\tilde{q}_{0}}}+\Tails\left(\frac{f-(f)_{B_{2r}(x_{0})}}{(2r)^{\tau_{0}}}; B_{2r}(x_{0})\right)\right]
\end{aligned}
\end{equation*}
for some constant $d_{0}=d_{0}(\mathsf{data})$, provided that $B_{2r}(x_{0})\Subset B_{4}
$. Since H\"older's inequality yields that
\begin{equation*}
\left(\dashint_{\mathcal{B}_{r}(x_{0})}|D^{\tau_{0}}d_{s}u|^{\tilde{p}_{0}}\dmu_{\tau_{0}}\right)^{\frac{1}{\tilde{p}_{0}}}\leq \left(\dashint_{\mathcal{B}_{r}(x_{0})}|D^{\tau_{0}}d_{s}u|^{\tilde{q}_{0}}\dmu_{\tau_{0}}\right)^{\frac{1}{\tilde{q}_{0}}},
\end{equation*}
the above two inequalities give that \eqref{indforhigh} with $m=0$, where $d_{0}=d_{0}(\mathsf{data})$.
By applying Lemma \ref{improved comparison} with $m=0$, we have that \eqref{indforhigh} with $m=1$, where $d_{1}=d_{1}(\mathsf{data})$ whenever $\delta<\min\{\delta_{0},\delta_{1}\}$, where the constant $\delta_{1}$ is determined by Lemma \ref{improved comparison} with $m=0$. By iterating this procedure $n_{\sigma}-1$ times, we conclude that if $\delta<
\min\limits_{h=0,1,\ldots,n_{\sigma}}\delta_{h}$, where $\delta_{h}$ is the constant given in Lemma \ref{improved comparison} with $m=h-1$ for $h\geq1$, then $D^{\tau_{n_{\sigma}}}d_{\alpha_{n_{\sigma}}+s}u\in L^{p}_{\mathrm{loc}}\left(\mathcal{B}_{4}\,;\,\mu_{\tau_{n\sigma}}\right)$ and \eqref{indforhigh} with $m=n_{\sigma}$ is true, where $d_{n_{\sigma}}=d_{n_{\sigma}}(\mathsf{data})$.
We are now in the position to prove \eqref{nsigmalast}.
Combining the first inequality in \eqref{diagonalp} and \eqref{indforhighm1} with $r$ there, replaced by $\frac{r}{2^{m-1}}$, respectively, and then applying H\"older's inequality along with the fact that $\tilde{q}_{m+1}<q$ to the third term in the right-hand side of \eqref{indforhighm1}, we get that 
\begin{equation}
\label{nsigma1}
\begin{aligned}
    &\left(\dashint_{\mathcal{B}_{\frac{r}{2^{m}}}(x_{0})}|D^{\tau_{m+1}}d_{\alpha_{m+1}+s}u|^{p}\,d\mu_{\tau_{m+1}}\right)^{\frac{1}{p}}\\
    &\leq c\left[\left(\dashint_{\mathcal{B}_{\frac{r}{2^{m-1}}}(x_{0})}|D^{\tau_{m}+\alpha_{m}}d_{s}u|^{p}\dmu_{\tau_{m}}\right)^{\frac{1}{p}}+\Tailp\left(\frac{u-(u)_{\frac{r}{2^{m-1}}}(x_{0})}{(\frac{r}{2^{m-1}})^{\tau_{0}+s}}; B_{\frac{r}{2^{m-1}}}(x_{0})\right)\right]\\
    &\quad+c\left[\left(\dashint_{\mathcal{B}_{\frac{r}{2^{m-1}}}(x_{0})}|D^{\tau_{m+1}}d_{\alpha_{m+1}}f|^{q}\dmu_{\tau_{m+1}}\right)^{\frac{1}{q}}+\Tails\left(\frac{f-(f)_{B_{\frac{r}{2^{m-1}}}(x_{0})}}{(\frac{r}{2^{m-1}})^{\tau_{0}}}; B_{\frac{r}{2^{m-1}}}(x_{0})\right)\right].
\end{aligned}
\end{equation}
Combine the above estimates \eqref{nsigma1} for $m=0,1,\ldots,n_{\sigma}-1$ to see that
\begin{equation}
\label{firstfinal1}
\begin{aligned}
   &\left(\dashint_{\mathcal{B}_{\frac{r}{2^{n_{\sigma}-1}}}(x_{0})}|D^{\tau_{n_{\sigma}}}d_{\alpha_{n_{\sigma}}+s}u|^{p}\,d\mu_{\tau_{n_{\sigma}}}\right)^{\frac{1}{p}}\\
    &\leq c\left[\left(\dashint_{\mathcal{B}_{2r}(x_{0})}|D^{\tau_{0}}d_{0}u|^{p}\dmu_{\tau_{0}}\right)^{\frac{1}{p}}+\sum_{m=0}^{n_{\sigma}-1}\Tailp\left(\frac{u-(u)_{2r}(x_{0})}{(\frac{r}{2^{m-1}})^{\tau_{0}+s}}; B_{\frac{r}{2^{m-1}}}(x_{0})\right)\right]\\
    &\quad+c\sum_{m=0}^{n_{\sigma}-1}\left[\left(\dashint_{\mathcal{B}_{\frac{r}{2^{m-1}}}(x_{0})}|D^{\tau_{m+1}}d_{\alpha_{m+1}}f|^{q}\dmu_{\tau_{m+1}}\right)^{\frac{1}{q}}+\Tails\left(\frac{f-(f)_{B_{\frac{r}{2^{m-1}}}(x_{0})}}{(\frac{r}{2^{m-1}})^{\tau_{0}}}; B_{\frac{r}{2^{m-1}}}(x_{0})\right)\right],
\end{aligned}
\end{equation}
where $c=c(\mathsf{data})$. We next note from \eqref{taualphacond} and \eqref{indm} that for any $\mathcal{B}\Subset\mathcal{B}_{4}$,
\begin{equation}
\label{holderforflast}
    \left(\dashint_{\mathcal{B}}|D^{\tau_{m-1}}d_{\alpha_{m-1}}f|^{q}\dmu_{\tau_{m-1}}\right)^{\frac{1}{q}}\leq c\left(\dashint_{\mathcal{B}}|D^{\tau_{m}}d_{\alpha_{m}}f|^{q}\dmu_{\tau_{m}}\right)^{\frac{1}{q}},
\end{equation}
where $c=c(\mathsf{data})$. In addition, applying \eqref{tail estimate of u local} with $\tau=\tau_{0}$, $t=s$, $\alpha=0$, $\rho=\frac{r}{2^{m-1}}$ and $i=m$ leads to
\begin{equation}
\label{tailuimproved}
\begin{aligned}
\sum_{m=0}^{n_{\sigma}-1}\Tailp\left(\frac{u-(u)_{\frac{r}{2^{m-1}}}(x_{0})}{(\frac{r}{2^{m-1}})^{\tau_{0}+s}}; B_{\frac{r}{2^{m-1}}}(x_{0})\right)&\leq c\left(\dashint_{\mathcal{B}_{2r}(x_{0})}|D^{\tau_{0}}d_{s}u|^{p}\,d\mu_{\tau_{0}}\right)^{\frac{1}{p}}\\
&\quad+c\Tailp\left(\frac{u-(u)_{2r}(x_{0})}{(2r)^{\tau_{0}+s}}; B_{2r}(x_{0})\right)
\end{aligned}
\end{equation}
for some constant $c=c(\mathsf{data})$, as $n_{\sigma}$ and $\tau_{0}$ depend only on $\mathsf{data}$. Similarly, applying \eqref{tail estimate of u local} with $u(x)$, $s$, $\tau$, $t$, $\alpha$, $\rho$ and $i$ there, replaced by $f(x)$, $\frac{s}{p}$, $\tau_{0}$, $0$, $0$, $\frac{r}{2^{m-1}}$ and $m$, respectively, and then using H\"older's inequality along with the fact that $p<q$, we find 
\begin{equation}
\label{tailfimproved}
\begin{aligned}
\sum_{m=0}^{n_{\sigma}-1}\Tails\left(\frac{f-(f)_{\frac{r}{2^{m-1}}}(x_{0})}{(\frac{r}{2^{m-1}})^{\tau_{0}+s}}; B_{\frac{r}{2^{m-1}}}(x_{0})\right)&\leq c\left(\dashint_{\mathcal{B}_{2r}(x_{0})}|D^{\tau_{0}}d_{0}f|^{q}\,d\mu_{\tau_{0}}\right)^{\frac{1}{q}}\\
&\quad+c\Tailp\left(\frac{f-(f)_{2r}(x_{0})}{(2r)^{\tau_{0}}}; B_{2r}(x_{0})\right).
\end{aligned}
\end{equation}
We combine the estimates \eqref{firstfinal1}, \eqref{holderforflast},  \eqref{tailuimproved} and \eqref{tailfimproved} to show that \eqref{nsigmalast}. 
\end{proof}
We now give a similar  result corresponding to Lemma \ref{inductiveicz}.
\begin{lem}
\label{inductivedcz}
Let $u$ be a weak solution to \eqref{localizedproblem} with \eqref{assumsigmaforhig} and $D^{\tau_{n_{\sigma}}}d_{\alpha_{n_{\sigma}}+s}u\in L^{p_{h}}_{\mathrm{loc}}\left(\mathcal{B}_{4}\,;\,\mu_{\tau_{n_{\sigma}}}\right)$ for some nonnegative integer $h\leq l_{q}$.
Then there are a small positive constant $\delta=\delta(\mathsf{data})$ and a positive constant $c=c(\mathsf{data})$ such that if $A$ is $(\delta,2)$-vanishing in $B_{4}\times B_{4}$ only at the diagonal, then we have
\begin{equation}
\label{ph2norm}
\begin{aligned}
\left(\dashint_{\mathcal{B}_{1}}|D^{\tau_{n_{\sigma}}}d_{\alpha_{n_{\sigma}}+s}u|^{\hat{q}}\dmu_{\tau_{n_{\sigma}}}\right)^{\frac{1}{\hat{q}}}&\leq c\left(\left(\dashint_{\mathcal{B}_{2}}|D^{\tau_{n_{\sigma}}}d_{\alpha_{n_{\sigma}}+s}u|^{p_{h}}\dmu_{\tau_{n_{\sigma}}}\right)^{\frac{1}{p_{h}}}+\Tailp\left(\frac{u-(u)_{B_{2}}}{2^{\alpha_{n_{\sigma}}+\tau_{n_{\sigma}}+s}};B_{2}\right)\right)\\
&\quad+c\left(\left(\dashint_{\mathcal{B}_{2}}|D^{\tau_{n_{\sigma}}}d_{\alpha_{n_{\sigma}}}f|^{\hat{q}}\dmu_{\tau_{n_{\sigma}}}\right)^{\frac{1}{\hat{q}}}+\Tails\left(\frac{f-(f)_{B_{2}}}{2^{\alpha_{n_{\sigma}}+\tau_{n_{\sigma}}}};B_{2}\right)\right),
\end{aligned}
\end{equation}
where the constant $\hat{q}$ is given in \eqref{tildeq}.
\end{lem}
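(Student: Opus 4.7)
The plan is to follow the same blueprint as in the proof of Lemma \ref{inductiveicz}, but with the pair $(\alpha,\tau)=(\alpha_{n_{\sigma}},\tau_{n_{\sigma}})$ replacing the pair $(0,\frac{q}{q-p}\sigma)$, and with the improved comparison lemma \ref{diagonal induction} (rather than Lemma \ref{digagonal comparison2}) providing the freezing step. The extra higher-differentiability data required by Lemma \ref{diagonal induction}, namely estimate \eqref{indforhigh} for $m=n_{\sigma}$, is already at our disposal from Corollary \ref{pnormnsigma}; this yields a $\mathsf{data}$-dependent constant $d_{n_{\sigma}}$, and once $d_{n_{\sigma}}$ is absorbed into $\mathsf{data}$ the threshold $\delta$ coming out of Lemma \ref{diagonal induction} will depend only on $\mathsf{data}$ and the free parameter $\epsilon$ to be chosen below.

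First I would fix $1\le r_{1}<r_{2}\le 2$, apply Lemma \ref{coveringoflevelset} with $\alpha=\alpha_{n_{\sigma}}$, $\tau=\tau_{n_{\sigma}}$, $\tilde{p}=p_{h}$, $\tilde{q}=p$ and $\tilde{\gamma}=\gamma_{h}$, and obtain the Vitali family of diagonal balls $\{\mathcal{B}_{5\rho_{x_{i}}}(x_{i})\}$ together with the Calder\'on--Zygmund family of off-diagonal cubes $\{\mathcal{Q}\}_{\mathcal{Q}\in\mathcal{A}}$ covering the upper level set $\{|D^{\tau_{n_{\sigma}}}d_{\alpha_{n_{\sigma}}+s}u|>\lambda\}$ whenever $\lambda\ge\lambda_{0}$. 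Using the diagonal estimate \eqref{diagonal lambda estimate} (which is applicable because $\alpha_{n_{\sigma}}+\tau_{n_{\sigma}}=\tau_{0}<\frac{s}{p-1}$ by \eqref{taucond}), I then invoke Lemma \ref{diagonal induction} on each ball $\mathcal{B}_{5\rho_{x_{i}}}(x_{i})$ to obtain a comparison solution $v$ of the frozen-coefficient homogeneous problem with
\begin{equation*}
\frac{1}{\tau_{n_{\sigma}}}\dashint_{\mathcal{B}_{5\rho_{x_{i}}}(x_{i})}|D^{\tau_{n_{\sigma}}}d_{\alpha_{n_{\sigma}}+s}(u-v)|^{p}\dmu_{\tau_{n_{\sigma}}}\le(\epsilon c_{1}\lambda)^{p}\qquad\text{and}\qquad \|D^{\tau_{n_{\sigma}}}d_{\alpha_{n_{\sigma}}+s}v\|_{L^{\infty}(\mathcal{B}_{5\rho_{x_{i}}}(x_{i}))}\le c_{2}c_{1}\lambda.
\end{equation*}

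Next I truncate at height $L\ge\lambda_{0}$ and write, for $M>1$ to be chosen,
\begin{equation*}
\int_{\mathcal{B}_{r_{1}}}|D^{\tau_{n_{\sigma}}}d_{\alpha_{n_{\sigma}}+s}u|_{L}^{\hat{q}}\dmu_{\tau_{n_{\sigma}}}=I+J,
\end{equation*}
with $I$ over $(0,M\lambda_{0}]$ and $J$ over $(M\lambda_{0},L]$. I split $J$ according to the covering into a diagonal contribution $J_{1}$ and an off-diagonal contribution $J_{2}$. For $J_{1}$, choosing $M>c_{1}c_{2}$ forces the excess mass to lie where $|D^{\tau_{n_{\sigma}}}d_{\alpha_{n_{\sigma}}+s}(u-v)|>M\lambda/2$, so a weak $(1,1)$ type estimate together with the first half of \eqref{compar result2} and the exit-time alternatives \eqref{exitradius} gives
\begin{equation*}
J_{1}\le cM^{\hat{q}-p}\epsilon^{p}\int_{\mathcal{B}_{r_{2}}}|D^{\tau_{n_{\sigma}}}d_{\alpha_{n_{\sigma}}+s}u|_{LM^{-1}}^{\hat{q}-p_{h}}|D^{\tau_{n_{\sigma}}}d_{\alpha_{n_{\sigma}}+s}u|^{p_{h}}\dmu_{\tau_{n_{\sigma}}}+\frac{c}{\delta^{\hat{q}}}\int_{\mathcal{B}_{r_{2}}}|D^{\tau_{n_{\sigma}}}d_{\alpha_{n_{\sigma}}}f|^{\hat{q}}\dmu_{\tau_{n_{\sigma}}}.
\end{equation*}
For $J_{2}$, the reverse H\"older estimate \eqref{gamma norm} from Lemma \ref{coveringoflevelset} (valid because $\hat{q}\le\gamma_{h}$ by \eqref{tildeq} and \eqref{pgamma}) together with \eqref{sumofmeasq} bounds
\begin{equation*}
J_{2}\le cM^{\hat{q}-\gamma_{h}}\int_{\mathcal{B}_{r_{2}}}|D^{\tau_{n_{\sigma}}}d_{\alpha_{n_{\sigma}}+s}u|_{LM^{-1}}^{\hat{q}-p_{h}}|D^{\tau_{n_{\sigma}}}d_{\alpha_{n_{\sigma}}+s}u|^{p_{h}}\dmu_{\tau_{n_{\sigma}}}.
\end{equation*}
Since $\min_{h\in\{0,\ldots,l_{q}\}}(\gamma_{h}-\hat{q})>0$ depends only on $\mathsf{data}$, I can first fix $M=M(\mathsf{data})$ so large that $cM^{\hat{q}-\gamma_{h}}\le 2^{-(2n+2q)}$, and then choose $\epsilon=\epsilon(\mathsf{data})$ so small that $cM^{\hat{q}-p}\epsilon^{p}\le 2^{-(2n+2q)}$, which fixes $\delta=\delta(\mathsf{data})$ via Lemma \ref{diagonal induction} (with $d_{n_{\sigma}}$ already absorbed into $\mathsf{data}$).

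After dividing by $\mu_{\tau_{n_{\sigma}}}(\mathcal{B}_{r_{1}})$ and using $\mu_{\tau_{n_{\sigma}}}(\mathcal{B}_{r_{2}})/\mu_{\tau_{n_{\sigma}}}(\mathcal{B}_{r_{1}})\le 2^{2n}$, the combined bound on $I+J_{1}+J_{2}$ yields an iteration inequality of the form
\begin{equation*}
\phi_{LM^{-1}}(r_{1})\le\tfrac{1}{2}\phi_{LM^{-1}}(r_{2})+\frac{c}{(r_{2}-r_{1})^{2n}}\bigl(\cdots\bigr),
\end{equation*}
with the right-hand side in $\bigl(\cdots\bigr)$ exactly the quantity appearing in \eqref{ph2norm}. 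Lemma \ref{technicallemma} absorbs the first term, and sending $L\to\infty$ concludes the proof.

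The main technical obstacle, compared with the proof of Lemma \ref{inductiveicz}, is that the freezing step is now \emph{qualitatively} different: Lemma \ref{diagonal induction} returns an $L^{\infty}$ bound on $D^{\tau_{n_{\sigma}}}d_{\alpha_{n_{\sigma}}+s}v$ (not merely on $D^{\tau_{n_{\sigma}}}d_{s}v$), so I must check carefully that the exit-time alternatives \eqref{exitradius} indeed furnish estimates on the correct fractional gradient $d_{\alpha_{n_{\sigma}}+s}u$ at scale $\rho_{x_{i}}$, and that the Lemma \ref{diagonal induction} hypotheses \eqref{scaling assumption2} (in particular the tail involving $\alpha_{n_{\sigma}}+\tau_{n_{\sigma}}+s$) are met by the covering output. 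Granted $\alpha_{n_{\sigma}}+\tau_{n_{\sigma}}=\tau_{0}<\frac{s}{p-1}$, the bounds in \eqref{diagonal lambda estimate} with $\alpha=\alpha_{n_{\sigma}},\tau=\tau_{n_{\sigma}}$ are precisely of the required form, so the freezing step goes through with $c_{1}=c_{1}(\mathsf{data})$ and the rest of the argument proceeds in perfect analogy with Lemma \ref{inductiveicz}.
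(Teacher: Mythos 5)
The overall architecture of your argument—apply Lemma \ref{coveringoflevelset} with $\alpha=\alpha_{n_{\sigma}}$ and $\tau=\tau_{n_{\sigma}}$, freeze via Lemma \ref{diagonal induction} (using Corollary \ref{pnormnsigma} to obtain the extra hypothesis \eqref{indforhigh} with $m=n_{\sigma}$ and constant $d_{n_{\sigma}}=d_{n_{\sigma}}(\mathsf{data})$), then run the level-set/Fubini argument and close with Lemma \ref{technicallemma}—is the one the paper uses. However, there is a concrete gap in your application of the covering lemma: you take $\tilde{q}=p$, carrying over the choice from Lemma \ref{inductiveicz}, but this is not strong enough to trigger the improved comparison lemma.

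The point is that Lemma \ref{diagonal induction} has a strictly stronger hypothesis than Lemma \ref{digagonal comparison2}. Condition \eqref{scaling assumption2} requires
\begin{equation*}
\frac{1}{\tau_{n_{\sigma}}}\dashint_{\mathcal{B}_{20\rho_{x_{i}}}(x_{i})}\left|D^{\tau_{n_{\sigma}}}d_{\alpha_{n_{\sigma}}}f\right|^{\tilde{q}_{n_{\sigma}}}\dmu_{\tau_{n_{\sigma}}}+\Tails\left(\cdots\right)^{\tilde{q}_{n_{\sigma}}}\leq\left(\delta\lambda\right)^{\tilde{q}_{n_{\sigma}}},
\end{equation*}
i.e.\ smallness of the $L^{\tilde{q}_{n_{\sigma}}}$-average of $D^{\tau_{n_{\sigma}}}d_{\alpha_{n_{\sigma}}}f$ at the exit scale, with $\tilde{q}_{n_{\sigma}}>p$ by \eqref{tildepqrange}. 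With your choice $\tilde{q}=p$ in the exit-time functional $\Theta$ of \eqref{defn of theta}, the second alternative of \eqref{exitradius} and the resulting estimate \eqref{diagonal lambda estimate} only deliver smallness of the $L^{p}$-average of $D^{\tau_{n_{\sigma}}}d_{\alpha_{n_{\sigma}}}f$. Since Jensen/H\"older goes the wrong way ($L^{p}$-average $\leq$ $L^{\tilde{q}_{n_{\sigma}}}$-average), you cannot upgrade $L^{p}$-smallness to the required $L^{\tilde{q}_{n_{\sigma}}}$-smallness, so the hypotheses of Lemma \ref{diagonal induction} are not verified and the freezing step does not go through. The remedy is to apply Lemma \ref{coveringoflevelset} with $\tilde{q}=\tilde{q}_{n_{\sigma}}$ (still with $\tilde{p}=p_{h}$, $\tilde{\gamma}=\gamma_{h}$); the needed local $L^{\tilde{q}_{n_{\sigma}}}$-integrability of $D^{\tau_{n_{\sigma}}}d_{\alpha_{n_{\sigma}}}f$ is available because $f\in W^{\sigma,q}_{\mathrm{loc}}$ together with Lemma \ref{embedding sob} gives $\left(\frac{1}{\tau_{n_{\sigma}}}\dashint_{\mathcal{B}}|D^{\tau_{n_{\sigma}}}d_{\alpha_{n_{\sigma}}}f|^{\tilde{q}_{n_{\sigma}}}\dmu_{\tau_{n_{\sigma}}}\right)^{1/\tilde{q}_{n_{\sigma}}}\leq c[f]_{W^{\sigma,q}(B)}<\infty$. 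With this corrected choice, the measure estimate replacing \eqref{mut dia2} features $|D^{\tau_{n_{\sigma}}}d_{\alpha_{n_{\sigma}}}f|^{\tilde{q}_{n_{\sigma}}}$ rather than $|D^{\tau_{n_{\sigma}}}d_{\alpha_{n_{\sigma}}}f|^{p}$, and one also needs $\hat{q}>\tilde{q}_{n_{\sigma}}$ (true, since $\hat{q}\geq q_{0}>\tilde{q}_{n_{\sigma}}$) for the Fubini step on the $f$-term. The diagnostic paragraph at the end of your proposal flags the tail terms as the potential obstruction, but it is the exponent on $f$ in the exit-time functional that actually breaks.
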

\begin{proof} 
Let us first assume $\delta\leq\hat{\delta}$ where the constant $\hat{\delta}$ is given in Corollary \ref{pnormnsigma} depending only on $\mathsf{data}$. Therefore, applying Corollary \ref{pnormnsigma}, we have \eqref{indforhigh} with $m=n_{\sigma}$, where $d_{n_{\sigma}}=d_{n_{\sigma}}(\mathsf{data})$. For a parameter $\epsilon\in(0,1)$ to be determined later, we take $\delta=\min\left\{\tilde{\delta},\hat{\delta}\right\}$ where the constant $\tilde{\delta}=\tilde{\delta}(\mathsf{data},\epsilon)$ is given in Lemma \ref{diagonal induction} with $m=n_{\sigma}$.  Let $1\leq r_{1}<r_{2}\leq2$.
We now apply Lemma \ref{coveringoflevelset} with $\alpha=\alpha_{n_{\sigma}}$ $ \tilde{p}=p_{h}$ , $\tilde{q}=\tilde{q}_{n_{\sigma}}$, $\tilde{\gamma}=\gamma_{h}$ and $\tau=\tau_{n_{\sigma}}$ to find families of countable disjoint diagonal balls and off-diagonal cubes, $\{\mathcal{B}_{\rho_{x_{i}}}\left(x_{i}\right)\}_{i\in\mathbb{N}}$ and $\{\mathcal{Q}\}_{\mathcal{Q}\in\tilde{A}}$,  
such that
\begin{equation*}
   \left\{(x,y)\in \mathcal{B}_{r_{1}}\,;\,|D^{\tau_{n_{\sigma}}}d_{\alpha_{n_{\sigma}}+s}u|(x,y)>\lambda\right\}\subset\left(\bigcup\limits_{i\in\mathbb{N}}\mathcal{B}_{5\rho_{x_{i}}}\left(x_{i}\right)\right)\bigcup \left(\bigcup\limits_{\mathcal{Q}\in\tilde{\mathcal{A}}}\mathcal{Q}\right)
\end{equation*}
provided that $\lambda\geq\lambda_{0}$, where $\lambda_{0}$ is given in \eqref{lambda0} with $\alpha=\alpha_{n_{\sigma}}$, $\tilde{p}=p_{h}$, $\tilde{q}=\tilde{q}_{n_{\sigma}}$ and $\tau=\tau_{n_{\sigma}}$. Furthermore, Lemma \ref{coveringoflevelset} yields  \eqref{exitradius}, \eqref{gamma norm} and \eqref{sumofmeasq} with $\alpha=\alpha_{n_{\sigma}},\, \tilde{p}=p_{h}$ , $\tilde{q}=\tilde{q}_{n_{\sigma}}$, $\tilde{\gamma}=\gamma_{h}$ and $\tau=\tau_{n_{\sigma}}$. In addition, it follows from \eqref{diagonal lambda estimate} with $\tilde{p}=p_{h}$, $\tilde{q}=q_{n_{\sigma}}$, $\tau=\tau_{n_{\sigma}}$ and $\alpha=\alpha_{n_{\sigma}}$ that
\begin{equation*}
\begin{aligned}
    &\frac{1}{\tau_{n_{\sigma}}}\dashint_{\mathcal{B}_{20\rho_{x_{i}}}(x_{i})}|D^{\tau_{n_{\sigma}}}d_{\alpha_{n_{\sigma}}+s}u|^{p}\dmu_{\tau_{m}}+\Tailp\left(\frac{u-(u)_{B_{20\rho_{x_{i}}}}}{(20\rho_{x_{i}})^{\alpha_{n_{\sigma}}+\tau_{n_{\sigma}}+s}};B_{20\rho_{x_{i}}}(x_{i})\right)^{p}\leq (c_{1}\lambda)^{p},\\
    &\frac{1}{\tau_{n_{\sigma}}}\dashint_{\mathcal{B}_{20\rho_{x_{i}}}(x_{i})}|D^{\tau_{n_{\sigma}}}d_{\alpha_{n_{\sigma}}}f|^{\tilde{q}_{n_{\sigma}}}\dmu_{\tau_{n_{\sigma}}}+\Tails\left(\frac{f-(f)_{B_{20\rho_{x_{i}}}}}{(20\rho_{x_{i}})^{\alpha_{n_{\sigma}}+\tau_{n_{\sigma}}}};B_{20\rho_{x_{i}}}(x_{i})\right)^{\tilde{q}_{n_{\sigma}}}\leq (c_{1}\lambda\delta)^{\tilde{q}_{n_{\sigma}}}
\end{aligned}
\end{equation*}
for some constant $c_{1}=c_{1}(n,s,p,q,\sigma)$ by the fact that $\frac{s}{p-1}-(\alpha_{n_{\sigma}}+\tau_{n_{\sigma}})=\frac{s}{p-1}-\tau_{0}>0$. Applying Lemma \ref{diagonal induction} with $\lambda$ there, replaced by $c_{1}\lambda$, we find a weak solution $v$ to \eqref{sol v for impr} such that
\begin{equation*}
    \frac{1}{\tau_{n_{\sigma}}}\dashint_{\mathcal{B}_{5\rho_{x_{i}}}(x_{i})}|D^{\tau_{n_{\sigma}}}d_{\alpha_{n_{\sigma}}+s}(u-v)|^{p}\dmu_{\tau_{n_{\sigma}}}\leq (\epsilon c_{1}\lambda)^{p}\quad\text{and}\quad\|D^{\tau_{n_{\sigma}}}d_{\alpha_{n_{\sigma}}+s}v\|_{L^{\infty}(\mathcal{B}_{5\rho_{x_{i}}}(x_{i}))}\leq c_{2}c_{1}\lambda,
\end{equation*} where the constant $c_{2}$ is determined in Lemma \ref{diagonal induction}. Let us define for $L\geq\lambda_{0}$,
\begin{equation*}
    {\phi}_{L}(r)=\left(\dashint_{\mathcal{B}_{r}}|(D^{\tau_{n_{\sigma}}}d_{\alpha_{n_{\sigma}}+s}u)_{L}|^{\hat{q}}\dmut\right)^{\frac{1}{\hat{q}}}\quad(1\leq r\leq 2).
\end{equation*}
Following the same arguments as for the estimates of $I$ and $J$ in Lemma \ref{inductiveicz} with $\mu_{\tau}$, $D^{\tau}d_{s}u$, $D^{\tau}d_{0}f$ and $|D^{\tau}d_{0}f|^{p}$ there, replaced by $\mu_{\tau_{n_{\sigma}}}$, $D^{\tau_{n_{\sigma}}}d_{\alpha_{n_{\sigma}}+s}u$, $D^{\tau_{n_{\sigma}}}d_{\alpha_{n_{\sigma}}}f$ and $|D^{\tau_{n_{\sigma}}}d_{\alpha_{n_{\sigma}}}f|^{\tilde{q}_{n_{\sigma}}}$, respectively,  along with the fact that $\gamma_{h}>\hat{q}>\tilde{q}_{n_{\sigma}}$, we have that if $L>\lambda_{0}$, then
\begin{equation*}
\begin{aligned}
    {\phi}_{L}(r_{1})&\leq \frac{1}{2}{\phi}_{L}(r_{2})+\frac{c}{(r_{2}-r_{1})^{2n}}\left(\left(\dashint_{\mathcal{B}_{2}}|D^{\tau_{n_{\sigma}}}d_{\alpha_{n_{\sigma}}+s}u|^{p_{h}}\,d\mu_{\tau_{n_{\sigma}}}\right)^{\frac{1}{p_{h}}}+\Tailp\left(\frac{u-(u)_{B_{2}}}{2^{\alpha_{n_{\sigma}}+\tau_{n_{\sigma}}+s}};B_{2}\right)\right)\\
&\quad+\frac{c}{(r_{2}-r_{1})^{2n}}\left(\left(\dashint_{\mathcal{B}_{2}}|D^{\tau_{n_{\sigma}}}d_{\alpha_{n_{\sigma}}}f|^{\hat{q}}\dmu_{\tau}\right)^{\frac{1}{\hat{q}}}+\Tails\left(\frac{f-(f)_{B_{2}}}{2^{\alpha_{n_{\sigma}}+\tau_{n_{\sigma}}}};B_{2}\right)\right)
\end{aligned}
\end{equation*}
for some constant $c=c(\mathsf{data})$ by taking $\epsilon\in(0,1)$ sufficiently small depending only on $\mathsf{data}$. Using the technical lemma \ref{technicallemma} and then passing to the limit $L\to \infty$ as in the last part for the proof of \eqref{ph1norm}, we get the desired result \eqref{ph2norm}.
\end{proof}
We now give the complete proof of our main theorem \ref{main theorem}.

\noindent
\textbf{Proof of Theorem \ref{main theorem}.} Since we proved Theorem \ref{main theorem} when \eqref{firstsigmares} in Section \ref{subsection51}, we may assume \eqref{assumsigmaforhig}. 
Take $\delta=\delta(\mathsf{data})\in(0,1)$ determined in Lemma \ref{inductivedcz}. We note from the first line in the proof of Lemma \ref{inductivedcz} that $\delta<\hat{\delta}$, where the constant $\hat{\delta}$ is given in Corollary \ref{pnormnsigma}. We first want to show that $D^{\tau_{n_{\sigma}}}d_{\alpha_{n_{\sigma}}+s}u\in L^{p}_{\mathrm{loc}}\left(\Omega\times\Omega\,;\,\mu_{\tau_{n_{\sigma}}}\right)$. Let $B_{\frac{r}{2}}(z_{0})\in \Omega$ with $r\in(0,R]$. Then we define for $x,y\in\mathbb{R}^{n}$,
\begin{equation*}
    \tilde{u}(x)=\left(\frac{2}{r}\right)^{s}u\left(\frac{r}{2}x+z_{0}\right),\quad \tilde{f}(x)=f\left(\frac{r}{2}x+z_{0}\right),\quad \tilde{A}(x,y)=A\left(\frac{r}{2}x+z_{0},\frac{r}{2}y+z_{0}\right)
\end{equation*}
to see that $\tilde{u}$ is a weak solution to \eqref{localizedproblem} with $f$ and $A$ there, replaced by $\tilde{f}$ and $\tilde{A}$, respectively. By Corollary \ref{pnormnsigma}, we have that $D^{\tau_{n_{\sigma}}}d_{\alpha_{n_{\sigma}}+s}\tilde{u}\in L^{p}_{\mathrm{loc}}\left(\mathcal{B}_{4}\,;\,\mu_{\tau_{n_{\sigma}}}\right)$ which is equivalent to $D^{\tau_{n_{\sigma}}}d_{\alpha_{n_{\sigma}}+s}u\in L^{p}_{\mathrm{loc}}\left(\mathcal{B}_{2r}(z_{0})\,;\,\mu_{\tau_{n_{\sigma}}}\right)$. On the other hand, for any cube $\mathcal{Q}\equiv \mathcal{Q}_{\frac{r}{2\sqrt{n}}}(z_{1},z_{2})\Subset \Omega\times \Omega$ satisfying \eqref{dist cube}, we have
\begin{equation*}
    \left(\int_{\mathcal{Q}}|D^{\tau_{n_{\sigma}}}d_{\alpha_{n_{\sigma}}+s}u|^{p}\dmu_{\tau_{n_{\sigma}}}\right)^{\frac{1}{p}}\leq c(\mathsf{data},\mathrm{diam}(\Omega))\left(\int_{\mathcal{Q}}|D^{\tau_{0}}d_{s}u|^{p}\dmu_{\tau_{m}}\right)^{\frac{1}{p}}<\infty,
\end{equation*}
where we have used \eqref{taualphacond} and the fact that
\begin{equation*}
    \frac{r}{2\sqrt{n}}\leq|x-y|\leq 2\mathrm{diam}(\Omega).
\end{equation*}
Since any compact subset of $\Omega\times \Omega$ is covered by finitely many balls $\mathcal{B}_{r}(z_{i})$ with $r\in(0,R]$ and cubes $\mathcal{Q}_{\frac{r}{2\sqrt{n}}}(z_{1,i},z_{2,i})$ with \eqref{dist cube} for $z_{i},z_{1,i},z_{2,i}\in\Omega$, we thus prove that $D^{\tau_{n_{\sigma}}}d_{\alpha_{n_{\sigma}}+s}u\in L^{p}_{\mathrm{loc}}\left(\Omega\times\Omega\,;\,\mu_{\tau_{n_{\sigma}}}\right)$. We next want to show that $D^{\tau_{n_{\sigma}}}d_{\alpha_{n_{\sigma}}+s}u\in L^{q}_{\mathrm{loc}}\left(\Omega\times\Omega\,;\,\mu_{\tau_{n_{\sigma}}}\right)$ and \eqref{goalestimate}.
Let $\mathcal{B}_{2r}(x_{0})\Subset\Omega$ and $r\in(0,R]$. Since Lemma \ref{inductivedcz} is the corresponding one for Lemma \ref{inductiveicz}, by following the same arguments as in the proof of \eqref{goalestimate} when \eqref{firstsigmares} with $D^{\tau}d_{s}u$, $D^{\tau}d_{0}f$ and $\mu_{\tau}$ there, replaced by $D^{\tau_{n_{\sigma}}}d_{\alpha_{n_{\sigma}}+s}u$, $D^{\tau_{n_{\sigma}}}d_{\alpha_{n_{\sigma}}}f$ and $\mu_{\tau_{n_{\sigma}}}$, respectively, we have that for $y_{0}\in B_{r}(x_{0})$ and $\hat{q}$ defined in \eqref{hatq},
\begin{equation}
\label{nsigma11}
\begin{aligned}
        &\left(\dashint_{\mathcal{B}_{\frac{r}{2^{n_{\sigma}}+3}}(y_{0})}|D^{\tau_{n_{\sigma}}}d_{\alpha_{n_{\sigma}}+s}u|^{\hat{q}}\,d\mu_{\tau_{n\sigma}}\right)^{\frac{1}{q}}\\
        &\leq c\left(\left(\dashint_{\mathcal{B}_{\frac{r}{2^{n_{\sigma}}+2}}(y_{0})}|D^{\tau_{n_{\sigma}}}d_{\alpha_{n_{\sigma}}+s}u|^{p}\,d\mu_{\tau_{n_{\sigma}}}\right)^{\frac{1}{p}}+\Tailp\left(\frac{u-(u)_{B_{\frac{r}{2^{n_{\sigma}}+2}}(y_{0})}}{\left(\frac{r}{2^{n_{\sigma}}+2}\right)^{\alpha_{n_{\sigma}}+\tau_{n_{\sigma}}+s}};B_{\frac{r}{2^{n_{\sigma}}+2}}(y_{0})\right)\right)\\
&+c\left(\left(\dashint_{\mathcal{B}_{\frac{r}{2^{n_{\sigma}}+2}}(y_{0})}|D^{\tau_{n_{\sigma}}}d_{\alpha_{n_{\sigma}}}f|^{\hat{q}}\,d\mu_{\tau_{n_{\sigma}}}\right)^{\frac{1}{\hat{q}}}+\Tails\left(\frac{f-(f)_{B_{\frac{r}{2^{n_{\sigma}}+2}}(y_{0})}}{\left(\frac{r}{2^{n_{\sigma}}+2}\right)^{\alpha_{n_{\sigma}}+\tau_{n_{\sigma}}}};B_{\frac{r}{2^{n_{\sigma}}+2}}(y_{0})\right)\right).
\end{aligned}
\end{equation}
We now insert \eqref{nsigmalast} with $\frac{r}{2^{n_{\sigma}-1}}$ and $x_{0}$ there, replaced by $\frac{r}{2^{n_{\sigma}+3}}$ and $y_{0}$, respectively, into \eqref{nsigma11}, and then use \eqref{taualphacond} and \eqref{tail estimate of u local} for the two tail terms in \eqref{nsigma11}, in order to get that
\begin{equation}
\label{nsigma111}
\begin{aligned}
        &\left(\dashint_{\mathcal{B}_{\frac{r}{2^{n_{\sigma}}+3}}(y_{0})}|D^{\tau_{n_{\sigma}}}d_{\alpha_{n_{\sigma}}+s}u|^{\hat{q}}\,d\mu_{\tau_{n\sigma}}\right)^{\frac{1}{\hat{q}}}\\
        &\leq c\left(\left(\dashint_{\mathcal{B}_{\frac{r}{4}}(y_{0})}|D^{\tau_{0}}d_{s}u|^{p}\,d\mu_{\tau_{0}}\right)^{\frac{1}{p}}+\Tailp\left(\frac{u-(u)_{B_{\frac{r}{4}}(y_{0})}}{\left(\frac{r}{4}\right)^{\tau_{0}+s}};B_{\frac{r}{4}}(y_{0})\right)\right)\\
&+c\left(\left(\dashint_{\mathcal{B}_{\frac{r}{4}}(y_{0})}|D^{\tau_{n_{\sigma}}}d_{\alpha_{n_{\sigma}}}f|^{q}\,d\mu_{\tau_{n_{\sigma}}}\right)^{\frac{1}{q}}+\Tails\left(\frac{f-(f)_{B_{\frac{r}{4}}(y_{0})}}{\left(\frac{r}{4}\right)^{\tau_{0}}};B_{\frac{r}{4}}(y_{0})\right)\right)
\end{aligned}
\end{equation}
for some constant $c=c(\mathsf{data})$. For the third term in the right-hand side of \eqref{nsigma111}, we have used H\"older's inequality along with the fact that $p,\hat{q}\leq q$. Furthermore, \eqref{tail estimate of u local2} with $\rho=\frac{r}{4}$, $R=2r$, $\tau=\tau_{0}$, $t=s$ and $\alpha=0$ gives that
\begin{equation}
\label{tailestimateofuimpr}
\Tailp\left(\frac{u-(u)_{B_{\frac{r}{4}}(y_{0})}}{\left(\frac{r}{4}\right)^{\tau_{0}+s}};B_{\frac{r}{4}}(y_{0})\right)\leq c \left(\dashint_{\mathcal{B}_{2r}(x_{0})}|D^{\tau_{0}}d_{s}u|^{p}\,d\mu_{\tau_{0}}\right)^{\frac{1}{p}}+c\Tailp\left(\frac{u-(u)_{B_{2r}(x_{0})}}{(2r)^{\tau_{0}+s}};B_{2r}(x_{0})\right).
\end{equation}
Similarly, \eqref{tail estimate of u local2} with 
$u=f$, $s=\frac{s}{p}$ $\rho=\frac{r}{4}$, $R=2r$, $\tau=\tau_{n_{\sigma}}$, $t=0$ and $\alpha=\alpha_{n_{\sigma}}$, and H\"older's inequality along with the fact that $p<q$ imply that 
\begin{equation}
\label{tailestimateoffimpr}
\begin{aligned}
\Tailp\left(\frac{f-(f)_{B_{\frac{r}{4}}(y_{0})}}{\left(\frac{r}{4}\right)^{\tau_{0}}};B_{\frac{r}{4}}(y_{0})\right)&\leq c \left(\dashint_{\mathcal{B}_{2r}(x_{0})}|D^{\tau_{n_{\sigma}}}d_{\alpha_{n_{\sigma}}}f|^{q}\,d\mu_{\tau_{n_{\sigma}}}\right)^{\frac{1}{q}}\\
&\quad+c\Tailp\left(\frac{f-(f)_{B_{2r}(x_{0})}}{(2r)^{\tau_{0}}};B_{2r}(x_{0})\right).
\end{aligned}
\end{equation}
Combine \eqref{nsigma111}, \eqref{tailestimateofuimpr} and \eqref{tailestimateoffimpr} together with the fact that $\mathcal{B}_{\frac{r}{4}}(y_{0})\subset\mathcal{B}_{2r}(x_{0})$ to show that
\begin{equation}
\label{ondiagonallast}
\begin{aligned}
        &\left(\dashint_{\mathcal{B}_{\frac{r}{2^{n_{\sigma}}+3}}(y_{0})}|D^{\tau_{n_{\sigma}}}d_{\alpha_{n_{\sigma}}+s}u|^{\hat{q}}\,d\mu_{\tau_{n\sigma}}\right)^{\frac{1}{\hat{q}}}\\
        &\leq c\left(\left(\dashint_{\mathcal{B}_{2r}(x_{0})}|D^{\tau_{0}}d_{s}u|^{p}\,d\mu_{\tau_{0}}\right)^{\frac{1}{p}}+\Tailp\left(\frac{u-(u)_{B_{2r}(x_{0})}}{\left(2r\right)^{\tau_{0}+s}};B_{2r}(x_{0})\right)\right)\\
&+c\left(\left(\dashint_{\mathcal{B}_{2r}(x_{0})}|D^{\tau_{n_{\sigma}}}d_{\alpha_{n_{\sigma}}}f|^{q}\,d\mu_{\tau_{n_{\sigma}}}\right)^{\frac{1}{q}}+\Tails\left(\frac{f-(f)_{B_{2r}(x_{0})}}{\left(2r\right)^{\tau_{0}}};B_{2r}(x_{0})\right)\right).
\end{aligned}
\end{equation}
On the other hand, as in \ref{ndiagonallastest}, Lemma \ref{nd lemma} with $\tilde{\gamma}=\gamma_{0}$, $\tau=\tau_{n_{\sigma}}$ and $\alpha=\alpha_{n_{\sigma}}$ yields that
\begin{equation}
\label{nsigma12}
\begin{aligned}
        &\left(\dashint_{\mathcal{Q}}|D^{\tau_{n_{\sigma}}}d_{\alpha_{n_{\sigma}}+s}u|^{\hat{q}}\,d\mu_{\tau_{n\sigma}}\right)^{\frac{1}{\hat{q}}}\\
        &\leq c\left(\dashint_{\mathcal{Q}}|D^{\tau_{n_{\sigma}}}d_{\alpha_{n_{\sigma}}+s}u|^{p}\,d\mu_{\tau_{n_{\sigma}}}\right)^{\frac{1}{p}}+c\left[\sum_{d=1}^{2}\left(\frac{1}{\tau_{n\sigma}}\dashint_{P^{d}\mathcal{Q}}|D^{\tau_{n\sigma}}d_{\alpha_{n_{\sigma}}+s}u|^{p}\,d\mu_{\tau_{n\sigma}}\right)^{\frac{1}{p}}\right],
\end{aligned}
\end{equation}
whenever $\mathcal{Q}\equiv\mathcal{Q}_{\frac{r}{\sqrt{n}2^{n_{\sigma}+3}}}(z_{1},z_{2})\Subset \Omega\times\Omega$  satisfying \eqref{dist cube}. Let us assume that $z_{1},z_{2}\in B_{r}(x_{0})$. We note that there is a constant $c=c(n,s,p,q,\sigma)$ such that 
\begin{equation}
\label{measureestimateofcubeontau}
    \frac{r^{n+p\tau_{m}}}{c}\leq\mu_{\tau_{m}}\left(\mathcal{Q}\right)\leq cr^{n+p\tau_{m}}\quad (m=0,1,\ldots, n_{\sigma})
\end{equation}
by following the same lines for the proof of \eqref{measure of qintau} along with the fact that
\begin{equation}
\label{distxyi}
    \frac{r}{\sqrt{n}2^{n_{\sigma}+3}}<|x-y|<2r\quad\text{for any }(x,y)\in\mathcal{Q}.
\end{equation}
Therefore, using \eqref{taualphacond},  \eqref{measureestimateofcubeontau}, \eqref{distxyi} and \eqref{indm} , we have
\begin{equation}
\label{aofirstfinal1}
    \dashint_{\mathcal{Q}}|D^{\tau_{n_{\sigma}}}d_{\alpha_{n_{\sigma}}+s}u|^{p}\,d\mu_{\tau_{n_{\sigma}}}\leq c\dashint_{\mathcal{Q}}|D^{\tau_{0}}d_{s}u|^{p}\,d\mu_{\tau_{0}}\leq c\dashint_{\mathcal{B}_{2r}(x_{0})}|D^{\tau_{0}}d_{s}u|^{p}\,d\mu_{\tau_{0}}.
\end{equation}
Since $P^{d}\mathcal{Q}\Subset \mathcal{B}_{\frac{r}{2^{n_{\sigma}+3}}}(z_{d})$, we estimate the the second term in the right-hand side of \eqref{nsigma12} as
\begin{equation*}
    c\left[\sum_{d=1}^{2}\left(\frac{1}{\tau_{n\sigma}}\dashint_{P^{d}\mathcal{Q}}|D^{\tau_{n\sigma}}d_{\alpha_{n_{\sigma}}+s}u|^{p}\,d\mu_{\tau_{n\sigma}}\right)^{\frac{1}{p}}\right]\leq c\left[\sum_{d=1}^{2}\left(\frac{1}{\tau_{n\sigma}}\dashint_{\mathcal{B}_{\frac{r}{2^{n_{\sigma}+3}}}(z_{d})}|D^{\tau_{n\sigma}}d_{\alpha_{n_{\sigma}}+s}u|^{\hat{q}}\,d\mu_{\tau_{n\sigma}}\right)^{\frac{1}{\hat{q}}}\right],
\end{equation*}
where we have applied H\"older's inequality together with the fact that $p\leq \hat{q}$. In light of \eqref{aofirstfinal1}, the above inequality and \eqref{ondiagonallast} with $y_{0}=z_{d}$, we estimate the right-hand side of \eqref{nsigma12} as
\begin{equation}
\label{offdiagonallast}
\begin{aligned}
        &\left(\dashint_{\mathcal{Q}}|D^{\tau_{n_{\sigma}}}d_{\alpha_{n_{\sigma}}+s}u|^{\hat{q}}\,d\mu_{\tau_{n\sigma}}\right)^{\frac{1}{\hat{q}}}\\
        &\leq c\left[\left(\dashint_{\mathcal{B}_{2r}(x_{0})}|D^{\tau_{0}}d_{s}u|^{p}\,d\mu_{\tau_{0}}\right)^{\frac{1}{p}}+\Tailp\left(\frac{u-(u)_{B_{2r}(x_{0})}}{(2r)^{\tau_{0}+s}};B_{2r}(x_{0})\right)\right]\\
        &\quad+c\left[ \left(\dashint_{\mathcal{B}_{2r}(x_{0})}|D^{\tau_{0}}d_{0}f|^{q}\,d\mu_{\tau_{0}}\right)^{\frac{1}{q}}+\Tailp\left(\frac{f-(f)_{B_{2r}(x_{0})}}{(2r)^{\tau_{0}}};B_{2r}(x_{0})\right)\right].
\end{aligned}
\end{equation}
Since $\mathcal{B}_{r}(x_{0})$ is covered by finitely many balls $\mathcal{B}_{\frac{r}{2^{n_{\sigma}+3}}}(y_{i})$ and cubes $\mathcal{Q}_{\frac{r}{\sqrt{n}2^{n_{\sigma}+3}}}\left(z_{1,i},z_{2,i}\right)$ satisfying \eqref{dist cube} for $y_{i},z_{1,i},z_{2,i}\in B_{r}(x_{0})$, 
\eqref{goalestimate} with $q=\hat{q}$ follows by using the standard covering argument along with \eqref{ondiagonallast} and \eqref{offdiagonallast}, and then using a few algebraic manipulations together with \eqref{taualphacond}. Moreover, we have $D^{\tau_{n_{\sigma}}}d_{\alpha_{n_{\sigma}}+s}u\in L^{\hat{q}}_{\mathrm{loc}}\left(\Omega\times\Omega\,;\,\mu_{\tau_{n_{\sigma}}}\right)$ which follows from the combinations of \eqref{nsigma111} and \eqref{nsigma12} along with the fact that $D^{\tau_{n_{\sigma}}}d_{\alpha_{n_{\sigma}}+s}u\in L^{p}_{\mathrm{loc}}\left(\Omega\times\Omega\,;\,\mu_{\tau_{n_{\sigma}}}\right)$. If $l_{q}=0$, then $\hat{q}=q$. Let $l_{q}>0$. Then $\hat{q}=p_{1}$. As in \eqref{estimate11forr} and \eqref{nsigma11}, we find that

\begin{equation}
\label{nsigma21}
\begin{aligned}
        &\left(\dashint_{\mathcal{B}_{\frac{r}{2^{n_{\sigma}}+3}}(y_{0})}|D^{\tau_{n_{\sigma}}}d_{\alpha_{n_{\sigma}}+s}u|^{\hat{q}_{1}}\,d\mu_{\tau_{n\sigma}}\right)^{\frac{1}{\hat{q}_{1}}}\\
        &\leq c\left(\left(\dashint_{\mathcal{B}_{\frac{r}{2^{n_{\sigma}}+2}}(y_{0})}|D^{\tau_{n_{\sigma}}}d_{\alpha_{n_{\sigma}}+s}u|^{p_{1}}\,d\mu_{\tau_{n_{\sigma}}}\right)^{\frac{1}{p_{1}}}+\Tailp\left(\frac{u-(u)_{B_{\frac{r}{2^{n_{\sigma}}+2}}(y_{0})}}{\left(\frac{r}{2^{n_{\sigma}}+2}\right)^{\alpha_{n_{\sigma}}+\tau_{n_{\sigma}}+s}};B_{\frac{r}{2^{n_{\sigma}}+2}}(y_{0})\right)\right)\\
&+c\left(\left(\dashint_{\mathcal{B}_{\frac{r}{2^{n_{\sigma}}+2}}(y_{0})}|D^{\tau_{n_{\sigma}}}d_{\alpha_{n_{\sigma}}}f|^{\hat{q}}\,d\mu_{\tau_{n_{\sigma}}}\right)^{\frac{1}{\hat{q}}}+\Tails\left(\frac{f-(f)_{B_{\frac{r}{2^{n_{\sigma}}+2}}(y_{0})}}{\left(\frac{r}{2^{n_{\sigma}}+2}\right)^{\alpha_{n_{\sigma}}+\tau_{n_{\sigma}}}};B_{\frac{r}{2^{n_{\sigma}}+2}}(y_{0})\right)\right),
\end{aligned}
\end{equation}
where $y_{0}\in B_{r}(x_{0})$ and $\hat{q}_{1}$ is defined in \eqref{hatq1}. Plugging \eqref{nsigma111} with $\frac{r}{2^{n_{\sigma}+3}}$ there, replaced by $\frac{r}{2^{n_{\sigma}+2}}$ into \eqref{nsigma21}, and using \eqref{tail estimate of u local2} as in \eqref{tailestimateofuimpr} and \eqref{tailestimateoffimpr}, we have
\begin{equation}
\label{ondiagonallast1}
\begin{aligned}
        &\left(\dashint_{\mathcal{B}_{\frac{r}{2^{n_{\sigma}}+3}}(y_{0})}|D^{\tau_{n_{\sigma}}}d_{\alpha_{n_{\sigma}}+s}u|^{\hat{q}_{1}}\,d\mu_{\tau_{n\sigma}}\right)^{\frac{1}{\hat{q}_{1}}}\\
        &\leq c\left(\left(\dashint_{\mathcal{B}_{2r}(x_{0})}|D^{\tau_{0}}d_{s}u|^{p}\,d\mu_{\tau_{0}}\right)^{\frac{1}{p}}+\Tailp\left(\frac{u-(u)_{B_{2r}(x_{0})}}{\left(2r\right)^{\tau_{0}+s}};B_{2r}(x_{0})\right)\right)\\
&+c\left(\left(\dashint_{\mathcal{B}_{2r}(x_{0})}|D^{\tau_{n_{\sigma}}}d_{\alpha_{n_{\sigma}}}f|^{q}\,d\mu_{\tau_{n_{\sigma}}}\right)^{\frac{1}{q}}+\Tails\left(\frac{f-(f)_{B_{2r}(x_{0})}}{\left(2r\right)^{\tau_{0}}};B_{2r}(x_{0})\right)\right).
\end{aligned}
\end{equation}
On the contrary, as in \eqref{ndiagonallastest}, Lemma \ref{nd lemma} with $\tilde{\gamma}=\gamma_{1}$, $p=p_{1}$, $\tau=\tau_{n_{\sigma}}$ and $\alpha=\alpha_{n_{\sigma}}$ gives that for $\mathcal{Q}\equiv\mathcal{Q}_{\frac{r}{\sqrt{n}2^{n_{\sigma}+3}}}(z_{1},z_{2})$ satisfying \eqref{dist cube}, 
\begin{equation}
\label{nsigma122}
\begin{aligned}
        &\left(\dashint_{\mathcal{Q}}|D^{\tau_{n_{\sigma}}}d_{\alpha_{n_{\sigma}}+s}u|^{\hat{q}_{1}}\,d\mu_{\tau_{n\sigma}}\right)^{\frac{1}{\hat{q}_{1}}}\\
        &\leq c\left(\dashint_{\mathcal{Q}}|D^{\tau_{n_{\sigma}}}d_{\alpha_{n_{\sigma}}+s}u|^{p}\,d\mu_{\tau_{n_{\sigma}}}\right)^{\frac{1}{p}}+c\left[\sum_{d=1}^{2}\left(\frac{1}{\tau_{n\sigma}}\dashint_{P^{d}\mathcal{Q}}|D^{\tau_{n\sigma}}d_{\alpha_{n_{\sigma}}+s}u|^{p_{1}}\,d\mu_{\tau_{n\sigma}}\right)^{\frac{1}{p_{1}}}\right].
\end{aligned}
\end{equation}
Let us assume $z_{1},z_{2}\in B_{r}(x_{0})$ to see that
\begin{equation*}
\begin{aligned}
        &\left(\dashint_{\mathcal{Q}}|D^{\tau_{n_{\sigma}}}d_{\alpha_{n_{\sigma}}+s}u|^{\hat{q}_{1}}\,d\mu_{\tau_{n\sigma}}\right)^{\frac{1}{\hat{q}_{1}}}\\
        &\leq c\left(\dashint_{\mathcal{B}_{2r}(x_{0})}|D^{\tau_{0}}d_{s}u|^{p}\,d\mu_{\tau_{0}}\right)^{\frac{1}{p}}+c\left[\sum_{d=1}^{2}\left(\dashint_{\mathcal{B}_{\frac{r}{2^{n_{\sigma}+3}}}(z_{d})}|D^{\tau_{n\sigma}}d_{\alpha_{n_{\sigma}}+s}u|^{p_{1}}\,d\mu_{\tau_{n\sigma}}\right)^{\frac{1}{p_{1}}}\right],
\end{aligned}
\end{equation*}
where we have used \eqref{aofirstfinal1}, \eqref{indm} and the fact that  $P_{d}\mathcal{Q}\subset \mathcal{B}_{\frac{r}{2^{n_{\sigma}+3}}}(z_{d})$. We now plug \eqref{ondiagonallast} with $\hat{q}=p_{1}$ and $y_{0}=z_{d}$ into the second term in the right-hand side of the above inequality, in order to find that
\begin{equation}
\label{offdiagonallast1}
\begin{aligned}
        &\left(\dashint_{\mathcal{Q}}|D^{\tau_{n_{\sigma}}}d_{\alpha_{n_{\sigma}}+s}u|^{\hat{q}_{1}}\,d\mu_{\tau_{n\sigma}}\right)^{\frac{1}{\hat{q}_{1}}}\\
        &\leq c\left[\left(\dashint_{\mathcal{B}_{2r}(x_{0})}|D^{\tau_{0}}d_{s}u|^{p}\,d\mu_{\tau_{0}}\right)^{\frac{1}{p}}+\Tailp\left(\frac{u-(u)_{B_{2r}(x_{0})}}{(2r)^{\tau_{0}+s}};B_{2r}(x_{0})\right)\right]\\
        &\quad+c\left[ \left(\dashint_{\mathcal{B}_{2r}(x_{0})}|D^{\tau_{0}}d_{0}f|^{q}\,d\mu_{\tau_{0}}\right)^{\frac{1}{q}}+\Tailp\left(\frac{f-(f)_{B_{2r}(x_{0})}}{(2r)^{\tau_{0}}};B_{2r}(x_{0})\right)\right].
\end{aligned}
\end{equation} 
Taking into account \eqref{ondiagonallast1} and \eqref{offdiagonallast1}, the standard covering argument and a few algebraic manipulation along with \eqref{taualphacond} give that \eqref{goalestimate} with $q=\hat{q}_{1}$. Furthermore, \eqref{ondiagonallast1} and \eqref{nsigma122} along with the fact $D^{\tau_{n_{\sigma}}}d_{\alpha_{n_{\sigma}}+s}u\in L^{p_{1}}_{\mathrm{loc}}\left(\Omega\times\Omega\,;\,\mu_{\tau_{n_{\sigma}}}\right)$ imply that $D^{\tau_{n_{\sigma}}}d_{\alpha_{n_{\sigma}}+s}u\in L^{\hat{q}}_{\mathrm{loc}}\left(\Omega\times\Omega\,;\,\mu_{\tau_{n_{\sigma}}}\right)$.
By iterating this procedure $l_{q}-1$ times, we have \eqref{goalestimate} and $D^{\tau_{n_{\sigma}}}d_{\alpha_{n_{\sigma}}+s}u\in L^{q}_{\mathrm{loc}}\left(\Omega\times\Omega\,;\,\mu_{\tau_{n_{\sigma}}}\right)$. Using \eqref{alphatauplus}, we complete that $u\in L^{q}_{\mathrm{loc}}\left(\Omega\times\Omega\,;\frac{\dx\dy}{|x-y|^{n+\sigma q}}\right)$. \qed

\medskip


\begin{thebibliography}{10}

\bibitem{AFLY}
Boumediene Abdellaoui, Antonio~J Fern{\'a}ndez, Tommaso Leonori, and Abdelbadie
  Younes, \emph{Global fractional calder\'on-zygmund regularity}, arXiv
  preprint arXiv:2107.06535 (2021), 1--22.

\bibitem{AMp}
Emilio Acerbi and Giuseppe Mingione, \emph{Gradient estimates for a class of
  parabolic systems}, Duke Math. J. \textbf{136} (2007), no.~2, 285--320.

\bibitem{ABES}
Pascal Auscher, Simon Bortz, Moritz Egert, and Olli Saari, \emph{Nonlocal
  self-improving properties: a functional analytic approach}, Tunisian Journal
  of Mathematics \textbf{1} (2018), no.~2, 151--183.

\bibitem{BL}
Lorenzo Brasco and Erik Lindgren, \emph{Higher {S}obolev regularity for the
  fractional {$p$}-{L}aplace equation in the superquadratic case}, Adv. Math.
  \textbf{304} (2017), 300--354.

\bibitem{BLS2}
Lorenzo Brasco, Erik Lindgren, and Armin Schikorra, \emph{Higher {H}\"{o}lder
  regularity for the fractional {$p$}-{L}aplacian in the superquadratic case},
  Adv. Math. \textbf{338} (2018), 782--846.

\bibitem{BKO}
Sun-Sig Byun, Hyojin Kim, and Jihoon Ok, \emph{Local {H}{\"o}lder continuity
  for fractional nonlocal equations with general growth}, Mathematische Annalen
  (2022), 1--40, https://doi.org/10.1007/s00208-022-02472-y.

\bibitem{BKK}
Sun-Sig Byun, Kyeongbae Kim, and Deepak Kumar, \emph{Regularity results for a
  class of nonlocal double phase equations with vmo coefficients},  arXiv
  preprint arXiv:2303.07749 (2023), 1--35.


\bibitem{BOS}
Sun-Sig Byun, Jihoon Ok, and Kyeong Song, \emph{H{\"o}lder regularity for weak
  solutions to nonlocal double phase problems}, Journal de Math{\'e}matiques
  Pures et Appliqu{\'e}es \textbf{168} (2022), 110--142.

\bibitem{BWc}
Sun-Sig Byun and Lihe Wang, \emph{Nonlinear gradient estimates for elliptic
  equations of general type}, Calculus of variations and partial differential
  equations \textbf{45} (2012), 403--419.

\bibitem{CPoo}
L.~A. Caffarelli and I.~Peral, \emph{On estimates for elliptic equations in
  divergence form}, Comm. Pure Appl. Math. \textbf{51} (1998), no.~1, 1--21.

\bibitem{Ci}
Matteo Cozzi, \emph{Interior regularity of solutions of non-local equations in
  {S}obolev and {N}ikol'skii spaces}, Ann. Mat. Pura Appl. (4) \textbf{196}
  (2017), no.~2, 555--578.

\bibitem{Cr}
\bysame, \emph{Regularity results and {H}arnack inequalities for minimizers and
  solutions of nonlocal problems: a unified approach via fractional {D}e
  {G}iorgi classes}, J. Funct. Anal. \textbf{272} (2017), no.~11, 4762--4837.

\bibitem{FMg}
Cristiana De~Filippis and Giuseppe Mingione, \emph{Gradient regularity in mixed
  local and nonlocal problems}, Mathematische Annalen (2022), 1--68, https://doi.org/10.1007/s00208-022-02512-7.

\bibitem{CKPh}
Agnese Di~Castro, Tuomo Kuusi, and Giampiero Palatucci, \emph{Nonlocal
  {H}arnack inequalities}, J. Funct. Anal. \textbf{267} (2014), no.~6,
  1807--1836.

\bibitem{CKPf}
\bysame, \emph{Local behavior of fractional {$p$}-minimizers}, Ann. Inst. H.
  Poincar\'{e} Anal. Non Lin\'{e}aire \textbf{33} (2016), no.~5, 1279--1299.

\bibitem{DPGV}
Eleonora Di~Nezza, Giampiero Palatucci, and Enrico Valdinoci,
  \emph{Hitchhiker's guide to the fractional {S}obolev spaces}, Bull. Sci.
  Math. \textbf{136} (2012), no.~5, 521--573.

\bibitem{LNc}
Lars Diening and Simon Nowak, \emph{Calderón-zygmund estimates for the
  fractional $p$-laplacian}, arXiv
  preprint arXiv:2303.02116 (2023), 1--27.


\bibitem{Hlp}
Hongjie Dong and Doyoon Kim, \emph{On ${L}^{p}$-estimates for a class of
  non-local elliptic equations}, Journal of Functional Analysis \textbf{262}
  (2012), no.~3, 1166--1199.

\bibitem{FMSY}
Mouhamed~Moustapha Fall, Tadele Mengesha, Armin Schikorra, and Sasikarn Yeepo,
  \emph{Calder{\'o}n-zygmund theory for non-convolution type nonlocal equations
  with continuous coefficient}, Partial Differential Equations and Applications
  \textbf{3} (2022), no.~2, 1--27.

\bibitem{FTASc}
\bysame, \emph{Calder{\'o}n-zygmund theory for non-convolution type nonlocal
  equations with continuous coefficient}, Partial Differential Equations and
  Applications \textbf{3} (2022), no.~2, 24.

\bibitem{G}
Enrico Giusti, \emph{Direct methods in the calculus of variations}, World
  Scientific Publishing Co., Inc., River Edge, NJ, 2003.

\bibitem{Ip}
Tadeusz Iwaniec, \emph{Projections onto gradient fields and ${L}^{p}$-estimates
  for degenerated elliptic operators}, Studia Mathematica \textbf{75} (1983),
  no.~3, 293--312.

\bibitem{KH1}
Moritz Kassmann, \emph{A priori estimates for integro-differential operators
  with measurable kernels}, Calc. Var. Partial Differential Equations
  \textbf{34} (2009), no.~1, 1--21.

\bibitem{KAAYg}
Sujin Khomrutai, Armin Schikorra, Adisak Seesanea, and Sasikarn Yeepo,
  \emph{Global tangential calderon-zygmund type estimates for the regional
  fractional laplacian}, arXiv preprint arXiv:2209.07939 (2022), 1--22.

\bibitem{KZav}
Juha Kinnunen and Shulin Zhou, \emph{A local estimate for nonlinear equations
  with discontinuous coefficients}, Comm. Partial Differential Equations
  \textbf{24} (1999), no.~11-12, 2043--2068.

\bibitem{KKP}
Janne Korvenp\"{a}\"{a}, Tuomo Kuusi, and Giampiero Palatucci, \emph{The
  obstacle problem for nonlinear integro-differential operators}, Calc. Var.
  Partial Differential Equations \textbf{55} (2016), no.~3, Art. 63, 29.

\bibitem{KMS2}
Tuomo Kuusi, Giuseppe Mingione, and Yannick Sire, \emph{Nonlocal equations with
  measure data}, Comm. Math. Phys. \textbf{337} (2015), no.~3, 1317--1368.

\bibitem{KMS1}
\bysame, \emph{Nonlocal self-improving properties}, Anal. PDE \textbf{8}
  (2015), no.~1, 57--114.

\bibitem{NKSg}
Tuomo Kuusi, Simon Nowak, and Yannick Sire, \emph{Gradient regularity and
  first-order potential estimates for a class of nonlocal equations}, arXiv
  preprint arXiv:2212.01950 (2022), 1--56.

\bibitem{MSf}
Katarzyna Mazowiecka and Armin Schikorra, \emph{Fractional div-curl quantities
  and applications to nonlocal geometric equations}, Journal of Functional
  Analysis \textbf{275} (2018), no.~1, 1--44.

\bibitem{MSY}
Tadele Mengesha, Armin Schikorra, and Sasikarn Yeepo, \emph{Calderon-zygmund
  type estimates for nonlocal pde with h{\"o}lder continuous kernel}, Advances
  in Mathematics \textbf{383} (2021), 1--64.

\bibitem{NHH}
Simon Nowak, \emph{Higher {H}\"{o}lder regularity for nonlocal equations with
  irregular kernel}, Calc. Var. Partial Differential Equations \textbf{60}
  (2021), no.~1, Paper No. 24, 37.

\bibitem{Nexist}
\bysame, \emph{Higher integrability for nonlinear nonlocal equations with
  irregular kernel}, Analysis and partial differential equations on manifolds,
  fractals and graphs, Adv. Anal. Geom., vol.~3, De Gruyter, Berlin, [2021]
  \copyright 2021, pp.~459--492.

\bibitem{Ni}
\bysame, \emph{Improved sobolev regularity for linear nonlocal equations with
  vmo coefficients}, Mathematische Annalen (2022), 1--56, https://doi.org/10.1007/s00208-022-02369-w.

\bibitem{Nv}
\bysame, \emph{Regularity theory for nonlocal equations with vmo coefficients},
  Annales de l'Institut Henri Poincar{\'e} C (2022), 1--59, https://doi.org/10.4171/AIHPC/37.

\bibitem{Olv}
Jihoon Ok, \emph{Local h{\"o}lder regularity for nonlocal equations with
  variable powers}, Calculus of Variations and Partial Differential Equations
  \textbf{62} (2023), no.~1, 32.

\bibitem{Sfv}
Donald Sarason, \emph{Functions of vanishing mean oscillation}, Transactions of
  the American Mathematical Society \textbf{207} (1975), 391--405.

\bibitem{Sn}
Armin Schikorra, \emph{Nonlinear commutators for the fractional p-laplacian and
  applications}, Mathematische Annalen \textbf{366} (2016), no.~1, 695--720.

\bibitem{SM}
James~M. Scott and Tadele Mengesha, \emph{Self-improving inequalities for
  bounded weak solutions to nonlocal double phase equations}, Communications on
  Pure and Applied Analysis \textbf{21} (2022), no.~1, 183--212.
  
\bibitem{Shf}
Luis Silvestre, \emph{H\"{o}lder estimates for solutions of
  integro-differential equations like the fractional {L}aplace}, Indiana Univ.
  Math. J. \textbf{55} (2006), no.~3, 1155--1174.

\bibitem{Wd}
Jerome Wettstein, \emph{Distributional fractional gradients and a
  bourgain-brezis-type estimate}, arXiv preprint arXiv:2208.07806 (2022),
  1--28.

\end{thebibliography}
\end{document}